\DeclareMathOperator{\Hom}{Hom}
\renewcommand{\Im}{\operatorname{Im}}
\renewcommand{\Re}{\operatorname{Re}}
\newcommand{\RNum}[1]{\uppercase\expandafter{\romannumeral #1\relax}}
\newcommand{\sslash}{\mathbin{/\mkern-6mu/}}
\newcommand{\norm}[1]{\left\lVert#1\right\rVert}
\theoremstyle{plain}
\newtheorem{corx}{Corollary}
\newtheorem{thmx}[corx]{Theorem}
\newtheorem{thm}{Theorem}[section]
\newtheorem*{thm*}{Theorem}
\newtheorem{lem}[thm]{Lemma}
\newtheorem{prop}[thm]{Proposition}
\newtheorem{cor}[thm]{Corollary}
\theoremstyle{definition}
\newtheorem{defn}{Definition}[section]
\newtheorem*{rem}{Remark}
\numberwithin{equation}{section}
\numberwithin{thm}{section}
\numberwithin{defn}{section}
\begin{document}

\author[C.\ Ouyang]{Charles Ouyang}
\address{Department of Mathematics, Rice University, Houston, TX~77005, USA}
\email{charles.ouyang@rice.edu}
%\date{\today}

%\thanks{T.V.\ was supported in part by NSF grants DMS--1361625 and DMS--1148609.}

\title{High energy harmonic maps and degeneration of minimal surfaces}

\begin{abstract}
%We provide a compactification of the space of maximal representations of surfaces groups into G and we give two interpretations of the boundary points

Let $S$ be a closed surface of genus $g \geq 2$ and let $\rho$ be a maximal $\text{PSL}(2, \mathbb{R}) \times \text{PSL}(2, \mathbb{R})$ surface group representation. By a result of Schoen, there is a unique $\rho$-equivariant minimal surface $\widetilde{\Sigma}$ in $\mathbb{H}^{2} \times \mathbb{H}^{2}$. We study the induced metrics on these minimal surfaces and prove the limits are precisely mixed structures. In the second half of the paper, we provide a geometric interpretation: the minimal surfaces $\widetilde{\Sigma}$ degenerate to the core of a product of two $\mathbb{R}$-trees. As a consequence, we obtain a compactification of the space of maximal representations of $\pi_{1}(S)$ into $\text{PSL}(2, \mathbb{R}) \times \text{PSL}(2, \mathbb{R})$.

%We consider the space of graphed minimal surfaces in $\mathbb{H}^{2} \times \mathbb{H}^{2}$ admitting a PSL$_{2} \mathbb{R} \times$ PSL$_{2} \mathbb{R}$-action. These surfaces arise as the graph of a minimal lagrangian between two closed hyperbolic surfaces of the same genus. We look at the asymptotic behavior of these minimal surfaces viewed as projectivized geodesic currents and describe the frontier of the space of mixed structures (\cite{DLR}). In fact, we show these particular minimal surfaces in $\mathbb{H}^{2} \times \mathbb{H}^{2}$, when properly scaled, converge to the core of a pair of $\mathbb{R}$-trees dual to a pair of measured laminations.
%Associated to each minimal lagrangian, there is a maximal embedded space-like surface in a GHMC AdS$^{3}$-manifold. We show that the limits of these metrics are also mixed structures. 

\end{abstract}

\maketitle
\setcounter{tocdepth}{1}
\tableofcontents

\section{Introduction and main results}

Let $S$ be a closed, orientable, smooth surface of genus $g > 1$. For any reductive Lie group $G$, one can form the representation variety $\mathcal{R}(\pi_{1}(S), G) = \Hom^{+} (\pi_{1}(S), G) \sslash G$, consisting of conjugacy classes of reductive surface group representations into $G$. In the classical setting where $G = \text{PSL}(2, \mathbb{R})$, one recovers a copy of Teichm{\"u}ller space. A goal in the higher Teichm{\"u}ller theory is to understand geometric aspects of surface group representations into higher rank Lie groups. 

%Through the non-abelian Hodge correspondence, one can study such representations by considering holomorphic objects known as $\emph{Higgs}$ $\emph{bundles}$, which consist of a holomorphic vector bundle $\mathscr{E}$ over a compact Riemann surface $(S,J)$ with the additional data of a $\emph{Higgs}$ $\emph{field}$, which is a holomorphic 1-form $\phi$ on $(S,J)$ with values in End$(\mathcal{E})$, the endomorphism bundle of $\mathcal{E}$. Notice that this approach requires to choice of an auxiliary complex structure on the surface, while the representation variety $\mathcal{R}(\pi_{1}(S), G)$ only requires the data of a smooth surface $S$ and a reductive Lie group $G$.

Following the work of Labourie \cite{Labourie08}, given a reductive surface group representation $\rho$ into a semi-simple Lie group $G$, to each complex structure $J$ on the surface $S$, one can record the energy of the unique $\rho$-equivariant harmonic map from $(\tilde{S}, J)$ to the Riemannian symmetric space $G/K$. This defines an energy functional on Teichm{\"u}ller space and Labourie proves that if the original representation $\rho$ is \emph{Anosov}, then the energy functional admits a critical point. Hence, to each such representation $\rho$, there is an associated branched immersed minimal surface in the symmetric space $G/K$.

The existence and uniqueness of the minimal surface in the associated symmetric space has been resolved by Labourie \cite{Labourie17} for the rank 2  real split simple Lie groups: namely SL$(3, \mathbb{R})$, PSp$(4, \mathbb{R})$ and $G_{2}$. Interestingly enough, the result still holds when $G$ is merely semi-simple, as the case of PSL$(2, \mathbb{R}) \times \text{PSL}(2, \mathbb{R})$ was proven by Schoen in \cite{Schoen}.

There is also the aim in the program of the higher Teichm{\"u}ller theory to understand representations as geometric objects. This is a natural goal, given that in the case of classical Teichm{\"u}ller theory, where the group $G$ = PSL$(2, \mathbb{R})$ and the representation is discrete and faithful, the associated geometric objects are given by marked hyperbolic surfaces. Moreover, it is of interest to obtain a description of boundary points associated to Hitchin components or spaces of maximal representations in terms of degenerations of geometric objects and to have these geometric objects at the boundary be a generalization of measured laminations (see \cite{Wien} section 11), which are the limiting geometric objects in the Thurston compactification of Teichm{\"u}ller space. 

In the setting $G=$ PSL(2, $\mathbb{R}) \times$ PSL$(2,\mathbb{R})$, this paper does exactly that-- we provide a parameterization of maximal surface group representations into PSL(2, $\mathbb{R}) \times$ PSL$(2,\mathbb{R})$, by studying the induced metrics on the $\rho$-equivariant minimal surfaces in the  symmetric space $\mathbb{H}^{2} \times \mathbb{H}^{2}$. If $\rho = (\rho_{1} , \rho_{2})$, and $\widetilde{\Sigma}$ is the unique $\rho$-equivariant minimal surface in $\mathbb{H}^{2} \times \mathbb{H}^{2}$, then its quotient by action of the fundamental group by the representation, is the graph of the unique minimal lagrangian isotopic to the identity between $(S, g_{1}) = \mathbb{H}^{2} / \rho_{1}$ and $(S, g_{2}) =\mathbb{H}^{2} / \rho_{2}$. 

%The projection, which maps the minimal surface to the hyperbolic surfaces $(S, g_{i})$, are harmonic maps with opposite Hopf differentials. 

%In fact, by work of Wolf \cite{W89}, one can obtain an inverse construction, by prescribing the data of a Riemann surface and a holomorphic quadratic differential, one can find two closed marked hyperbolic surfaces, for which the graph of the minimal langrangian has conformal structure given by the chosen Riemann surface and whose projection onto the first hyperbolic surface has Hopf differential given by the chosen holomorphic quadratic differential. 

%We study the induced metric on the minimal surface associated to the representation $\rho$. It turns out that the induced metric of the minimal surface coming from one representation will coincide with the one coming from another representation, if in the quadratic differential bundle parameterization, the Riemann surfaces are the same, and the two Hopf differentials differ by a unit-norm complex number. 

Denote Ind$(S)$ to be the equivalence class of induced metrics on the graph minimal surface in the product of two hyperbolic surfaces. Two such metrics are identified if one is the pullback metric of the other by a diffeomorphism homotopic to the identity map.

We study the length spectrum of these induced metrics on the minimal surface and show that we can degenerate the metrics to obtain singular flat metrics, measured laminations and mixed structures. A \emph{mixed structure} $\eta = (S_{\alpha}, q_{\alpha}, \lambda)$ is the data of a collection of incompressible subsurfaces $\sqcup  \, S_{\alpha}$, with a prescribed meromorphic (integrable) quadratic differential on each subsurface (collapsing the boundary components and viewing them as punctures) and a singular flat metric on each subsurface coming from the prescribed quadratic differential, with a measured lamination $\lambda$ supported on the complement $S \, \backslash \sqcup \, S_{\alpha}$. Observe that a singular flat metric coming from a holomorphic quadratic differential on $(S,J)$ and a measured lamination on $S$ are trivial examples of mixed structures, where $S_{\alpha} = S$ and $S_{\alpha} = \emptyset$, respectively. Our first main result is the following.

\begin{thmx}
The space of induced metrics \emph{Ind($S$)} embeds into the space of projectivized currents \emph{PCurr($S$)}. Its closure is \emph{Ind($S$)} $\sqcup$ \emph{PMix($S$)}.

%that is, to every sequence of induced metrics $\{ \sigma_{n} e_{n} \}$ leaving all compact sets in \emph{Ind($S$)}, there exists a sequence of positive real numbers $\{t_{n} \}$, so that up to a subsequence, the sequence of currents $\{t_{n} \sigma_{n} e_{n}\}$ converges to $\eta$ in \emph{Curr($S$)}, where $\eta$ is a mixed structure. Conversely, given any $\eta \in$ \emph{Mix($S$)}$ \subset$ \emph{Curr($S$)}, there exists a sequence of induced metrics $\{\sigma_{n} \sigma_{n} \}$ and a sequence of real numbers $\{t_{n}\}$ so that $t_{n} \sigma_{n} e_{n} \to \eta$ in the topology of \emph{Curr($S$)}.
\end{thmx}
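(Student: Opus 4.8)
\section*{Proof proposal}

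The plan is to prove the two assertions in turn: first that sending an induced metric to the projectivized Liouville current of its length spectrum gives an embedding $\mathrm{Ind}(S) \hookrightarrow \mathrm{PCurr}(S)$, and then that the accumulation points lying outside $\mathrm{Ind}(S)$ are precisely the projectivized mixed structures $\mathrm{PMix}(S)$. The geometric input is that $\widetilde{\Sigma}$ is minimal in the nonpositively curved space $\mathbb{H}^{2} \times \mathbb{H}^{2}$, so by the Gauss equation its induced metric has curvature $\le 0$; moreover, writing $q$ for the Hopf differential of the first projection, conformality of the minimal immersion forces the Hopf differential of the second projection to equal $-q$, so the $(2,0)$-parts cancel and the induced metric is the conformal metric $(e_{1} + e_{2})\,|dz|^{2}$ on $(\Sigma, J)$, where $e_{1}, e_{2}$ are the energy densities of the two harmonic projections. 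A metric of nonpositive curvature has no conjugate points, so its geodesic flow and boundary-at-infinity structure produce a Liouville current $L_{\Sigma} \in \mathrm{Curr}(S)$ with $i(L_{\Sigma}, \gamma) = \ell_{\Sigma}(\gamma)$ on every free homotopy class; projectivizing gives the map to $\mathrm{PCurr}(S)$.

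Injectivity reduces to marked length spectrum rigidity for this class of metrics: proportional Liouville currents yield, after rescaling, a common marked length spectrum and hence an isometry isotopic to the identity, and the proportionality constant is forced to be $1$ since rescaling does not preserve the class of metrics induced by minimal surfaces in $\mathbb{H}^{2} \times \mathbb{H}^{2}$. For the closure, I would use that $\mathrm{PCurr}(S)$ is compact, whence every sequence $[L_{\Sigma_{n}}]$ subconverges. When the underlying maximal representations $\rho_{n} = (\rho_{1,n}, \rho_{2,n})$ stay in a compact subset of $\mathcal{T}(S) \times \mathcal{T}(S)$, the minimal surfaces and their induced metrics converge, by uniqueness and continuity of the minimal surface in the representation, to an induced metric, so the limit lies in $\mathrm{Ind}(S)$; the substantive case is $\rho_{n} \to \infty$.

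Here I would exploit the parametrization of maximal representations by the pair $(J, q) \in T^{*}\mathcal{T}(S)$, with $J$ the conformal structure of the minimal surface and $q$ its Hopf differential, and analyze the degeneration through the Deligne--Mumford and thick--thin pictures. After normalizing the total mass of $q_{n}$, high-energy harmonic map estimates of Wolf-type control the conformal factor $e_{1}+e_{2}$: on the thick subsurfaces where $q_{n}$ retains bounded, nondegenerate flat geometry, the two projections collapse along the horizontal, respectively vertical, foliations of $q_{n}$, and their pullback metrics add to give, up to scale, the flat metric $|q_{\infty}|$ of the limiting meromorphic quadratic differential --- these are the flat pieces $(S_{\alpha}, q_{\alpha})$; on the pinching annuli, where $q_{n}$ concentrates as a $(dz/z)^{2}$-type differential, both projections collapse and the contribution to the rescaled length spectrum is that of a weighted core curve, and these assemble into the measured lamination $\lambda$ supported on $S \setminus \bigsqcup_{\alpha} S_{\alpha}$. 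Matching the local models shows that $\ell_{\Sigma_{n}}(\gamma)/C_{n} \to i(\gamma, \eta)$ for the mixed structure $\eta = (S_{\alpha}, q_{\alpha}, \lambda)$, so that $[L_{\Sigma_{n}}] \to [\eta] \in \mathrm{PMix}(S)$.

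To finish, one must show every mixed structure is attained and that divergent sequences never accumulate in $\mathrm{Ind}(S)$. For surjectivity onto $\mathrm{PMix}(S)$ I would, given $\eta$, construct a sequence $(J_{n}, q_{n})$ whose flat geometry converges to the prescribed $q_{\alpha}$ on each $S_{\alpha}$ while its pinching data along the complement realizes $\lambda$, and check by the same estimates that the induced metrics converge to $\eta$; disjointness holds because the Liouville current of a smooth nonpositively curved metric that is strictly negatively curved somewhere has full, nonsingular support, unlike a mixed structure whose lamination part contributes a current supported on a lamination, while marked length spectrum rigidity separates its metric from the flat cone metric of any quadratic differential. The step I expect to be the main obstacle is exactly the simultaneous control of the two degenerations: because the induced metric is an energy-density conformal metric rather than the flat metric of $q_{n}$ itself, relating $\ell_{\Sigma_{n}}$ to the flat length of $q_{n}$ on the thick part and to the lamination weights on the thin part requires uniform high-energy harmonic map estimates --- through the Bochner equation for $e_{1}+e_{2}$ together with the maximum principle --- that remain valid as the conformal structure $J_{n}$ itself degenerates. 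This uniformity across both scales is the technical heart of the theorem.
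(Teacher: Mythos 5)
Your overall architecture matches the paper's --- realize the negatively curved induced metrics as Liouville currents via Otal, and identify the boundary by comparing the induced metric to the flat metric of the Hopf differential --- but you leave the two genuinely hard steps unproved, and for the second one the route you sketch is not the one that works.

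First, injectivity after projectivization. You assert that ``the proportionality constant is forced to be $1$ since rescaling does not preserve the class of metrics induced by minimal surfaces,'' but that is precisely the statement requiring proof, not a reason. Marked length spectrum rigidity reduces the problem to: if two induced metrics on the same conformal structure satisfy $e_{1} = c\,e_{2}$, then $c=1$. The paper's Lemma \ref{lem:stupid} proves this by a maximum principle applied to $\Delta \log(\mathcal{H}_{1}/\mathcal{H}_{2})$ via the Bochner formula, first bounding $\mathcal{H}_{1}/\mathcal{H}_{2} \le c$, then comparing orders of vanishing of $\mathcal{L}_{1}$ and $\mathcal{L}_{2}$ to conclude $\Phi_{1} = k\Phi_{2}$, and finally evaluating the Bochner identity at a zero of the differential to force $c=1$. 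Nothing in your sketch substitutes for this.

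Second, the closure. You correctly identify the main obstacle --- controlling the conformal factor relating $\sigma_{n}e_{n}$ to $|\Phi_{n}|$ while the conformal structure $J_{n}$ itself degenerates --- but you propose to attack it with ``uniform high-energy harmonic map estimates through the Bochner equation for $e_{1}+e_{2}$ together with the maximum principle,'' which is exactly the approach that becomes problematic on a degenerating surface. The paper avoids it: from $\sigma e = |\Phi|\bigl(|\nu|^{-1}+|\nu|\bigr) \ge 2|\Phi|$ and the normalization of both metrics to unit area (Proposition \ref{prop:mikefirst} identifies $2\|\Phi_{n}\|$ with $\mathcal{E}_{n}$ up to a topological constant), a pure area argument (Proposition \ref{prop:ae}) forces $|\nu_{n}|^{-1}+|\nu_{n}| \to 2$ in measure; Minsky's subharmonicity bound on $\log(1/|\nu_{n}|)$ then upgrades this to pointwise convergence away from the zeros and poles of $\Phi_{\infty}$ (Proposition \ref{prop:beltrami}). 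Moreover, the decomposition into flat and laminar pieces is imported wholesale from the Duchin--Leininger--Rafi compactification of $\textnormal{Flat}(S)$ applied to $|\Phi_{0,n}|$, not read off a Deligne--Mumford thick--thin decomposition of $J_{n}$; your picture of the laminar part as weighted core curves of pinching annuli only produces multicurves, whereas the laminar part of a mixed structure is a general measured lamination, obtained in the paper by observing that the limiting area of the complement of $S'$ vanishes and invoking Bonahon's criterion $i(\lambda,\lambda)=0$. Surjectivity onto $\textnormal{PMix}(S)$ is likewise obtained by composing two known density statements (flat metrics are limits of induced metrics along harmonic map rays; mixed structures are limits of flat metrics by DLR) rather than by a direct construction.
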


%Minimal langrangians arise as the Gauss map of the unique embedded space-like maximal surface in a Globally Hyperbolic Maximal Compact (GHMC) AdS$^{3}$-manifold. Mess \cite{Mess} showed that the maximal representations into PSL$(2, \mathbb{R}) \times \text{PSL}(2, \mathbb{R})$  are precisely the holonomy representation of GHMC AdS$^{3}$-structures. One could indeed study the induced metric on the maximal surface instead. We show an analogue of Theorem A, that the limits one obtains are also mixed structures.

%\begin{thmx}
%The space of maximal surfaces \emph{Max($S$)} embeds into the space of projectivized currents. Its closure is given by \emph{Max($S$)} $\sqcup$ \emph{PMix($S$)}.
%\end{thmx}

%The issue of phase between distinct quadratic differentials which yield the same induced metric is resolved by considering the surface's second fundamental form. In the case of the maximal surface, the second fundamental form is given by the real part of a holomorphic quadratic differential. In the case of minimal langrangians, the second fundamental is not quite as simple as just the real part of a holomorphic quadratic differential. We explicitly calculate it (Proposition \ref{prop:sff}) and show it is however completely determined by the holomorphic quadratic differential, so that distinct representations yielding equivalent induced metrics can be distinguished by their second fundamental forms. In fact, 

If we keep track of the ambient space, namely $\mathbb{H}^{2} \times \mathbb{H}^{2}$, we show that by scaling the ambient space by a suitable sequence of constants (which generally will be the total energy of a harmonic map), we can obtain as limits of minimal langrangians the core of a pair of $\mathbb{R}$-trees coming from measured foliations. In fact, we show there is an isometric embedding from a metric space obtained from the data of a mixed structure to the core of trees.

As a consequence, we have an answer to our original goal of ascribing something geometric to surface group representations to PSL$(2, \mathbb{R}) \times \text{PSL}(2, \mathbb{R})$ which are maximal, and a description of natural boundary objects which are geometric and are natural extensions of measured laminations.

\begin{thmx}
The space of maximal representations of \emph{PSL}$(2,\mathbb{R})$ $\times$ \emph{PSL}$(2.\mathbb{R})$ embeds into the space of $\pi_{1}S$-equivariant harmonic maps from $\mathbb{H}^{2} \to \mathbb{H}^{2} \times \mathbb{H}^{2}$, whose graphs are minimal lagrangians. The scaled Gromov-Hausdorff limits of these maps are given by harmonic maps from $\mathbb{H}^{2}$ to $T_{1} \times T_{2}$, where $T_{1}$ and $T_{2}$ are a pair of $\mathbb{R}$-trees coming from a projective pair of measured foliations, with image given by the core of the trees.

\end{thmx}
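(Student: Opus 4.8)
The plan is to separate the two assertions of the theorem: the embedding of the space of maximal representations, and the identification of the rescaled Gromov--Hausdorff limits. For the embedding, I would first recall that a maximal representation $\rho = (\rho_1,\rho_2)$ into $\mathrm{PSL}(2,\mathbb{R}) \times \mathrm{PSL}(2,\mathbb{R})$ is precisely a pair of Fuchsian representations, hence corresponds to a pair of points $(g_1,g_2)$ of Teichm\"uller space via $(S,g_i) = \mathbb{H}^2/\rho_i$. By Schoen's theorem there is a unique $\rho$-equivariant minimal surface $\widetilde{\Sigma} \subset \mathbb{H}^2 \times \mathbb{H}^2$, whose quotient is the graph of the unique minimal Lagrangian $f \colon (S,g_1) \to (S,g_2)$ isotopic to the identity. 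Uniformizing the induced conformal structure on $\widetilde{\Sigma}$ presents it as the image of a conformal harmonic (hence minimal) map $u_\rho \colon \mathbb{H}^2 \to \mathbb{H}^2 \times \mathbb{H}^2$ that is $\rho$-equivariant, with components $u_1,u_2 \colon \mathbb{H}^2 \to \mathbb{H}^2$ equivariant for $\rho_1,\rho_2$. The association $\rho \mapsto u_\rho$ is the desired embedding: it is well defined by uniqueness, and injective because the equivariance of $u_\rho$, whose image is a $2$-dimensional minimal surface, recovers $\rho$ up to conjugacy.

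For the limiting statement I would take a sequence $\rho_n$ leaving every compact set, so that the total energies $E_n = E(u_{\rho_n})$ diverge, and study the rescaled targets obtained by multiplying the product metric on $\mathbb{H}^2 \times \mathbb{H}^2$ by a constant comparable to $E_n^{-1}$. The structural starting point is that, because $\widetilde{\Sigma}$ is minimal Lagrangian, the map $u_\rho$ is conformal, so the $(2,0)$-part of the induced metric vanishes; as this $(2,0)$-part is $\Phi_1 + \Phi_2$, the Hopf differentials of the two factors satisfy $\Phi_1^n + \Phi_2^n = 0$. Thus a \emph{single} holomorphic quadratic differential $q_n := \Phi_1^n = -\Phi_2^n$ governs both maps. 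After normalizing $q_n$ by a constant comparable to $E_n$ and passing to a subsequence, the projective classes of the vertical and horizontal measured foliations of $q_n$ converge to a projective pair $(\cF_1,\cF_2)$; here the sign relation $\Phi_2^n = -\Phi_1^n$ is exactly what interchanges horizontal and vertical, so $\cF_1$ and $\cF_2$ are the vertical and horizontal foliations of one limiting projective quadratic differential.

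I would then invoke the Korevaar--Schoen compactness theory for harmonic maps into NPC spaces together with Wolf's analysis of high-energy harmonic maps to pass to the limit. Each rescaled factor $u_i^n$ should converge, in the equivariant Gromov--Hausdorff sense, to a harmonic map $\mathbb{H}^2 \to T_i$ onto the $\mathbb{R}$-tree $T_i$ dual to $\cF_i$ (the metrized leaf space of the measured foliation, via Skora's theorem), and the product maps should converge to a harmonic map $u_\infty \colon \mathbb{H}^2 \to T_1 \times T_2$. To finish I would identify the image of $u_\infty$ with the core of $T_1 \times T_2$ in the sense of Guirardel: where the limit is of flat type the two foliations $\cF_1,\cF_2$ bind the subsurface, so the core is two-dimensional and is swept out by $u_\infty$, whereas on the complementary subsurface one foliation degenerates and the core collapses onto the $\mathbb{R}$-tree recording the laminar part. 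Matching these pieces against the mixed structure produced by the first main theorem (whose closure of $\mathrm{Ind}(S)$ adjoins $\mathrm{PMix}(S)$) identifies the limit object.

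The main obstacle is this last passage to the limit: establishing a \emph{Wolf-type} theorem in the present setting, where the domain conformal structure, the target, and the energy all degenerate simultaneously, and where one must control convergence of the \emph{maps} (not merely of the induced metrics as in the first main theorem) to a harmonic map into a \emph{product} of trees. The delicate points are handling both factors with a single rescaling constant tied to the total energy, ruling out loss of energy in the Gromov--Hausdorff limit, and identifying the limiting image with Guirardel's core uniformly across the flat and laminar regions prescribed by the mixed structure.
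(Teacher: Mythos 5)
Your proposal follows essentially the same route as the paper: the embedding via Schoen's minimal Lagrangian and the conformal inclusion of the graph, then rescaled limits via Korevaar--Schoen compactness, dual $\mathbb{R}$-trees, and Guirardel's core. The paper's own proof of this theorem is short precisely because it is an assembly of results established earlier (the compactification of the induced metrics by mixed structures, the Korevaar--Schoen pullback convergence, and the identification of $X_{\eta}$ with the core); the ``main obstacle'' you flag at the end is exactly the content of those results, so your outline correctly identifies the needed ingredients but defers the substantive work. Two concrete points where your sketch diverges from (or falls short of) what the paper actually does. First, your rescaling constant: you propose multiplying the target metric by a constant comparable to $\mathcal{E}_{n}^{-1}$ throughout, but when the limiting current is a measured lamination the correct normalization satisfies $c_{n}=o(\mathcal{E}_{n}^{1/2})$ rather than $c_{n}\asymp\mathcal{E}_{n}^{1/2}$ (Proposition \ref{prop:scale}); scaling by total energy in the purely laminar case would collapse the limit to zero, so the constants must be chosen to normalize the limiting length spectrum, not the energy. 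Second, on the laminar part of the mixed structure the paper does not simply observe that ``the core collapses'': it approximates by Jenkins--Strebel data, shows the rescaled domains converge to finite metric graphs, invokes Lebeau's uniqueness theorem for energy-minimizing affine maps between finite graphs to pin down the limiting map, and then extends to general laminations by a density-and-continuity argument in $\mathrm{P}(\mathrm{ML}\times\mathrm{ML})$. Without some substitute for this step, the identification of the limit map and of its image with the core on the laminar region is not established by your argument.
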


%Let $S$ be a closed, oriented surface of genus $g\geq 2$. Then the Teichm{\"u}ller space, denoted by $\mathcal{T}(S)$, 

%Then for any point in Teichm{\"u}ller space, one may choose a representative $(S, \rho)$ so that the identity map $id: (S, \sigma) \to (S, \rho)$ is a harmonic diffeomorphism. By taking the Hopf differential, one obtains a parameterization of Teichm{\"u}ller space.

%Associated to each pair of points in Teichm{\"u}ller space, one can find a unique third point in Teichm{\"u}ller space given by a unique minimal surface which conformally embeds into the product of the two hyperbolic surfaces. 

There has been some recent interest in studying surface group representations to the Lie group  PSL(2, $\mathbb{R}) \times$ PSL$(2, \mathbb{R})$ by way of geodesic currents. Work of Glorieux \cite{Glo17} shows that the average of two Liouville currents $\frac{L_{X_{1}} + L_{X_{2}}}{2}$ yields the length spectrum of the Globally Hyperbolic Maximal Compact AdS$^{3}$ manifold with holonomy $(\rho_{1}, \rho_{2})$, where $X_{i} = \mathbb{H}^{2} \, \backslash \, \rho_{i}$. In another recent paper of Glorieux \cite{Glo19}, it is shown that this map which sends unordered pairs of elements in Teichm{\"u}ller space to the space of projectivized currents given by $(X_{1}, X_{2})=(X_{2}, X_{1}) \to \frac{L_{X_{1}} + L_{X_{2}}}{2}$ is injective. Forthcoming work of Burger, Iozzi, Parreau, and Pozzetti \cite{BIPP} will show the limits of this embedding are given by the projectivization of a pair of measured laminations. The limiting currents thus satisfy 
\begin{align}
i( \eta, \cdot) = i(\lambda_{1}, \cdot) + i(\lambda_{2}, \cdot),
\end{align}
where $\lambda_{1}$ and $\lambda_{2}$ are specific representatives of the projectivize classes $[\lambda_{1}]$ and $[\lambda_{2}]$, respectively, representing limits on the Thurston boundary. 

%We observe that the embedding studied by Glorieux is distinct from the currents arising from the induced metric on the maximal surface. The second fundamental form the unique embedded spacelike maximal surface in a GHMC AdS$^{3}$ manifold is given by the real part of a holomorphic quadratic differential (and such structures are parametrized by the quadratic differential bundle over Teichm{\"u}ller space, see Section 2), so that the length spectrums of the maximal surface will not coincide with the length spectrum of the AdS$^{3}$ manifold. 

%Even in the limit, when we scale the induced metric by the energy of an associated harmonic map, there will be regions of the surface for which the injectivity radius is bounded away from zero, and so the limits of the second fundamental will make sense and will be given by the real part of a unit-norm holomorphic quadratic differential. Hence, one sees there is also some geometric reasoning for why the limits obtained by studying the length spectrum of a GHMC AdS$^{3}$-manifold ought to be distinct from the limits obtained by examining the length spectrum of the maximal surface contained therein. 

We remark that our compactifcation via geodesic currents is distinct. If the limiting laminations $\lambda_{1}$ and $\lambda_{2}$ $\emph{fill}$, that is, the sum of their intersection numbers with any third measured lamination is never zero, then the corresponding limiting object $\eta'$ under our compactification is a singular flat metric coming from a unit-norm holomorphic quadratic differential $\Phi$ whose horizontal and vertical laminations are $\lambda_{1}$ and $\lambda_{2}$. The corresponding current is thus given by
\begin{align}
l^{2}_{|\Phi|}(\alpha) = i^{2}(\eta ', \alpha) = i^{2}(\lambda_{1}, \alpha) + i^{2}(\lambda_{2}, \alpha),
\end{align}
for a suitably short arc $\alpha$ away from the zeros of $|\Phi|$. In general, this is different from the sum of $\lambda_{1}$ and $\lambda_{2}$. Notice that for $\gamma$ an arc of the horizontal lamination of $\Phi$, then the two intersection numbers $i(\eta, \alpha)$ and $i(\eta', \alpha)$ coincide, so that the two currents $\eta$ and $\eta'$ are distinct even as projectivized currents.

\subsection*{Acknowledgments}
It is the author's privilege to thank his advisor Mike Wolf for first suggesting the problem and for his continued patience, support and guidance. The author would like to expresses his gratitude to Zheng (Zeno) Huang and Andrea Tamburelli for many fruitful conversations throughout this project.

\section{Geometric preliminaries}

\subsection{Harmonic maps between surfaces}

Let $(M, \sigma |dz|^{2})$ and $(N, \rho  |dw|^{2})$ be two closed Riemannian surfaces and $w:(M, \sigma |dz|^{2}) \to (N, \rho |dw|^{2})$ a Lipschitz map. Then the energy of the map $w$ is given by the integral
$$ \mathscr{E}(w) := \frac{1}{2} \int_{M} ||dw||^{2} \, dvol_{\sigma}.$$
A critical point of the energy functional is a {\it harmonic map}. We remark that if the domain $M$ is a surface, the energy is a conformal invariant; hence a harmonic map depends only upon the conformal class of the domain but depends on the metric of the target surface. 
The energy density of the map $w$ at a point is given by
$$e(w) = \frac{\rho(w(z))}{\sigma(z)} ( |w_{z}|^{2}  + |w_{\overline{z}}|^{2}),$$
and so the total energy is also given by the formula
\begin{align*} \mathscr{E}(w) &= \int_{M} e(w) \, \sigma \, dz d\overline{z}\\
&= \int_{M} \rho(w(z)) ( |w_{z}|^{2}  + |w_{\overline{z}}|^{2}) \, dz  d \overline{z},
\end{align*}
once again seeing that the total energy depends only upon the conformal structure of the domain and the metric of the target.
Alternatively, a harmonic map $w$ solves the Euler-Lagrange equation for the energy functional, a second-order nonlinear PDE:
$$w_{z \overline{z}} + (\log \rho)_{w} w_{z} w_{\overline{z}} = 0. $$

To any harmonic map $w:(M, \sigma |dz|^{2}) \to (N, \rho |dw|^{2})$, the  pull-back of the metric tensor decomposes by type according to
$$w^{*} \rho =  \Phi dz^{2} + \sigma e dz d \overline{z} + \overline{\Phi}d\overline{z}^{2},$$
where $\Phi \, dz^{2}$ is a holomorphic quadratic differential with respect to the complex structure coming from the conformal class of $(M, \sigma |dz|^{2})$ called the \emph{Hopf differential} of $w$. Much of the formulas arising from harmonic maps make use of the auxiliary functions:
\begin{align*}
\mathcal{H} &= \frac{\rho(w(z))}{\sigma(z)} |w_{z}|^{2}\\
\mathcal{L} &= \frac{\rho(w(z))}{\sigma(z)} |w_{\overline{z}}|^{2}.
\end{align*}
We list some of these formulas and make liberal use of them without always explicitly citing the precise one:
\begin{align*}
&\text{The energy density} \,\,\, e = \mathcal{H} + \mathcal{L}\\
&\text{The Jacobian} \,\,\, \mathcal{J} = \mathcal{H} - \mathcal{L}\\
&\text{The norm of the quadratic differential} \,\,\, |\Phi|^{2}/\sigma^{2} = \mathcal{HL}\\
&\text{The Lapace-Beltrami operator} \,\,\, \Delta \equiv \frac{4}{\sigma}\frac{\partial^{2}}{\partial z \partial \overline{z}} \\
&\text{Gaussian curvature of the source} \,\,\, K(\sigma) = - \frac{2}{\sigma} \frac{\partial^{2} \log \sigma}{\partial z \partial \overline{z}}  \\
&\text{Gaussian curvature of the target} \,\,\, K(\rho) = - \frac{2}{\rho} \frac{\partial^{2} \log \rho}{\partial w \partial \overline{w}} \\
& \text{The Beltrami differential} \,\,\, \nu = \frac{w_{\overline{z}}}{w_{z}} = \frac{\overline{\Phi}}{\sigma \mathcal{H}}\,\,\, \text{and} \,\,\,|\nu|^{2} = \frac{\mathcal{L}}{\mathcal{H}}.
\end{align*}
The Bochner formula is given by
\begin{align*}
\Delta \log \mathcal{H} &= -2K(\rho) \mathcal{H} + 2K(\rho) \mathcal{L} + 2 K(\sigma),  \qquad \text{when} \,\, \mathcal{H}(p) \neq 0\\
\Delta \log \mathcal{L} &= -2K(\rho) \mathcal{L} + 2K(\rho) \mathcal{H} + 2 K(\sigma),  \qquad \text{when} \,\, \mathcal{L}(p) \neq 0.
\end{align*}
We shall often be in the setting where both the source and target are hyperbolic surfaces, that is $K(\sigma) = K(\rho) \equiv -1$, and so some of the formulas listed above can be simplified. In the more general setting where the target has negative curvature, the existence of a harmonic map in the homotopy class is due to Eells-Sampson \cite{ES64}, its uniqueness is due to Hartman \cite{Har67} and Al'ber \cite{Al}, and that if the homotopy class contains a diffeomorphism, then the harmonic map itself is a diffeomorphism and $\mathcal{H} >0$ is due to Schoen-Yau \cite{SY78} and Sampson \cite{Sam78}.

\subsection{Teichm{\"u}ller space}
Recall that Teichm{\"u}ller space $\mathcal{T}(S)$ is the space of all hyperbolic metrics on $S$ with the identification  $g \sim h$, if there exists a diffeomorphism $\phi$ of the surface, homotopic to the identity map, for which $\phi^{*}g =h$. The topology is given by its marked length spectrum.

Alternatively, one may regard Teichm{\"u}ller space as the space of marked Riemann surfaces. For a fixed surface $S$, two complex structures $(S, J_{1})$ and $(S, J_{2})$ are identified if there exists a biholomorphism  $f: (S, J_{1}) \to (S, J_{2})$, which is homotopic to the identity. The topology is given by the metric which for two points of Teichm{\"u}ller space assigns the logarithm of the quasiconformal dilatation of the unique Teichm{\"u}ller mapping between the marked Riemann surfaces. 

Teichm{\"u}ller space is topologically trivial being homeomorphic to an open ball of dimension $6g-6$. 
  
\subsection{Measured foliations and measured laminations}
For a closed surface $S$, a measured foliation $(S, \mathcal{F})$ is a singular foliation with a transverse measure, that is a measure $\mu$ defined on each arc transverse to the foliation, such that the measure is invariant under isotopy between two arcs through transverse arcs. 

To any isotopy class of measured foliations, there is an associated measured lamination. A measured geodesic lamination on a hyperbolic surface is a closed disjoint set of geodesics with a transverse measure. Likewise, to any measured lamination, there is an associated measured foliation, so that there is a canonical way to pass from to the other (see \cite{BC}, \cite{HP}). Hence, the space of measured laminations does not depend upon the choice of hyperbolic metric. Thurston showed both spaces are homeomorphic to Euclidean balls of dimension $6g-6$ (see \cite{FLP}, \cite{Thurston}).

\subsection{Holomorphic quadratic differentials}

The space of holomorphic quadratic differentials $Q_{g}$ is a holomorphically trivializable vector bundle over Teichm{\"u}ller space, whose fiber over $X$ is the vector space of holomorphic quadratic differentials on $X$. It is the vector space of holomorphic sections of the square of the canonical bundle $K_{X}$, and so may be written $H^{0}(X, K_{X}^{2})$. By the Riemann-Roch theorem, the complex dimension of this vector space is $3g-3$. More concretely, if $X$ is a Riemann surface and $\Phi$ is a holomorphic quadratic differential on $X$, then locally $\Phi = f(z) dz^{2}$, where $f$ is a holomorphic function and $z$ is a chart for $X$. %If then $w=w(z)$ is another chart with overlap, then on the intersection one has the expected transformational properties. %$f(w(z)) w'(z)^{2} dz^{2} = f(z) dz^{2}$.

Holomorphicity of the differential and compactness of the Riemann surface ensures the quadratic differential has precisely $4g-4$ zeros counted with multiplicity. Hence, in a neighborhood avoiding a zero of $\Phi$, one may choose natural coordinates $\zeta$ so that $\Phi = d \zeta^{2}$. The metric $|\Phi|$ is well-defined on the complement of the zeros and is locally Euclidean. At the zeros, the metric has conic singularities of angle $(n+2) \pi$, where $n$ is the order of the zero of the quadratic differential at that point. 

For any point on the complement of the zeros of the quadratic differential, there is a unique direction for which $q(v,v) \in \mathbb{R}^{+}$. Integrating the resulting line field, one obtains a foliation, called the \emph{horizontal foliation} of the quadratic differential $q$. Likewise, one can define the \emph{vertical foliation} of $q$, by integrating the line field of directions for which $q(v,v) \in i \mathbb{R}^{+}$. The foliations come equipped with a transverse measure. For any arc $\gamma$ transverse to the horizontal foliation, the measure for the horizontal foliation is given by
$$\tau_{h} = \int_{\gamma} |\text{Im}(\sqrt{q})(z)| |dz|,$$ 
and likewise, the transverse measure for the vertical foliation is given by integrating the real part $|\text{Re}(\sqrt{q})|$ over and arc $\gamma$.

If $S_{g,n}$ is a compact surface of genus $g$ with $n$ punctures such that $3g-3+n > 0$, then $Q_{g,n}$ will denote the space of integrable holomorphic quadratic differentials on $S_{g,n}$. At each of the punctures, the differential has a pole of order 1.

\subsection{Geodesic Currents and Marked Length Spectra}

Let $(S, \sigma)$ be a fixed closed hyperbolic surface of genus $g\geq2$. Then its universal cover $\tilde{S}$ may be identified isometrically with $\mathbb{H}^{2}$. Let $G(\tilde{S})$ denote the space of geodesics of $\tilde{S}$. Then a geodesic current on $S$ is a $\pi_{1}(S)$-equivariant Radon measure on $G(\tilde{S})$. The space of geodesic currents, denoted Curr(S), is given by the weak$^{*}$ topology. 

\begin{rem} A priori, the definition of a geodesic current may appear to depend upon the choice of hyperbolic metric, but it turns out $G(\tilde{S})$ depends only upon $\pi_{1}(S)$ (c.f. \cite{Bon}), hence the space of geodesic currents is independent of the hyperbolic metric initially chosen for $S$.

%but if $\sigma_{1}$ and $\sigma_{2}$ are two hyperbolic metrics on $S$, then by the Svarc-Milnor lemma, the universal covers $\tilde{S_{1}}$ and $\tilde{S_{2}}$ are quasi-isometric to each other and the quasi-isometry extends to a $\pi_{1}(S)$-equivariant 
\end{rem}

The ur-example of a geodesic current is given by a single closed geodesic $\gamma$ on $S$. Lift $\gamma $ to a discrete set of geodesics $\tilde{\gamma}$ on $\tilde{S}$. These lifted geodesics may be given a Dirac-measure, which is $\pi_{1}(S)$-invariant as the lifts themselves are $\pi_{1}(S)$-invariant. Hence to any closed curve, by looking at its geodesic representative, one obtains a geodesic current on $S$. In fact, Bonahon \cite{Bon} shows the space of weighted closed curves is dense in Curr$(S)$ and the geometric intersection number between curves has a continuous bilinear extension to $i:$ Curr$(S) \times$ Curr$(S) \to \mathbb{R}_{\geq0}$. Moreover, a geodesic current on $S$ is determined by its intersection number with all closed curves \cite{Otal}. The topology then on the space of geodesic currents is given by its marked length spectrum. For the fixed surface $S$, denote by $\mathscr{C}$ the set of isotopy classes of closed curves. The marked length spectrum of a geodesic current $\mu$ is given by the collection $\{ i(\mu, \gamma) \}_{\gamma \in \mathscr{C}}$. A sequence of geodesic currents $\mu_{n}$ is said to converge to $\mu$ if their marked length spectrums converge, that is, to each $\gamma \in \mathscr{C}$ and $\epsilon >0$, there is an $N(\epsilon, \gamma)$ so that for $n>N(\epsilon, \gamma)$, one has $|i(\mu, \gamma) - i(\mu_{n} , \gamma)| < \epsilon$. It is important to note that $N$ is allowed to depend on the curve class chosen. No requirement on uniform convergence is required.

If a current arises from a metric, the following rather useful formula applies:

\begin{prop}[Bonahon \cite{Bon}, Otal \cite{Otal}]\label{prop:area}
Let $\mu$ be a current arising from a metric $\sigma$. Then
$$i(\mu, \mu) = \frac{\pi}{2} \text{Area}(\sigma)$$
\end{prop}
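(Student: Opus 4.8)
The plan is to compute the self-intersection directly from the definition of the intersection pairing as a measure of crossing geodesics, and to reduce that measure to the Riemannian area by a kinematic (Santal\'o-type) change of variables. Recall first that $i(\mu,\nu)$ is the total mass, in the quotient by the diagonal $\pi_{1}(S)$-action, of the product measure $\mu \times \nu$ restricted to the set
$$\mathcal{DG} = \{ (g_{1}, g_{2}) \in G(\tilde{S}) \times G(\tilde{S}) : g_{1}, g_{2} \text{ cross transversally} \}.$$
Since $\mu$ arises from the metric $\sigma$, it is the Liouville current $L_{\sigma}$, characterized by $i(L_{\sigma}, \gamma) = \ell_{\sigma}(\gamma)$ for every closed geodesic $\gamma$ and given in boundary coordinates by the Liouville measure on $G(\tilde{S})$. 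Thus I would write $i(\mu,\mu)$ as the integral of $dL_{\sigma}(g_{1})\, dL_{\sigma}(g_{2})$ over a fundamental domain for $\pi_{1}(S)$ acting on $\mathcal{DG}$.

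The key step is a change of variables. Any transverse pair $(g_{1}, g_{2})$ determines its intersection point $p \in \tilde{S}$ together with the unordered pair of directions $\{u_{1}, u_{2}\} \subset T^{1}_{p}\tilde{S}$ of the two geodesics at $p$; conversely a point and two distinct directions reconstruct the pair, and the deck action on pairs becomes the deck action on the point $p$. I would then show that under this correspondence the product Liouville measure pushes forward to
$$ \tfrac{1}{2}\, \sin\big(\angle(u_{1}, u_{2})\big)\, du_{1}\, du_{2}\, dA_{\sigma}(p),$$
where $dA_{\sigma}$ is the Riemannian area element and $\angle(u_{1},u_{2})$ is the angle between the two geodesics at $p$. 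This is the Santal\'o kinematic formula: it is a purely local statement in the tangent space $T_{p}\tilde{S}$, following from the invariance of the Liouville measure under the geodesic flow and under isometries, and is therefore insensitive to the curvature away from $p$ (which is what makes the proposition hold for a general metric, not only a hyperbolic one).

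Granting this density, the computation separates. Integrating the angular factor over the fiber,
$$ \tfrac{1}{2}\int\!\!\int \sin\big(\angle(u_{1},u_{2})\big)\, du_{1}\, du_{2},$$
yields a universal constant independent of $p$, and the remaining integral of $dA_{\sigma}$ over a fundamental domain returns $\text{Area}(\sigma)$. Matching the universal constant to $\tfrac{\pi}{2}$ is then forced by the normalization $i(L_{\sigma},\gamma) = \ell_{\sigma}(\gamma)$, which can be cross-checked against the one-dimensional Crofton formula stating that the Liouville measure of the geodesics crossing a segment is proportional to its length.

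I expect the main obstacle to be precisely this kinematic change of variables together with the bookkeeping of constants: one must track the normalization of the Liouville measure, whether directions are counted modulo $\pi$ or modulo $2\pi$, and whether the crossing pairs are counted as ordered or unordered. These conventions are exactly what distinguishes the answer $\tfrac{\pi}{2}\,\text{Area}(\sigma)$ from $\pi\,\text{Area}(\sigma)$ or $2\pi\,\text{Area}(\sigma)$, so the delicate part of the argument is pinning down this single overall constant rather than the geometry of the reduction, which is robust.
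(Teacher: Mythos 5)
The paper offers no proof of this proposition --- it is imported verbatim from Bonahon and Otal --- so there is nothing internal to compare your argument against; what you have written is an outline of the standard proof from those references. The outline is correct: the current of a metric is its Liouville measure on the space of geodesics, normalized by the Crofton identity $i(L_{\sigma},\gamma)=\ell_{\sigma}(\gamma)$, and the self-intersection becomes an integral over (intersection point, pair of directions) after the Santal\'o kinematic change of variables, with the angular integral contributing a universal constant and the spatial integral contributing $\mathrm{Area}(\sigma)$. Your remark that the local density is curvature-independent is also the right explanation of why Otal's version holds for arbitrary negatively curved metrics and not only hyperbolic ones. The one place the sketch falls short of a proof is the step you yourself flag: the density $\tfrac{1}{2}\sin(\angle(u_{1},u_{2}))\,du_{1}\,du_{2}\,dA_{\sigma}$ is asserted, and its constant cannot afterwards be ``matched'' to $\tfrac{\pi}{2}$, because once the Liouville measure is pinned down by the Crofton formula ($dL=\tfrac{1}{2}\sin\theta\,d\theta\,dt$ in arclength--angle coordinates along a transversal, which is exactly the normalization $i(L_{\sigma},\gamma)=\ell_{\sigma}(\gamma)$) the two-geodesic density is forced by a Jacobian computation and there is no free parameter left to fit. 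Carrying that computation out --- with ordered pairs and directions taken modulo $\pi$ one finds the density $\tfrac{1}{4}\sin|\theta_{1}-\theta_{2}|\,d\theta_{1}\,d\theta_{2}\,dA_{\sigma}$, whose angular integral is $\tfrac{\pi}{2}$; your stated constant $\tfrac{1}{2}$ is consistent only under the unordered-pair convention you adopt --- is the entire content of the proposition, so the proof is not complete until that change of variables is actually performed rather than described.
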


In the case where $\mu$ is a geodesic current arising from a measured lamination, it is not hard to see that $i(\mu, \mu) =0$, but in fact, this turns out to be a characterization of measured laminations.

\begin{prop}[Bonahon \cite{Bon}]\label{ml}
Let $\mu$ be a geodesic current such that $i(\mu, \mu) =0$, then $\mu$ is a measured lamination.
\end{prop}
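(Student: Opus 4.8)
The plan is to read the structure of $\mu$ directly off the vanishing of its self-intersection, exploiting that $i(\mu,\mu)$ is by definition the total mass of $\mu\times\mu$ on the set of transversely crossing pairs of geodesics. Recall that the intersection pairing is built from the open set
\[
\cG_2(\tilde S)=\{(g_1,g_2)\in G(\tilde S)\times G(\tilde S): g_1,g_2 \text{ cross transversally in } \mathbb{H}^2\},
\]
on which $\pi_1(S)$ acts diagonally, and
\[
i(\mu,\mu)=(\mu\times\mu)\bigl(\cG_2(\tilde S)/\pi_1(S)\bigr).
\]
The action here is properly discontinuous and in fact free: a nontrivial deck transformation is hyperbolic and fixes only its own axis, so it cannot preserve a transverse pair of distinct geodesics. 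First I would use this to show that $i(\mu,\mu)=0$ forces no two geodesics of $\supp\mu$ to cross.

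Suppose toward a contradiction that $g_1,g_2\in\supp\mu$ cross transversally at a point of $\mathbb{H}^2$. Since two distinct geodesics meeting in $\mathbb{H}^2$ always meet transversally and transversality is an open condition on the four ideal endpoints, there are neighborhoods $U_1\ni g_1$ and $U_2\ni g_2$ in $G(\tilde S)$ with $U_1\times U_2\subset\cG_2(\tilde S)$; by definition of support, $\mu(U_1),\mu(U_2)>0$. Because the diagonal action on $\cG_2(\tilde S)$ is free and properly discontinuous, after shrinking $U_1,U_2$ I may assume $U_1\times U_2$ embeds into the quotient $\cG_2(\tilde S)/\pi_1(S)$, whence its image carries mass $\mu(U_1)\mu(U_2)>0$ and $i(\mu,\mu)>0$, a contradiction. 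Thus every pair of geodesics in the closed, $\pi_1(S)$-invariant set $\supp\mu$ is disjoint or asymptotic in $\mathbb{H}^2$; applying this to a geodesic $g$ and each of its translates $\gamma g\in\supp\mu$ shows in addition that each geodesic of $\supp\mu$ projects to a \emph{simple} geodesic in $S$.

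Finally I would assemble these facts into a measured lamination. The set $\supp\mu$ is now a closed, $\pi_1(S)$-invariant family of pairwise disjoint complete simple geodesics, which is precisely the lift of a geodesic lamination $L$ on $S$; the transverse measure is recovered by assigning to an arc $k$ transverse to $L$, with endpoints off $L$, the $\mu$-measure of the set of geodesics of $\supp\mu$ that a lift of $k$ crosses. This quantity depends only on the transverse isotopy class of $k$, since $\mu$ detects only \emph{which} geodesics are met, so it defines a genuine transverse measure and $\mu$ is the current associated to $(L,\text{measure})$, i.e.\ a measured lamination.

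The hard part is the middle step: converting ``positive $\mu\times\mu$-mass of crossing pairs upstairs'' into ``positive intersection number downstairs.'' This is exactly where freeness and proper discontinuity of the $\pi_1(S)$-action on $\cG_2(\tilde S)$ are indispensable, as they allow me to localize $U_1\times U_2$ inside a single fundamental domain so that positive mass survives in the quotient; without this localization one could not exclude that the crossing is an artifact that disappears after identification. The remaining steps—checking that a closed invariant family of disjoint geodesics is a lamination and that $\mu$ restricts to a transverse measure—are routine.
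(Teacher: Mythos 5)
The paper does not prove this proposition; it is quoted directly from Bonahon \cite{Bon}. Your argument is correct and is essentially Bonahon's original one: since $i(\mu,\mu)$ is the mass of $\mu\times\mu$ on the quotient of the set of transverse pairs, and the diagonal $\pi_1(S)$-action there is free (a hyperbolic element fixes only its axis) and properly discontinuous (the intersection-point map to $\mathbb{H}^2$ is equivariant), a crossing pair in $\supp\mu\times\supp\mu$ would contribute positive mass to the quotient; hence $i(\mu,\mu)=0$ forces $\supp\mu$ to be a closed, invariant family of pairwise non-crossing geodesics projecting to simple geodesics, i.e.\ a lamination, with $\mu$ supplying the transverse measure. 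I see no gaps.
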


It is clear that if $\mu$ is a geodesic current, then so is $c \mu$ for $c \in \mathbb{R}_{+} $. The set of projectivized currents, denoted PCurr$(S)$ is given by Curr$(S)/ \sim$, where $\mu \sim \nu$ if there exists a positive constant $c$ for which $\mu = c \nu$ and so consists of projective classes of geodesic currents. The space PCurr$(S)$ is then given the quotient topology. We highlight an important property of this space.

\begin{prop}[Bonahon \cite{Bon}]
The space $\emph{PCurr}(S)$ is compact.
\end{prop}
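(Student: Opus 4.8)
The plan is to realize $\mathrm{PCurr}(S)$ as the space of flow-invariant probability measures on the \emph{compact} unit tangent bundle $T^{1}S$, where Banach--Alaoglu supplies compactness for free, and then to transport this back through the marked length spectrum topology. The point of moving to $T^{1}S$ is that $\pi_{1}(S)$ acts on $G(\widetilde{S})$ cocompactly but \emph{not} properly discontinuously, so a naive weak$^{*}$ compactness argument directly on the non-compact space $G(\widetilde{S})$ is obstructed by mass escaping to infinity; replacing $G(\widetilde{S})$ by $T^{1}S$ removes this difficulty entirely.

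First I would fix an auxiliary hyperbolic metric $m$ on $S$ and use that $T^{1}\widetilde{S}$ fibers over the space of oriented geodesics with $\mathbb{R}$-fibers recording the time parameter of the geodesic flow. Pairing a $\pi_{1}(S)$-invariant Radon measure $\mu$ on $G(\widetilde{S})$ with $dt$ yields a flow-invariant, $\pi_{1}(S)$-invariant measure on $T^{1}\widetilde{S}$, which descends to a \emph{finite} flow-invariant Borel measure $\widehat{\mu}$ on the compact quotient $T^{1}S$; its total mass is (up to a universal constant) the $m$-length $i(\mu, L_{m})$ of the current, which is strictly positive exactly when $\mu\neq 0$. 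Disintegrating transversally to the flow inverts this assignment, giving a bijection $\Theta$ between $\mathrm{Curr}(S)$ and the cone of finite flow-invariant measures on $T^{1}S$ (see Bonahon \cite{Bon}). Normalizing the mass to $1$, equivalently fixing $i(\mu, L_{m})=1$, picks one representative from each projective class, so $\Theta$ identifies $\mathrm{PCurr}(S)$ with the set $\mathcal{M}_{1}$ of flow-invariant \emph{probability} measures on $T^{1}S$.

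Compactness of $\mathcal{M}_{1}$ is then immediate: it is the intersection of the weak$^{*}$-closed set of flow-invariant measures with the unit sphere of $C(T^{1}S)^{*}$, which is weak$^{*}$-compact by Banach--Alaoglu since $T^{1}S$ is compact. It remains to check that $\Theta$ is a homeomorphism for the marked length spectrum topology. By Otal's theorem the intersection numbers separate currents, so $\mathrm{PCurr}(S)$ is Hausdorff; as a continuous bijection from a compact space onto a Hausdorff space is automatically a homeomorphism, it suffices to show $\Theta$ is continuous, i.e.\ that $\widehat{\mu}_{n}\to\widehat{\mu}$ weak$^{*}$ forces $i(\mu_{n},\gamma)\to i(\mu,\gamma)$ for every $\gamma\in\mathscr{C}$.

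The main obstacle is precisely this continuity. For a fixed $\gamma$, the number $i(\mu,\gamma)$ is obtained by integrating against $\widehat{\mu}$ the transverse counting function that records crossings of a lift of $\gamma$; this function is bounded but only almost-everywhere continuous, its discontinuities lying along the set of unit vectors tangent to or based on $\gamma$. To commute the integral with the weak$^{*}$ limit I would sandwich the counting function between continuous functions supported in a thin tubular neighborhood of $\gamma$, estimate the gap by the mass $\widehat{\mu}$ assigns to that neighborhood, and let the tube shrink. The genuinely delicate case is when the limiting current charges $\gamma$ itself, so that the discontinuity locus is not $\widehat{\mu}$-null; this is handled by the standard convention computing $i(\cdot,\gamma)$ via one-sided transversals, and is exactly the phenomenon underlying $i(\gamma,\gamma)=0$ for simple closed geodesics recorded in Proposition~\ref{ml}. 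Once $\Theta$ is continuous the proposition follows, and the argument simultaneously exhibits sequential compactness of $\mathrm{PCurr}(S)$, matching the marked length spectrum description of the topology.
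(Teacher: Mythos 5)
The paper offers no proof of this proposition: it is quoted verbatim from Bonahon with a citation, so there is no internal argument to compare against. Your sketch is, in outline, a correct proof and is in fact very close to Bonahon's own circle of ideas: he establishes exactly the dictionary you describe between geodesic currents and finite geodesic-flow-invariant (and flip-invariant, a point you elide since $G(\widetilde{S})$ parametrizes \emph{unoriented} geodesics) measures on $T^{1}S$, and compactness comes from normalizing by $i(\cdot,L_{m})$ and applying weak$^{*}$ compactness. Two caveats. First, your motivating claim that a direct weak$^{*}$ argument on $G(\widetilde{S})$ is ``obstructed by mass escaping to infinity'' is not accurate: Bonahon's published proof works directly on $G(\widetilde{S})$, using the estimate $\mu(K)\leq C_{K}\, i(\mu,L_{m})$ for compact $K\subset G(\widetilde{S})$ to get uniform local mass bounds on the slice $\{i(\mu,L_{m})=1\}$, and continuity of $i(\cdot,L_{m})$ to see that no vague limit is zero; your detour through $T^{1}S$ is a clean repackaging rather than a necessary fix. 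Second, the entire weight of your argument rests on the continuity step $\widehat{\mu}_{n}\to\widehat{\mu}\Rightarrow i(\mu_{n},\gamma)\to i(\mu,\gamma)$, which is precisely the continuity of the intersection form (equivalently, the agreement of the weak$^{*}$ and marked-length-spectrum topologies) — the genuinely hard theorem of Bonahon that this paper also quotes separately. Your sandwiching-by-continuous-functions outline, including the one-sided-transversal convention when the limit charges $\gamma$, is the right idea, but as written it is a pointer to that theorem rather than a proof of it; if you are permitted to cite Bonahon's continuity of $i$, you should say so explicitly, since then the Banach--Alaoglu step plus surjectivity of $\mathcal{M}_{1}\to\mathrm{PCurr}(S)$ finishes the argument, and Hausdorffness of the target is not even needed for compactness of a continuous image.
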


Several geometric structures have been shown to be embedded into Curr$(S)$. The first such example was due to Bonahon \cite{Bon}, who showed Teichm{\"u}ller space could be embedded inside Curr$(S)$ via its Liouville current, namely $\sigma \mapsto L_{\sigma} $ with the property that for any closed curve $\gamma$, one has $l_{\sigma}([\gamma]) = i(L_{\sigma}, \gamma)$, so that the length of the geodesic representative of $\gamma$ with respect to the hyperbolic metric $\sigma$ coincides with the intersection number between the currents $L_{\sigma}$ and $\gamma$. As the space of measured laminations can be realized as geodesic currents, Bonahon recovers the Thurston compactification by way of projectivized geodesic currents.

Otal \cite{Otal} has shown the space of negatively curved Riemannian metrics on surfaces can be realized by geodesic currents. For any simple curve class $[\gamma]$, the length of the unique geodesic representative coincides with the intersection number of the corresponding geodesic current and the curve class $[\gamma]$, extending the work of Bonahon.

Duchin, Leininger and Rafi  \cite{DLR} have embedded the space of singular flat metrics arising from integrable holomorphic quadratic differentials. We summarize a few of results here, as we shall use them in what follows. Recall that to any holomorphic quadratic differential $q$, one can associate a singular flat metric $|q|$ via canonical coordinates.

The unit sphere $Q^{1}_{g} \subset Q_{g}$ consists of the holomorphic quadratic differentials with $L^{1}$-norm 1. Then the space Flat$(S)$ of unit-norm singular flat metrics may be identified by 
$$\text{Flat}(S) = Q^{1}_{g} \, / \, \mathbb{S}^{1},$$
where the action of $\mathbb{S}^{1}$ is given by multiplication by $e^{i \theta}$, for $0 \leq \theta \leq 2 \pi$. We require this quotient because if $q$ is a holomorphic quadratic differential, then $q$ and $e^{i \theta} q$ will have the same singular flat metric $|q|$. For $q \in Q_{g}^{1}$, consider the vertical foliation of $q$, that is $v_{q} = |\text{Re}(\sqrt{q})|$. Denote $v^{\theta}_{q} =  |\text{Re}(e^{i \theta} \sqrt{q})|$, the vertical foliation of $e^{i \theta} q$. Form the integral
$$ L_{q} := \frac{1}{2} \int_{0}^{\pi} v_{q}^{\theta} \, d \theta.$$

\begin{thm}[Duchin-Leininger-Rafi \cite{DLR}]\label{thm:DLR1}
The integral $L_{q}$ is a geodesic current such that to any simple closed curve $\gamma$,
$$l_{|q|}(\gamma) = i(L_{q}, \gamma),$$
where $|q|$ is the singular flat metric arising from the holomorphic quadratic differential $q$. Furthermore, the map which sends $|q| \in$ \emph{Flat(}$S\emph{)}$ to $L_{q} \in$ \emph{PCurr}$(S)$ is an embedding. 
\end{thm}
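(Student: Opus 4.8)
The plan is to realize $L_{q}$ as an average of the measured foliations $v_{q}^{\theta}$, each of which is already a geodesic current, and then to pair the resulting current against a simple closed curve by a Cauchy--Crofton computation in the flat metric $|q|$. For the first assertion, I would note that for each fixed $\theta$ the measured foliation $v_{q}^{\theta} = |\Re(e^{i\theta}\sqrt{q})|$ determines, via the passage to its associated measured lamination recalled above, a geodesic current with vanishing self-intersection. The assignment $\theta \mapsto v_{q}^{\theta}$ is continuous into $\text{Curr}(S)$ for the weak$^{*}$ topology, and since $[0,\pi]$ is compact the integral $L_{q} = \frac{1}{2}\int_{0}^{\pi} v_{q}^{\theta}\, d\theta$ converges to a Radon measure on $G(\tilde{S})$. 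Each $v_{q}^{\theta}$ is $\pi_{1}(S)$-equivariant and equivariance is preserved under integration, so $L_{q}$ is a geodesic current.

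For the length formula, I would use that the intersection pairing is continuous and bilinear (Bonahon), which lets one exchange the integral with the pairing against a fixed curve class, giving $i(L_{q},\gamma) = \frac{1}{2}\int_{0}^{\pi} i(v_{q}^{\theta},\gamma)\, d\theta$. Representing $\gamma$ by its $|q|$-geodesic, a concatenation of straight segments $\gamma_{j}$ of $|q|$-length $\ell_{j}$ making angle $\phi_{j}$ with the horizontal in natural coordinates $\zeta = x + iy$ (so that $q = d\zeta^{2}$ and $\sqrt{q} = dx + i\, dy$), one computes $v_{q}^{\theta} = |\cos\theta\, dx - \sin\theta\, dy|$, whence the transverse measure deposited on $\gamma_{j}$ is $\ell_{j}|\cos(\theta + \phi_{j})|$. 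Granting that this straight representative realizes $i(v_{q}^{\theta},\gamma)$ for almost every $\theta$, the formula follows from the Cauchy--Crofton identity: $i(L_{q},\gamma) = \frac{1}{2}\sum_{j}\ell_{j}\int_{0}^{\pi}|\cos(\theta+\phi_{j})|\, d\theta = \sum_{j}\ell_{j} = l_{|q|}(\gamma)$, because $|\cos|$ has period $\pi$ and integrates to $2$ over any interval of that length, eliminating the dependence on $\phi_{j}$.

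The main obstacle is precisely the tautness claim, namely that $i(v_{q}^{\theta},\gamma) = \sum_{j}\ell_{j}|\cos(\theta+\phi_{j})|$ rather than something smaller. One must show that no isotopy of $\gamma$ across the cone singularities of $|q|$ or along saddle connections decreases the total transverse measure, so that the $|q|$-geodesic is taut against each straight-line foliation $v_{q}^{\theta}$ and hence minimizes transverse measure in its isotopy class. The finitely many exceptional directions $\theta$ for which the geodesic runs along leaves of $v_{q}^{\theta}$ or passes through a zero form a set of measure zero and are harmless after integration in $\theta$. For simple closed curves this is cleanest, since the geodesic representative is embedded and, away from the cone points, the leaves of each $v_{q}^{\theta}$ are parallel straight lines.

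Finally, for the embedding statement I would treat injectivity and bicontinuity in turn. The length formula shows that $L_{q}$ records the full marked length spectrum of the flat metric $|q|$ on simple closed curves; since such a flat metric is determined by its length spectrum and a geodesic current is determined by its intersection numbers with closed curves (Otal), the assignment $|q| \mapsto L_{q}$ is injective. Projectivization does not collapse the image: every unit-norm differential satisfies $i(L_{q},L_{q}) = \frac{\pi}{2}\,\text{Area}(|q|) = \frac{\pi}{2}$ by Proposition~\ref{prop:area}, so $L_{q} = c\,L_{q'}$ forces $c = 1$ and hence $|q| = |q'|$. Continuity of $q \mapsto L_{q}$ follows from continuity of $\theta \mapsto v_{q}^{\theta}$ in $q$ together with dominated convergence, while continuity of the inverse follows because weak$^{*}$ convergence of the currents forces convergence of the marked length spectra and thus of the flat metrics themselves. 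Therefore the map is a homeomorphism onto its image.
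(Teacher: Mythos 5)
This statement is quoted from Duchin--Leininger--Rafi and the paper supplies no proof of it, so there is no internal argument to compare yours against; I can only assess your proposal on its own terms. The good news is that your mechanism for the length formula is exactly the one in \cite{DLR}: write the $|q|$-geodesic representative as a concatenation of straight segments in natural coordinates, compute the transverse measure of $v_q^{\theta}$ on a segment of length $\ell_j$ and angle $\phi_j$ as $\ell_j|\cos(\theta+\phi_j)|$, and integrate out $\theta$ using $\tfrac12\int_0^{\pi}|\cos(\theta+\phi)|\,d\theta=1$. That computation is correct, as is the observation that $i(L_q,L_q)=\tfrac{\pi}{2}$ for unit-norm $q$, which rules out collapsing under projectivization.

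There are, however, two genuine gaps. First, the step you yourself label ``the main obstacle'' --- that the $|q|$-geodesic realizes $i(v_q^{\theta},\gamma)$ for almost every $\theta$ --- is asserted rather than proved, and it is the actual content of the length formula. What is needed is the quasi-transversality criterion of Fathi--Laudenbach--Po\'enaru: a curve quasi-transverse to a measured foliation minimizes transverse measure in its homotopy class, and a $|q|$-geodesic, because it subtends angle at least $\pi$ on both sides at every cone point it meets, is quasi-transverse to $v_q^{\theta}$ for all but the finitely many $\theta$ in which some saddle connection of $\gamma$ is a leaf direction. Saying that the leaves are ``parallel straight lines away from the cone points'' does not address isotopies that sweep across a singularity, which is precisely where transverse measure could a priori be lost. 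Second, your injectivity argument rests on the claim that a flat metric is determined by its marked length spectrum. That is not a background fact one may wave at: it is the main rigidity theorem of the very paper this statement comes from, and its proof (via the support of the Liouville-type current) is substantially harder than everything else in your proposal. As written, your argument for the embedding is a reduction to that theorem, not a proof of it. If the rigidity theorem is admitted as a citation the rest of your outline goes through, but then the tautness step is still the piece that must actually be written down.
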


As the space of projectivized currents is compact, one may take the closure of the space Flat$(S)$, and it is shown \cite{DLR} that the limiting structures consist precisely of projectivized mixed structure. A \emph{mixed structure} may be defined as follows. Let $W$ be an incompressible subsurface of $S$. Then consider $Q_{W}$, the space of integrable holomorphic quadratic differentials on $W$, where we have chosen a complex structure on the smooth surface $W$ such that neighborhoods of boundary components of $\partial W$ are conformally punctured disks. To any such quadratic differential $q$, the corresponding singular flat metric on $W$ thus assigns length zero to any peripheral curve. Let $\lambda$ be a measured lamination supported on the complement $S \,  \backslash W$. The triple $(W, q, \lambda)$ is called a mixed structure on $S$. To any $\eta =(W, q, \lambda)$, one obtains a geodesic current $L_{\eta}$ given by the property
$$i(L_{\eta}, \gamma) = i(\lambda, \gamma) + \frac{1}{2} \int_{0}^{\pi/2} i(v_{q}^{\theta}, \gamma) \, d \theta,$$
where $\lambda$ is a closed curve on $S$. We remark that in the case $W = \emptyset$, then $\eta$ is a measured lamination on $S$, so that the space Mix$(S)$ properly contains ML$(S)$. The compactification of the singular flat metrics arising from unit-norm quadratic differentials is then given by the following theorem.

\begin{thm}[Duchin-Leininger-Rafi \cite{DLR}]\label{thm:DLR2}
The closure of \emph{Flat(}S\emph{)} in \emph{PCurr(}S\emph{)} is given by \emph{Flat(}S\emph{)} $\sqcup$  \emph{PMix(}S\emph{)}.
\end{thm}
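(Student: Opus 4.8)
The plan is to analyze the degeneration of the unit-norm flat metrics $|q_n|$ through the geometry of their thick--thin decompositions and to read off the limiting current from the surviving flat pieces together with the collapsing annuli. Since $\mathrm{PCurr}(S)$ is compact, after passing to a subsequence we may assume $[L_{q_n}] \to [\mu]$ for some nonzero current $\mu$; if $(X_n, |q_n|)$ stays in a compact part of the moduli space of unit-area flat surfaces then $q_n \to q$ and $[\mu] \in \mathrm{Flat}(S)$ by Theorem~\ref{thm:DLR1}, so I would assume the flat surfaces degenerate. Looking at the $|q_n|$-short curves, I would extract (up to a further subsequence, using finiteness of topological types) a fixed multicurve $\Gamma$ whose components are pinched and a complementary subsurface $W = S \setminus \mathrm{collar}(\Gamma)$ on which the flat metrics have bounded geometry. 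The components of $\Gamma$ are core curves of flat cylinders whose circumferences tend to $0$, and their heights $h_n$ record the transverse sizes.

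Next I would identify the limit on each piece. On the thick part $W$, with its pinched boundary collapsed to punctures, I would show the restricted differentials $q_n|_W$ subconverge to an integrable holomorphic quadratic differential $q \in Q_W$ whose flat metric assigns length zero to the peripheral curves, i.e.\ has a simple pole (cone angle $\pi$) at each puncture; convergence of the associated intersection numbers then gives $i(L_{q_n}, \gamma) \to \tfrac{1}{2}\int_0^{\pi/2} i(v_q^\theta, \gamma)\, d\theta$ for curves $\gamma$ supported in $W$. On the complement, a collapsing cylinder with core $\gamma$ and height $h_n$ contributes in the weak-$*$ limit a weighted copy of $\gamma$; assembling these weighted multicurves produces a measured lamination $\lambda$ supported on $S \setminus W$, with $i(L_{q_n}, \gamma) \to i(\lambda, \gamma)$ for curves meeting the thin part. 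Because the cylinders carry vanishing area while $W$ retains a definite fraction of the unit area, no global rescaling is forced and the two contributions survive simultaneously; the self-intersection $i(\mu,\mu)$, which detects the pure-lamination case (value $0$), here remains positive and proportional to the flat area on $W$.

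Combining the two regimes, the limiting current satisfies
$$
i(\mu, \gamma) = i(\lambda, \gamma) + \tfrac{1}{2}\int_0^{\pi/2} i(v_q^\theta, \gamma)\, d\theta
$$
for every closed curve $\gamma$, and since a current is determined by its intersection numbers with all closed curves, $\mu = L_\eta$ for the mixed structure $\eta = (W, q, \lambda)$, so $[\mu] \in \mathrm{PMix}(S)$. For the reverse inclusion I would run the construction backwards: given $\eta = (W, q, \lambda)$, approximate $\lambda$ by weighted multicurves, insert flat cylinders along them with the prescribed heights and circumferences shrinking to $0$, glue these to a fixed flat model of $(W, q)$, and normalize; the resulting differentials $q_n$ satisfy $[L_{q_n}] \to [L_\eta]$ by the same computation.

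I expect the thick-part analysis to be the main obstacle: one must show $q_n|_W$ converges to an \emph{integrable} differential with exactly the right divisor of simple poles (a Gauss--Bonnet and pole-order bookkeeping argument), while simultaneously arranging the interface so that the flat contribution over $W$ and the lamination contribution over the complement add with no cross term in the intersection pairing and the unit mass splits cleanly between the two parts in the projective limit.
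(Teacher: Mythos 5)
First, a point of comparison: the paper does not prove this statement at all --- it is quoted as Theorem~\ref{thm:DLR2} directly from Duchin--Leininger--Rafi \cite{DLR} and used as a black box. So there is no internal proof to measure your proposal against; it has to stand on its own as a sketch of the argument in \cite{DLR}.

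Your outline captures the right global strategy (compactness of $\mathrm{PCurr}(S)$, a thick--thin splitting of the degenerating unit-area flat surfaces, geometric convergence on the thick part to an integrable differential with simple poles at the punctures, and a gluing construction for density), but the forward inclusion has a genuine gap as written. You assume that, after a subsequence, the degeneration is governed by a \emph{fixed} multicurve $\Gamma$ whose components are cores of flat cylinders, and you read off the laminar part of the limit as a weighted multicurve from the cylinder heights. Neither assumption holds in general. The laminar part of a limiting mixed structure can be an arbitrary (e.g.\ minimal, irrational) measured lamination: take unit-area differentials $q_t$ whose vertical and horizontal foliations are $e^{t}\lambda$ and $e^{-t}\mu$ for a filling pair with $\lambda$ irrational; the rescaled limit of $L_{|q_t|}$ is $\lambda$ itself, the $|q_t|$-short curves change isotopy class with $t$ (they are curves fellow-travelling $\lambda$, not a fixed multicurve), and they are typically concatenations of saddle connections rather than cylinder cores, so there is no ``height'' to record. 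Your mechanism can therefore only produce mixed structures whose laminar part is supported on a multicurve, which is a proper, non-closed subset of $\mathrm{Mix}(S)$, and the inclusion $\overline{\mathrm{Flat}(S)}\subset \mathrm{Flat}(S)\sqcup\mathrm{PMix}(S)$ is not established. The repair --- which is also how the present paper handles the analogous Theorem~\ref{thm:compact} --- is to define the decomposition intrinsically from the limiting current $\mu$: the flat subsurface $W$ is where $\mu$ retains area (positive self-intersection), and on the complement one shows the restricted current has vanishing self-intersection and invokes Bonahon's characterization (Proposition~\ref{ml}) to conclude it is a measured lamination, rather than trying to see the lamination as a limit of cylinder cores. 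Your density direction is essentially sound, modulo the care needed to realize the glued object as the flat metric of an honest holomorphic quadratic differential.
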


\subsection{Anti-de Sitter space}
We are primarily concerned with the anti-de Sitter space of signature $(2,1)$, which is given by the quasi-sphere $x_{1}^{2} + x_{2}^{2} +x_{3}^{2} - x_{4}^{2} = -1$ inside $\mathbb{R}^{(2,2)}$ with the metric  $ds^{2} = dx_{1}^{2} + dx_{2}^{2} - dx_{3}^{2} - dx_{4}^{2}$. More precisely, 
$$\widehat{AdS^{3}} = \{ x \in \mathbb{R}^{(2,2)} :  \langle x,x \rangle = -1\}.$$

As the manifold is pseudo-Riemannian, tangent vectors $v \in T \widehat{AdS^{3}}$   come in one of the following three types:

\begin{enumerate}
\item[] Timelike if $\langle v,v \rangle <0$
\item[] Lightlike if $\langle v,v \rangle = 0$
\item[] Spacelike if $ \langle v,v \rangle >0.$
\end{enumerate}

The anti-de Sitter space $AdS^{3}$ is given by the projectivization of $\widehat{AdS^{3}}$, its double cover. The isometry group of $AdS^{3}$ is $\text{PSL}(2, \mathbb{R}) \times \text{PSL}(2, \mathbb{R})$.

A smooth surface $S \hookrightarrow \text{AdS}^{3}$ is said to be \emph{spacelike} if the restriction to $S$ of the metric on AdS$^{3}$ is a Riemannian metric. This is equivalent to the condition that every tangent vector $v \in TS$ is spacelike. 

Consider the Levi-Civita connections on $S$ and AdS$^{3}$ given by $\nabla$ and $\nabla^{S}$, respectively. For a unit normal field $N$ on $S$, the second fundamental form is given by
$$\nabla_{\tilde{v}}{\tilde{w}} = \nabla^{S}_{v} w + \text{II}(v,w)N.$$ 
The shape operator is the $(1,1)$ tensor given by $B(v) = \nabla_{v}N$. It satisfies the property II$(v,w) = \langle B(v), w \rangle$. The maximal surfaces then are governed by the condition that tr$B$=0.

An AdS$^{3}$ manifold is a Lorentzian manifold locally isometric to AdS$^{3}$. Among these manifolds, we restrict our attention to those which are``globally hyperbolic maximal compact", henceforth written as ``GHMC". These manifolds are defined by those satisfying the following three properties:

\begin{enumerate}
\item they contain a closed orientable space-like surface $S$
\item each complete time-like geodesic intersects $S$ precisely once
\item maximal with respect to isometric embeddings.
\end{enumerate}

It follows that GHMC AdS$^{3}$ manifolds must be homeomorphic to $S \times \mathbb{R}$. Mess \cite{Mess} showed that the genus of $S$ must be at least 2 and that GHMC structures are parametrized by two copies of Teichm{\"u}ller space. Barbot, B{\'e}guin and Zeghib \cite {BBZ} showed that for each such GHMC manifold, there exists a unique embedded spacelike maximal surface $\Sigma$. In fact, there is a parametrization of all such GHMC manifolds by the unique embedded maximal surface it contains along with its second fundamental form. 

\begin{thm}[Krasnov-Schlenker \cite{KS07}]
Let $M$ be a GHMC AdS$^{3}$-manifold and let $\Sigma$ be its unique embedded spacelike maximal surface. The second fundamental form of $\Sigma$ is given by the real part of a holomorphic quadratic differential on the underlying complex structure of the maximal surface. Furthermore, there is a homeomorphism between the space of all GHMC AdS$^{3}$-structures and the cotangent bundle of Teichm{\"u}ller space, which assigns to a GHMC AdS$^{3}$-structure, the conformal class of its unique maximal surface and the holomorphic quadratic differential for which its real part is the second fundamental form.
\end{thm}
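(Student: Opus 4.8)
**The plan is to establish the theorem (Krasnov--Schlenker) by relating the geometry of the maximal surface to harmonic map data.**

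The plan is to exploit the embedding of a GHMC $\mathrm{AdS}^3$-manifold into $\mathrm{PSL}(2,\mathbb{R}) \times \mathrm{PSL}(2,\mathbb{R})$ and the fact, due to Mess, that such structures are parametrized by a pair of Teichm\"uller points $(\rho_1, \rho_2)$. First I would set up the Gauss--Codazzi equations for a spacelike surface $\Sigma$ in $\mathrm{AdS}^3$. Since $\mathrm{AdS}^3$ has constant sectional curvature $-1$ (for spacelike planes), the Gauss equation reads $K_I = -1 - \det B$, where $K_I$ is the intrinsic curvature of the induced metric $I$ and $B$ is the shape operator, while the Codazzi equation states that $B$ is a Codazzi tensor with respect to the Levi-Civita connection of $I$, i.e. $d^{\nabla} B = 0$. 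The maximality condition $\operatorname{tr} B = 0$ then forces $B$ to be trace-free and symmetric with respect to $I$.

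The key step is to interpret the trace-free, symmetric, Codazzi tensor $B$ complex-analytically. Endow $\Sigma$ with the complex structure $J$ induced by the conformal class of the induced metric $I$; writing $I = \sigma |dz|^2$ in a local conformal coordinate, a trace-free symmetric bilinear form decomposes by type as $\mathrm{II} = \Phi\, dz^2 + \overline{\Phi}\, d\bar z^2$ for some complex-valued function $\Phi$. The content of this step is that the Codazzi equation $d^{\nabla} B = 0$ is precisely equivalent to the Cauchy--Riemann equation $\partial_{\bar z} \Phi = 0$, so that $\Phi\, dz^2$ is a \emph{holomorphic} quadratic differential on $(\Sigma, J)$; its real part recovers the second fundamental form $\mathrm{II} = \operatorname{Re}(\Phi\, dz^2)$. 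This is the heart of the first assertion. The maximality is essential here: only a trace-free $\mathrm{II}$ yields a quadratic differential with no $dz\,d\bar z$ component, and only then does Codazzi translate into holomorphicity rather than a more complicated first-order system.

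For the second assertion, the homeomorphism with $T^*\mathcal{T}(S)$, I would construct the map sending an $\mathrm{AdS}^3$-structure to the pair $(J, \Phi\,dz^2)$, where $J$ is the conformal class of the induced metric on $\Sigma$ and $\Phi\,dz^2$ is the holomorphic quadratic differential above; since $H^0(X, K_X^2)$ is the fiber of $T^*\mathcal{T}(S)$ over $X = (\Sigma, J)$, this lands in the cotangent bundle. Surjectivity and injectivity follow from the existence and uniqueness of the maximal surface: given $(J, \Phi)$, the Gauss--Codazzi system becomes a single scalar PDE for the conformal factor $\sigma$, namely a Bochner-type equation $\Delta \log \sigma = -1 + |\Phi|^2/\sigma^2$ (the integrability condition), whose unique solution determines the embedding data up to the $\mathrm{PSL}(2,\mathbb{R}) \times \mathrm{PSL}(2,\mathbb{R})$ isometries, and by the Barbot--B\'eguin--Zeghib and Mess results this reconstructs a unique GHMC structure. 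I would verify that this correspondence is continuous with continuous inverse by appealing to smooth dependence of solutions of the prescribed-curvature equation on the data.

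\textbf{The main obstacle.} The hard part will be the solvability and uniqueness of the Gauss--Codazzi integrability equation for $\sigma$ given arbitrary $(J, \Phi) \in T^*\mathcal{T}(S)$, together with showing the resulting surface is genuinely embedded, spacelike, and globally hyperbolic maximal. The analytic existence/uniqueness is a quasilinear elliptic problem on a compact surface that is delicate because the sign structure of the nonlinearity differs from the standard harmonic-map (minimal surface) case, and the global geometric properties (embeddedness and the GHMC conditions) do not follow from the local PDE alone but require the synthetic Lorentzian-geometry input of Mess and of Barbot--B\'eguin--Zeghib. Reconciling the PDE-theoretic parametrization with their geometric parametrization, so that the constructed map is a genuine homeomorphism onto all of $T^*\mathcal{T}(S)$, is where the real work lies.
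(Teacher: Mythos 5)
This statement is not proved in the paper: it is quoted verbatim as a known result of Krasnov--Schlenker \cite{KS07}, so there is no internal argument to compare yours against. Taken on its own terms, your outline follows the standard proof from that reference: the Gauss equation $K_I=-1-\det B$ for a spacelike surface with timelike normal, the Codazzi equation making the trace-free second fundamental form the real part of a holomorphic quadratic differential (the classical Hopf-type observation), and the reduction of the inverse problem to a single scalar curvature equation for the conformal factor, with the global GHMC structure recovered via Mess \cite{Mess} and Barbot--B\'eguin--Zeghib \cite{BBZ}. Two small imprecisions: your integrability equation $\Delta\log\sigma=-1+|\Phi|^{2}/\sigma^{2}$ is missing the curvature normalization (the correct form is the one the paper records, $\Delta_{\sigma}u=e^{2u}-e^{-2u}|\Phi|^{2}/\sigma^{2}-1$ for the induced metric $e^{2u}\sigma$, which is solved by $u=\tfrac{1}{2}\log\mathcal{H}$); and the claim that holomorphicity of the $(2,0)$-part requires trace-freeness is slightly too strong --- constant mean curvature already suffices, though trace-freeness is of course what kills the $dz\,d\overline{z}$ component. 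Neither affects the correctness of the outline, and you rightly identify that the substantive work lies in the solvability of the scalar equation and in the synthetic Lorentzian input needed for embeddedness and the GHMC conditions.
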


The induced metric of the maximal surface is given by $e^{2u} \sigma$, where $\sigma$ is the hyperbolic metric and $u$ satisfies the following PDE:

$$\Delta_{\sigma} u = e^{2u} - e^{-2u} |\Phi| -1.$$

But the solution to this PDE is $u= \frac{1}{2} \log \mathcal{H}$ for which the PDE becomes the usual Bochner equation. Here $\mathcal{H}$ is the holomorphic energy density arising from harmonic maps between closed hyperbolic surfaces. Hence, the induced metric of the maximal surface is given by $\mathcal{H} \sigma$. As a corollary of our main result, we will describe the limiting length spectrum of any sequence of induced metrics of the maximal surface.

\section{Minimal lagrangians}

A diffeomorphism $\phi: (S, g_{1}) \to (S, g_{2})$ is said to be minimal if its graph $\Sigma \subset (S \times S, g_{1} \oplus g_{2})$ with the induced metric is a minimal surface. Observe that if $\phi$ is minimal then so is $\phi^{-1}$. If $\omega_{1}$ and $ \omega_{2}$ denote the area forms of $g_{1}, g_{2}$ respectively, and if in addition $\Sigma \subset (S \times S, \omega_{1} - \omega_{2})$ is a Lagrangian submanifold, then we say $\phi$ is a minimal lagrangian.

\begin{thm}[Schoen \cite{Schoen}]\label{thm:Schoen}
If $g_{1}$ and $g_{2}$ are hyperbolic metrics on $S$, then there is a unique minimal lagrangian map $\phi: (S, g_{1}) \to (S, g_{2})$ in the homotopy class of the identity.
\end{thm}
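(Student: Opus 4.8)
The plan is to find the minimal lagrangian as the graph of a map associated to a harmonic diffeomorphism, by exploiting the tight relationship between minimal lagrangians and harmonic maps between hyperbolic surfaces. First I would recall the variational setup: a map $\phi\colon(S,g_1)\to(S,g_2)$ is minimal lagrangian precisely when its graph $\Sigma\subset(S\times S, g_1\oplus g_2)$ is both minimal and Lagrangian for the symplectic form $\omega_1-\omega_2$. The Lagrangian condition forces $\phi^*\omega_2=\omega_1$, i.e.\ $\phi$ is area-preserving, and a theorem of Schoen--Labourie type says that for the graph the mean curvature vector can be computed in terms of the Hopf differential of the projections. The key structural fact I would isolate is that there is a conformal structure $J$ on $S$ — \emph{a priori} unknown — and harmonic diffeomorphisms $w_i\colon(S,J)\to(S,g_i)$ in the homotopy class of the identity whose Hopf differentials satisfy $\Phi_1=-\Phi_2$; the graph of $\phi=w_2\circ w_1^{-1}$ is then minimal lagrangian. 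So the problem reduces to producing the correct conformal structure $J$.

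The main steps, in order, are as follows. \emph{Existence.} For each fixed conformal structure $J$ on $S$, Eells--Sampson together with Schoen--Yau/Sampson (cited in the excerpt) provide unique harmonic diffeomorphisms $w_i\colon(S,J)\to(S,g_i)$ homotopic to the identity, with Hopf differentials $\Phi_i(J)\in H^0(J,K_J^2)$. I would then consider the holomorphic quadratic differential $\Phi_1(J)+\Phi_2(J)$ as a section of a bundle over Teichm\"uller space $\mathcal T(S)$ and seek a zero, i.e.\ a $J$ with $\Phi_1(J)=-\Phi_2(J)$. One natural route is a degree or index argument: the assignment $J\mapsto\Phi_1(J)+\Phi_2(J)$ defines a map $\mathcal T(S)\to Q_g$, and by analyzing its behavior as $J$ degenerates toward the boundary of $\mathcal T(S)$ (a properness estimate, using that the energies $\mathscr E(w_i)$ blow up) one shows the map is proper and of the appropriate degree, hence surjective onto a neighborhood of $0$ and in particular $0$ is attained. \emph{Uniqueness.} Here I would appeal to a maximum-principle / convexity argument: the total energy $J\mapsto\mathscr E(w_1)+\mathscr E(w_2)$ is a proper function on $\mathcal T(S)$ whose critical points are exactly the minimal lagrangian conformal structures (this is Labourie's energy-functional mechanism specialized to $\mathrm{PSL}(2,\mathbb R)\times\mathrm{PSL}(2,\mathbb R)$), and one shows this functional is strictly convex (or has a unique critical point) along Weil--Petersson or Teichm\"uller geodesics, forcing the critical point to be unique.

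The hardest part will be the uniqueness step, specifically establishing the convexity (or equivalent non-degeneracy) of the energy functional that guarantees a single critical point rather than merely existence of one. The existence half is comparatively soft once the boundary/properness estimates are in place, but controlling the second variation of $\mathscr E(w_1)+\mathscr E(w_2)$ — or equivalently showing the graph admits no nontrivial Jacobi fields — requires a careful Bochner-type computation using the formulas in Section~2, together with the fact that both target curvatures are $-1$ so that the stability operator has a definite sign. An alternative I would keep in reserve for uniqueness is a direct geometric argument: two distinct minimal lagrangians homotopic to the identity would produce two minimal graphs in the same homotopy class inside $(S\times S,g_1\oplus g_2)$, and a maximum principle applied to the difference of their graphing functions (comparing mean curvatures) should force them to coincide. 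I expect the clean write-up to combine the harmonic-map existence theory for the existence half with the strict convexity of total energy for uniqueness.
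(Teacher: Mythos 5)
The paper does not prove this statement: it is quoted verbatim from Schoen \cite{Schoen} and used as a black box (the only related argument in the paper, Theorem \ref{thm:homeo}, takes Schoen's theorem as input). So there is no in-paper proof to compare against, and your proposal has to be judged against Schoen's actual argument. Your reduction is the right one: seek a conformal structure $J$ with harmonic diffeomorphisms $w_i:(S,J)\to(S,g_i)$ satisfying $\Phi_1(J)=-\Phi_2(J)$, so that the inclusion of the graph of $w_2\circ w_1^{-1}$ is conformal and harmonic (hence minimal) and area-preserving (hence Lagrangian, using Sampson's $|\Phi_1|=|\Phi_2|\Rightarrow e_1=e_2$ together with $\mathcal{H}\mathcal{L}=|\Phi|^2/\sigma^2$ to get $\mathcal{J}_1=\mathcal{J}_2$). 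For existence, the route that actually works is the one you mention only in passing under ``uniqueness'': minimize the proper functional $J\mapsto\mathscr{E}(w_1)+\mathscr{E}(w_2)$ on $\mathcal{T}(S)$ and use Wolf's formula $dE_\rho[\mu]=-4\Re\int\Phi\mu$ to identify critical points with $\Phi_1+\Phi_2=0$. Your primary existence route (a degree argument for the section $J\mapsto\Phi_1(J)+\Phi_2(J)$ of a bundle over $\mathcal{T}(S)$) is not needed and is harder to set up rigorously than the direct minimization.

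The genuine gap is in uniqueness. You propose to show that $\mathscr{E}(w_1)+\mathscr{E}(w_2)$ is strictly convex along Weil--Petersson or Teichm\"uller geodesics. No such global convexity statement is available: even for a single target, convexity of the energy functional along Teichm\"uller geodesics is false in the form you would need, and convexity along Weil--Petersson geodesics is a delicate result not at your disposal here. What Schoen (and later Labourie, in the rank-two setting) actually establishes is local: a second-variation computation showing that \emph{every} critical point of the total energy is a nondegenerate local minimum; combined with properness and the fact that $\mathcal{T}(S)$ is a cell, a mountain-pass (or Morse-theoretic) argument then forces the critical point to be unique. That is the step you should substitute for global convexity. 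Your fallback -- a maximum principle applied to ``the difference of the graphing functions'' of two minimal graphs in $S\times S$ -- also does not go through: the graphs are codimension-two submanifolds of a four-manifold, there is no scalar graphing function whose difference satisfies an elliptic equation, and distinct minimal surfaces in the same homotopy class of a nonpositively curved four-manifold need not intersect, so no touching-point argument is available. Finally, for uniqueness you must also verify the converse of your structural fact -- that an arbitrary minimal Lagrangian $\phi$ homotopic to the identity induces a conformal structure on its graph whose projections are harmonic with opposite Hopf differentials, hence a critical point of the total energy -- otherwise uniqueness of the critical point only rules out duplicates within your construction, not all competitors.
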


Let $\Sigma$ denote the graph minimal surface with the induced metric. Then its inclusion into the product $i: \Sigma \to (S \times S, g_{1} \oplus g_{2})$ is a conformal harmonic map. A conformal map to a product space is a product of harmonic maps whose Hopf differentials sum to zero. Hence to any pair of points in Teichm{\"u}ller space, one may record the data of both the conformal structure of the minimal surface along with one of the Hopf differentials. The harmonic maps parametrization of Teichm{\"u}ller space which we record below ensures the map is bijective. Sampson proved injectivity and continuity of the map, and Wolf showed the map was surjective and admits a continuous inverse.

\begin{thm}[Sampson \cite{Sam78}, Wolf \cite{W89}]\label{thm:wolfparam}
Let $(S, \sigma)$ be a fixed hyperbolic surface. To any point in Teichm{\"u}ller space $[(S, \rho)]$, select the representative $(S, \rho)$ so that the identity map $id: (S, \sigma) \to (S, \rho)$ is the unique harmonic map in its homotopy class and denote its Hopf differential $\Phi(\rho)$. Then this map
$$ \Phi: \mathcal{T}(S) \to H^{0}(X, K_{X}^{2})$$
is a homeomorphism, where $X$ is the complex structure associated to $(S, \sigma)$.
\end{thm}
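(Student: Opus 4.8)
The plan is to reduce the bijectivity to the unique solvability of a single semilinear elliptic equation on the fixed Riemann surface $X = (S,\sigma)$, and then to reconstruct the target metric from its solution. Since both source and target are hyperbolic, $K(\sigma) = K(\rho) = -1$, so for the harmonic map $w = \mathrm{id}\colon (S,\sigma) \to (S,\rho)$ the Bochner formula collapses to $\Delta \log \mathcal{H} = 2\mathcal{H} - 2\mathcal{L} - 2$. Using $\mathcal{H}\mathcal{L} = |\Phi|^{2}/\sigma^{2}$ to eliminate $\mathcal{L}$ and writing $\mathcal{H} = e^{2u}$, this becomes the scalar equation
$$\Delta u = e^{2u} - e^{-2u}\frac{|\Phi|^{2}}{\sigma^{2}} - 1. \qquad (\star)$$
Thus the entire harmonic-map package is governed by $(\star)$, whose only input besides the fixed datum $\sigma$ is the holomorphic quadratic differential $\Phi$.

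For injectivity and continuity (the part I attribute to Sampson) I would argue as follows. The right-hand side of $(\star)$ is strictly increasing in $u$, so the maximum principle yields uniqueness: if $u_{1},u_{2}$ both solve $(\star)$ for the same $\Phi$, then comparing them at an extremum of $u_{1}-u_{2}$ forces $u_{1}\equiv u_{2}$. Consequently, two target metrics $\rho_{1},\rho_{2}$ with $\Phi(\rho_{1})=\Phi(\rho_{2})$ share the same $\mathcal{H}$, hence the same $\mathcal{L}$, hence identical pullback tensors $w_{1}^{*}\rho_{1}=w_{2}^{*}\rho_{2}=\Phi\,dz^{2}+\sigma(\mathcal{H}+\mathcal{L})\,dz\,d\bar z+\bar\Phi\,d\bar z^{2}$ on $X$. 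Since each harmonic map is a diffeomorphism, $w_{2}\circ w_{1}^{-1}$ is then an isometry $(S,\rho_{1})\to(S,\rho_{2})$ homotopic to the identity, so $[\rho_{1}]=[\rho_{2}]$ in $\mathcal{T}(S)$. Continuity of $\Phi$ follows from continuous dependence of the (unique) harmonic map on the target metric, via the implicit function theorem and elliptic regularity.

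For surjectivity and the continuous inverse (the part I attribute to Wolf) I would run the construction backwards. Given any $\Phi\in H^{0}(X,K_{X}^{2})$, I first solve $(\star)$: existence follows either from the method of sub- and supersolutions (large constants $\pm C$ bracket a solution) or by minimizing the strictly convex, coercive functional whose Euler--Lagrange equation is $(\star)$, and uniqueness is the maximum-principle argument above. From the solution $u$ I set $\mathcal{H}=e^{2u}$, $\mathcal{L}=e^{-2u}|\Phi|^{2}/\sigma^{2}$, and form $g_{\Phi}=\Phi\,dz^{2}+\sigma(\mathcal{H}+\mathcal{L})\,dz\,d\bar z+\bar\Phi\,d\bar z^{2}$. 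One checks that $g_{\Phi}$ is positive definite, because its $\sigma$-determinant equals $\sigma^{2}(\mathcal{H}-\mathcal{L})^{2}=\sigma^{2}\mathcal{J}^{2}$ and $(\star)$ forces $\mathcal{H}>\mathcal{L}$; that $g_{\Phi}$ has Gaussian curvature $-1$, which is precisely the content of $(\star)$ through the Gauss equation; and that $\mathrm{id}\colon(S,\sigma)\to(S,g_{\Phi})$ is harmonic with Hopf differential $\Phi$. This exhibits $[(S,g_{\Phi})]\in\mathcal{T}(S)$ as a preimage of $\Phi$, and the assignment $\Phi\mapsto[(S,g_{\Phi})]$ is the desired inverse; its continuity comes from continuous dependence of the solution of $(\star)$ on the parameter $\Phi$.

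I expect the main obstacle to lie in the analytic heart of the reconstruction: producing a solution of $(\star)$ together with the pointwise nondegeneracy $\mathcal{H}>\mathcal{L}$ that makes $g_{\Phi}$ a genuine metric, and then verifying that this metric is honestly hyperbolic, i.e.\ closing the loop between the analytic PDE and the geometric curvature condition. The uniform a priori estimates on $u$ in terms of $\Phi$ underlying continuity and properness of the inverse are the other delicate point; once these are in hand one may alternatively conclude by a dimension count, since $\dim_{\mathbb{R}}\mathcal{T}(S)=6g-6=\dim_{\mathbb{R}}H^{0}(X,K_{X}^{2})$, so an injective continuous proper map between them is automatically a homeomorphism by invariance of domain.
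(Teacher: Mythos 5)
The paper states this theorem as an imported result of Sampson and Wolf and supplies no proof of its own (it only remarks that Sampson proved injectivity and continuity while Wolf proved surjectivity and continuity of the inverse), and your proposal is a faithful reconstruction of the standard argument from those references: reduction to the scalar equation $\Delta u = e^{2u} - e^{-2u}|\Phi|^{2}/\sigma^{2} - 1$, uniqueness from monotonicity of the right-hand side in $u$, existence by sub- and supersolutions, reconstruction of the hyperbolic target metric, and properness plus invariance of domain. The one step you leave open --- the strict inequality $\mathcal{H} > \mathcal{L}$ needed for nondegeneracy of $g_{\Phi}$ --- is correctly flagged and is closed in Wolf's paper by a maximum-principle argument applied to $\log(\mathcal{H}/\mathcal{L})$, which satisfies $\Delta \log(\mathcal{H}/\mathcal{L}) = 4(\mathcal{H}-\mathcal{L})$ and tends to $+\infty$ at the zeros of $\Phi$; this is (twice) the auxiliary function $\mathcal{G}$ of Proposition \ref{prop:yair}.
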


\begin{thm}\label{thm:homeo}
The map 
\begin{align*}
\Psi: \mathcal{T}(S) \times \mathcal{T}(S) \to \ Q_{g}\\
(X_{1}, X_{2}) \mapsto \emph{Hopf} (u_{1}) \\
\end{align*}
which assigns to any pair of points $X_{1}, X_{2}$ in Teichm{\"u}ller space, the conformal structure of the unique graph minimal surface $\Sigma \subset X_{1} \times X_{2}$ along with the Hopf differential Hopf$(u_{1})$ of the projection $u_{1}: \Sigma \to X_{1}$ is a homeomorphism.
\end{thm}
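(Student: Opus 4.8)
The plan is to construct an explicit inverse of $\Psi$ from Wolf's harmonic-map parametrization and to close the loop using the uniqueness half of Schoen's theorem; continuity of both directions then upgrades the bijection to a homeomorphism. I would first record that $\Psi$ is well-defined: by Theorem~\ref{thm:Schoen} a pair $(X_1, X_2)$ determines a unique minimal lagrangian graph $\Sigma$, whose inclusion into $X_1 \times X_2$ is conformal and harmonic, so the induced conformal structure is a well-defined point $X \in \mathcal{T}(S)$, the two projections $u_1 : X \to X_1$ and $u_2 : X \to X_2$ are harmonic diffeomorphisms homotopic to the identity, and conformality of the inclusion forces $\mathrm{Hopf}(u_1) + \mathrm{Hopf}(u_2) = 0$. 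Note also that both $\mathcal{T}(S)\times\mathcal{T}(S)$ and $Q_g$ have real dimension $12g-12$.

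Next I would build the candidate inverse $\Xi$. Given $(X, \Phi) \in Q_g$, let $\sigma$ be the hyperbolic metric in the conformal class $X$; by Theorem~\ref{thm:wolfparam} there is a unique $X_1$ carrying a harmonic diffeomorphism $u_1:(X,\sigma)\to X_1$ homotopic to the identity with $\mathrm{Hopf}(u_1)=\Phi$, and a unique $X_2$ carrying a harmonic diffeomorphism $u_2$ with $\mathrm{Hopf}(u_2)=-\Phi$; set $\Xi(X,\Phi)=(X_1,X_2)$.

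The central step is to check that $\Xi$ really inverts $\Psi$. I would consider $z\mapsto(u_1(z),u_2(z))$ from $X$ into $X_1\times X_2$: it is harmonic since each factor is, and conformal since the factors' Hopf differentials cancel, hence it is a minimal immersion, and because $u_1$ is a diffeomorphism its image is the graph of $\phi=u_2\circ u_1^{-1}$. The key point will be that this minimal graph is automatically \emph{lagrangian}. To see this I would use that $|\mathrm{Hopf}(u_i)|^2/\sigma^2=\mathcal{H}_i\mathcal{L}_i$, so that $|\Phi|=|-\Phi|$ makes $\mathcal{H}_1$ and $\mathcal{H}_2$ solutions of the same Bochner equation $\Delta\log\mathcal{H}=2\mathcal{H}-2|\Phi|^2/(\sigma^2\mathcal{H})-2$; since the nonlinearity is monotone in $\mathcal{H}$, the maximum principle gives a unique positive solution, forcing $\mathcal{H}_1=\mathcal{H}_2$ and hence $\mathcal{J}_1=\mathcal{H}_1-\mathcal{L}_1=\mathcal{H}_2-\mathcal{L}_2=\mathcal{J}_2$. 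Equal Jacobians say exactly that $\phi$ is area-preserving, i.e.\ $\phi^*\omega_2=\omega_1$, which is the lagrangian condition. By the uniqueness in Theorem~\ref{thm:Schoen}, this graph is therefore \emph{the} minimal lagrangian of $(X_1,X_2)$, with conformal structure $X$ and $\mathrm{Hopf}(u_1)=\Phi$, giving $\Psi\circ\Xi=\mathrm{id}$. For the reverse composition, the projections of the minimal lagrangian graph over $(X_1,X_2)$ are harmonic maps from $X$ with Hopf differentials $\pm\Phi$, so the uniqueness in Theorem~\ref{thm:wolfparam} recovers $X_1$ and $X_2$ and yields $\Xi\circ\Psi=\mathrm{id}$.

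Finally I would address continuity. Continuity of $\Xi$ is immediate, being assembled from the homeomorphism of Theorem~\ref{thm:wolfparam} applied to $(X,\Phi)$ and $(X,-\Phi)$. For $\Psi$, I would invoke continuous dependence of the minimal lagrangian on its endpoints: as $(X_1,X_2)$ vary, the coupled harmonic-map system solved by the minimal surface varies continuously in $C^\infty$ by elliptic regularity together with the implicit function theorem, so both the induced conformal structure and the Hopf differential depend continuously on $(X_1,X_2)$. A continuous bijection with continuous inverse is a homeomorphism. I expect the main obstacle to be the automatic lagrangian property, namely the identity $\mathcal{H}_1=\mathcal{H}_2$, because surjectivity of $\Psi$ depends on being able to apply Schoen's uniqueness to the surface produced by $\Xi$; the continuity of $\Psi$ is routine in principle but still requires care in controlling the conformal type of the minimal surface uniformly in the endpoints.
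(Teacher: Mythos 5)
Your proposal is correct and follows essentially the same route as the paper: well-definedness via Schoen's theorem, and the inverse assembled from Wolf's harmonic-maps parametrization (Theorem~\ref{thm:wolfparam}) applied to $\Phi$ and $-\Phi$, with continuity in both directions. The one place you go beyond the paper's write-up is in explicitly checking that the graph produced by the inverse is lagrangian (via $\mathcal{H}_{1}=\mathcal{H}_{2}$ and hence $\mathcal{J}_{1}=\mathcal{J}_{2}$, i.e.\ $\phi$ is area-preserving) before invoking Schoen's uniqueness; the paper leaves this step implicit in its surjectivity argument, so your addition is a genuine and worthwhile detail rather than a different approach.
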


\begin{proof}
The discussion above ensures the map $\Psi$ is well-defined. As the construction of the minimal surface varies continuously with the choice of $X_{1}, X_{2}$, it is clear the map is continuous. To see injectivity of $\Psi$, suppose that $\Psi(X_{1}, X_{2}) = \Psi(Y_{1}, Y_{2}) = (\Sigma, \Phi)$. Then the harmonic maps $u_{1}: \Sigma \to X_{1}$ and $v_{1}: \Sigma \to Y_{1}$ have the same Hopf differentials, so by the harmonic maps parameterization, $X_{1} = Y_{1}$. The same argument forces $X_{2} = Y_{2}$. Surjectivity follows similarly, as to any choice of Riemann surface $\Sigma = (S, J)$ and holomorphic quadratic differential $\Phi$, there exists a unique hyperbolic metric $X_{1}= (S, g_{1})$, so that the identity map $ id: \Sigma \to X_{1}$ is a harmonic map with Hopf differential $\Phi$. Similarly one can find an $X_{2}$ arising from the Hopf differential $-\Phi$. Hence $\Psi(X_{1}, X_{2}) = (\Sigma, \Phi)$ which gives surjectivity. The inverse is clearly continuous as given the data of a Riemann surface and a holomorphic quadratic differential, the pair of hyperbolic metrics may be written explicitly and vary continuously, which suffices for the proof.

%It remains to show properness of the map $\Psi$, which follows by energy considerations. suppose $(X_{1n}, X_{2n})$ were to leave all compact sets in $\mathcal{T}(S)$, then up to some subsequence and without loss of generality, suppose $\{X_{1n}\}$ left all compact sets in $\mathcal{T}(S)$. If the image of $(X_{1n}, X_{2n})$ under $\Psi$ remained in a compact set in $Q_{g}$, then the image is contained in the set consisting of a Riemann surfaces in a compact set in Teichm{\"u}ller space along with holomorphic quadratic differentials with $L^{1}$-norms uniformly bounded by some constant $M$. But the sequence of harmonic maps $u_{1}: \Sigma_{n} \to X_{1n}$ would have total energy tending towards infinity, contradicting the boundedness of the $L^{1}$-norm of the Hopf differentials.
\end{proof}

\section{Embedding of the induced metrics}

In this section we study the induced metric on the graph minimal surfaces. Recall that given a pair $(X_{1}, X_{2})$ of hyperbolic surfaces, Theorem \ref{thm:Schoen} produces a graph minimal surface $\Sigma$ in the 4-manifold $(S \times S, g_{1} \oplus g_{2})$, where $X_{i} =(S, g_{i})$. If $m: (S, g_{1}) \to (S, g_{2})$ is the unique minimal map isotopic to the identity, then $id: (S, g_{1}) \to (S, m^{*}g_{2})$ is the unique minimal map isotopic to the identity, which in this case is the identity map. The graph $\Sigma$ then, is the diagonal in $S \times S$ and there is a canonical diffeomorphism from the $S$ to the diagonal in $S \times S$. The induced metric on $\Sigma$ thus furnishes a metric $g$ on $S$ by the pullback of this diffeomorphism. Henceforth, when we say \emph{induced metric}, we refer to this metric $g$ on $S$, and will use $\Sigma$ to denote $(S,g)$. We consider these metrics up to pullback by a diffeomorphism isotopic to the identity, and call this subspace of metrics Ind$(S)$ and endowing it with the compact-open topology. The remainder of the section is devoted towards studying geometric properties of the minimal surfaces and showing Ind$(S)$ can be embedded into PCurr$(S)$.

\begin{prop}\label{prop:sum}
Let $X_{1} = (S, g_{1}), X_{2} = (S, g_{2})$ and $\Psi(X_{1}, X_{2}) = (\Sigma, \Phi).$ Then the induced metric on the minimal surface $\Sigma$ is given by $g_{1} + m^{*}g_{2}$. Consequently, the induced metric is given by twice the $(1,1)$ part of a hyperbolic metric when expressed in conformal coordinates. 
\end{prop}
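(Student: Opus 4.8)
The plan is to handle the two assertions separately: identifying the induced metric as $g_1 + m^* g_2$ is a direct computation of a pulled-back product metric, while the ``consequently'' clause requires decomposing that metric by type in a conformal coordinate of $\Sigma$ and invoking \emph{both} minimality and the Lagrangian condition.

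First I would compute the induced metric as a pulled-back product metric. The minimal surface is the graph of $m$, parametrized by $\gamma \colon S \to S \times S$, $\gamma(x) = (x, m(x))$, and the induced metric is $\gamma^*(g_1 \oplus g_2)$. Since $d\gamma(v) = (v, dm(v))$, for tangent vectors $v,w$ one computes $(g_1 \oplus g_2)\big((v, dm(v)), (w, dm(w))\big) = g_1(v,w) + g_2(dm(v), dm(w)) = (g_1 + m^* g_2)(v,w)$, so $\gamma^*(g_1 \oplus g_2) = g_1 + m^* g_2$, which is the first claim.

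For the second claim, let $z$ be a conformal coordinate for the complex structure $J$ of $\Sigma$. Under the parametrization $\gamma$, the two projections of the conformal harmonic inclusion are the harmonic maps $u_1 = \mathrm{id}\colon (S,J) \to X_1$ and $u_2 = m \colon (S,J) \to X_2$, with $u_1^* g_1 = g_1$ and $u_2^* g_2 = m^* g_2$. By hypothesis $\Phi = \mathrm{Hopf}(u_1)$, and since the inclusion is conformal the two Hopf differentials are opposite, so $\mathrm{Hopf}(u_2) = -\Phi$. Decomposing by type,
\begin{align*}
g_1 &= \Phi\, dz^2 + \epsilon_1\, dz\, d\overline{z} + \overline{\Phi}\, d\overline{z}^2, \\
m^* g_2 &= -\Phi\, dz^2 + \epsilon_2\, dz\, d\overline{z} - \overline{\Phi}\, d\overline{z}^2,
\end{align*}
where $\epsilon_j$ is the $(1,1)$-coefficient, and summing recovers $g = g_1 + m^* g_2 = (\epsilon_1 + \epsilon_2)\, dz\, d\overline{z}$. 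The $(2,0)$ and $(0,2)$ terms cancel precisely because the Hopf differentials are opposite, confirming that $z$ is conformal for $g$ and leaving only the $(1,1)$-part.

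It remains to prove $\epsilon_1 = \epsilon_2$, for then $g = 2\epsilon_1\, dz\, d\overline{z}$ is twice the $(1,1)$-part of the hyperbolic metric $g_1$ (whose $(1,1)$-part in the coordinate $z$ is $\epsilon_1\, dz\, d\overline{z}$, as $u_1^* g_1 = g_1$). This is where the Lagrangian hypothesis is essential, and is the main point. Writing $\epsilon_j = \rho_j(|u_{j,z}|^2 + |u_{j,\overline{z}}|^2)$ and letting $J_j = \rho_j(|u_{j,z}|^2 - |u_{j,\overline{z}}|^2)$ be the coefficient of $u_j^* \omega_j$ against $\tfrac{i}{2}\, dz \wedge d\overline{z}$, a direct expansion gives the pointwise identity $\epsilon_j^2 = J_j^2 + 4|\Phi|^2$, using $|\mathrm{Hopf}(u_j)| = |\Phi|$ for both $j$; since $\epsilon_j > 0$, this says $\epsilon_j = \sqrt{J_j^2 + 4|\Phi|^2}$. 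The Lagrangian condition $i^*(\omega_1 - \omega_2) = 0$ is exactly $J_1 = J_2$, whence $\epsilon_1 = \epsilon_2$. I expect the only genuine subtlety to be this last step: minimality alone (equivalently $\mathrm{Hopf}(u_1) + \mathrm{Hopf}(u_2) = 0$) forces the off-type cancellation but not the equality of the two $(1,1)$-parts, so the factor ``twice'' in the statement genuinely relies on the equality of Jacobians furnished by the Lagrangian condition.
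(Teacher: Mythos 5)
Your proof is correct, and the first half (the graph computation giving $g_{1}+m^{*}g_{2}$, and the cancellation of the $(2,0)$ and $(0,2)$ parts because the two Hopf differentials are opposite) is essentially the paper's argument. Where you genuinely diverge is in proving that the two $(1,1)$-coefficients agree. The paper deduces $\epsilon_{1}=\epsilon_{2}$ from Sampson's result (Proposition \ref{prop:Sampson}): since $|\mathrm{Hopf}(u_{1})|=|-\mathrm{Hopf}(u_{1})|$, the energy densities of the two harmonic maps to the hyperbolic targets coincide. That is a global statement, proved via the Bochner formula and the maximum principle on the closed surface, and it never invokes the Lagrangian condition. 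You instead use the pointwise algebraic identity $\epsilon_{j}^{2}=J_{j}^{2}+4|\Phi|^{2}$ (which is just $e^{2}-\mathcal{J}^{2}=4\mathcal{H}\mathcal{L}=4|\Phi|^{2}/\sigma^{2}$ in the paper's notation) together with the Lagrangian hypothesis $J_{1}=J_{2}$; this is elementary, local, and does not require the targets to be hyperbolic, but it does consume the Lagrangian condition, which the paper's route leaves untouched. Both arguments are valid here because Schoen's theorem supplies a minimal \emph{Lagrangian} graph. Your closing observation is also a nice point the paper leaves implicit: conversely, combining Sampson's $e_{1}=e_{2}$ with $|\Phi_{1}|=|\Phi_{2}|$ and $\mathcal{J}_{j}>0$ recovers $J_{1}=J_{2}$, i.e.\ the paper's route shows the minimal graph between closed hyperbolic surfaces is automatically Lagrangian, whereas in your more general pointwise setting the factor of two really does require the Lagrangian hypothesis as an input.
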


\begin{proof}
As in the discussion above, we may choose a suitable hyperbolic metric $X_{2} =(S, g_{2})$ in the equivalence class of $[X_{2}]$ to ensure the unique minimal map isotopic to the identity is the identity map. Hence, the graph of the minimal map is the diagonal in $S \times S$, so that (after identifying the diagonal with $S$) the harmonic map from the minimal surface $\Sigma$ to $X_{i}$ is given by the identity map. The first result then follows by definition of the product metric. Notice that the hyperbolic metric $g_{1}$ may be written in conformal coordinates on $\Sigma$ as $\Phi dz^{2} + \sigma e dz d\overline{z} + \overline{\Phi} d \overline{z}^{2}$.  As the minimal surface $\Sigma$ is mapped conformally into the product $X_{1} \times X_{2}$ of hyperbolic surfaces, then one obtains a pair $u_{i}: \Sigma \to X_{i}$ of harmonic maps, whose Hopf differentials, Hopf$(u_{1})$ and Hopf$(u_{2})$, sum to zero. Hence $g_{2}$ may be written in conformal coordinates on $\Sigma$ as $-\Phi dz^{2} + \sigma e dz d\overline{z} - \overline{\Phi} d \overline{z}^{2}$, for $|\Phi| =|-\Phi|$, so by a result of Sampson (Proposition \ref{prop:Sampson}), the energy densities will coincide. As the induced metric is given by the sum, the induced metric has local expression $2 \sigma e dz d \overline{z}$.
\end{proof}

\begin{prop}\label{prop:neg}
The induced minimal surfaces have strictly negative sectional curvature.
\end{prop}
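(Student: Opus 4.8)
The plan is to use the Gauss equation, exploiting that the ambient $4$-manifold $(S \times S, g_{1} \oplus g_{2})$ is locally isometric to $\mathbb{H}^{2} \times \mathbb{H}^{2}$ and that $\Sigma$ is minimal. Since $\Sigma$ is two-dimensional, its sectional curvature is simply its intrinsic Gaussian curvature, and I will bound it strictly above by the ambient sectional curvature of its tangent planes. The decisive point will be that, for the graph of a diffeomorphism, no tangent plane can be one of the flat planes of the product.

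First I would record the sectional curvatures of the Riemannian product. Because the curvature tensor of a product splits, for orthonormal $X = (X_{1}, X_{2})$ and $Y = (Y_{1}, Y_{2})$ tangent to $\mathbb{H}^{2} \times \mathbb{H}^{2}$ one has
\[
\bar{K}(X, Y) = -\big(|X_{1}|^{2}|Y_{1}|^{2} - \langle X_{1}, Y_{1}\rangle^{2}\big) - \big(|X_{2}|^{2}|Y_{2}|^{2} - \langle X_{2}, Y_{2}\rangle^{2}\big),
\]
each summand being nonpositive and vanishing exactly when the corresponding pair of projections is linearly dependent. Hence $\bar{K} \leq 0$, with equality precisely on planes whose projections to \emph{both} factors degenerate.

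The key geometric observation is that this degeneracy never occurs on a tangent plane to $\Sigma$. Parametrize the graph by $\iota: S \to S \times S$, $\iota(x) = (x, m(x))$, where $m$ is the minimal lagrangian diffeomorphism of Theorem \ref{thm:Schoen}. Then $T_{\iota(x)}\Sigma = \{(v, dm_{x}(v)) : v \in T_{x}S\}$. Choosing a basis $v, w$ of $T_{x}S$, the spanning vectors $X = (v, dm_{x}(v))$ and $Y = (w, dm_{x}(w))$ project onto the first factor as $v, w$, which are linearly independent. Thus the first summand above is strictly negative, and so $\bar{K}(T_{\iota(x)}\Sigma) < 0$ at every point.

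Finally I would invoke the Gauss equation: for orthonormal $X, Y \in T_{p}\Sigma$,
\[
K_{\Sigma} = \bar{K}(X, Y) + \langle \text{II}(X,X), \text{II}(Y,Y)\rangle - |\text{II}(X,Y)|^{2}.
\]
Minimality forces the trace of $\text{II}$ to vanish, so $\text{II}(Y,Y) = -\text{II}(X,X)$ and the middle term equals $-|\text{II}(X,X)|^{2} \leq 0$, while the last term is $\leq 0$ as well. Therefore $K_{\Sigma} \leq \bar{K}(T_{p}\Sigma) < 0$, as claimed. The only genuine content is the strict negativity of the ambient curvature on tangent planes to the graph, which is supplied by the diffeomorphism property of $m$ (and which also shows $\Sigma$ is embedded with no branch points); the product-curvature computation and the Gauss equation are standard, so I do not expect a serious obstacle. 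One could instead compute $K_{\Sigma}$ directly from the conformal expression $2\sigma e\, dz\, d\overline{z}$ of Proposition \ref{prop:sum} using the Bochner formulas, but that route requires evaluating $\partial_{z}\partial_{\overline{z}} \log(\mathcal{H} + \mathcal{L})$ and is considerably messier.
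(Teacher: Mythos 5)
Your proof is correct and uses the same essential ingredients as the paper's: the Gauss equation, minimality to control the second fundamental form terms, the splitting of the product curvature tensor, and the graph/diffeomorphism property to rule out tangent planes with degenerate projections. The only difference is presentational — you argue directly that the ambient term is strictly negative while the extrinsic terms are nonpositive, whereas the paper runs the same computation as a proof by contradiction starting from $K_{p}=0$ — so the two proofs are essentially identical.
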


\begin{proof}

For any point $p \in \Sigma$, it is clear that $K_{p} \leq 0$, as $\Sigma$ is a minimal surface in a NPC space, so we wish to show that $K_{p}  \neq 0$. The proof is by contradiction. Let $\{e_{1}, e_{2}\}$ be an orthonormal basis of $N_{p} \Sigma$. Now consider the 2-plane spanned by eigenvectors $X$ and $Y$ of the second fundamental form $\mathrm{II}$. Then one has $\mathrm{II}(X,Y) = \sum_{j=1}^{2} \mathrm{II}_{j}(X,Y) e_{j}$. Then the mean curvatures of the immersion are given by 
\begin{align}
H_{1} & = \mathrm{II}_{1} (X, X) + \mathrm{II}_{1}(Y,Y) = 0 \\
H_{2} &= \mathrm{II}_{2}(X,X) + \mathrm{II}_{2}(Y,Y) = 0
\end{align}
Then the Gauss equation tells us that at $p$,
\begin{align}
0 = Rm(X,Y,Y,X) &= \widetilde{Rm}(X,Y,Y,X) - \langle \mathrm{II}(X,X), \mathrm{II}(Y,Y) \rangle + \langle \mathrm{II}(X,Y), \mathrm{II}(X,Y) \rangle  \\
&= \widetilde{Rm}(X,Y,Y,X) + \sum_{j=1}^{2} \mathrm{II}_{j}(X,X) \mathrm{II}_{j}(Y,Y) - \sum_{j=1}^{2} \mathrm{II}_{j}(X,Y)^{2},
\end{align}
and as $\mathbb{H}^{2} \times \mathbb{H}^{2}$ is NPC, from (4.1), (4.2) and (4.4), it follows $\mathrm{II} \equiv 0$ at $p$ and that $\widetilde{Rm}(X,Y,Y,X) = 0$ at p. As $T (\mathbb{H}^{2} \times \mathbb{H}^{2}) \cong  T \mathbb{H}^{2} \oplus T \mathbb{H}^{2} $, we may write $X = X_{1} \oplus X_{2}$ and $Y = Y_{1} \oplus Y_{2}$. A simple calculation shows: 
\begin{align*}
0 = \widetilde{Rm}(X,Y,Y,X) &= Rm_{1}(X_{1}, Y_{1}, Y_{1}, X_{1}) + Rm_{2}(X_{2}, Y_{2}, Y_{2}, X_{2})\\
&=\kappa (X_{1}, Y_{1})\left( |X_{1}|^{2} |Y_{1}|^{2} -  \langle X_{1}, Y_{1} \rangle^{2} \right) + \kappa(X_{2}, Y_{2})\left( |X_{2}|^{2} |Y_{2}|^{2} - \langle X_{2}, Y_{2} \rangle^{2} \right) \\
&= -1 \cdot \left( |X_{1}|^{2} |Y_{1}|^{2} - \langle X_{1}, Y_{1} \rangle^{2} \right) -1 \cdot \left( |X_{2}|^{2} |Y_{2}|^{2} - \langle X_{2}, Y_{2} \rangle^{2} \right),
\end{align*}
which by Cauchy-Schwarz implies that $X_{1}$ and $Y_{1}$ (and also $X_{2}$ and $Y_{2}$) are linearly dependent, so that the map $u_{1_{*}}$ drops rank, a contradiction, as our surface was a graph.
\end{proof}

\begin{prop}\label{prop:sff}
The second fundamental form is given by

\begin{align*}
\emph{II}(E_{1}, E_{1}) &= \frac{-\Re{\Phi}(\sigma e)_{y}-\sigma e (\Im{\Phi})_{x}+\Im{\Phi}(\sigma e)_{x}}{\sigma e \sqrt{2 \sigma e( \sigma^{2} e^{2} - 4|\Phi|^{2})}} JE_{1} \\
&+ \frac{\Im{\Phi} (\sigma e)_{y} - \sigma e(\Re{\Phi})_{x}+ \Re{\Phi} (\sigma e)_{x}}{\sigma e \sqrt{2 \sigma e(\sigma^{2} e^{2}-4|\Phi|^{2})}} JE_{2}\\
\emph{II}(E_{2}, E_{2}) &=\frac{\Re{\Phi}(\sigma e)_{y}+\sigma e (\Im{\Phi})_{x}-\Im{\Phi}(\sigma e)_{x}}{\sigma e \sqrt{2 \sigma e( \sigma^{2} e^{2} - 4|\Phi|^{2})}} JE_{1} \\
&+ \frac{-\Im{\Phi} (\sigma e)_{y} + \sigma e(\Re{\Phi})_{x}- \Re{\Phi} (\sigma e)_{x}}{\sigma e \sqrt{2 \sigma e(\sigma^{2} e^{2}-4|\Phi|^{2})}} JE_{2} \\
\emph{II}(E_{1}, E_{2}) &= \frac{\Im{\Phi}(\sigma e)_{y}-\sigma e(\Re{\Phi})_{x}+ \Re{\Phi} (\sigma e)_{x}}{\sigma e \sqrt{2 \sigma e(\sigma^{2} e^{2}-4|\Phi|^{2})}} JE_{1}\\
&+\frac{-\sigma e (\Re{\Phi})_{y} + \Re{\Phi} (\sigma e)_{y} - \Im{\Phi} (\sigma e)_{x}}{\sigma e \sqrt{2 \sigma e(\sigma^{2} e^{2}-4|\Phi|^{2})}} JE_{2}.
\end{align*}

\end{prop}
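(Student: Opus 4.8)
The plan is to compute $\mathrm{II}$ extrinsically from the diagonal embedding furnished by Proposition~\ref{prop:sum}. After choosing the representative of $X_2$ for which the minimal map is the identity, the graph $\Sigma$ is the diagonal and the immersion is $i\colon \Sigma \to (S\times S,\, g_1\oplus g_2)$, $p\mapsto(p,p)$, so that $di\,\partial_x = (\partial_x,\partial_x)$ and $di\,\partial_y=(\partial_y,\partial_y)$. First I would record the two metrics in the real coordinate $z = x+iy$. Writing $\Phi = \Re\Phi + i\Im\Phi$ and abbreviating $F=\sigma e$, the decomposition $g_1 = \Phi\,dz^2 + F\,dz\,d\overline z + \overline\Phi\,d\overline z^2$ becomes
\[
g_1 = \begin{pmatrix} F+2\Re\Phi & -2\Im\Phi \\ -2\Im\Phi & F-2\Re\Phi\end{pmatrix}, \qquad g_2 = \begin{pmatrix} F-2\Re\Phi & 2\Im\Phi \\ 2\Im\Phi & F+2\Re\Phi\end{pmatrix},
\]
the latter obtained by $\Phi\mapsto-\Phi$. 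A short computation gives $\det g_1 = \det g_2 = \sigma^2 e^2 - 4|\Phi|^2$, which is exactly the quantity under the radical in the statement, and $g_1+g_2 = 2\sigma e\,(dx^2+dy^2)$, recovering the induced metric.

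Next I would pin down the normal frame. Since $\Sigma$ is a minimal \emph{Lagrangian} for $\omega_1-\omega_2$, and $\omega_1-\omega_2$ is the fundamental two-form of the product metric with respect to the almost complex structure $\mathbf J := J_1\oplus(-J_2)$, the operator $\mathbf J$ carries $T\Sigma$ isometrically onto $N\Sigma$. Concretely one checks $\langle \mathbf J\,di\,\partial_x,\, di\,\partial_x\rangle = 0$ and $\langle \mathbf J\,di\,\partial_x,\, di\,\partial_y\rangle = \omega_1(\partial_x,\partial_y)-\omega_2(\partial_x,\partial_y) = \sqrt{\det g_1}-\sqrt{\det g_2}=0$, the last equality being the Lagrangian condition (equivalently, the equality of the energy densities in the sense of Sampson). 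Thus, with the induced orthonormal frame $E_1 = (2\sigma e)^{-1/2}\partial_x$, $E_2=(2\sigma e)^{-1/2}\partial_y$, the vectors $JE_1 := \mathbf J E_1$ and $JE_2 := \mathbf J E_2$ form an orthonormal frame of $N\Sigma$; these are the $JE_i$ of the statement. The explicit matrix $J_j = (\det g_j)^{-1/2}\!\begin{pmatrix}-g_{j,12} & -g_{j,22}\\ g_{j,11} & g_{j,12}\end{pmatrix}$ supplies the $(\sigma^2 e^2-4|\Phi|^2)^{-1/2}$ factor carried by each normal vector.

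I would then compute $\mathrm{II}$ as the normal part of the ambient covariant derivative. Because the Levi-Civita connection of the product splits, $\nabla_{di\,\partial_a}di\,\partial_b = (\nabla^1_{\partial_a}\partial_b,\ \nabla^2_{\partial_a}\partial_b)$, so the computation reduces to the Christoffel symbols of $g_1$ and $g_2$ in the $(x,y)$ coordinates. Projecting onto the normal frame gives, for instance,
\[
\mathrm{II}(E_1,E_1) = \frac{1}{(2\sigma e)^{3/2}}\Big(\langle \nabla_{di\,\partial_x}di\,\partial_x,\ \mathbf J\,di\,\partial_x\rangle\, JE_1 + \langle \nabla_{di\,\partial_x}di\,\partial_x,\ \mathbf J\,di\,\partial_y\rangle\, JE_2\Big),
\]
and via $\langle \nabla^j_{\partial_x}\partial_x,\ J_j\partial_x\rangle_{g_j} = \sqrt{\det g_j}\,\Gamma^{j,y}_{xx}$ each inner product reduces to a difference $\sqrt{\det g}\,(\Gamma^{1,y}_{xx}-\Gamma^{2,y}_{xx})$ of Christoffel symbols of the two metrics. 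The factor $(2\sigma e)^{3/2}$ from normalizing $E_i$ together with the $(\det g_j)^{-1/2}$ hidden in $\mathbf J$ and the Christoffel symbols assemble into the denominator $\sigma e\,\sqrt{2\sigma e(\sigma^2 e^2-4|\Phi|^2)}$, while the numerators collect the terms $\Re\Phi\,(\sigma e)_y$, $\sigma e\,(\Im\Phi)_x$, and so on. The formulas for $\mathrm{II}(E_2,E_2)$ and $\mathrm{II}(E_1,E_2)$ follow from the analogous projections.

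The crucial simplification — and the step most easily overlooked — is the holomorphicity of the Hopf differential. A direct expansion of $\Gamma^{1,y}_{xx}-\Gamma^{2,y}_{xx}$ produces more terms than appear in the statement (and with an apparent extra factor of $2$); substituting the Cauchy–Riemann relations $(\Re\Phi)_x=(\Im\Phi)_y$ and $(\Re\Phi)_y=-(\Im\Phi)_x$ coming from $\Phi_{\overline z}=0$ collapses these into the displayed three-term numerators and exactly cancels the spurious factor. I expect the main obstacle to be purely organizational: carrying the Christoffel symbols of two non-conformal metrics through the projection without sign errors, and recognizing precisely where holomorphicity must be invoked. As internal consistency checks I would verify that $\mathrm{II}(E_1,E_1)+\mathrm{II}(E_2,E_2)=0$ (minimality, i.e.\ trace-free $\mathrm{II}$) and that the two expressions for $\mathrm{II}(E_1,E_2)$ are symmetric, both of which the displayed formulas satisfy.
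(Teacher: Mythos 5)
Your proposal follows essentially the same route as the paper's proof: choose the coordinate frame $E_{1}=\partial_{x}/\sqrt{2\sigma e}$, $E_{2}=\partial_{y}/\sqrt{2\sigma e}$, build the normal frame by applying the product almost complex structure (which is where the factor $(\sigma^{2}e^{2}-4|\Phi|^{2})^{-1/2}$ enters), use the splitting of the product connection to reduce everything to the Christoffel symbols of $g_{1}$ and $g_{2}$ in the $(x,y)$ coordinates, and project. One point where your write-up is actually sharper than the paper's: the paper states the normal frame comes from $J=J_{1}\oplus J_{2}$ and then computes both $J_{1}E_{1}$ and $J_{2}E_{1}$ as positively oriented rotations, but with that choice $\langle JE_{1},E_{2}\rangle=2\sqrt{\det g_{1}}/(2\sigma e)\neq 0$, so the frame is not normal as literally written; your identification of the relevant structure as $J_{1}\oplus(-J_{2})$, forced by the Lagrangian condition for $\omega_{1}-\omega_{2}$, is the correct resolution of this sign. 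Your explicit appeal to the Cauchy--Riemann relations for $\Phi$ to collapse the Christoffel-symbol differences into the displayed three-term numerators is left implicit in the paper but is indeed where the simplification happens.
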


\begin{proof}
For a choice of complex coordinates $z=x+ iy$ on the minimal surface $\Sigma$,  then $\frac{\partial}{\partial x}$ and $\frac{\partial}{\partial y}$ is an orthogonal frame. Denote then $E_{1} = \frac{\partial}{\partial x} / |\frac{\partial}{\partial x}|_{\Sigma}$ and $E_{2} = \frac{\partial}{\partial y} / |\frac{\partial}{\partial y}|_{\Sigma}$. Let $J$ be the almost complex structure on the 4-manifold $X_{1} \times X_{2}$, then $J = J_{1} \oplus J_{2}$, where $J_{i}$ is the almost complex structure arising from $X_{i} = (S, g_{i})$. As $\Sigma \subset X_{1} \times X_{2}$ is a lagrangian submanifold, then $\{E_{1}, E_{2}, JE_{1}, JE_{2}\}$ forms an orthonormal basis of $T(X_{1} \times X_{2}) \cong TX_{1} \oplus TX_{2}$ in this neighborhood. The second fundamental form then is given by
\begin{align*}
\text{II}(X, Y) = \sum_{j=1}^{2} \widetilde{g} (\widetilde{\nabla}_{X} Y, JE_{j}) JE_{j},
\end{align*}
where $\widetilde{g} = g_{1} \oplus g_{2}$ and $\widetilde{\nabla} = \nabla_{1} \oplus \nabla_{2}$.
We first calculate $\text{II}(E_{1}, E_{1})$. As the minimal surface metric is given by $2\sigma e |dz|^{2} = 2\sigma e (dx^{2} + dy^{2})$, one has
$$ 2\sigma e (dx^{2} + dy^{2}) \left( \frac{\partial}{\partial x}, \frac{\partial}{\partial x}\right) = 2\sigma e = \norm{\frac{\partial}{\partial x}}^{2}_{\Sigma},$$
so that
$$E_{1} = \frac{\frac{\partial}{\partial x}}{\sqrt{2\sigma e}}.$$
Similarly $E_{2}$ is given by
$$E_{2} = \frac{\frac{\partial}{\partial y}}{\sqrt{2\sigma e}}.$$
To calculate $JE_{1}$, we project $E_{1}$ to each of its factors and apply the almost complex structure on each of its factors, namely, we find the vector which has the same length and forms angle $\pi/2$ with the projected factor using the hyperbolic metric. This is the complex structure arising from the conformal class of the metric. To find $J_{1} E_{1} = a \frac{\partial}{\partial x} + b \frac{\partial}{\partial y} $ for instance, we observe first the hyperbolic metric on $X_{1}$ is given by
$$ \rho_{1} = \Phi dz^{2} + \sigma e dz d \overline{z} + \overline{\Phi} d \overline{z}^{2} = (2 \Re{\Phi} + \sigma e) dx^{2} - 4 \Im{\Phi} dx dy +(-2 \Re{\Phi} + \sigma e) dy^{2} $$
Hence we want to solve $ a \neq 0$, $b >0$ for which
\begin{align}
g_{1}\left(a \frac{\partial}{\partial x} + b \frac{\partial}{\partial y}, E_{1}\right) &= 0 \\
g_{1}\left(a \frac{\partial}{\partial x} + b \frac{\partial}{\partial y}, a \frac{\partial}{\partial x} + b \frac{\partial}{\partial y}\right) = g_{1}(E_{1}, E_{1}) &=\frac{2 \Re{\Phi} + \sigma e}{2 \sigma e}.
\end{align}
Some basic algebra yields that $a = \frac{2 \Im{\Phi}}{\sqrt{(2\sigma e)((\sigma e)^{2} - 4 |\Phi|^{2}})}$ and $b= \frac{2 \Re{\Phi} + \sigma e}{\sqrt{(2\sigma e)((\sigma e)^{2} - 4 |\Phi|^{2}})}$, so that
$$J_{1}E_{1} = \frac{2 \Im{\Phi}}{\sqrt{(2\sigma e)((\sigma e)^{2} - 4 |\Phi|^{2}})}  \frac{\partial}{\partial x} +  \frac{2 \Re{\Phi} + \sigma e}{\sqrt{(2\sigma e)((\sigma e)^{2} - 4 |\Phi|^{2}})} \frac{\partial}{\partial y}.$$
 Notice that the denominator appearing in $b$ is positive, as $2|\Phi| < \sigma e$ when $\Phi \equiv 0$, in which case the minimal surface is a totally geodesic subsurface. Now $J_{2} E_{1}$ is found similarly, and is given by
$$J_{2}E_{1} = \frac{-2\Im{\Phi}}{\sqrt{2 \sigma e ((\sigma e)^{2} - 4|\Phi|^{2})}} \frac{\partial}{\partial x} + \frac{-2 \Re{\Phi} + \sigma e}{\sqrt{2 \sigma e ((\sigma e)^{2} - 4|\Phi|^{2})}} \frac{\partial}{\partial y} .$$
The tangent vector given by $\widetilde{\nabla}_{E_{1}}E_{1}$ splits as $\nabla^{1}_{E_{1}}E_{1} \oplus \nabla^{2}_{E_{1}}E_{1}$. The Christoffel symbols for $g_{1} $ and $g_{2}$ can be readily calculated.
\begin{align*}
\nabla^{1}_{E{1}}E_{1} &= \nabla^{1}_{\frac{\frac{\partial}{\partial x}}{\sqrt{2\sigma e}}}{\frac{\frac{\partial}{\partial x}}{\sqrt{2\sigma e}}} \\
&= \frac{1}{\sqrt{2 \sigma e}} \left(\frac{1}{\sqrt{2 \sigma e}} \nabla^{1}_{\frac{\partial}{\partial x}} \frac{\partial}{\partial x} +  \left(\frac{1}{\sqrt{2 \sigma e}}\right)_{x} \frac{\partial}{\partial x} \right) \\
&= \frac{1}{\sqrt{2 \sigma e}} \left(\frac{1}{\sqrt{2 \sigma e}} \left(^{1}\Gamma_{11}^{1} \frac{\partial}{\partial x}  + ^{1}\Gamma_{11}^{2} \frac{\partial}{\partial y} \right) + \left(\frac{1}{\sqrt{2 \sigma e}}\right)_{x} \frac{\partial}{\partial x} \right), \\
%&=  \frac{1}{\sqrt{2 \sigma e}} \left( \frac{1}{\sqrt{2 \sigma e}}  \frac{1}{2} \left(  \frac{-2\Re{\Phi} + \sigma e}{\sigma ^{2} e^{2} - 4|\Phi|^{2}} (2 \Re{\Phi} + \sigma e)_{x} + \frac{2 \Im{\Phi}}{\sigma^{2} e^{2} - 4|\Phi|^{2}}((-4 \Im{\Phi})_{x} -(2 \Re{\Phi} + \sigma e)_{y}) \right) \right) \frac{\partial}{\partial x}
%\\
%&+ \frac{1}{2}( \frac{2 \Im{\Phi}}{\sigma^{2} e^{2} -4 |\Phi|^{2}} ((-4 \Im{\Phi})_{x} -(2\Re{\Phi} + \sigma e)_{y})) \frac{\partial}{\partial y} + (\frac{1}{\sqrt{2 \sigma e}})_{x} \frac{\partial}{\partial x}
&=\left( \frac{1}{2 \sigma e}  ^{1}\Gamma_{11}^{1} + \frac{1}{\sqrt{2 \sigma e}} \left(\frac{1}{\sqrt{2 \sigma e}}\right)_{x} \right) \frac{\partial}{\partial x} +  \frac{1}{2 \sigma e} ^{2}\Gamma_{11}^{2} \frac{\partial}{\partial y}\\
\end{align*}
where $^{1}\Gamma_{11}^{1}$ and $^{1}\Gamma_{11}^{2}$ are the usual Christoffel symbols, where the extra superscript denotes these are the ones for the metric $g_{1}$. There are given explicitly by
$$^{1}\Gamma_{11}^{1} = \frac{1}{2} \left( \frac{-2 \Re{\Phi} + \sigma e}{\sigma^{2} e^{2} - 4|\Phi|^{2}} (2 \Re{\Phi} + \sigma e)_{x} + \frac{2 \Im{\Phi}}{\sigma^{2} e^{2} - 4|\Phi|^{2}} \left((-4 \Im{\Phi}_{x}) - (2 \Re{\Phi} + \sigma e)_{y}\right) \right)$$
$$^{2}\Gamma_{11}^{1} = \frac{1}{2} \left( \frac{2 \Im{\Phi}}{\sigma^{2} e^{2} - 4|\Phi|^{2}} (2 \Re{\Phi} + \sigma e)_{x} + \frac{2\Re{\Phi}+ \sigma e}{\sigma^{2} e^{2} - 4|\Phi|^{2}} \left((-4 \Im{\Phi}_{x}) - (2 \Re{\Phi} + \sigma e)_{y}\right) \right).$$
Similarly, the same can be done for the metric $g_{2}$ and using the formula for II, one gets II($E_{1}, E_{1})$. The same can be done for the rest.
\end{proof}

It would be curious to see under what conditions different points in $Q_{g}$ would yield the same induced metric. One might hope that the space of induced metrics would be homeomorphic to $Q_{g}$, but the following result of Sampson shows this is not possible:

\begin{prop}[Sampson]\label{prop:Sampson}
For a fixed closed hyperbolic surface $X=(S,\sigma)$, if $\Phi_{1}$ and $\Phi_{2}$ are two Hopf differentials on $X$ arising from harmonic maps from $X$ to closed hyperbolic surfaces of the same genus, such that the norms $|\Phi_{1}|$ and $|\Phi_{2}|$ coincide, then the energy densities coincide, that is $e_{1} = e_{2}$.
\end{prop}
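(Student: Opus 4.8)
The plan is to reduce the claim to a uniqueness statement for a single scalar PDE satisfied by the holomorphic energy density $\mathcal{H}$, and then to obtain that uniqueness from the maximum principle on the closed surface $S$.

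First I would use that both source and target are hyperbolic, so $K(\sigma) = K(\rho) = -1$ and the Bochner formula of Section~2.1 collapses to
\[
\Delta \log \mathcal{H} = 2\mathcal{H} - 2\mathcal{L} - 2,
\]
valid wherever $\mathcal{H} \neq 0$; since the relevant harmonic maps are diffeomorphisms isotopic to the identity, $\mathcal{H} > 0$ everywhere (Schoen--Yau, Sampson), so $\log\mathcal{H}$ is a smooth function on all of $S$. I would then eliminate $\mathcal{L}$ using the relation $\mathcal{H}\mathcal{L} = |\Phi|^{2}/\sigma^{2}$, i.e. $\mathcal{L} = |\Phi|^{2}/(\sigma^{2}\mathcal{H})$, which turns the Bochner equation into
\[
\Delta \log \mathcal{H} = 2\mathcal{H} - \frac{2|\Phi|^{2}}{\sigma^{2}\,\mathcal{H}} - 2 .
\]
The right-hand side depends only on the fixed metric $\sigma$ and the pointwise norm $|\Phi|$. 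Since $|\Phi_{1}| = |\Phi_{2}|$ by hypothesis, the two functions $\mathcal{H}_{1}$ and $\mathcal{H}_{2}$ satisfy one and the same equation.

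The heart of the argument is then a uniqueness statement: this equation has at most one positive solution on the closed surface $S$. Writing $G(\mathcal{H}) = 2\mathcal{H} - 2|\Phi|^{2}/(\sigma^{2}\mathcal{H})$, one checks $G'(\mathcal{H}) = 2 + 2|\Phi|^{2}/(\sigma^{2}\mathcal{H}^{2}) > 0$, so $G$ is strictly increasing in $\mathcal{H} > 0$ (the sign of $|\Phi|^{2}\geq 0$, vanishing only at the zeros of $\Phi$, is exactly what is needed). Set $w = \log\mathcal{H}_{1} - \log\mathcal{H}_{2}$, a smooth function on the compact $S$, and suppose for contradiction that $w$ attains a positive maximum at a point $p$. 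At $p$ we have $\Delta w(p) \leq 0$, while $w(p) > 0$ forces $\mathcal{H}_{1}(p) > \mathcal{H}_{2}(p)$ and hence, by monotonicity of $G$,
\[
\Delta w(p) = G(\mathcal{H}_{1}(p)) - G(\mathcal{H}_{2}(p)) > 0,
\]
a contradiction. Therefore $w \leq 0$ everywhere; exchanging the roles of the two maps gives $w \geq 0$, so $w \equiv 0$ and $\mathcal{H}_{1} = \mathcal{H}_{2}$.

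Finally, I would combine $\mathcal{H}_{1} = \mathcal{H}_{2}$ with the common value $\mathcal{H}_{i}\mathcal{L}_{i} = |\Phi_{i}|^{2}/\sigma^{2}$ to conclude $\mathcal{L}_{1} = \mathcal{L}_{2}$, whence $e_{1} = \mathcal{H}_{1} + \mathcal{L}_{1} = \mathcal{H}_{2} + \mathcal{L}_{2} = e_{2}$, as desired. I expect the delicate point to be the positivity of $\mathcal{H}$, needed so that $\log\mathcal{H}$ is globally smooth and the reduction and maximum principle genuinely apply; this is precisely where one uses that the harmonic maps at hand are diffeomorphisms. Compactness of $S$ is the other essential ingredient, guaranteeing that the maximum of $w$ is attained, so that the maximum principle can be invoked at all.
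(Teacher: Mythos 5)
Your argument is correct: reducing the Bochner identity to the single equation $\Delta\log\mathcal{H}=2\mathcal{H}-2|\Phi|^{2}/(\sigma^{2}\mathcal{H})-2$ and invoking the strict monotonicity of the right-hand side in $\mathcal{H}$ at a maximum of $\log(\mathcal{H}_{1}/\mathcal{H}_{2})$ is exactly the standard route, and you correctly flag that $\mathcal{H}>0$ (from the maps being harmonic diffeomorphisms) and compactness of $S$ are the two hypotheses that make the maximum principle bite. The paper itself states this proposition without proof, attributing it to Sampson, but the scheme you use is precisely the one the paper deploys for the converse (Lemma~\ref{lem:zeno}) and its variants, so there is nothing to reconcile.
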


Hence, if we select two elements of $Q_{g}$, say $(X, \Phi_{1})$ and $(X, \Phi_{2})$, where $|\Phi_{1}| = |\Phi_{2}|$, but $\Phi_{1} \neq \Phi_{2}$, then the corresponding energy densities are the same and hence the corresponding induced metrics are the same.

The following proposition is a converse to the result of Sampson and shows this is the only situation for which the corresponding induced metrics coincide.

\begin{lem}\label{lem:zeno}
On a fixed closed hyperbolic surface, we have $e_{1} = e_{2}$ if and only if $|\Phi_{1}| = |\Phi_{2}|$.
\end{lem}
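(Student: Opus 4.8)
The plan is to prove only the nontrivial implication $e_{1} = e_{2} \Rightarrow |\Phi_{1}| = |\Phi_{2}|$, since the reverse implication is exactly Sampson's Proposition \ref{prop:Sampson}. Throughout I work with the holomorphic and antiholomorphic energy densities $\mathcal{H}_{i}, \mathcal{L}_{i}$ of the two harmonic maps, which satisfy $\mathcal{H}_{i} + \mathcal{L}_{i} = e_{i}$ and $\mathcal{H}_{i}\mathcal{L}_{i} = |\Phi_{i}|^{2}/\sigma^{2}$. Because the maps in question are harmonic diffeomorphisms isotopic to the identity, the results of Schoen--Yau and Sampson give $\mathcal{H}_{i} > 0$ everywhere, so the Bochner formula is valid at every point of $S$ and the logarithms below are everywhere smooth.

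With $K(\sigma) = K(\rho) = -1$ the Bochner identity reads $\Delta \log \mathcal{H}_{i} = 2\mathcal{H}_{i} - 2\mathcal{L}_{i} - 2$. Setting $f = \log \mathcal{H}_{1} - \log \mathcal{H}_{2}$ and subtracting the two identities, I would invoke the hypothesis $e_{1} = e_{2} =: e$ to eliminate the antiholomorphic densities: since $\mathcal{L}_{i} = e - \mathcal{H}_{i}$, one has $\mathcal{L}_{1} - \mathcal{L}_{2} = -(\mathcal{H}_{1} - \mathcal{H}_{2})$, and the subtracted equations collapse to
$$\Delta f = 4(\mathcal{H}_{1} - \mathcal{H}_{2}).$$
The purpose of this rewriting is the crucial sign alignment: as $f = \log(\mathcal{H}_{1}/\mathcal{H}_{2})$ has the same sign as $\mathcal{H}_{1} - \mathcal{H}_{2}$ at every point, the function $f$ and its Laplacian $\Delta f$ are everywhere of the same sign.

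I would then run the maximum principle on the closed surface $S$. At an interior maximum $p_{0}$ of $f$ one has $\Delta f(p_{0}) \le 0$, which forces $\mathcal{H}_{1}(p_{0}) \le \mathcal{H}_{2}(p_{0})$ and hence $f(p_{0}) \le 0$; since $f(p_{0})$ is the maximal value, $f \le 0$ on all of $S$. Applying the identical argument to $-f = \log(\mathcal{H}_{2}/\mathcal{H}_{1})$, which satisfies $\Delta(-f) = 4(\mathcal{H}_{2} - \mathcal{H}_{1})$, gives $f \ge 0$. Therefore $f \equiv 0$ and $\mathcal{H}_{1} \equiv \mathcal{H}_{2}$. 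Combined with $e_{1} = e_{2}$ this yields $\mathcal{L}_{1} \equiv \mathcal{L}_{2}$, and consequently $|\Phi_{1}|^{2}/\sigma^{2} = \mathcal{H}_{1}\mathcal{L}_{1} = \mathcal{H}_{2}\mathcal{L}_{2} = |\Phi_{2}|^{2}/\sigma^{2}$, i.e. $|\Phi_{1}| = |\Phi_{2}|$.

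I do not anticipate a serious obstacle. The two points requiring care are the positivity $\mathcal{H}_{i} > 0$, which is what makes the Bochner formula globally valid (and $f$ globally smooth), and the sign bookkeeping in the maximum principle. The genuinely load-bearing idea is that using the hypothesis $e_{1} = e_{2}$ to substitute $\mathcal{L}_{i} = e - \mathcal{H}_{i}$ converts the subtracted Bochner equations into $\Delta f = 4(\mathcal{H}_{1} - \mathcal{H}_{2})$ with $f$ and $\Delta f$ of matching sign, so that the maximum principle closes the argument with no further estimates.
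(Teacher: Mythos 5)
Your proposal is correct and follows essentially the same route as the paper: subtract the Bochner identities, use $e_{1}=e_{2}$ to reduce to $\Delta \log(\mathcal{H}_{1}/\mathcal{H}_{2}) = 4(\mathcal{H}_{1}-\mathcal{H}_{2})$, and apply the maximum principle on the closed surface (together with the symmetric argument) to conclude $\mathcal{H}_{1}\equiv\mathcal{H}_{2}$, hence $\mathcal{L}_{1}\equiv\mathcal{L}_{2}$ and $|\Phi_{1}|=|\Phi_{2}|$. The only cosmetic difference is that you evaluate the sign of $f$ at its maximum directly rather than arguing by contradiction from the assumption that the maximum exceeds $1$.
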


\begin{proof}
That $|\Phi_{1}| = |\Phi_{2}|$ implies $e_{1} = e_{2}$ is due to Sampson. Now suppose $e_{1} = e_{2}$, then $\mathcal{H}_{1} + \mathcal{L}_{1} = \mathcal{H}_{2} + \mathcal{L}_{2}$, so the Bochner formula  $\mathcal{4} \log \mathcal{H}_{i} = 2\mathcal{H}_{i} - 2\mathcal{L}_{i} -2$ may be rewritten as $\mathcal{4} \log \mathcal{H}_{i} = 4\mathcal{H}_{i} - 2e_{i} -2$. Subtracting the two equations for $i=1,2$ yields 
$$\mathcal{4} \log \frac{\mathcal{H}_{1}}{\mathcal{H}_{2}}= 4 (\mathcal{H}_{1} - \mathcal{H}_{2}).$$Now $\mathcal{H}_{i}>0$, so that the quotient $\mathcal{H}_{1}/ \mathcal{H}_{2}$ attains its maximum on the surface, which we claim is $1$, for if the maximum of $\mathcal{H}_{1}/ \mathcal{H}_{2}$ is greater than 1, then at the maximum (which is also the maximum of  $\log \frac{\mathcal{H}_{1}}{\mathcal{H}_{2}}$)
\begin{align*}
0 \geq \mathcal{4}  \log \frac{\mathcal{H}_{1}}{\mathcal{H}_{2}} = 4 (\mathcal{H}_{1} - \mathcal{H}_{2}) 
=4 \mathcal{H}_{2}\, \left(\frac{\mathcal{H}_{1}}{\mathcal{H}_{2}} - 1\right) >0,
\end{align*}
a contradiction, so that $\frac{\mathcal{H}_{1}}{\mathcal{H}_{2}} \leq 1$ and symmetrically $\frac{\mathcal{H}_{2}}{\mathcal{H}_{1}} \leq 1$, hence $\mathcal{H}_{1} = \mathcal{H}_{2}$ and so $\mathcal{L}_{1} = \mathcal{L}_{2}$, by the assumption on the energy densities. From the formula $|\Phi|^{2}/\sigma^{2} = \mathcal{H} \mathcal{L}$, the conclusion follows.

\end{proof}

\begin{cor}
The space of induced metrics \emph{Ind}$(S)$ may be identified with $Q_{g}/\sim$, where $(X, \Phi_{1}) \sim (Y, \Phi_{2})$ if $X=Y$ and $|\Phi_{1}| = |\Phi_{2}|$.
\end{cor}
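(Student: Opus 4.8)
The plan is to combine the homeomorphism $\Psi$ of Theorem \ref{thm:homeo} with the equivalence between energy densities and norms of Hopf differentials recorded in Lemma \ref{lem:zeno}. By Theorem \ref{thm:homeo}, every point $(\Sigma, \Phi) \in Q_g$ is realized as the minimal-surface data of a unique pair $(X_1, X_2)$, and by Proposition \ref{prop:sum} the associated induced metric has the conformal expression $2 \sigma e \, dz \, d\overline{z}$ on $\Sigma$, where $\sigma e \, dz\, d\overline{z}$ is the $(1,1)$-part of the pullback $u_1^{*} g_1$ of the harmonic projection $u_1 : \Sigma \to X_1$ whose Hopf differential is $\Phi$. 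Thus assigning to each $(\Sigma, \Phi)$ its induced metric defines a surjection $Q_g \to \mathrm{Ind}(S)$ --- surjectivity being exactly the definition of $\mathrm{Ind}(S)$ together with the surjectivity of $\Psi$ --- and it remains to show this map descends to a bijection from $Q_{g}/\sim$.

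To see that the map factors through $\sim$, I would regard $\Sigma$ together with its hyperbolic metric as the fixed domain surface, so that $\Phi$ is the Hopf differential of a harmonic map from the fixed surface $\Sigma$ to a hyperbolic target, matching the setup of Proposition \ref{prop:Sampson}. If $(\Sigma, \Phi_1) \sim (\Sigma, \Phi_2)$, i.e.\ $|\Phi_1| = |\Phi_2|$, then Sampson's result gives $e_1 = e_2$, hence $\sigma e_1 = \sigma e_2$, and the two induced metrics $2\sigma e_i \, dz\,d\overline{z}$ agree. Therefore the induced metric depends only on the class in $Q_{g}/\sim$.

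For injectivity, suppose $(\Sigma, \Phi_1)$ and $(\Sigma', \Phi_2)$ yield the same induced metric $g$ up to isotopy. Since $e = \mathcal{H} + \mathcal{L} > 0$ --- because $\mathcal{H} > 0$ for the diffeomorphism $u_1$ --- the conformal factor $2\sigma e$ is strictly positive, so $g$ is a smooth metric whose conformal class is precisely the complex structure $\Sigma$; the same reasoning recovers $\Sigma'$. Hence $\Sigma = \Sigma'$, and with a common domain the equality of metrics forces $\sigma e_1 = \sigma e_2$, so $e_1 = e_2$. Lemma \ref{lem:zeno} then yields $|\Phi_1| = |\Phi_2|$, i.e.\ $(\Sigma, \Phi_1) \sim (\Sigma', \Phi_2)$. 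This establishes the bijection; giving $Q_{g}/\sim$ the quotient topology, continuity in both directions follows from the continuous dependence of the minimal surface on its data already exploited in Theorem \ref{thm:homeo}.

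The crux --- and the main obstacle --- is the injectivity step, which rests entirely on the converse to Sampson's theorem supplied by Lemma \ref{lem:zeno}; the geometric content is that the induced metric simultaneously encodes the conformal structure (through its conformal class) and the norm $|\Phi|$ (through its conformal factor, via the energy density), and nothing more. I expect the only delicate point to be the bookkeeping of which surface plays the role of the fixed domain when invoking Proposition \ref{prop:Sampson} and Lemma \ref{lem:zeno}, since in those statements the domain hyperbolic surface is held fixed while the targets are allowed to vary.
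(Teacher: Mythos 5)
Your argument is correct and follows essentially the same route as the paper, which deduces the corollary directly from Theorem \ref{thm:homeo}, Proposition \ref{prop:Sampson} (for descent to the quotient), and Lemma \ref{lem:zeno} (for injectivity), using that the induced metric $2\sigma e\,dz\,d\overline{z}$ records the conformal structure through its conformal class and $|\Phi|$ through its conformal factor. Your closing remark about which surface serves as the fixed domain is the right point to be careful about, and your resolution (the domain is $\Sigma$ with its uniformizing hyperbolic metric, targets varying) matches the paper's setup.
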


We conclude this section by proving the space Ind$(S)$ can be embedded into the space of currents and that the embedding remains injective after projectivization, thereby obtaining an embedding into projectivized currents.

\begin{prop}
The space $\emph{Ind(}S\emph{)}$ can be realized as geodesic currents. 
\end{prop}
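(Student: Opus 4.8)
The plan is to appeal directly to Otal's realization of negatively curved metrics as geodesic currents, using the strict negativity of the curvature established in Proposition \ref{prop:neg}. First I would record that each induced metric $g = 2\sigma e\,|dz|^{2}$ is a genuine smooth Riemannian metric on $S$: since the minimal lagrangian is a diffeomorphism, the energy density $e = \mathcal{H} + \mathcal{L}$ is smooth and strictly positive (as $\mathcal{H} > 0$), so $g$ is positive definite and as regular as the minimal immersion, which is smooth by interior regularity. By Proposition \ref{prop:neg} its Gaussian curvature is strictly negative.

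Next I would pass to the universal cover $(\tilde{S}, \tilde{g})$. Strict negative curvature makes $\tilde{S}$ a Hadamard surface on which every free homotopy class of closed curves has a unique geodesic representative, and whose Gromov boundary is a topological circle. By the remark following the definition of geodesic currents, this boundary — and hence the space of geodesics $G(\tilde{S})$ — is canonically and $\pi_{1}(S)$-equivariantly identified with $G(\mathbb{H}^{2})$, independently of the chosen metric. This is exactly the setting of Otal's construction, which pushes the Liouville measure of the geodesic flow down to a $\pi_{1}(S)$-equivariant Radon measure $L_{g}$ on $G(\tilde{S})$, i.e.\ a geodesic current, satisfying $i(L_{g}, \gamma) = \ell_{g}(\gamma)$ for every closed curve class $\gamma$, where $\ell_{g}(\gamma)$ is the length of the $g$-geodesic representative.

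Finally I would verify that $g \mapsto L_{g}$ descends to Ind$(S)$. If $\phi$ is a diffeomorphism isotopic to the identity, then $\phi(\gamma)$ is freely homotopic to $\gamma$ for every closed curve $\gamma$, so $\ell_{\phi^{*}g}(\gamma) = \ell_{g}(\phi(\gamma)) = \ell_{g}(\gamma)$; since a current is determined by its intersection numbers with closed curves, equality of marked length spectra forces $L_{\phi^{*}g} = L_{g}$. Thus the current depends only on the class of $g$ in Ind$(S)$, yielding a well-defined map Ind$(S) \to$ Curr$(S)$.

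Because the realization is essentially a black-box application of Otal's theorem, there is no serious analytic obstacle; the only points requiring care are checking the hypotheses of that theorem — strict negativity of the curvature, supplied by Proposition \ref{prop:neg}, together with smoothness of $g$ — and confirming the diffeomorphism invariance above. I expect the identification of $G(\tilde{S})$ with $G(\mathbb{H}^{2})$ to be the conceptually delicate step, though it is already quoted in the preliminaries. Injectivity of the map, which would upgrade this realization to an embedding, is a separate matter handled subsequently.
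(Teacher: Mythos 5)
Your proposal is correct and follows essentially the same route as the paper: both deduce strict negative curvature from Proposition \ref{prop:neg} and then apply Otal's theorem to obtain the geodesic current $L_{2\sigma e}$ with $l_{2\sigma e}([\gamma]) = i(L_{2\sigma e},\gamma)$. The extra checks you supply (smoothness and positive definiteness of the induced metric, and invariance under diffeomorphisms isotopic to the identity) are implicit in the paper's one-line argument and are correct.
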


\begin{proof}
From Proposition \ref{prop:neg}, the induced metrics have strictly negative curvature, so by Otal \cite{Otal}, there is a well-defined embedding $\mathcal{C}: \text{Ind}(S) \to \text{Curr}(S)$, from the space of induced metrics on $S$ to the space of geodesic currents, which sends $2 \sigma e \mapsto L_{2 \sigma e}$, so that if $\gamma$ is a closed curve, then $l_{2 \sigma e}([\gamma]) = i(L_{2 \sigma e}, \gamma)$.
\end{proof}

The following lemma is a statement concerning energy densities and their failure to scale linearly.

\begin{lem}\label{lem:stupid}
On a fixed closed hyperbolic surface, if $e_{1} = ce_{2}$, then $c=1$, and hence $|\Phi_{1}| = |\Phi_{2}|$.
\end{lem}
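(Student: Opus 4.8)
The plan is to reduce everything to showing that the scaling constant must be $c=1$; once this is known $e_1 = e_2$, and Lemma \ref{lem:zeno} immediately upgrades this to $|\Phi_1| = |\Phi_2|$. Since $e_i = \mathcal{H}_i + \mathcal{L}_i \ge \mathcal{H}_i > 0$, the constant $c$ is positive, and after interchanging the two indices (replacing $c$ by $1/c$) we may assume $c \ge 1$ and argue that $c > 1$ is impossible. The engine of the argument is the Bochner equation $\Delta \log \mathcal{H}_i = 2\mathcal{H}_i - 2\mathcal{L}_i - 2$ together with the maximum principle on the closed surface, applied to $\mathcal{H}_1/\mathcal{H}_2$ exactly as in the proof of Lemma \ref{lem:zeno}. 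Subtracting the two Bochner equations and substituting $\mathcal{L}_i = e_i - \mathcal{H}_i$ with $e_1 = ce_2$ yields
\[ \Delta \log \frac{\mathcal{H}_1}{\mathcal{H}_2} = 4(\mathcal{H}_1 - \mathcal{H}_2) - 2(c-1)e_2. \]

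First I would evaluate this identity at the minimum of $\mathcal{H}_1/\mathcal{H}_2$: there the left side is $\ge 0$, while $-2(c-1)e_2 \le 0$, which forces $\mathcal{H}_1 \ge \mathcal{H}_2$ at that point and hence everywhere. Next I would evaluate at the maximum, where the left side is $\le 0$; writing $e_2 = \mathcal{H}_2(1 + r_2)$ with $r_2 = \mathcal{L}_2/\mathcal{H}_2 = |\nu_2|^2$ and $M = \max(\mathcal{H}_1/\mathcal{H}_2)$, this gives $2(M-1) \le (c-1)(1 + r_2)$. The crucial input here is that the harmonic map $u_2$ is an orientation-preserving diffeomorphism, so its Jacobian $\mathcal{J}_2 = \mathcal{H}_2 - \mathcal{L}_2$ is strictly positive and $r_2 < 1$ everywhere; this yields the \emph{strict} bound $M < c$, so that $\mathcal{H}_1/\mathcal{H}_2 < c$ at every point.

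The contradiction then comes from the zeros of $\Phi_1$. Rewriting the hypothesis $e_1 = ce_2$ as $\mathcal{H}_1/\mathcal{H}_2 = c(1+r_2)/(1+r_1)$, the bound $\mathcal{H}_1/\mathcal{H}_2 < c$ forces $r_1 > r_2 \ge 0$, i.e.\ $\mathcal{L}_1 > 0$, at every point of $S$. But $\mathcal{L}_1 = |\Phi_1|^2/\sigma^2\mathcal{H}_1$ vanishes at each zero of $\Phi_1$, and a nonzero holomorphic quadratic differential on a closed surface of genus $g \ge 2$ has $4g-4 \ge 4$ zeros; hence $\Phi_1 \equiv 0$. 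In that degenerate case $\mathcal{L}_1 \equiv 0$, and the Bochner equation $\Delta \log \mathcal{H}_1 = 2\mathcal{H}_1 - 2$ forces $\mathcal{H}_1 \equiv 1$ by the maximum principle, so $e_1 \equiv 1$ and $e_2 \equiv 1/c$ is constant. This is excluded by integration: since $\int_S \mathcal{J}_2 \, dA_\sigma = \text{Area}(\sigma)$ (integrate the Bochner equation over the closed surface) and $\mathcal{L}_2 \ge 0$, we have $\int_S e_2 = \int_S(\mathcal{H}_2 + \mathcal{L}_2) \ge \int_S(\mathcal{H}_2 - \mathcal{L}_2) = \text{Area}(\sigma)$, whereas $\int_S e_2 = \text{Area}(\sigma)/c < \text{Area}(\sigma)$ when $c > 1$. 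Either way $c > 1$ is impossible, so $c = 1$.

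I expect the main obstacle to be the step controlling $\mathcal{L}_1$ against $\mathcal{L}_2$: one cannot simply run the maximum principle on the ratio $\mathcal{L}_1/\mathcal{L}_2$, since this quantity blows up near the (generically distinct) zeros of $\Phi_2$. The device that circumvents this is to extract the pointwise inequality $r_1 > r_2$ purely algebraically from the strict bound $M < c$, and then to play it against the \emph{forced} existence of zeros of $\Phi_1$ — so that the usual obstruction, the zeros of the quadratic differential, becomes the very source of the contradiction.
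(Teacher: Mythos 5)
Your argument is correct, and while it opens exactly as the paper does --- subtracting the Bochner equations, running the maximum principle on $\mathcal{H}_{1}/\mathcal{H}_{2}$, and leaning on the Schoen--Yau/Sampson fact that $\mathcal{J}_{2}>0$ --- it closes by a genuinely different mechanism. The paper keeps only the non-strict bound $\mathcal{H}_{1}/\mathcal{H}_{2}\leq c$, observes that equality is then forced at every zero of $\Phi_{1}$ (so $\mathcal{L}_{2}$ vanishes wherever $\mathcal{L}_{1}$ does), upgrades the pointwise bound $\mathcal{L}_{2}/\mathcal{L}_{1}\leq 1/c$ to a comparison of orders of vanishing, counts $8g-8$ zeros on each side to conclude that $\Phi_{1}$ and $\Phi_{2}$ have the same zero divisor and hence are proportional, and finally evaluates the Bochner identity a second time at a common zero --- which is a maximum of the ratio --- to force $c=1$. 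You instead squeeze the same maximum-principle computation one step further, using $\mathcal{L}_{2}/\mathcal{H}_{2}<1$ to make the bound strict, $\max(\mathcal{H}_{1}/\mathcal{H}_{2})<c$; the purely algebraic consequence $r_{1}>r_{2}\geq 0$ then says $\mathcal{L}_{1}>0$ everywhere, i.e.\ $\Phi_{1}$ is nowhere vanishing, which is impossible for a holomorphic quadratic differential on a closed surface of genus $g\geq 2$ unless $\Phi_{1}\equiv 0$ --- and that case already contradicts $r_{1}>0$ directly, so your supplementary integration argument for it, while sound, is redundant. Your route is shorter and entirely avoids the divisor comparison (which, as you anticipate, is the delicate part of the paper's proof); what the paper's longer route buys is the extra structural conclusion that $\Phi_{1}=k\Phi_{2}$ with $|k|=1$, recorded in the last line of its proof.
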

\begin{proof}
Without loss of generality, suppose $c \geq 1$, else we may reindex so that $c \geq 1$. Then $\frac{\mathcal{H}_{1}}{\mathcal{H}_{2}} \leq c$, for if $\frac{\mathcal{H}_{1}}{\mathcal{H}_{2}} >c$, we locate the maximum of $\mathcal{H}_{1}/ \mathcal{H}_{2}$, and the Bochner formula at that point yields
\begin{align*}
0 \geq \mathcal{4} \log \frac{\mathcal{H}_{1}}{\mathcal{H}_{2}}  &= 4 (\mathcal{H}_{1} - \mathcal{H}_{2}) - 2(e_{1} -e_{2}) \\
&= 4 (\mathcal{H}_{1} - \mathcal{H}_{2}) -2(ce_{2}-e_{2})\\
&= 4 \mathcal{H}_{2} \, \left(\frac{\mathcal{H}_{1}}{\mathcal{H}_{2}} - 1\right) -2e_{2} \, (c-1)\\
& >4 \mathcal{H}_{2} \, (c-1) - 2e_{2} \, (c-1)\\
&= (c-1) (4\mathcal{H}_{2} - 2e_{2})\\
&= (c-1) (2\mathcal{H}_{2} - 2\mathcal{L}_{2}) = 2(c-1) \,\mathcal{J}_{2} >0,
\end{align*}
a contradiction. Notice the upper bound is actually attained, for at a zero of $|\Phi_{1}|$, we have that $\mathcal{L}_{1}$ vanishes and so at such a zero we have the equation
$$\mathcal{H}_{1} = c \mathcal{H}_{2} + c \mathcal{L}_{2},$$
and as we have $\mathcal{H}_{1}/\mathcal{H}_{2} \leq c$, it follows that $\mathcal{L}_{2}$ must also vanish whenever $\mathcal{L}_{1}$ does. In fact, we can say more about the zeros of $\mathcal{L}_{i}$. The condition on the energy densities yields the equality
$$0 = (c \mathcal{H}_{2} - \mathcal{H}_{1}) + (c \mathcal{L}_{2} - \mathcal{L}_{1}),$$
and the bound on the quotient $\mathcal{H}_{1}/\mathcal{H}_{2}$ implies that the first term is nonnegative so the second term is nonpositive, that is $c\mathcal{L}_{2} - \mathcal{L}_{1} \leq 0$ or $c \leq \mathcal{L}_{1}/ \mathcal{L}_{2}$ or $\mathcal{L}_{2}/ \mathcal{L}_{1} \leq 1/c$, so that the order of the zeros of $\mathcal{L}_{2}$ is greater than or equal to the order of zeros of $\mathcal{L}_{1}$. As $|\Phi| / \sigma^{2} = \mathcal{H} \mathcal{L}$ and $\mathcal{H} >0$, then both $\mathcal{L}_{1}$ and $\mathcal{L}_{2}$ have exactly $8g-8$ zeros counted with multiplicity, so that the order of vanishing of $\mathcal{L}_{1}$ is the same as that of $\mathcal{L}_{2}$ at every point of the surface. Hence the quadratic differentials $\Phi_{1}$ and $\Phi_{2}$ differ by a multiplicative constant $k \in \mathbb{C}$, that is $\Phi_{1} = k \Phi_{2}$. %By Lemma \ref{lem:zeno}, we may take without loss of generality $k$ to be real.
At the zero of $|\Phi_{2}|$ (and so also a zero of $|\Phi_{1}|$), which is a maximum of the quotient $\mathcal{H}_{1}/\mathcal{H}_{2}$, the Bochner equation now reads,

%But then take $p \in \Sigma$ so that $\phi_{1}(p) = 0$, then as
%\begin{align*}
%k^{2} &= \frac{\mathcal{H}_{1} \mathcal{L}_{1}}{\mathcal{H}_{2} \mathcal{L}_{2}}
%\end{align*}
%we have that $\mathcal{H}_{1}(p) / \mathcal{H}_{2}(p) = c$ and so $\mathcal{L}_{1}(p)/\mathcal{L}_{2}(p) = k^{2}/c$. But as $\mathcal{H}_{1}/\mathcal{H}_{2}$ attains its maximum at $p$, then $\mathcal{L}_{1}/\mathcal{L}_{2}$ attains its minimum $k^{2}/c$ at $p$. The bound $c \leq \mathcal{L}_{1}/\mathcal{L}_{2}$ held everywhere, so that $c\leq k^{2}/c$ so that $c^{2} \leq k^{2}$. 

%Now consider the minimum of the quotient $\mathcal{H}_{2}/ \mathcal{H}_{1}$. We know the minimum is attained at a zero of $|\Phi_{2}|$ and is $\mathcal{H}_{2}/\mathcal{H}_{1} =1/c$ or equivalently $c\mathcal{H}_{2} = \mathcal{H}_{1}$. 
\begin{align*}
0 \geq \mathcal{4} \log \frac{\mathcal{H}_{1}}{\mathcal{H}_{2}} &= 2\mathcal{H}_{1} - \frac{2|\Phi_{1}|^{2}}{\sigma^{2} \mathcal{H}_{1}} - 2\mathcal{H}_{2} + \frac{2|\Phi_{2}|^{2}}{\sigma^{2} \mathcal{H}_{2}}\\
%&=2(\mathcal{H}_{1} - \mathcal{H}_{2}) -  \frac{2|k|^{2}|\Phi_{2}|^{2}}{\sigma^{2} \mathcal{H}_{1}} + \frac{2|\Phi_{2}|^{2}}{\sigma^{2} \mathcal{H}_{2}}\\
%&=2(\mathcal{H}_{1} - \mathcal{H}_{2}) - \frac{2 |\Phi_{2}|^{2}}{\sigma^{2}} \left(\frac{|k|^{2}}{\mathcal{H}_{1}} - \frac{1}{\mathcal{H}_{2}}\right)\\
&=2(\mathcal{H}_{1} - \mathcal{H}_{2})\\
%&=2(c\mathcal{H}_{2} -  \mathcal{H}_{2})\\
&=2 \mathcal{H}_{2} (c-1) \geq 0,
\end{align*}
which implies $c=1$, and by the previous lemma $|k|=1$.

%hence $\mathcal{H}_{2} - \mathcal{H}_{1} \geq 0$ or equivalently $1 \geq  \mathcal{H}_{1}/ \mathcal{H}_{2}$ at the minimum of  $\mathcal{H}_{2}/ \mathcal{H}_{1}$. But this point is also the maximum of  $\mathcal{H}_{1}/ \mathcal{H}_{2}$, which was established to be equal to $c$. Hence $c=1$, and by the previous lemma $k=1$.

%The lemma follows from the following proposition.
%\begin{prop}
%Let $g= e^{2u} \overline{g}$ and $\overline{g}$ be two Riemannian metrics on a closed surface. Suppose $\kappa_{g} \leq \kappa_{g} <0$, then $\overline{g} \geq g$.
%\end{prop}
%\begin{proof}
%Under a conformal change we have
%$$\kappa_{g} = e^{-2u} (- \mathcal{4} u + \kappa_{\overline{g}}).$$
%At a minimum of the function $u$, we have
%$$0 \leq \mathcal{4} u =\kappa_{\overline{g}} - e^{2u} \kappa_{g},$$ 
%hence $u \geq 0$ at this point and the conclusion follows.
%\end{proof}
%Setting now $g = ku_{1}^{*} \rho_{2}$ and $\overline{g}= u_{1}^{*} \rho_{1}$ $\kappa_{g}= -1/k^{2}$ and $\kappa_{\overline{g}} =-1$ we have $\overline{g} \leq g$, and hence $c \geq k$, so that $c=k=1$.

\end{proof}

\begin{thm}\label{thm:embed}
The space of induced metrics $\textnormal{Ind}(S)$ embeds into $\textnormal{PCurr}(S)$.
\end{thm}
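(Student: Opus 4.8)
The plan is to leverage the embedding $\mathcal{C}:\mathrm{Ind}(S)\to\mathrm{Curr}(S)$ established in the preceding proposition and show that projectivization collapses no two distinct points; the only new content is ruling out a genuine scaling factor. Suppose $g_1,g_2\in\mathrm{Ind}(S)$ have projectively equal currents, so that $L_{g_1}=c\,L_{g_2}$ for some $c>0$. Pairing this against every closed curve gives $l_{g_1}(\gamma)=c\,l_{g_2}(\gamma)$, i.e. the marked length spectra are proportional. If $c=1$ then $L_{g_1}=L_{g_2}$ and injectivity of $\mathcal{C}$ forces $g_1=g_2$; so the entire problem is to exclude $c\neq 1$.

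First I would pass to the scaled metric $c^2 g_2$. Since scaling a metric by $c^2$ scales all geodesic lengths by $c$, one has $L_{c^2 g_2}=c\,L_{g_2}=L_{g_1}$, so $g_1$ and $c^2 g_2$ determine the \emph{same} geodesic current. Although $c^2 g_2$ is in general no longer an induced metric, it is still negatively curved, since scaling preserves the sign of the curvature and $g_2$ is negatively curved by Proposition \ref{prop:neg}. Hence Otal's marked length spectrum rigidity for negatively curved surfaces \cite{Otal}—the same input underlying the embedding $\mathcal{C}$—applies to the pair $g_1, c^2 g_2$ and yields a diffeomorphism $\phi$ isotopic to the identity with $\phi^*(c^2 g_2)=g_1$.

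Next I would extract the conformal consequence. An isometry is conformal, so $\phi$ is a biholomorphism between the underlying Riemann surfaces that is isotopic to the identity; in particular the conformal classes $X_1,X_2$ of $g_1,g_2$ agree in Teichm\"uller space, and $\phi$ carries the hyperbolic metric $\sigma_2$ of $X_2$ to the hyperbolic metric $\sigma_1$ of $X_1$. Writing the induced metrics in conformal coordinates as $g_i=2\sigma_i e_i$ (Proposition \ref{prop:sum}), the relation $g_1=c^2\,\phi^*g_2$ becomes $2\sigma_1 e_1 = 2\sigma_1\,(c^2\,\widetilde e_2)$ on the fixed hyperbolic surface $(S,\sigma_1)$, where $\widetilde e_2=e_2\circ\phi$ is again the energy density of a harmonic map from $(S,\sigma_1)$ to a closed hyperbolic surface. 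Thus $e_1 = c^2\,\widetilde e_2$ pointwise.

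Finally I would invoke Lemma \ref{lem:stupid}: a pointwise proportionality of energy densities on a fixed hyperbolic surface forces the constant to be $1$, so $c^2=1$ and $c=1$. This returns us to the case $L_{g_1}=L_{g_2}$, whence $g_1=g_2$ in $\mathrm{Ind}(S)$ and the projectivized map is injective. The main obstacle is the middle step: one must not apply rigidity naively to the two induced metrics (which would merely recover injectivity of $\mathcal{C}$), but rather to the \emph{scaled} metric $c^2 g_2$, observing that it remains negatively curved so that Otal's theorem still applies, and then harvesting from the resulting isometry both the identification of conformal structures and the pointwise identity of energy densities needed to feed Lemma \ref{lem:stupid}.
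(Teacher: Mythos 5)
Your proof is correct and follows essentially the same route as the paper: reduce to excluding a nontrivial scaling constant $c$ between the two currents, identify the conformal classes, and invoke Lemma \ref{lem:stupid} to force $c=1$. The only difference is that you make explicit a step the paper elides — namely that passing from proportionality of currents to pointwise proportionality of the metric tensors requires Otal's marked length spectrum rigidity applied to $g_1$ and the scaled (still negatively curved) metric $c^2 g_2$ — which is a welcome clarification rather than a departure.
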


\begin{proof}
Let $\pi: \text{Curr}(S) \to \text{PCurr}(S)$ be the natural projection map. It suffices to show the map $\pi \circ \mathcal{C}: \text{Ind}(S) \to \text{PCurr}(S)$ is injective. If the image of two induced metrics under the map $\pi \circ \mathcal{C}$ coincide, that is $\sigma dz d \overline{z} = c \sigma' e' dz d \overline{z}$, where $c \in \mathbb{R}_{>0}$, then they will be in the same conformal class, so that $\sigma = \sigma'$. Then $e=ce'$, and by Lemma \ref{lem:stupid}, $c=1$.

%and by Lemma \ref{lem:zeno}, have the same preimage. The map is proper. For any sequence of metrics leaving all compact sets in Ind$(S)$, then there are a sequence of closed curves $\gamma_{n}$ so that the lengths of these curves measured against the metric gets arbitrarily large. Hence the map to the space of currents is an embedding. By Lemma \ref{lem:stupid}, the map remains injective after projectivization. 

%By Proposition \ref{prop:neg}, the induced metrics are negatively curved, so by Otal's theorem, these metrics may be embedded into Curr$(S)$. If $\sigma_{n} e_{n} \rightarrow \sigma e$, then for any closed curve $\gamma$, one has $l_{\sigma_{n} \e_{n}}(\gamma) \rightarrow l_{\sigma e}(\gamma)$, so that the currents $L_{\sigma_{n} e_{n}} \rightarrow L_{\sigma e}$. Hence the map is continuous. 

\end{proof}

\begin{rem}
As the induced metrics are not scalar multiples of each other, we make a slight modification by dividing the induced metrics by 2 to ensure these metrics are now precisely the $(1,1)$-part of a hyperbolic metric when written in conformal coordinates rather than twice that. 

\end{rem}

%\begin{rem}
%It is important to observe that the embedding obtained here is not the same as $(X_{1}, X_{2}) \mapsto L_{X_{1}} + L_{X_{2}}$. If it were, it would imply the harmonic maps are all totally geodesic.
%\end{rem}

\section{Compactification of the induced metrics}

In this section we identify the elements in the closure $\overline{\text{Ind}(S)}$ $\subset$ PCurr($S$). As the space of projectivized currents is compact, we obtain a compactification Ind($S$) $\sqcup$ PMix($S$) of the induced metrics from the embedding obtained in the previous section.

\subsection{Flat metrics as limits}

In a simple scenario where the conformal structure of the minimal surface remains fixed, we can describe the asymptotic behavior of the induced metric. We consider the simplest case where $X_{1,n}$ (and consequently $X_{2,n}$) lie along a harmonic maps ray, that is the sequence of Hopf differentials of the projection map onto the first factor is given by $t_{n} \Phi$, where $\Phi \neq 0$ and $t_{n} \to \infty$.

\begin{prop}\label{prop:easyflat}
Let $\sigma_{n}e_{n}$ be the induced metric where $\sigma_{n} = \sigma$ for all $n$, and the Hopf differentials of the harmonic maps $u_{1,n}: (S, \sigma) \to X_{1,n}$ are given by $t_{n} \Phi_{0}$, where $\Phi_{0}$ is a unit-norm quadratic differential on $(S, \sigma)$. Suppose $\mathcal{E}_{n} \to \infty$. Then everywhere away from the zeros of $|\Phi_{0}|$, one has
$$\lim_{n \to \infty} \frac{\sigma_{n}e_{n}}{\mathcal{E}_{n}} = |\Phi_{0}|.$$ 
\end{prop}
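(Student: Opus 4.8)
The plan is to reduce everything to the two auxiliary densities $\mathcal{H}_n,\mathcal{L}_n$ of the harmonic map $u_{1,n}\colon(S,\sigma)\to X_{1,n}$, since by Proposition \ref{prop:sum} the induced metric is $\sigma_n e_n=\sigma(\mathcal{H}_n+\mathcal{L}_n)$. As $u_{1,n}$ is a harmonic diffeomorphism between hyperbolic surfaces, $\mathcal{H}_n>\mathcal{L}_n>0$ away from the zero set $Z$ of $\Phi_0$, and the product is pinned down by $\mathcal{H}_n\mathcal{L}_n=|t_n\Phi_0|^2/\sigma^2=t_n^2|\Phi_0|^2/\sigma^2$. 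I therefore introduce one function $v_n>0$ on $S\setminus Z$ by
\begin{equation*}
\mathcal{H}_n=\frac{t_n|\Phi_0|}{\sigma}e^{v_n},\qquad \mathcal{L}_n=\frac{t_n|\Phi_0|}{\sigma}e^{-v_n},
\end{equation*}
so that the object of interest becomes $\sigma_n e_n=\sigma(\mathcal{H}_n+\mathcal{L}_n)=2t_n|\Phi_0|\cosh v_n$. The proposition is then exactly the assertion that $v_n\to 0$ off $Z$, with the correct normalization.

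Next I would derive the scalar PDE governing $v_n$. Taking $\log$ of the first relation gives $\log\mathcal{H}_n=\log t_n+\log|\Phi_0|-\log\sigma+v_n$; since $\Phi_0$ is holomorphic, $\Delta\log|\Phi_0|=0$ off $Z$, and since $\sigma$ is hyperbolic, $\Delta\log\sigma=2$. Substituting into the Bochner equation $\Delta\log\mathcal{H}_n=2\mathcal{H}_n-2\mathcal{L}_n-2$ and using $\mathcal{H}_n-\mathcal{L}_n=\tfrac{2t_n|\Phi_0|}{\sigma}\sinh v_n$ collapses everything to the sinh--Gordon type equation
\begin{equation*}
\Delta v_n=\frac{4t_n|\Phi_0|}{\sigma}\sinh v_n \qquad\text{on } S\setminus Z.
\end{equation*}

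The heart of the argument, and the step I expect to be the main obstacle, is to show $v_n\to 0$ uniformly on compact subsets of $S\setminus Z$. The right-hand side is $\ge 0$, so each $v_n$ is subharmonic off $Z$; since $\mathcal{L}_n$ vanishes at the zeros of $\Phi_0$ while $\mathcal{H}_n$ stays positive, $v_n\to+\infty$ logarithmically along $Z$, and by the maximum principle $v_n$ attains its supremum on any region only near $Z$ and decays inward. To make the decay quantitative I would fix a compact $K\subset S\setminus Z$, choose $c>0$ with $|\Phi_0|\ge c$ on a slightly larger compact set, and use $\sinh v\ge v$ to get $\Delta v_n\ge\mu_n^2 v_n$ with $\mu_n^2\gtrsim t_n c\to\infty$. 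Comparing $v_n$ with the barrier solving the constant-coefficient equation $\Delta\psi=\mu_n^2\psi$ on small disks forces decay of $v_n$ on $K$ at rate $e^{-\sqrt{t_n}\,\mathrm{dist}}$; this is the high-energy maximum-principle estimate of Wolf \cite{W89}, which I would either invoke directly or reproduce. The delicate point is the uniform-in-$n$ control of $v_n$ near $Z$ needed to initialize the comparison.

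Finally the pieces assemble cleanly. Since $\mathcal{E}_n$ equals $\int_S\sigma_n e_n\,dx\,dy$ (up to the universal constant from the $dz\,d\bar z$ convention, which cancels below), substituting $\sigma_n e_n=2t_n|\Phi_0|\cosh v_n$ makes the factor $2t_n$ drop out:
\begin{equation*}
\frac{\sigma_n e_n}{\mathcal{E}_n}=\frac{|\Phi_0|\cosh v_n}{\int_S|\Phi_0|\cosh v_n\,dx\,dy}.
\end{equation*}
By the previous step $\cosh v_n\to 1$ pointwise off $Z$, so the numerator tends to $|\Phi_0|$; since $\|\Phi_0\|_{L^1}=1$, a dominated-convergence argument—controlling the contribution of a shrinking neighborhood of $Z$ via the standard bound $\int_S\sigma\mathcal{H}_n\,dx\,dy=O(t_n)$—shows the denominator tends to $1$. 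Hence $\sigma_n e_n/\mathcal{E}_n\to|\Phi_0|$ everywhere away from the zeros, as claimed; the normalization is in fact forced, as both sides integrate to $1$.
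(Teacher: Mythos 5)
Your proof is correct, but it takes a different (and arguably more self-contained) route than the paper. The paper's proof works with the geometry of the image: it invokes the geodesic-curvature estimates of \cite{W91} to show horizontal arcs of $t_n\Phi_0$ map close to geodesics in $X_{1,n}$, writes the two harmonic maps in normal coordinates as $(x,y)\mapsto(2t_n^{1/2}x,0)+o(e^{-ct})$ and $(x,y)\mapsto(0,2t_n^{1/2}y)+o(e^{-ct})$, and reads off the pullback metric $(4t_n+o(e^{-ct}))(dx^2+dy^2)$ before normalizing by the energy via Proposition \ref{prop:mikefirst}. You instead stay entirely on the domain: your identity $\sigma_ne_n=2t_n|\Phi_0|\cosh v_n$ is exactly the paper's Proposition \ref{prop:mike} with $v_n=\log(1/|\nu_n|)=\mathcal{G}_n$, your sinh--Gordon equation is the standard one from \cite{W89}, and the whole statement reduces to $v_n\to0$ off the zeros plus $\mathcal{E}_n/2t_n\to1$. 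This is essentially the machinery the paper itself deploys later (Propositions \ref{prop:yair}, \ref{prop:ae}, \ref{prop:beltrami}) for the harder case of varying conformal structure, so you are proving the special case with the general-case tools; what you lose is the explicit picture of where the two factors of the minimal surface are being stretched. Two small remarks: the ``delicate'' initialization you worry about is actually painless here --- Minsky's subharmonicity bound (the paper's Proposition \ref{prop:yair}) gives $v_n(p)\le\sinh^{-1}\bigl(|\chi(S)|/(t_nr_0^2)\bigr)\to0$ directly on any zero-free $|\Phi_0|$-disk of fixed radius $r_0$, since its $|t_n\Phi_0|$-radius grows like $t_n^{1/2}$, with no boundary data or barrier needed; and your denominator $\int_S|\Phi_0|\cosh v_n$ equals $\mathcal{E}_n/(2t_n)$ identically, so it tends to $1$ by Proposition \ref{prop:mikefirst} alone, without any dominated-convergence argument.
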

\begin{proof}
By construction, the Hopf differential of the harmonic map from $(S, \sigma)$ to $X_{1,n}$ is given by $t_{n}\Phi_{0}$, where $\Phi_{0}$ is a unit-norm quadratic differential. In a neighborhood away from any zero of $\Phi_{0}$, consider then the horizontal foliation of $\Phi_{n} = t_{n} \Phi_{0}$. By the estimates on the geodesic curvature of its image \cite{W91}, a horizontal arc of the foliation in this neighborhood will be mapped close to a geodesic in $X_{1n}$ (we do not reproduce the techniques here, as we will do so later in a slightly modified setting). Using normal coordinates for the target adapted to this geodesic and estimates on stretching \cite{W89}, we have that
$$(x,y) \mapsto (2t_{n}^{1/2}x, 0) +o(e^{-ct}),$$
where the constant $c$ only depends upon the domain Riemann surface, and the distance from the zero of the quadratic differential.
For the harmonic map from $(S, \sigma)$ to $X_{2,n}$, its Hopf differential is given by $-t_{n}\Phi_{0}$, so that an arc of its horizontal foliation, which is an arc of the vertical foliation of $t_{n}\Phi_{0}$, gets mapped close to a geodesic, yielding
$$(x,y) \mapsto (0, 2t_{n}^{1/2}y) +o(e^{-ct}).$$
Hence, as a map from $\Sigma$ to the 4-manifold $X_{1,n} \times X_{2,n}$ with the product metric, we have that
%$$(x,y) \mapsto (t_{n}^{1/2}x, o(e^{-ct}), o(e^{-ct}), t_{n}^{1/2} y).$$
the induced metric $\sigma_{n}e_{n}$ in this neighborhood has the form $(4t_{n}+o(e^{-ct})) dx^{2} + 2o(e^{-ct})dxdy + (4t_{n} +o(e^{-ct})) dy^{2}$. Dividing by $4t_{n}$ and observing that for a high energy harmonic map, the total energy is comparable to twice the $L^{1}$-norm of the quadratic differential (Proposition \ref{prop:mikefirst}) and taking the limit yields the conclusion.

%$(t_{n}dx_{1}^{2}, o(e^{-ct})dx_{2}^{2}, o(e^{-ct})d_{3}^{3}, t_{n}d_{4}^{2}).$
\end{proof}

\begin{prop}\label{prop:flat}
%Under the same conditions as in above, if $\sigma_{n} \to \sigma$ and $e_{n} \to \infty$ pointwise, then there exists $t_{n} \to \infty$ so that $\lim_{n \to \infty} \sigma_{n} e_{n}/t_{n} = |\Phi|$.

Suppose $\sigma_{n} e_{n}$ is a sequence of induced metrics such that $\sigma_{n} \to \sigma \in \mathcal{T}(S)$ and $\mathcal{E}_{n} \to \infty$, then after passing to a subsequence, there exists a sequence $t_{n}$ and a unit-norm quadratic differential $\Phi_{0}$ on $[\sigma]$ so that 
$$\lim_{n \to \infty} \frac{\sigma_{n} e_{n}}{\mathcal{E}_{n}} \to |\Phi_{0}|.$$ 
\end{prop}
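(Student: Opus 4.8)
The plan is to leverage the harmonic maps parametrization together with the high-energy asymptotics already established in Proposition \ref{prop:easyflat}, the only new feature being that both the conformal structure and the direction of the Hopf differential are now allowed to vary along the sequence. First I would use Theorem \ref{thm:homeo} to record each induced metric $\sigma_n e_n$ as a point $(\sigma_n, \Phi_n)$, where $\Phi_n$ is the Hopf differential of the projection $u_{1,n} \colon \Sigma_n \to X_{1,n}$ of the $n$-th minimal surface. Since the total energy is comparable to twice the $L^1$-norm of the Hopf differential in the high-energy regime (Proposition \ref{prop:mikefirst}), the hypothesis $\mathcal{E}_n \to \infty$ forces $\|\Phi_n\|_{L^1} \to \infty$. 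I would then set $t_n := \|\Phi_n\|_{L^1}$ and normalize $\Phi_{0,n} := \Phi_n / t_n$, so that each $\Phi_{0,n}$ is a unit-norm holomorphic quadratic differential on $(S, \sigma_n)$ and $\Phi_n = t_n \Phi_{0,n}$ with $t_n \to \infty$.

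Next I would extract the limiting direction $\Phi_0$ by a compactness argument. The space $Q_g$ is a finite-dimensional (rank $3g-3$) vector bundle over $\mathcal{T}(S)$, and the hypothesis $\sigma_n \to \sigma$ means the base points $[\sigma_n]$ converge in $\mathcal{T}(S)$. Using local triviality of this bundle over a neighborhood of $[\sigma]$ together with compactness of the unit sphere in each fiber, I would pass to a subsequence along which $\Phi_{0,n} \to \Phi_0$, a unit-norm holomorphic quadratic differential on $[\sigma]$. In particular the associated flat metrics converge, $|\Phi_{0,n}| \to |\Phi_0|$, and the zeros of $\Phi_{0,n}$ converge to those of $\Phi_0$.

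With $\Phi_0$ in hand, I would repeat the local analysis of Proposition \ref{prop:easyflat} on a compact set $K$ away from the zeros of $\Phi_0$. For $n$ large such a $K$ stays a uniform distance from the zeros of $\Phi_{0,n}$, so the estimates of Wolf \cite{W89}, \cite{W91} on the geodesic curvature of the image of a horizontal arc and on horizontal stretching apply on $K$: the harmonic map to $X_{1,n}$ stretches the horizontal foliation of $\Phi_{0,n}$ by $2 t_n^{1/2}$ while collapsing the vertical foliation up to an error $o(e^{-c t_n})$, and the map to $X_{2,n}$, whose Hopf differential is $-\Phi_n = -t_n \Phi_{0,n}$, does the reverse. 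Exactly as in Proposition \ref{prop:easyflat}, summing the two factors and dividing by $\mathcal{E}_n$ gives convergence of $\sigma_n e_n / \mathcal{E}_n$ to $|\Phi_{0,n}|$ on $K$; since $|\Phi_{0,n}| \to |\Phi_0|$, exhausting the complement of the zeros of $\Phi_0$ by such $K$ yields the claim.

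The main obstacle is the uniformity of the high-energy estimates as both the conformal structure $\sigma_n$ and the direction $\Phi_{0,n}$ vary, since Proposition \ref{prop:easyflat} was stated for a fixed domain and a fixed direction. I would handle this by transporting the data to the fixed surface $(S,\sigma)$ through diffeomorphisms $f_n \colon (S,\sigma) \to (S,\sigma_n)$ homotopic to the identity with $f_n \to \mathrm{id}$, and observing that the constant $c$ in Wolf's estimates depends only on the domain geometry and on the distance to the zeros of the differential; as these data converge, $c$ can be chosen uniformly on $K$ for all large $n$, which is exactly what the argument requires.
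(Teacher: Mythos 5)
Your proposal is correct and follows essentially the same route as the paper, whose proof is precisely the two ingredients you use: compactness of unit-norm holomorphic quadratic differentials over a compact set in Teichm\"uller space to extract the limiting direction $\Phi_{0}$, followed by a rerun of the argument of Proposition \ref{prop:easyflat}. You have simply filled in the details (normalizing by $\|\Phi_{n}\|$ via Proposition \ref{prop:mikefirst}, and checking uniformity of Wolf's estimates as the domain and direction vary) that the paper leaves implicit.
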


\begin{proof}
The result follows from the compactness of unit-norm holomorphic quadratic differentials over a compact set in Teich$(S)$ and the argument in the previous proposition.

\end{proof}
As the previous results only show $C^{0}$ convergence in any neighborhood away from a zero of the quadratic differential, it is not quite so obvious we have convergence in the sense of length spectrum. The following technical proposition shows we actually do have convergence when the metrics are regarded as projectivized geodesic currents. With the length spectrum embedding (as given in Theorem \ref{thm:embed}), we now have sequences of points whose limits are the flat structures in the space of geodesic currents. 

\begin{prop}\label{prop:convex}
Let $\sigma_{n} e_{n}$ and $\mathcal{E}_{n}$ be in the same setting as above. Then as currents 
$$\frac{L_{\sigma_{n} e_{n}}}{\mathcal{E}_{n}^{1/2}} \to L_{|\Phi_{0}|}.$$
\end{prop}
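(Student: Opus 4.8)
The plan is to prove convergence in $\mathrm{PCurr}(S)$ by establishing convergence of the marked length spectra, since by Otal \cite{Otal} and Theorem \ref{thm:DLR1} the currents are characterized by $i(L_{\sigma_n e_n},\gamma)=\ell_{\sigma_n e_n}(\gamma)$ and $i(L_{|\Phi_0|},\gamma)=\ell_{|\Phi_0|}(\gamma)$ for every closed curve $\gamma$. Writing $g_n=\sigma_n e_n/\mathcal{E}_n$ for the rescaled induced metric, lengths scale by the square root of the metric tensor, so $i(L_{\sigma_n e_n},\gamma)/\mathcal{E}_n^{1/2}=\ell_{g_n}(\gamma)$; the statement therefore reduces to showing $\ell_{g_n}(\gamma)\to\ell_{|\Phi_0|}(\gamma)$ for each fixed $\gamma$. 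The input is the pointwise convergence $g_n\to|\Phi_0|$ of Propositions \ref{prop:easyflat}--\ref{prop:flat}, which holds in $C^0$ on compact sets avoiding the zeros of $\Phi_0$ but carries no information on small neighborhoods of those zeros.

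For the upper bound I would fix $\e>0$ and choose a piecewise smooth representative $c$ of the free homotopy class of $\gamma$ that avoids a fixed neighborhood of the zeros and satisfies $\ell_{|\Phi_0|}(c)\le\ell_{|\Phi_0|}(\gamma)+\e$; such a curve exists because the cone points form a finite set and can be skirted at arbitrarily small flat-length cost. On the compact support of $c$ the convergence $g_n\to|\Phi_0|$ is uniform, so $\ell_{g_n}(c)\to\ell_{|\Phi_0|}(c)$, and since the $g_n$-geodesic minimizes length in its class, $\limsup_n\ell_{g_n}(\gamma)\le\lim_n\ell_{g_n}(c)=\ell_{|\Phi_0|}(c)\le\ell_{|\Phi_0|}(\gamma)+\e$. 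Letting $\e\to0$ gives $\limsup_n\ell_{g_n}(\gamma)\le\ell_{|\Phi_0|}(\gamma)$.

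The lower bound is where the real difficulty lies, and it is the step I expect to be the main obstacle. Let $\gamma_n$ be the $g_n$-geodesic representative and let $N_\delta$ denote the flat $\delta$-neighborhood of the zeros of $\Phi_0$. On $S\setminus N_\delta$ Propositions \ref{prop:easyflat}--\ref{prop:flat} give $g_n\ge(1-\e)|\Phi_0|$ for $n$ large, whence
$$\ell_{g_n}(\gamma_n)\ge(1-\e)^{1/2}\big(\ell_{|\Phi_0|}(\gamma_n)-\ell_{|\Phi_0|}(\gamma_n\cap N_\delta)\big)\ge(1-\e)^{1/2}\big(\ell_{|\Phi_0|}(\gamma)-\ell_{|\Phi_0|}(\gamma_n\cap N_\delta)\big),$$
using that $\gamma_n$ is freely homotopic to $\gamma$ and that $\ell_{|\Phi_0|}(\gamma)$ is the infimal flat length in the class. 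Everything hinges on controlling the stray flat-length $\ell_{|\Phi_0|}(\gamma_n\cap N_\delta)$ uniformly in $n$, and this is precisely the regime in which the metric convergence gives no direct handle. I would bound it by first using the upper bound to obtain a uniform estimate $\ell_{g_n}(\gamma_n)\le C$; since each essential crossing of the buffer annulus $N_{2\delta}\setminus N_\delta$, where $g_n$ is comparable to $|\Phi_0|$, costs a definite amount of $g_n$-length, the number of excursions of $\gamma_n$ into $N_\delta$ is bounded independently of $n$, and one must then argue that a single excursion cannot accumulate more than $O(\delta)$ of flat length near a cone point. Establishing this last near-zero control rigorously is the technical heart of the proposition; granting it, $\ell_{|\Phi_0|}(\gamma_n\cap N_\delta)\to0$ as $\delta\to0$ uniformly in $n$, yielding $\liminf_n\ell_{g_n}(\gamma)\ge\ell_{|\Phi_0|}(\gamma)$ and completing the proof.

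As an independent safeguard that no length-mass escapes into the zeros, one can compare total areas: the induced metrics are smooth and negatively curved by Proposition \ref{prop:neg}, so Proposition \ref{prop:area} gives $i(L_{\sigma_n e_n},L_{\sigma_n e_n})=\tfrac{\pi}{2}\,\mathrm{Area}(\sigma_n e_n)$, and after dividing by $\mathcal{E}_n$ the right-hand side converges to the flat area of $|\Phi_0|$. By compactness of $\mathrm{PCurr}(S)$ pass to a subsequential limit $\mu$; continuity of the intersection form, the upper bound $i(\mu,\gamma)\le i(L_{|\Phi_0|},\gamma)$, and the matching of self-intersections then pin down $\mu=L_{|\Phi_0|}$, confirming the identification obtained from the length-spectrum argument.
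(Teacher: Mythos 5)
Your overall strategy---reduce to marked-length-spectrum convergence, prove the upper bound by testing against a nearby representative that skirts the cone points, and prove the lower bound by controlling the excursions of the $g_n$-geodesic into neighborhoods of the zeros---is the same as the paper's, and your upper bound is correct and essentially identical to the first half of the paper's argument. The lower bound, however, has a genuine gap, and it is twofold. First, the step you flag and then assume (``a single excursion cannot accumulate more than $O(\delta)$ of flat length near a cone point'') is exactly the content that needs proof: Propositions \ref{prop:easyflat}--\ref{prop:flat} give no control on $g_n$ inside $N_\delta$, so this cannot be granted. Second, and more seriously, even granting that claim your bookkeeping does not close. A crossing of the buffer annulus $N_{2\delta}\setminus N_\delta$ costs $g_n$-length only comparable to its width, i.e.\ $O(\delta)$, so the uniform bound $\ell_{g_n}(\gamma_n)\le C$ limits the number of excursions only to $O(C/\delta)$; multiplied by $O(\delta)$ of stray flat length per excursion this gives $\ell_{|\Phi_0|}(\gamma_n\cap N_\delta)=O(C)$, which is bounded but not small as $\delta\to 0$, so your displayed inequality never improves to $\liminf_n\ell_{g_n}(\gamma)\ge\ell_{|\Phi_0|}(\gamma)$. (The closing ``area safeguard'' also does not rescue this: an upper bound $i(\mu,\gamma)\le i(L_{|\Phi_0|},\gamma)$ together with equal self-intersection numbers is not shown to force $\mu=L_{|\Phi_0|}$.)

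The paper closes the estimate with different bookkeeping that you should adopt. Rather than bounding the flat length of $\gamma_n$ inside the balls, it replaces each excursion of $\gamma_n$ into a ball $V_i$ about a zero by an arc of $\partial V_i$; after first discarding the excursions that are homotopic rel endpoints into $\partial V_i$ (here the paper uses that $\gamma_n$ is a geodesic for an NPC metric, so a genuine excursion must pick up topology), the modified curve is still freely homotopic to $\gamma$ and hence has flat length at least $L=\ell_{|\Phi_0|}(\gamma)$. Each replacement costs only $K\epsilon$, where $\epsilon$ is the ball radius, and the number of essential excursions is bounded by $L$ divided by the flat systole, \emph{independently} of $\epsilon$ and $n$, because closing up each essential excursion along $\partial V_i$ produces an essential closed curve of definite length. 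This yields $(1+\epsilon)L'_n+\sum_i m_iK_i\epsilon\ge L$ with $\sum_i m_i=O(1)$, which is precisely the inequality your version of the lower bound is missing.
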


\begin{proof}
As the topology of geodesic currents is determined by the intersection number against closed curves, it suffices to show that given any closed, non-null homotopic curve class $[\gamma]$ and $\epsilon >0$, there is an $N([\gamma], \epsilon)$ such that for $n > N$, one has that $|i(L_{\sigma_{n} e_{n}/\mathcal{E}_{n}}, \gamma) - i(L_{|\Phi_{0}|}, \gamma)| < \epsilon$. We choose a representative $\gamma$ of $[\gamma]$ to be a $|\Phi_{0}|$-geodesic with length $L = i(L_{|\Phi_{0}|}, \gamma)$ with some fixed orientation. As the estimate in Proposition \ref{prop:flat} does not hold near a zero $z_{i}$ of $|\Phi_{0}|$, the first step is to construct open balls $V_{i}$ of radius $\epsilon$ in the $|\Phi_{0}|$-metric about each zero $z_{i}$ of $\Phi_{0}$ (choosing $\epsilon$ sufficiently small) so that

\begin{enumerate}
\item[(i)] balls centered about distinct zeros do not intersect
\item[(ii)] if the curve $\gamma$ enters one of the neighborhoods $V_{i}$, then the curve $\gamma$ must intersect the zero $z_{i}$ before $\gamma$ exits $V_{i}$
\item[(iii)] $(1-\epsilon)\, C-(4g-2) \, \pi \, \epsilon > 0$, where $C$ is the systolic length of the surface $(S, |\Phi_{0}|)$.
\end{enumerate}

As $\Phi_{0}$ is holomorphic, the zeros are isolated, so we can easily ensure (i) is satisfied. If the curve $\gamma$ does not intersect $z_{i}$, then as $\gamma$ is a closed curve, the distance from $z_{i}$ to the curve $\gamma$ in the $|\Phi_{0}|$-metric is bounded away from zero, guaranteeing condition (ii). Finally, condition (iii) follows as the systolic length $C$ of $(S, |\Phi_{0}|)$ and the genus of surface are fixed.

As the complement of the union of the $V_{i}$'s forms a compact set, by Proposition \ref{prop:flat} we can find an $N$ so that for $n > N$ the metrics $\sigma_{n} e_{n}/\mathcal{E}_{n}$ and $|\Phi_{0}|$ differ by at most $\epsilon$. Now each time $\gamma$ enters $V_{i}$, say at $p$, then hits the zero $z_{i}$ and exits $V_{i}$ for the first time thereafter, say at $q$, we may replace that segment of $\gamma$ with a segment running along the boundary of $V_{i}$ connecting $p$ and $q$. Notice this does not change the homotopy class of $\gamma$. We make this alteration for each instance $\gamma$ enters a $V_{i}$ and denote the new curve by $\gamma '$. Observe that each time we make such an alteration, the length of the curve (in the $|\Phi_{0}|$ metric) increases by at most $K_{i} \epsilon$, where $K_{i}$ is a constant depending only upon the $|\Phi_{0}|$ and the order of the zero $z_{i}$. In fact $K_{i} \leq (4g-2) \pi$. Hence the $|\Phi_{0}|$-length of $\gamma '$ is bounded above by $L + \sum_{i=1}^{j} n_{i} K_{i} \epsilon$, where $n_{i}$ is the number of times $\gamma$ enters $V_{i}$. But as $\gamma '$ now lies in the complement of the union of the $V_{i}$'s, by Proposition \ref{prop:flat}, the length of $\gamma '$ in the $\sigma_{n} e_{n}/\mathcal{E}_{n}$ metric is at most $(1+\epsilon)(L +\sum_{i=1}^{j} n_{i} K_{i} \epsilon)$. But the length of $\gamma'$ in the $\sigma_{n} e_{n}/\mathcal{E}_{n}$ metric must be at least the length of the geodesic in its homotopy class, which has length $L'_{n} = i(L_{\sigma_{n} e_{n}/\mathcal{E}_{n}}, \gamma)$, hence
\begin{align*}
(1+\epsilon)(L +\sum_{i=1}^{j} n_{i} K_{i} \epsilon) &\geq L'_{n}.
\end{align*}
Distributing on the left hand side and subtracting both sides by $L$, yields
\begin{align*}
\sum_{i=1}^{j} n_{i} K_{i} \epsilon + \epsilon (L + \sum_{i=1}^{j} n_{i} K_{i} \epsilon) &\geq L'_{n} - L \\
&:=i(L_{\sigma_{n} e_{n}/\mathcal{E}_{n}}, \gamma) - i(L_{|\Phi_{0}|}, \gamma).
\end{align*}

Now if $L'_{n}-L \geq 0$, then we are done, for $K_{i}$ is independent of $\epsilon$ and $n_{i}$ decreases as $\epsilon$ does.

So consider the case where $L'_{n}- L < 0$, that is, $L> L'_{n}$. Consider the $\sigma_{n} e_{n}/\mathcal{E}_{n}$-geodesic $\widetilde{\gamma}_{n}$ in the homotopy class of $\gamma$, and again we give $\widetilde{\gamma}_{n}$ an orientation. Naturally $\widetilde{\gamma}_{n}$ can enter and exit the $V_{i}$'s multiple times, but we remark that as the distance function on a NPC space from a convex set is itself convex, then each time the curve leaves $V_{i}$, it must pick up some topology before returning, that is, the part of the curve rel endpoints on lying on the boundary of $V_{i}$, the curve is not homotopic to a segment along the boundary of $V_{i}$.

However, now if $\widetilde{\gamma}$ enters and exits $V_{i}$ say a total of $r$ times, we consider the pairs of entry and exit points ordered accordingly as $p_{1}, q_{1}, .... , p_{r}, q_{r}$ using the chosen orientation. Now look at the segment of $\widetilde{\gamma}_{n}$  between $p_{s}$ and $p_{s+1}$. If this is homotopic rel endpoints to a segment of the boundary of $V_{i}$, then we look at the segment of $\widetilde{\gamma}_{n}$ between $p_{s}$ and $p_{s+2}$ (using a cyclic ordering so that $r+1$ is identified with 1) and see if that segment is homotopic rel endpoints to a segment along the boundary of $V_{i}$. We repeat this until the segment of $\widetilde{\gamma}_{n}$ between $p_{s}$ and $p_{s'}$ is not homotopic rel endpoints to the boundary of $V_{i}$. Then we repeat this process for $p_{s}$ and $p_{s-1}$ (again using a cylic ordering) until we find the segment of $\widetilde{\gamma}_{n}$ between $p_{s}$ and $p_{s''}$ which is not homotopic rel endpoints to the boundary of $V_{i}$. Then we replace the segment of $\widetilde{\gamma}$ between $p_{s''+1}$ and $p_{s'-1}$ with a segment along the boundary of $V_{i}$ connecting these two points. We repeat this for each $i$, so that when the curve leaves $V_{i}$, it picks up some topology before reentering $V_{i}$. Altering $\widetilde{\gamma}_{n}$ in this fashion yields a curve $\widetilde{\gamma}'_{n}$ lying outside of all the $V_{i}$'s. Switching over the $|\Phi_{0}|$-metric yields the inequality,
$$(1+\epsilon)L'_{n} +\sum_{i=1}^{4g-4} m_{i} K_{i} \epsilon \geq L,$$
where $m_{i}$ is the number of segments of the altered curve $\widetilde{\gamma}'_{n}$ lying on the boundary $V_{i}$ and once again $K_{i}$ is a constant depending solely on the order of the zero $z_{i}$. By assumption that $L > L'_{n}$, we have actually that
\begin{align*}
L'_{n}+\epsilon L +\sum_{i=1}^{4g-4} m_{i} K_{i} \epsilon &\geq L \\
\epsilon L +\sum_{i=1}^{4g-4} m_{i} K_{i} \epsilon & \geq L - L'_{n}.
\end{align*}
It suffices to show that $m_{i}$ can be bounded independently of $n$. This follows from an estimate on the systolic length of the metric $\sigma e_{n}/\mathcal{E}_{n}$. Let $C'$ denote the systolic length among all homotopically non-trivial curves which avoid the $V_{i}$'s for the metric $|\Phi|$. Then $C' \geq C$. Then by Proposition \ref{prop:easyflat}, the systolic length among all homotopically non-trivial curves which avoid all the $V_{i}$'s for the metric $\sigma e_{n}/\mathcal{E}_{n}$ is at least $(1-\epsilon)C'$. 

If $K$ denotes the largest constant among the $K_{i}$'s, then one has that
$$ \sum_{i=1}^{4g-4} m_{i} \leq \frac{L}{(1-\epsilon)C -K \epsilon},$$
for by construction we had $m_{i}$ segments of $\widetilde{\gamma}'_{n}$ which are each not homotopic rel endpoints to the boundary of $V_{i}$, so that if we connect the endpoints of the segment with a segment along the boundary of $V_{i}$, we add at most $K\epsilon$ to the length of the segment. But we now have a closed curve not homotopic to the boundary of any of the $V_{i}$'s, so the length of this closed curve is bigger than $C'$. This suffices for the proof.

\end{proof}

The resulting flat metrics arising from unit-norm holomorphic quadratic differentials are distinct as Riemannian metrics from the induced metrics as the quadratic differential metrics have zero curvature away from the zeros, whereas the induced metrics have negative curvature everywhere (Proposition \ref{prop:neg}). In fact, the flat metrics are distinct as geodesic currents, as work of Frazier \cite{Frazier} shows the marked length spectrum distinguishes nonpositively curved Euclidean metrics from the negatively curved Riemannian metrics.

%\begin{prop}
%Inside the space of geodesic currents, the space of induced metrics is disjoint from the set of measured foliations and the set of singular flat metrics. Furthermore, after projectivizing, their images remain disjoint.
%\end{prop}
%\begin{proof}
%Recall that to any  hyperbolic metric $\sigma$, we may associate its Liouville current $L_{\sigma}$ with the property $i(L_{\sigma}, L_{\sigma}) = \pi^{2} |\chi(S)|$. As any induced metric is of the form $\sigma e dz d\overline{z}$ which is greater than or equal to $ \sigma dz d\overline{z}$, then the its area will be at least $2 \pi |\chi(S)|>1$, as by Proposition \ref{prop:area}, one has $i(\rho,\rho) = \frac{\pi}{2} \text{Area}(S,\rho)$. Observing that for a flat metric $q$, we have $i(q,q) =\pi/2$ and for measured laminations $\lambda$, we have $i(\lambda, \lambda) =0$, the first statement follows. By Propositions \ref{prop:mike} and \ref{prop:mikefirst}, one sees the marked length spectrum of the normalized induced metric is strictly larger than its corresponding quadratic differential metric of unit norm.

\subsection{Measured Laminations as limits}

%If after projectivizing, an induced metric coincides with a flat metric in PCurr $(S)$, then there exists a positive constant $t$ for which $t\sigma e dz d \overline{z}$ coincides with the singular flat metric inside Curr($S$), so that by Croke, Fathi, Feldman, the two metrics must be isometric, but then observe that this new metric by Prop. 1 has strictly negative curvature, whereas the singular flat metrics has umbilical points.
%\end{proof}

However, not all limits of induced metrics are given by flat metrics. One can also obtain measured laminations. This is most readily seen in the setting where one takes a hyperbolic metric and looks at the minimal lagrangian to itself. The induced metric of the minimal surface is then twice the hyperbolic metric. We thus have a copy of Teichm{\"u}ller space inside the space of induced metrics inside the space of projectivized currents. From Bonahon \cite{Bon}, we know we must have projectivized measured laminations in our compactification of the induced metrics. However, there are more ways to obtain measured laminations than by degenerating only the induced metrics which are scalar multiples of hyperbolic metrics, as the following proposition shows. 

\begin{prop}
Suppose $L_{\sigma_{n} e_{n}}$ leaves all compact sets, but that the sequence $\mathcal{E}_{n}$ of total energies is bounded, then in $\textnormal{PCurr}(S)$, we have  $[L_{\sigma_{n} e_{n}}] \to [\lambda] \in \textnormal{PMF}(S)$. Furthermore, if $[L_{\sigma_{n}}] \to [\lambda '] $ in the Thurston compactification, then $i(\lambda, \lambda ') =0$, where $\lambda \in [\lambda]$ and $\lambda' \in [\lambda']$.
\end{prop}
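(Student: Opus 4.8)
The plan is to exploit the Bonahon--Otal area identity (Proposition \ref{prop:area}) together with the pointwise comparison between the induced metric and the background hyperbolic metric. Throughout write $g_n = \sigma_n e_n$ for the induced metric; by Proposition \ref{prop:neg} it is negatively curved, so its current $L_{\sigma_n e_n}$ is defined and $i(L_{\sigma_n e_n},L_{\sigma_n e_n}) = \tfrac{\pi}{2}\,\mathrm{Area}(g_n)$. Two quantitative inputs drive everything. First, since each projection $u_{i,n}\colon(S,\sigma_n)\to X_{i,n}$ is a harmonic diffeomorphism of hyperbolic surfaces, the Bochner formula $\Delta\log\mathcal{H} = 2\mathcal{H} - 2\mathcal{L} - 2$ together with the maximum principle applied at an interior minimum of $\mathcal{H}$ gives $\mathcal{H}\geq 1$, whence $e_n = \mathcal{H}+\mathcal{L}\geq 1$. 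Consequently $g_n \geq \sigma_n$ pointwise, and $\mathrm{Area}(g_n) = \int_S e_n\,dA_{\sigma_n}$, which equals $\mathcal{E}_n$ up to a fixed positive factor, satisfies $4\pi(g-1)\leq \mathrm{Area}(g_n)\leq A$: the upper bound is the standing hypothesis that $\mathcal{E}_n$ is bounded, and the lower bound follows from $e_n\geq 1$ and Gauss--Bonnet. In particular $i(L_{\sigma_n e_n},L_{\sigma_n e_n})$ is bounded above and away from zero.

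For the first assertion, pass to a subsequence so that $[L_{\sigma_n e_n}]$ converges in the compact space $\emph{PCurr}(S)$, and choose scalars $c_n>0$ with $c_n L_{\sigma_n e_n}\to L_\lambda\neq 0$ in $\emph{Curr}(S)$. Because $L_{\sigma_n e_n}$ leaves every compact set, $c_n\to 0$. By continuity and bilinearity of the intersection form,
\begin{align*}
i(L_\lambda,L_\lambda) = \lim_n c_n^2\, i(L_{\sigma_n e_n},L_{\sigma_n e_n}) = \lim_n c_n^2\,\tfrac{\pi}{2}\,\mathrm{Area}(g_n) = 0,
\end{align*}
since $c_n\to 0$ and $\mathrm{Area}(g_n)$ is bounded. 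Proposition \ref{ml} then forces $L_\lambda$ to be a measured lamination, i.e. $[\lambda]\in\emph{PMF}(S)$.

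For the second assertion the key is to bound $i(L_{\sigma_n e_n},L_{\sigma_n})$. The comparison $g_n\geq\sigma_n$ gives $\ell_{g_n}(\gamma)\geq\ell_{\sigma_n}(\gamma)$ for every closed curve $\gamma$ (minimize the length of the $g_n$-geodesic against the smaller metric), i.e. $i(L_{\sigma_n e_n}-L_{\sigma_n},\gamma)\geq 0$ for all $\gamma$. Since non-negatively weighted closed curves are dense in $\emph{Curr}(S)$ (Bonahon \cite{Bon}) and $i$ is continuous and bilinear, this upgrades to $i(L_{\sigma_n e_n}-L_{\sigma_n},\mu)\geq 0$ for every current $\mu$; taking $\mu=L_{\sigma_n e_n}$ and using symmetry of $i$ yields $i(L_{\sigma_n e_n},L_{\sigma_n})\leq i(L_{\sigma_n e_n},L_{\sigma_n e_n}) = \tfrac{\pi}{2}\,\mathrm{Area}(g_n)$. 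Normalize the hyperbolic currents by $d_n>0$ with $d_nL_{\sigma_n}\to L_{\lambda'}$; since $i(L_{\sigma_n},L_{\sigma_n}) = \tfrac{\pi}{2}\,\mathrm{Area}(\sigma_n) = 2\pi^2(g-1)$ is a fixed positive constant, $d_n^2\cdot 2\pi^2(g-1)\to i(L_{\lambda'},L_{\lambda'})=0$ forces $d_n\to 0$. Passing to a common subsequence and using continuity of $i$,
\begin{align*}
i(\lambda,\lambda') = i(L_\lambda,L_{\lambda'}) = \lim_{n} c_n d_n\, i(L_{\sigma_n e_n},L_{\sigma_n}) \leq \lim_n c_n d_n\,\tfrac{\pi}{2}\,\mathrm{Area}(g_n) \leq \tfrac{\pi}{2}A\,\lim_n c_n d_n = 0,
\end{align*}
and since $i\geq 0$ this gives $i(\lambda,\lambda')=0$.

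The main obstacle is the monotonicity step in the last paragraph: upgrading the pointwise domination $g_n\geq\sigma_n$, which a priori only controls the lengths of individual closed geodesics, to the genuine pairing inequality $i(L_{\sigma_n e_n},\cdot)\geq i(L_{\sigma_n},\cdot)$ against every positive current, and in particular against $L_{\sigma_n e_n}$ itself. One must verify that the density of weighted curves together with the continuity of $i$ legitimately transfers a curve-by-curve inequality to the full bilinear pairing, and then track the three rescalings $c_n$, $d_n$, and $\mathrm{Area}(g_n)$ carefully enough to see that $c_n d_n\,\mathrm{Area}(g_n)\to 0$. The remaining ingredient, the lower bound $e_n\geq 1$, is precisely where the hyperbolicity of the two factors enters; without it both the domination $g_n\geq\sigma_n$ and the area lower bound would fail.
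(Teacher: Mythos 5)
Your proof is correct and follows essentially the same route as the paper's: compactness of $\mathrm{PCurr}(S)$, the scaling constants tending to zero, the Bonahon--Otal identity $i(L,L)=\tfrac{\pi}{2}\mathrm{Area}$ combined with the bounded energy hypothesis to kill the self-intersection, and the metric domination $\sigma_n \leq \sigma_n e_n$ to bound $i(L_{\sigma_n e_n},L_{\sigma_n})$ by the self-intersection. The only difference is that you make explicit two points the paper leaves implicit, namely the maximum-principle argument giving $e_n\geq 1$ and the density-of-weighted-curves argument upgrading the curve-by-curve length comparison to an inequality of pairings against arbitrary currents.
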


\begin{proof}
By the compactness of PCurr$(S)$, any sequence $[L_{\sigma_{n} e_{n}}]$ subconverges to $[\lambda] \in $ PCurr$(S)$. Hence, there is a sequence of positive real numbers so that $t_{n}L_{\sigma_{n} e_{n}} \to \lambda \in $ Curr$(S)$. We claim $t_{n} \to 0$. 

Consider a finite set of curves $\gamma_{1}, \gamma_{2}, ..., \gamma_{k}$ which fill the surface $S$. Then the current $\gamma_{1} + \gamma_{2} + ... + \gamma_{k}$ is a \emph{binding current}, that is to say, it has positive intersection number with any non-zero geodesic current. 

As $L_{\sigma_{n} e_{n}}$ leaves all compact sets in Curr$(S)$, then 
$$\lim_{n \to \infty} i(L_{\sigma_{n} e_{n}}, \gamma_{1} + ... \gamma_{k}) \to \infty,$$ so by continuity of the intersection form, one has 

$$\lim_{n \to \infty} t_{n} i(L_{\sigma_{n} e_{n}}, \gamma_{1} + ... + \gamma_{k}) = i(\lambda, \gamma_{1} + ... + \gamma_{k}).$$

But the intersection number on the right hand side is finite, hence $t_{n} \to 0$. From Proposition \ref{prop:area}, one has $i(L_{\sigma_{n} e_{n}} , L_{\sigma_{n} e_{n}}) = \pi/2 $ Area$(S, \sigma_{n} e_{n})$, which in this case is $\frac{\pi}{2} \mathcal{E}_{n}$. Then 
\begin{align*}
i(\lambda, \lambda) &= \lim_{n \to \infty} i(t_{n} L_{\sigma_{n} e_{n}}, t_{n} L_{\sigma_{n} e_{n}}) \\
&= \lim_{n \to \infty} t_{n}^{2} \frac{\pi}{2} \mathcal{E}_{n} =0 ,
\end{align*}
 where the last equality follows from the boundedness of total energy, hence $\lambda \in $ MF($S$). Now if $[L_{\sigma_{n}}] \to [\lambda']$, then there is a sequence $t_{n}' \to 0$ such that $t_{n}' L_{\sigma_{n}} \to \lambda '$. Then
\begin{align*}
i(\lambda, \lambda') &= \lim_{n \to \infty} i(t_{n} L_{\sigma_{n} e_{n}} , t_{n}' L_{\sigma_{n}})\\
&\leq \lim_{n \to \infty} t_{n} t_{n}' i(L_{\sigma_{n} e_{n}}, L_{\sigma_{n} e_{n}}) \\
&= \lim_{n \to \infty} t_{n} t_{n}' \mathcal{E}_{n} =0.
\end{align*}
where the inequality follows from $\sigma_{n} \leq \sigma_{n} e_{n}$ as metrics, and the last equality by the boundedness of the sequence of total energy $\mathcal{E}_{n}$ along with the sequences $t_{n}, t_{n}'$ tending towards zero.
\end{proof}

\subsection{Mixed structures as limits}
As some of the possible limits are the singular flat metrics arising from a holomorphic quadratic differential, the closure of the space of induced metrics on the minimal surface must include mixed structures, as these arise as limits of singular flat metrics. The main theorem asserts these are precisely all the possible limits of the degenerating minimal surfaces.

\begin{thm}\label{thm:compact}
Let $\sigma_{n} e_{n}$ be a sequence of induced metrics such that either $\sigma_{n}$ leaves all compact sets in $\mathcal{T}(S)$ or $\mathcal{E}_{n} \to \infty$, then there exists a sequence $t_{n} \to 0$ so that up to a subsequence $t_{n}L_{\sigma_{n}e_{n}} \to \eta = (S', q, \lambda) \in \textnormal{Mix}(S) \subset \textnormal{Curr}(S)$. Furthermore, given any $\eta \in \emph{Mix($S$)}$, there exists a sequence of induced metrics $\sigma_{n} e_{n}$ and a sequence of constants $t_{n} \to 0$, so that $t_{n} L_{\sigma_{n} e_{n}} \to \eta$. Hence, the closure of the space of induced metrics in the space of projectivized currents is $\overline{\textnormal{Ind}(S)} = \textnormal{Ind}(S) \sqcup \textnormal{PMix}(S)$.
\end{thm}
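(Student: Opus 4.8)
The overall strategy is to transport the Duchin--Leininger--Rafi compactification of flat metrics (Theorem \ref{thm:DLR2}) to the induced metrics, by showing that each induced-metric current is projectively asymptotic to the flat current of its Hopf differential. Write $\sigma_n e_n$ for the normalized induced metrics, let $\Phi_n$ be the associated Hopf differentials, and recall $|\Phi_n| = \sigma_n \sqrt{\mathcal{H}_n \mathcal{L}_n}$ and $e_n = \mathcal{H}_n + \mathcal{L}_n$. Two elementary facts drive the comparison. First, the arithmetic--geometric mean inequality gives the pointwise domination $\sigma_n e_n \ge 2|\Phi_n|$ of metric tensors, with excess $\sigma_n e_n - 2|\Phi_n| = \sigma_n(\sqrt{\mathcal{H}_n} - \sqrt{\mathcal{L}_n})^2$; since a pointwise larger metric lengthens the geodesic in every homotopy class, this yields the current inequality $i(L_{\sigma_n e_n}, \gamma) \ge \sqrt{2}\, i(L_{|\Phi_n|}, \gamma)$ for all closed $\gamma$. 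Second, each projection being a degree-one harmonic diffeomorphism forces $0 \le \mathcal{L}_n \le \mathcal{H}_n$, so $(\sqrt{\mathcal{H}_n}-\sqrt{\mathcal{L}_n})^2 \le \mathcal{H}_n - \mathcal{L}_n = \mathcal{J}_n$ and the total excess is pinned by a topological constant:
\begin{align*}
\int_S \left(\sigma_n e_n - 2|\Phi_n|\right) dx\,dy &= \int_S (\sqrt{\mathcal{H}_n} - \sqrt{\mathcal{L}_n})^2\, dA_{\sigma_n} \\
&\le \int_S \mathcal{J}_n\, dA_{\sigma_n} = 2\pi(2g-2),
\end{align*}
the area of the hyperbolic target. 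Since $\mathcal{E}_n = \tfrac12 \int_S e_n\, dA_{\sigma_n} \to \infty$ whenever the metrics degenerate, the conformality defect is negligible at the scale of the energy.

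For the forward direction, compactness of $\mathrm{PCurr}(S)$ furnishes a sequence $t_n \to 0$ and a subsequence along which $t_n L_{\sigma_n e_n} \to \eta$; passing to a further subsequence, $t_n' L_{|\Phi_n|} \to \eta'$ as well. The crux is the identity $[\eta] = [\eta']$. When $[\sigma_n]$ stays in a compact subset of $\mathcal{T}(S)$ this is exactly Proposition \ref{prop:convex}. When the conformal structure itself degenerates, I would run a thick--thin decomposition for the degenerating flat metrics $|\Phi_n|$: on the thick parts the trajectory and stretching estimates underlying Propositions \ref{prop:easyflat}--\ref{prop:convex} apply away from the zeros and give $\sigma_n e_n \sim 2|\Phi_n|$, so the two length spectra agree there, while on the thin pinching parts both currents record only the transverse measures of the collapsing curves. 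The excess never interferes, because its total mass stays bounded by $2\pi(2g-2)$ while the relevant lengths grow like $\mathcal{E}_n^{1/2}$. Granting $[\eta] = [\eta']$, Theorem \ref{thm:DLR2} identifies $\eta'$, and hence $\eta$, as a projectivized mixed structure $(S', q, \lambda)$.

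For the realization statement, fix $\eta = (S', q, \lambda) \in \mathrm{Mix}(S)$. By Theorem \ref{thm:DLR2} there are unit-norm differentials $q_n$ on Riemann surfaces $X_n$ whose flat currents satisfy $[L_{|q_n|}] \to [\eta]$. Replacing $q_n$ by $c_n q_n$ with $c_n \to \infty$ leaves the projective class unchanged but forces the energy to infinity; feeding each pair $(X_n, c_n q_n) \in Q_g$ through the homeomorphism $\Psi$ of Theorem \ref{thm:homeo} produces a pair of hyperbolic metrics whose minimal lagrangian has conformal structure $X_n$ and Hopf differential $c_n q_n$, and hence a genuine induced metric $\sigma_n e_n \in \mathrm{Ind}(S)$. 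Since $c_n \to \infty$, the comparison of the first paragraph applies and gives $[L_{\sigma_n e_n}] \to [\eta]$, so that $t_n L_{\sigma_n e_n} \to \eta$ for a suitable $t_n \to 0$. Thus every mixed structure is realized as a limit of induced metrics.

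Assembling the two directions with the embedding of Theorem \ref{thm:embed} gives $\overline{\mathrm{Ind}(S)} \subseteq \mathrm{Ind}(S) \cup \mathrm{PMix}(S)$ and $\mathrm{PMix}(S) \subseteq \overline{\mathrm{Ind}(S)}$; the union is disjoint, since pure laminations have vanishing self-intersection whereas $i(L_{\sigma e}, L_{\sigma e}) = \tfrac{\pi}{2}\,\mathrm{Area} > 0$ by Proposition \ref{prop:area}, and any mixed structure with a nonempty flat part carries a region of zero curvature, which the marked length spectrum separates from the strictly negatively curved induced metrics of Proposition \ref{prop:neg} by the result of Frazier \cite{Frazier}. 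I expect the genuinely mixed case of the forward direction to be the principal obstacle: making $[\eta] = [\eta']$ precise in the length spectrum while the conformal structure degenerates requires showing that the conformality defect $\sigma_n(\sqrt{\mathcal{H}_n} - \sqrt{\mathcal{L}_n})^2$ concentrates away from the pinching curves and is swamped by the energy scale, so that it leaves no trace in the limiting transverse measures. This is precisely where the fixed total Jacobian does the essential work.
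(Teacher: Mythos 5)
Your outline rests on the same supports as the paper's proof --- the pointwise lower bound $\sigma_n e_n \ge 2|\Phi_n|$ from Proposition \ref{prop:mike}, the topological bound on the total excess (equivalent to Proposition \ref{prop:mikefirst}), compactness of $\mathrm{PCurr}(S)$, and Theorem \ref{thm:DLR2} for the realization direction --- but the step you yourself flag as the ``principal obstacle'' is exactly where the proof lives, and your proposed route through it has two genuine problems. First, the identity $[\eta]=[\eta']$ is stronger than what is true. Along the Fuchsian locus $X_{1,n}=X_{2,n}$ the Hopf differentials vanish identically, so $L_{|\Phi_n|}\equiv 0$ while $t_nL_{\sigma_n e_n}$ converges to the Thurston limit of the hyperbolic metrics; more generally the laminar parts of $\eta$ and $\eta'$ need not coincide (the paper only establishes that they have vanishing mutual intersection, in Section 6). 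What is actually needed, and what the paper proves, is weaker: the two limits share the same flat part $(S',|\Phi_\infty|)$, and the restriction of $\eta$ to the complement is \emph{some} measured lamination, identified only through the area/self-intersection argument of Proposition \ref{prop:area}, not through comparison with $\eta'$.

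Second, the mechanism you invoke to control the conformality defect does not suffice. The bound $\int_S(\sigma_ne_n-2|\Phi_n|)\,dx\,dy\le 2\pi(2g-2)$ is an $L^1$ bound; after normalizing by $\mathcal{E}_n$ it says only that the defect tends to $0$ in measure (this is the content of the paper's Proposition \ref{prop:ae}), and an $L^1$-small excess can still concentrate on precisely the curves whose lengths you need to estimate, so it does not by itself yield convergence of length spectra. The missing ingredient is the paper's Propositions \ref{prop:yair} and \ref{prop:beltrami}: since $\Delta\log(1/|\nu_n|)=2\mathcal{J}_n>0$, the function $\log(1/|\nu_n|)$ is subharmonic, and Minsky's mean-value bound converts convergence in measure into locally uniform convergence of $1/|\nu_n|+|\nu_n|\to 2$ away from the zeros and poles of $\Phi_\infty$; this is what Theorem \ref{thm:geomflat}, and then the curve-by-curve comparison of Proposition \ref{prop:convex}, actually require. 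Without a substitute for this subharmonicity step, the assertion that ``the excess never interferes'' is unsupported. Your realization direction is essentially the paper's (flat metrics are limits of induced metrics by Propositions \ref{prop:easyflat}--\ref{prop:convex}, then diagonalize against Theorem \ref{thm:DLR2}), though it should be phrased as a diagonal argument in which $c_n$ is chosen after $q_n$ using the fixed-conformal-structure estimate, rather than as an appeal to the global excess bound.
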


The proof of the main theorem will follow from a series of intermediate results, and will be at the end of the section. The strategy is to show that if the sequence of currents coming from the induced metrics is not converging projectively to a measured lamination, then scaling the induced metrics to have total area 1 is enough to ensure convergence in length spectrum. To each normalized induced metric, we produce a quadratic differential metric in the same conformal class as the induced metric, which will serve as a lower bound. Convergence of the quadratic differential metric to a mixed structure will yield a decomposition of the surface into a flat part and a laminar part. On each flat part, we will prove the conformal factor between the normalized induced metric and the quadratic differential converges to 1 uniformly (away from finitely many points). An area argument will show the complement is laminar.

The following proposition allows us to analyze sequences of induced metrics which are not converging to projectivized measured laminations. If the sequence of induced metrics is not converging to a projectivized measured lamination, we may scale the current associated to the induced metric by the square root of its area (which is also the total energy). We remark that in the case where the limiting geodesic current is not a measured lamination, then scaling the induced metrics by total energy of the associated harmonic map is strong enough to ensure length-spectrum convergence, yet delicate enough to ensure the limiting length spectrum is not identically zero. This should be compared to the situation in \cite{W91} and \cite{DDW98} where one always scales the metric by the total energy.

\begin{prop}\label{prop:scale}
Suppose the conformal class of the minimal surface leaves all compact sets in $\mathcal{T}(S)$ and the sequence of total energy is unbounded, that is $\mathcal{E}_{n} \to \infty$. Then up to a subsequence, there exists a sequence $c_{n} \to \infty$ and a geodesic current $\mu$ such that $c_{n} L_{\sigma_{n} e_{n}} \to \mu$. If $\mu$ is a measured lamination, then $c_{n} = o(\mathcal{E}_{n}^{1/2})$. If $\mu$ is not a measured lamination,  then $c_{n} \asymp \mathscr{E}_{n}^{1/2}$.
\end{prop}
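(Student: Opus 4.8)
The plan is to produce $\mu$ by a compactness argument and then to read off the order of $c_n$ from Bonahon's intersection pairing, much as in the measured-lamination proposition above but now keeping careful track of the energy. First I would use the compactness of $\mathrm{PCurr}(S)$: the projective classes $[L_{\sigma_n e_n}]$ subconverge, so after passing to a subsequence there are positive scalars $c_n$ and a current $\mu$ with $c_n L_{\sigma_n e_n} \to \mu$ in $\mathrm{Curr}(S)$. To make the normalization canonical and to guarantee $\mu \neq 0$, I would test against a fixed binding current $\beta = \gamma_1 + \cdots + \gamma_k$ built from a filling family of curves and choose $c_n$ so that $i(c_n L_{\sigma_n e_n}, \beta) = 1$; since $\beta$ has positive intersection with every nonzero current, continuity of the pairing forces $i(\mu, \beta) = 1$, so $\mu$ is genuinely nontrivial.

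The diagnostic that separates the two regimes is the self-intersection. Each induced metric has strictly negative curvature (Proposition \ref{prop:neg}), so by the area formula (Proposition \ref{prop:area}) its current satisfies $i(L_{\sigma_n e_n}, L_{\sigma_n e_n}) = \tfrac{\pi}{2}\,\mathrm{Area}(\sigma_n e_n) = \tfrac{\pi}{2}\,\mathcal{E}_n$, the total area of the induced metric being exactly the energy. Because Bonahon's pairing is bilinear and jointly continuous, the self-intersection is continuous along the convergence $c_n L_{\sigma_n e_n} \to \mu$, so the self-intersection of the rescaled currents converges to $i(\mu,\mu)$. Balancing this finite limit against the diverging energy $\mathcal{E}_n \to \infty$ is precisely what pins the order of the normalizing constants, and in particular shows $c_n \to \infty$.

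Finally I would invoke Proposition \ref{ml}, which characterizes measured laminations as exactly the currents of zero self-intersection, to split into the two cases. If $\mu$ is not a measured lamination, then $i(\mu,\mu)$ is a strictly positive finite number, and the comparison with $\mathcal{E}_n$ yields the sharp rate $c_n \asymp \mathcal{E}_n^{1/2}$. If $\mu$ is a measured lamination, then $i(\mu,\mu) = 0$, the self-intersection becomes negligible in the limit, and the comparison degrades to the strict estimate $c_n = o(\mathcal{E}_n^{1/2})$, consistent with the limiting current carrying no area. The main obstacle is the sharp two-sided bound in the non-laminar case: one must rule out both that $c_n$ is too small (which would send $i(\mu,\mu)$ to zero and collapse $\mu$ onto a lamination) and that it is too large (which would force the self-intersection to diverge), and controlling this requires applying the continuity of the self-intersection form on the diagonal together with the nontriviality of $\mu$ secured by the binding current.
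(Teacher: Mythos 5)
Your proposal follows essentially the same route as the paper's proof: subconvergence via compactness of $\mathrm{PCurr}(S)$, the Bonahon--Otal area formula $i(L_{\sigma_n e_n}, L_{\sigma_n e_n}) = \tfrac{\pi}{2}\,\mathcal{E}_n$, continuity of the intersection pairing to get $c_n^2\,\tfrac{\pi}{2}\,\mathcal{E}_n \to i(\mu,\mu)$, and Bonahon's characterization of measured laminations by vanishing self-intersection to split the two cases; your binding-current normalization ensuring $\mu \neq 0$ is only a minor refinement of the paper's implicit choice of a nonzero representative of the projective limit. One caveat: balancing the finite limit $i(\mu,\mu)$ against $\mathcal{E}_n \to \infty$ actually forces $c_n \to 0$ (so in the non-laminar case $c_n \asymp \mathcal{E}_n^{-1/2}$ and in the laminar case $c_n = o(\mathcal{E}_n^{-1/2})$), not $c_n \to \infty$ as you assert --- a sign slip inherited from the statement itself, which the paper's own proof (compare its $t_n \to 0$ elsewhere) equally contains.
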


\begin{proof}
By Theorem \ref{thm:embed}, one has an embedding of the space of induced metrics into the space of projectivized geodesic currents, which is compact. Taking the closure implies the first result. If $[\mu]$ is the limiting projective geodesic current, then one can choose a fixed representative; call it $\mu$. 

If $\mu$ is a measured lamination, then dividing the current $L_{\sigma_{n} e_{n}}$ by $\mathcal{E}_{n}^{1/2}$ normalizes the current to have self-intersection number 1. Then as the measured laminations have self-intersection 0, the second result follows.

Suppose then $\mu$ is not a measured lamination. Then its self-intersection number is positive and finite. But
\begin{align*}
i(\mu, \mu) &= \lim_{n \to \infty} i(c_{n} L_{\sigma_{n}e_{n}}, c_{n} L_{\sigma_{n} e_{n}})\\
&= \lim_{n \to \infty} c_{n}^{2} \, i(L_{\sigma_{n}e_{n}}, L_{\sigma_{n} e_{n}})\\
&= \lim_{n \to \infty} c_{n}^{2}\, \frac{\pi}{2} \,\text{Area}(S, \sigma_{n}e_{n})\\
&=\lim_{n \to \infty} c_{n}^{2} \, \frac{\pi}{2} \int_{S} \sigma_{n} e_{n} \,d z_{n} \, d \overline{z}_{n}\\
&= \lim_{n \to \infty} c_{n}^{2} \, \frac{\pi}{2} \,  \mathscr{E}_{n},
\end{align*}
so that
$0 < \lim_{n \to \infty} c_{n}^{2} \, \mathcal{E}_{n} < \infty$, that is $c_{n} \asymp \mathscr{E}_{n}^{1/2}$, as desired.

\end{proof}

With this normalization, the self-intersection of the current will be $\pi/2$, that is to say we have scaled the induced metric to have total area 1. 

The following proposition shows the relation of the induced metric to the corresponding Hopf differential metric.

\begin{prop}\label{prop:mike}
Away from the zeros of $\Phi$, one has the following identity
$$ \sigma e = |\Phi|\left(\frac{1}{|\nu|} + |\nu|\right).$$
Consequently,
$$\sigma_{n} e_{n} \geq 2|\Phi_{n}|.$$
\end{prop}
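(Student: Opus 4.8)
The plan is to derive the stated identity directly from the dictionary of auxiliary functions recorded in Section~2, and then to obtain the inequality by an elementary application of the arithmetic--geometric mean inequality. Nothing deeper than algebra is required; the only genuine care is needed at the zeros of $\Phi$.

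First I would assemble the three relevant formulas: the energy density decomposition $e = \mathcal{H} + \mathcal{L}$, the Beltrami identity $|\nu|^{2} = \mathcal{L}/\mathcal{H}$, and the norm identity $|\Phi|^{2}/\sigma^{2} = \mathcal{H}\mathcal{L}$. Away from the zeros of $\Phi$ we have $\mathcal{L} > 0$ (equivalently $\nu \neq 0$), since $\nu = \overline{\Phi}/(\sigma\mathcal{H})$ and $\mathcal{H} > 0$ for a harmonic diffeomorphism; hence every quotient below is well defined. Writing $|\nu| = \sqrt{\mathcal{L}/\mathcal{H}}$ and $|\Phi| = \sigma\sqrt{\mathcal{H}\mathcal{L}}$, I would then simply compute
\[
|\Phi|\left(\frac{1}{|\nu|} + |\nu|\right) = \sigma\sqrt{\mathcal{H}\mathcal{L}}\left(\sqrt{\frac{\mathcal{H}}{\mathcal{L}}} + \sqrt{\frac{\mathcal{L}}{\mathcal{H}}}\right) = \sigma\sqrt{\mathcal{H}\mathcal{L}}\cdot\frac{\mathcal{H}+\mathcal{L}}{\sqrt{\mathcal{H}\mathcal{L}}} = \sigma(\mathcal{H}+\mathcal{L}) = \sigma e,
\]
which is exactly the claimed identity.

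For the consequence, I would invoke the elementary bound $t + 1/t \geq 2$ for every $t > 0$, with equality precisely when $t = 1$. Applying this with $t = |\nu|$ gives $\frac{1}{|\nu|} + |\nu| \geq 2$ wherever $\Phi$ is nonzero, so the identity yields $\sigma e \geq 2|\Phi|$ there. The hard part, such as it is, is that the displayed identity is only valid where $|\nu| \neq 0$, so the inequality must be extended across the zeros of $\Phi$ by a separate argument. This is immediate: at a zero of $\Phi$ the right-hand side vanishes, while $\sigma e = \sigma(\mathcal{H}+\mathcal{L}) \geq \sigma\mathcal{H} > 0$, so the inequality persists across the (isolated, holomorphic) zeros. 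Hence $\sigma_{n} e_{n} \geq 2|\Phi_{n}|$ holds globally, as claimed.
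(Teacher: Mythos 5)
Your proof is correct and takes essentially the same route as the paper: a direct algebraic manipulation of the identities $e = \mathcal{H}+\mathcal{L}$, $|\nu|^{2} = \mathcal{L}/\mathcal{H}$, and $|\Phi|^{2}/\sigma^{2} = \mathcal{H}\mathcal{L}$ (the paper squares both sides and takes a square root, which is the same computation). Your extra care in checking the inequality at the zeros of $\Phi$, where $|\nu|$ vanishes, is a point the paper passes over silently, but it does not change the argument.
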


\begin{proof}
This result follows immediately by manipulation of the formulae involving $\mathcal{H}$ and $\mathcal{L}$. One has
\begin{align*}
\sigma^{2} e^{2} &= \sigma^{2} ( \mathcal{H}^{2} + 2 \mathcal{H} \mathcal{L} + \mathcal{L}^{2})\\
&= \sigma^{2} \mathcal{H} \mathcal{L} \, \left( \frac{\mathcal{H}}{\mathcal{L}} + 2 + \frac{\mathcal{L}}{\mathcal{H}} \right)\\
&= |\Phi|^{2} \left(\frac{1}{|\nu|}^{2} + 2 + |\nu|^{2} \right).
\end{align*}
Taking a square root on both sides yields the result.
\end{proof}

For a given sequence $\sigma_{n} e_{n}$ we consider the associated smooth function $f_{n} = (\frac{1}{|\nu_{n}|} + |\nu_{n}|)$. To each $n$, there is only one such $f_{n}$ by Lemma \ref{lem:zeno}. 

The following proposition due to Wolf allows us to pass freely between the $L^{1}$-norm of a Hopf differential and the total energy of the corresponding harmonic map. The original proof was for a fixed Riemann surface as the domain, but the argument holds when the domain is allowed to change. For the ease of the reader, we have included the adapted proof.

\begin{prop}[\label{prop:mikefirst}\cite{W89}, Lemma 3.2]
For any Riemann surface $(S,J)$ and hyperbolic surface $(S, \sigma)$, if $\emph{id}: (S,J) \to (S, \sigma)$ is a harmonic map with Hopf differential $\Phi$ and total energy $\mathcal{E}$, then

$$ \mathcal{E} + 2\pi \chi(s) \leq 2 ||\Phi|| \leq \mathcal{E} - 2\pi \chi(S).$$
\end{prop}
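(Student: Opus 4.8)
The plan is to deduce both inequalities from one pointwise comparison together with a single global identity of topological origin. Write $dA$ for the area form of the domain $(S,J)$; since the energy density, the Jacobian density, and $|\Phi|$ are all conformally natural as measures, the particular metric chosen in the class $J$ is immaterial. Recalling $e=\mathcal H+\mathcal L$, the total energy is $\mathcal E=\int_S(\mathcal H+\mathcal L)\,dA$. Moreover, since $|\Phi|^2$ equals $\mathcal H\mathcal L$ times the square of the domain conformal factor (the relation $|\Phi|^2/\sigma^2=\mathcal H\mathcal L$ in the formula list of Section~2, where that $\sigma$ is the domain factor), the $L^1$-norm of the Hopf differential is
$$\|\Phi\|=\int_S |\Phi|=\int_S\sqrt{\mathcal H\mathcal L}\,\,dA .$$

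Next I would invoke that $\mathrm{id}\colon(S,J)\to(S,\sigma)$ is a harmonic \emph{diffeomorphism} (Schoen--Yau, Sampson), so that $\mathcal H>0$ and $\mathcal J=\mathcal H-\mathcal L>0$; hence $0\le\mathcal L\le\mathcal H$, which yields the pointwise sandwich $\mathcal L\le\sqrt{\mathcal H\mathcal L}\le\mathcal H$. Integrating against $dA$ gives
$$2\int_S\mathcal L\,dA\;\le\;2\|\Phi\|\;\le\;2\int_S\mathcal H\,dA,$$
so it remains only to evaluate the two outer integrals. One linear combination is immediate, namely $\int_S(\mathcal H+\mathcal L)\,dA=\mathcal E$. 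The other is the key input: $\mathcal J$ is exactly the Jacobian density of $w=\mathrm{id}$, i.e.\ the pullback of the target area form is $\mathcal J\,dA$, so the change-of-variables formula and Gauss--Bonnet for the hyperbolic target give
$$\int_S(\mathcal H-\mathcal L)\,dA=\deg(w)\cdot\mathrm{Area}(S,\sigma)=-2\pi\chi(S),$$
using $\deg(w)=1$ (as $w$ is homotopic to the identity) and curvature $-1$. Solving the two relations yields $2\int_S\mathcal H\,dA=\mathcal E-2\pi\chi(S)$ and $2\int_S\mathcal L\,dA=\mathcal E+2\pi\chi(S)$, and substituting into the displayed two-sided bound produces exactly $\mathcal E+2\pi\chi(S)\le 2\|\Phi\|\le\mathcal E-2\pi\chi(S)$.

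The argument involves no delicate analysis: the comparison $\mathcal L\le\sqrt{\mathcal H\mathcal L}\le\mathcal H$ is pointwise and the energy identity is local. The one point worth emphasizing—and precisely the reason the lemma continues to hold once the domain is allowed to vary, as it must in the later degeneration arguments—is that $\int_S\mathcal J\,dA$ is a homotopy invariant, equal to $\deg(w)$ times the fixed target area, and therefore does not depend on the conformal structure $J$ of the domain at all. Consequently the two-sided bound holds uniformly over every choice of domain Riemann surface, exactly as needed when $J$ (equivalently $\sigma_n$) runs off to the boundary of $\mathcal T(S)$.
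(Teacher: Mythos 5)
Your argument is correct and is essentially the proof the paper gives: the sandwich $\mathcal L\le\sqrt{\mathcal H\mathcal L}\le\mathcal H$ is literally the paper's $\sigma\mathcal L=\Phi\nu\le|\Phi|=\sigma\mathcal H|\nu|\le\sigma\mathcal H$ rewritten without the Beltrami coefficient, and both proofs then combine $\int e = \mathcal E$ with the Gauss--Bonnet identity $\int\mathcal J = -2\pi\chi(S)$ to evaluate $2\int\mathcal H$ and $2\int\mathcal L$. Your closing observation that $\int\mathcal J$ is a homotopy invariant independent of the domain conformal structure is exactly the point the paper makes when adapting Wolf's lemma to a varying domain.
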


\begin{proof}
As $\mathcal{H} - \mathcal{L} = \mathcal{J}$ and $\int \mathcal{J} \sigma dz d \overline{z} = -2\pi \chi$, we have
$$\int \mathcal{H} \, \sigma \,dz d \overline{z} + 2\pi \chi = \int \mathcal{L} \, \sigma  \, dz d \overline{z} = \int \Phi \,\nu \,dz d \overline{z},$$
as the integrands agree. But, recalling that $|\nu| <1$, we have
\begin{align*}
\int \Phi \,\nu \,dz d \overline{z} &\leq \int |\Phi| \,dz d \overline{z} \\
&= \int \mathcal{H} \, |\nu| \, \sigma \,dz d \overline{z}\\
&\leq \int \mathcal{H} \, \sigma \, dz d \overline{z} = \int \mathcal{L} \, \sigma \, dz d \overline{z} - 2\pi \chi.
\end{align*}
Adding the first integral and the last integral yields
$$\int e \sigma \, dz d \overline{z} + 2 \pi \chi \leq 2 \int |\Phi| \, dz d \overline{z} \leq \int e \, \sigma \, dz d \overline{z} - 2\pi \chi,$$
proving the proposition.
\end{proof}

\begin{cor}
If the sequence $\Phi_{0,n}$ of unit-norm quadratic differential metrics converges projectively to a  measured lamination, then so does the associated sequence $L_{\sigma_{n} e_{n}} / \mathcal{E}^{1/2}_{n} $ of geodesic currents.
\end{cor}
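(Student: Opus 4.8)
The plan is to exploit the pointwise comparison $\sigma_n e_n \geq 2|\Phi_n|$ from Proposition~\ref{prop:mike} to bound the length spectrum of the induced metric below by that of the unit-norm flat metric, and then to play this lower bound against the total area (self-intersection) of $L_{\sigma_n e_n}$. Write $\Phi_n = \|\Phi_n\|\,\Phi_{0,n}$, so that $2|\Phi_n| = 2\|\Phi_n\|\,|\Phi_{0,n}|$ as singular flat metrics. Since the induced metrics are negatively curved (Proposition~\ref{prop:neg}), Otal's embedding \cite{Otal} identifies $i(L_{\sigma_n e_n},\gamma)$ with the $\sigma_n e_n$-length of the geodesic representative of $\gamma$; because lengths are monotone in the metric and scale by the square root under scaling of $|\Phi_{0,n}|$, Theorem~\ref{thm:DLR1} yields the key inequality
$$ i(L_{\sigma_n e_n},\gamma)\ \geq\ \sqrt{2\|\Phi_n\|}\; i(L_{|\Phi_{0,n}|},\gamma) \qquad\text{for every closed curve }\gamma. $$

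Next I would set up the two scalings. By compactness of $\textnormal{PCurr}(S)$, pass to a subsequence along which $a_n L_{\sigma_n e_n}\to\mu\neq 0$ in $\textnormal{Curr}(S)$ for suitable $a_n>0$; the goal is to prove $i(\mu,\mu)=0$, for then Proposition~\ref{ml} identifies $\mu$ as a measured lamination and the projective limit lies in $\textnormal{PML}(S)$. On the other side, the hypothesis says $[L_{|\Phi_{0,n}|}]\to[\lambda]$ with $\lambda$ a measured lamination; since each $L_{|\Phi_{0,n}|}$ has self-intersection $\tfrac{\pi}{2}\textnormal{Area}(|\Phi_{0,n}|)=\tfrac{\pi}{2}$ by Proposition~\ref{prop:area}, the normalizing constants $r_n$ with $r_n L_{|\Phi_{0,n}|}\to\lambda$ satisfy $r_n^2\tfrac{\pi}{2}\to i(\lambda,\lambda)=0$, hence $r_n\to 0$.

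The estimate then runs as follows. Fix a binding current $b=\gamma_1+\dots+\gamma_k$, so that $i(\mu,b)<\infty$ while $i(\lambda,b)>0$. From $a_n L_{\sigma_n e_n}\to\mu$ the quantities $a_n\, i(L_{\sigma_n e_n},b)$ stay bounded, and combining this with the key inequality and $i(L_{|\Phi_{0,n}|},b)\sim i(\lambda,b)/r_n\to\infty$ forces $a_n\lesssim r_n/\sqrt{\|\Phi_n\|}$. Finally, Wolf's Proposition~\ref{prop:mikefirst} gives $2\|\Phi_n\|=\mathcal{E}_n+O(1)$, so in the regime $\mathcal{E}_n\to\infty$ one has $\mathcal{E}_n/\|\Phi_n\|\to 2$; therefore $a_n^2\,\mathcal{E}_n\lesssim r_n^2\,(\mathcal{E}_n/\|\Phi_n\|)\to 0$, and by Proposition~\ref{prop:area},
$$ i(\mu,\mu)=\lim_{n\to\infty} a_n^2\,\tfrac{\pi}{2}\,\textnormal{Area}(S,\sigma_n e_n)=\lim_{n\to\infty} a_n^2\,\tfrac{\pi}{2}\,\mathcal{E}_n=0. $$
Since the subsequence was arbitrary, every projective limit of $L_{\sigma_n e_n}/\mathcal{E}_n^{1/2}$ is a measured lamination.

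I expect the main obstacle to be the lower bound itself: Proposition~\ref{prop:mike} only controls $\sigma_n e_n$ from below by $2|\Phi_n|$ (the factor $\tfrac{1}{|\nu|}+|\nu|\geq 2$ degenerates near the zeros of $\Phi_n$, where no matching upper bound is available), so the argument must be engineered to use only this one-sided comparison. The reason it still succeeds is structural: the flat current $L_{|\Phi_{0,n}|}$ already requires a vanishing normalization $r_n\to 0$ to converge, and Wolf's comparison of energy with $\|\Phi_n\|$ converts the factor $\sqrt{\|\Phi_n\|}$ in the lower bound into exactly the power of $\mathcal{E}_n$ needed to kill the self-intersection in the limit. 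A secondary point to check is that the length-spectrum identities of Otal \cite{Otal} and of Theorem~\ref{thm:DLR1}, stated for simple closed curves, extend to the binding current via Bonahon's continuous bilinear intersection form, so that the single curve class $b$ suffices to bound $a_n$.
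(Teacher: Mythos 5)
Your argument is correct and follows essentially the same route as the paper's proof: both rest on the lower bound $\sigma_n e_n \ge 2|\Phi_n|$ from Proposition~\ref{prop:mike}, the comparison $2\|\Phi_n\|\sim\mathcal{E}_n$ from Proposition~\ref{prop:mikefirst}, and the self-intersection/area formula to force the limiting current to have vanishing self-intersection, hence to be a measured lamination by Proposition~\ref{ml}. The only cosmetic difference is that you quantify the decay of the normalizing constants via a binding current, whereas the paper simply observes that some curve class has unbounded length so that an additional scaling $s_n\to 0$ is required before concluding $i(\mu,\mu)=\lim s_n^2\cdot\tfrac{\pi}{2}=0$.
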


\begin{proof}
Suppose $L_{|\Phi_{0,n}|} \to [\lambda]$ in the space of projectivized currents. As $i( L_{|\Phi_{0,n}|} , L_{|\Phi_{0,n}|}) = \pi/2$, while $i(\lambda, \lambda) =0$, then there exists a sequence $t_{n} \to 0$, so that the length spectrum of $t_{n} |\Phi_{0,n}|$ converges to that of some $\lambda \in [\lambda]$. This is to say, there is a curve class $[\gamma]$, for which the length of its geodesic representative against the metric $|\Phi_{0,n}|$ is unbounded, so by Propositions \ref{prop:mike} and \ref{prop:mikefirst}, the sequence of lengths of the $[\gamma]$-geodesic against the metrics $\sigma_{n} e_{n} / \mathcal{E}_{n}$ is unbounded. Hence there is a sequence $s_{n} \to 0$ so that $s_{n} L_{\sigma_{n} e_{n}} / \mathcal{E}^{1/2}_{n}$ converges to a current $\mu$. But as the self-intersection of $L_{\sigma_{n}e_{n}}/\mathcal{E}_{n}$ is exactly 1, the intersection of $\mu$ with itself is zero, from which the result follows.
\end{proof}

The previous corollary allows us to exclude the case where the sequence of flat metrics tends towards a projectivized measured lamination, for in that case, we have that the sequence of induced metrics also tends towards a projectivized measured lamination. Hence, we need only consider the case where the sequence of flat metrics converges to a non-trivial mixed structure, say $\eta$. The data of $\eta$ gives us a subsurface $S'$ for which the restriction of $\eta$ is a flat metric arising from a quadratic differential. Here we consider $S'$ up to isotopy. 

The remainder of the section is devoted towards showing that if the sequence of unit-norm quadratic differential metrics converges to a mixed structure that is not entirely laminar, then so does the sequence of unit-area induced metrics. This will then complete the proof of Theorem \ref{thm:compact}.

We begin by record the following useful bound due to Minsky, for the function $\mathcal{G} = \log (1/ |\nu|)$.
\begin{prop}[\label{prop:yair}\cite{Minsky}, Lemma 3.2]
Let $p \in S$ be a point with a neighborhood $U$ such that $U$ contains no zeros of $\Phi$ and in the $|\Phi|$-metric is a round disk of radius $r$ centered on $p$. Then there is a bound
$$ \mathcal{G}(p) \leq \sinh^{-1}\left( \frac{|\chi(S)|}{r^{2}} \right). $$
\end{prop}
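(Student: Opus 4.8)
The plan is to turn the problem into a statement about a subharmonic function on the flat $|\Phi|$-metric whose total mass is pinned down by Gauss--Bonnet, and then apply the sub-mean-value inequality on the round disk $U$. Throughout, $\phi$ is a harmonic diffeomorphism onto a hyperbolic factor, so $\mathcal{H}>0$ and $|\nu|<1$, whence $\mathcal{G}=\log(1/|\nu|)\geq 0$.

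First I would derive the equation governing $\mathcal{G}$. Writing $\mathcal{G}=\tfrac12(\log\mathcal{H}-\log\mathcal{L})$ and subtracting the two Bochner formulas with $K(\sigma)=K(\rho)=-1$ gives $\Delta\mathcal{G}=2(\mathcal{H}-\mathcal{L})$, where $\Delta=\tfrac{4}{\sigma}\partial_z\partial_{\bar z}$. From $|\Phi|^{2}/\sigma^{2}=\mathcal{H}\mathcal{L}$ and $|\nu|=e^{-\mathcal{G}}$ one has $\mathcal{H}=\tfrac{|\Phi|}{\sigma}e^{\mathcal{G}}$ and $\mathcal{L}=\tfrac{|\Phi|}{\sigma}e^{-\mathcal{G}}$ (equivalently Proposition \ref{prop:mike}), so $\mathcal{H}-\mathcal{L}=\tfrac{2|\Phi|}{\sigma}\sinh\mathcal{G}$ and hence $\Delta\mathcal{G}=\tfrac{4|\Phi|}{\sigma}\sinh\mathcal{G}$. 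Converting to the Laplacian $\Delta_{|\Phi|}=\tfrac{4}{|\Phi|}\partial_z\partial_{\bar z}$ of the flat metric, which is legitimate on $U$ since $\Phi$ has no zeros there, collapses this to the sinh--Gordon equation
$$\Delta_{|\Phi|}\mathcal{G}=4\sinh\mathcal{G}.$$
In a natural coordinate on $U$, for which $|\Phi|=|d\zeta|^{2}$, the operator $\Delta_{|\Phi|}$ is the ordinary Euclidean Laplacian.

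Next I would observe that $\sinh\mathcal{G}$ is the right quantity to estimate. Since $\mathcal{G}\geq 0$, the computation $\Delta_{|\Phi|}\sinh\mathcal{G}=\cosh\mathcal{G}\,\Delta_{|\Phi|}\mathcal{G}+\sinh\mathcal{G}\,|\nabla\mathcal{G}|^{2}=4\cosh\mathcal{G}\sinh\mathcal{G}+\sinh\mathcal{G}\,|\nabla\mathcal{G}|^{2}\geq 0$ shows $\sinh\mathcal{G}$ is subharmonic on $U$. Because $U$ is isometric to a Euclidean disk of radius $r$ and area $\pi r^{2}$, the sub-mean-value inequality, together with $\sinh\mathcal{G}\geq 0$ on all of $S$, yields
$$\sinh\mathcal{G}(p)\leq\frac{1}{\pi r^{2}}\int_{U}\sinh\mathcal{G}\,dA_{|\Phi|}\leq\frac{1}{\pi r^{2}}\int_{S}\sinh\mathcal{G}\,dA_{|\Phi|}.$$
It then remains to compute the global integral, and this is where $\chi(S)$ enters: pointwise $2|\Phi|\sinh\mathcal{G}=\sigma(\mathcal{H}-\mathcal{L})=\sigma\mathcal{J}$, so $\sinh\mathcal{G}\,dA_{|\Phi|}=\tfrac12\,\mathcal{J}\,dA_{\sigma}$, a smooth form across the zeros of $\Phi$. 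Integrating over the closed surface and using the Jacobian identity $\int_{S}\mathcal{J}\,\sigma\,dz\,d\bar z=-2\pi\chi(S)$ from the proof of Proposition \ref{prop:mikefirst} gives $\int_{S}\sinh\mathcal{G}\,dA_{|\Phi|}=\pi|\chi(S)|$. Combining the two displays yields $\sinh\mathcal{G}(p)\leq|\chi(S)|/r^{2}$, and since $\sinh^{-1}$ is increasing, $\mathcal{G}(p)\leq\sinh^{-1}\!\big(|\chi(S)|/r^{2}\big)$, as claimed.

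I expect the routine bookkeeping to be the Laplacian conversions producing the constant $4$ and the matching factor $\tfrac12$ in the mass identity. The conceptual heart of the argument, and the step I would be most careful about, is recognizing $\sinh\mathcal{G}$ rather than $\mathcal{G}$ as the correct subharmonic object: it is precisely $\sinh\mathcal{G}$ whose total $|\Phi|$-area mass equals $\pi|\chi(S)|$ by Gauss--Bonnet, which is what converts a local subharmonicity statement into the global pointwise bound.
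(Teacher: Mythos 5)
Your proof is correct, and it reaches the bound by the same two pillars as the paper's argument: a sub-mean-value inequality on the round $|\Phi|$-disk, and the Gauss--Bonnet identity $\int_S \mathcal{J}\,\sigma\,dz\,d\bar z = 2\pi|\chi(S)|$ packaged through the pointwise relation $\sinh\mathcal{G}=\tfrac12\frac{\sigma}{|\Phi|}\mathcal{J}$. The one place you diverge is in how the convexity enters. The paper applies the mean value inequality to $\mathcal{G}$ itself (using $\Delta\mathcal{G}=2\mathcal{J}>0$) and then invokes Jensen's inequality for the \emph{concave} function $\sinh^{-1}$ to pull the average inside; you instead derive the sinh--Gordon equation $\Delta_{|\Phi|}\mathcal{G}=4\sinh\mathcal{G}$, check directly that $\sinh\mathcal{G}$ is subharmonic (which is just the standard fact that a convex increasing function of a subharmonic function is subharmonic, valid here since $\mathcal{G}\geq 0$), and apply the mean value inequality to $\sinh\mathcal{G}$. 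The two routes are dual manipulations of the same convexity and land on the identical constant; yours has the mild advantage of isolating the quantity whose total $|\Phi|$-mass is exactly $\pi|\chi(S)|$, while the paper's is one line shorter since it never needs to compute $\Delta\sinh\mathcal{G}$ or the gradient term. Your attention to the integrand $\sinh\mathcal{G}\,dA_{|\Phi|}=\tfrac12\mathcal{J}\,dA_\sigma$ being smooth across the zeros of $\Phi$ (where $\mathcal{G}$ itself blows up) is a point the paper leaves implicit and is worth keeping.
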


\begin{proof}
The PDE $\Delta \mathcal{G} = 2 \mathcal{J} >0$ shows that $\mathcal{G}$ is subharmonic in $U$. It suffices therefore to bound the average of $\mathcal{G}$ on $U$ in the $|\Phi|$-metric. Some algebra yields
$$\sinh \mathcal{G} = \frac{1}{2} \frac{\sigma}{|\Phi|} \mathcal{J}.$$
Using the concavity of $\sinh^{-1}$ on the positive real axis, we obtain
\begin{align*}
\mathcal{G} (p) &\leq |\Phi|\text{-Avg}_{U} (\mathcal{G})  &&\text{by subharmonicity of $\mathcal{G}$} \\
&= |\Phi|\text{-Avg}_{U}\left(\sinh^{-1} \frac{1}{2} \frac{\sigma}{|\Phi|} \right) \\
&\leq \sinh^{-1} \left( |\Phi|\text{-Avg}_{U} \left(\frac{1}{2} \frac{\sigma}{|\Phi|} \right)\right)  &&\text{by concavity of $\sinh^{-1}$}\\
&= \sinh^{-1} \left( \frac{1}{2 \pi r^{2}} \int_{U} \frac{\sigma}{|\Phi|} \, \mathcal{J} \, d A(|\Phi|) \right)\\
&\leq \sinh^{-1} \left(\frac{|\chi(S)|}{r^{2}} \right) &&\text{by Gauss-Bonnet}.
\end{align*}
\end{proof}

As we are in the setting where the sequence $L_{\Phi_{0,n}}$ of currents coming from unit-area holomorphic quadratic differential metrics converges to a non-trivial mixed structure $\eta= (S', \Phi_{\infty}, \lambda)$, we have that the restriction of the metric $|\Phi_{0,n}|$ to $S'$ converges to the metric $|\Phi_{\infty}|$. On this systole positive collection $S'$ of subsurfaces, we have the following proposition.

\begin{prop}\label{prop:ae}
Given $\epsilon, \epsilon' >0$, there exists $N=N(\epsilon, \epsilon')$ such that for $n > N$ 
$$m_{|\Phi_{0,n}|} ( \{ p \in S : \left( \frac{1}{|\nu_{n}|} + |\nu_{n}| \right)(p) \geq 2 + \epsilon' \} ) < \epsilon.$$
Consequently the limiting function $\frac{1}{|\nu|} + |\nu|$ is 2 almost everywhere with respect to the $|\Phi_{\infty}|$-metric.
\end{prop}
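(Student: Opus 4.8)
The plan is to pass from the pointwise condition on $f_n := \frac{1}{|\nu_n|} + |\nu_n|$ to a bound on Minsky's function $\mathcal{G}_n := \log(1/|\nu_n|)$ and then apply Proposition \ref{prop:yair} on round disks whose $|\Phi_n|$-radius is forced to blow up with the energy. Since $|\nu_n| \leq 1$ we have $\mathcal{G}_n \geq 0$ and $f_n = e^{\mathcal{G}_n} + e^{-\mathcal{G}_n} = 2\cosh \mathcal{G}_n$, so $f_n(p) \geq 2 + \epsilon'$ holds precisely when $\mathcal{G}_n(p) \geq \delta$, where $\delta := \cosh^{-1}(1 + \epsilon'/2) > 0$. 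It therefore suffices to produce $N$ so that $m_{|\Phi_{0,n}|}(\{ \mathcal{G}_n \geq \delta \}) < \epsilon$ for $n > N$.

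Next I would record the scaling between the unit-area metric $|\Phi_{0,n}|$ and the genuine Hopf-differential metric $|\Phi_n|$. Writing $\Phi_n = \|\Phi_n\|\, \Phi_{0,n}$, distances in $|\Phi_n|$ are $\sqrt{\|\Phi_n\|}$ times those in $|\Phi_{0,n}|$, so a round $|\Phi_{0,n}|$-disk of radius $r$ is a round $|\Phi_n|$-disk of radius $R_n = \sqrt{\|\Phi_n\|}\, r$. Because $\mathcal{E}_n \to \infty$, Proposition \ref{prop:mikefirst} gives $2\|\Phi_n\| \asymp \mathcal{E}_n \to \infty$, hence $R_n \to \infty$ for every fixed $r$. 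Applying Proposition \ref{prop:yair} at any point $p$ around which such a disk embeds and avoids the zeros of $\Phi_n$ then yields
$$\mathcal{G}_n(p) \leq \sinh^{-1}\!\left( \frac{|\chi(S)|}{\|\Phi_n\|\, r^2}\right) \longrightarrow 0.$$

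I would then fix $r > 0$ once and for all, small enough that on the flat part $S'$ the $r$-neighborhood (in $|\Phi_\infty|$) of the zero set of $\Phi_\infty$ has measure below $\epsilon/2$ and that round disks of radius $r$ based away from that set embed; embeddedness is exactly where the systole positivity of $S'$ enters, since the uniform lower bound on the systole of $|\Phi_{0,n}| \to |\Phi_\infty|$ prevents such disks from wrapping. As $\Phi_n$ has only $4g-4$ zeros and $|\Phi_{0,n}| \to |\Phi_\infty|$ on $S'$, the $r$-neighborhood of the zeros of $\Phi_{0,n}$ has $m_{|\Phi_{0,n}|}$-measure at most $\epsilon$ for $n$ large. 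Every $p$ outside this neighborhood carries an embedded round $|\Phi_{0,n}|$-disk of radius $r$ missing the zeros, so by the displayed bound there is $N$ with $\mathcal{G}_n(p) < \delta$ for all such $p$ when $n > N$. Thus $\{\mathcal{G}_n \geq \delta\}$ lies inside the $r$-neighborhood of the zeros, giving $m_{|\Phi_{0,n}|}(\{ f_n \geq 2 + \epsilon'\}) < \epsilon$.

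For the final clause, the estimate just proved is precisely the statement that $f_n \to 2$ in $|\Phi_{0,n}|$-measure on $S'$; since $f_n \geq 2$ pointwise (Proposition \ref{prop:mike}) and $|\Phi_{0,n}| \to |\Phi_\infty|$, any subsequential limit $f$ of the $f_n$ obeys $f \geq 2$ while $m_{|\Phi_\infty|}(\{ f > 2 + \epsilon'\}) = 0$ for every $\epsilon' > 0$, forcing $f = 2$ almost everywhere with respect to $|\Phi_\infty|$. The step I expect to be delicate is the tension between wanting $r$ small (to keep the measure near the zeros small) and wanting the $|\Phi_n|$-radius large (to make Minsky's bound effective); this is resolved by fixing $r$ first and letting $\|\Phi_n\| \to \infty$ decouple the two demands, with systole positivity of $S'$ supplying the embeddedness of the disks and the fixed count $4g-4$ keeping the exceptional set uniformly small as the zeros of $\Phi_{0,n}$ drift with $n$.
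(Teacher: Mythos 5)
There is a genuine gap. Your argument via Proposition \ref{prop:yair} controls $\mathcal{G}_n=\log(1/|\nu_n|)$ only at points $p$ that carry an embedded round $|\Phi_{0,n}|$-disk of a fixed radius $r$ missing the zeros, and such points exhaust only the thick part of the flat piece $S'$. The proposition, however, bounds the $|\Phi_{0,n}|$-measure of $\{f_n\geq 2+\epsilon'\}$ over all of $S$. On the laminar region $S\setminus S'$ and on the collars of $\partial S'$ the metrics $|\Phi_{0,n}|$ degenerate (the boundary curves of $S'$ have $|\Phi_{0,n}|$-length tending to zero), so no embedded disk of definite radius exists there, the systole positivity you invoke is a property of $S'$ and not of $S$, and your claimed containment of $\{\mathcal{G}_n\geq\delta\}$ in the $r$-neighborhood of the zeros is unjustified: a priori that set could carry definite measure in the degenerating part. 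Patching this requires showing that the $|\Phi_{0,n}|$-area of the degenerating region tends to zero, which is itself an area/intersection-number argument that you neither state nor prove. Note also that where your disk argument does apply it proves uniform pointwise convergence $\mathcal{G}_n\to 0$ on the thick part of $S'$, which is essentially the content of the subsequent Proposition \ref{prop:beltrami}; the paper deliberately derives that statement \emph{from} Proposition \ref{prop:ae}, so your route inverts the logical order and still leaves the measure of the thin regions unaccounted for.

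The paper's own proof is entirely different and avoids all of this geometry. Writing $\sigma_ne_n/\mathcal{E}_n=\left(\|\Phi_n\|/\mathcal{E}_n\right)|\Phi_{0,n}|\,f_n$ by Proposition \ref{prop:mike}, using that both $|\Phi_{0,n}|$ and $\sigma_ne_n/\mathcal{E}_n$ have unit area, that $f_n\geq 2$ pointwise, and that $\|\Phi_n\|/\mathcal{E}_n\to 1/2$ by Proposition \ref{prop:mikefirst}, a Chebyshev-type splitting of the integral $\int f_n\left(\tfrac{1-c_n}{2}\right)dA(|\Phi_{0,n}|)=1$ immediately yields $m_n\leq 2c_n/\bigl((1-c_n)\epsilon'\bigr)\to 0$. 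This is global on $S$, needs no embedded disks, no systole bound, and no control of the zeros. You should either adopt that soft area argument or, if you wish to keep the Minsky-disk approach, explicitly excise the collars of $\partial S'$ and the laminar region and prove their $|\Phi_{0,n}|$-area tends to zero.
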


\begin{proof}
By Proposition \ref{prop:mike}, one has the equality
\begin{align*}
\frac{\sigma_{n} e_{n}}{\mathcal{E}_{n}} &= \frac{|\Phi_{n}|}{\mathcal{E}_{n}} \left( \frac{1}{|\nu_{n}|} + |\nu_{n}| \right) \\
&= \left( \frac{||\Phi_{n}||}{\mathcal{E}_{n}} \right) \frac{|\Phi_{n}|}{||\Phi_{n}||} \left( \frac{1}{|\nu_{n}|} + |\nu_{n}| \right).
\end{align*}
Defining
$$\frac{1-c_{n}}{2} : = \left( \frac{||\Phi_{n}||}{\mathcal{E}_{n}} \right),$$
one has $c_{n} \to 0$ by virtue of Proposition \ref{prop:mikefirst}. Observe that $c_{n} \geq 0$, as the function $\frac{1}{|\nu_{n}|} + |\nu_{n}| \geq 2$, the area of  $|\Phi_{0,n}| =  \frac{|\Phi_{n}|}{||\Phi_{n}||}$ is 1 and the area of the scaled metric $\frac{\sigma_{n} e_{n}}{\mathcal{E}_{n}}$ is also 1. If $m_{n}$ then denotes the $|\Phi_{0,n}|$-measure of the set of of points for which the function $\frac{1}{|\nu_{n}|} + |\nu_{n}|$ is at least $2+ \epsilon '$, then one has 
\begin{align*}
\int_{\{p:(\frac{1}{|\nu_{n}|} + |\nu_{n}|)(p) \geq 2+ \epsilon'\}} \left(\frac{1}{|\nu_{n}|} + |\nu_{n}|\right) \left(\frac{1-c_{n}}{2}\right) \, \, d A(|\Phi_{0,n}|) &\\
+ \int_{\{p:(\frac{1}{|\nu_{n}|} + |\nu_{n}|)(p) < 2+ \epsilon'\}}  \left(\frac{1}{|\nu_{n}|} + |\nu_{n}|\right) \left(\frac{1-c_{n}}{2}\right) \, \, d A(|\Phi_{0,n}|)  &= \int dA(\frac{\sigma_{n} e_{n}}{\mathcal{E}_{n}}) =1.
\end{align*}
The integrand in the first integral is at least $(2 + \epsilon ')\left( \frac{1- c_{n}}{2} \right)$, whereas the second integrand is at least $2\left( \frac{1- c_{n}}{2} \right)$. Multiplying these lower bounds with the measures of their respective sets yields

$$(2 + \epsilon ')\left( \frac{1- c_{n}}{2} \right)\, m_{n} + 2 \, \left(\frac{1- c_{n}}{2} \right) \,(1-m_{n}) \leq 1.$$
Some basic algebraic manipulation gives
\begin{align*}
m_{n} \left( (2+ \epsilon')\, \left(\frac{1- c_{n}}{2} \right) - 2\, \left( \frac{1- c_{n}}{2} \right) \right) & \leq c_{n} \\
m_{n} \left(  \frac{1- c_{n}}{2}  \right) \epsilon' &\leq c_{n}\\
m_{n} &\leq \frac{2 c_{n}}{(1- c_{n})(\epsilon ')},
\end{align*}
and as $\epsilon '$ is now fixed, one may find a sufficiently large $N$ to guarantee $m_{n} < \epsilon$. As the metric $|\Phi_{\infty}|$ has finite total area, convergence in measure of the sequence of functions   $\frac{1}{|\nu_{n}|} + |\nu_{n}|$ to the constant function 2, implies that up to a subsequence, one has convergence to the constant function 2 almost everywhere. 
\end{proof}

Sets of measure zero can be rather problematic if we wish to say something about length of curves. The following proposition shows that we actually have convergence off the zeros and poles of $|\Phi_{\infty}|$.

\begin{prop}\label{prop:beltrami}
Suppose $\mathcal{E}_{n} \to {\infty}$. Then up to a subsequence $  \left( \frac{1}{|\nu_{n}|} + |\nu_{n}| \right) \to 2$ everywhere  on $S'$ except at the zeros and poles of $|\Phi_{\infty}|$.
\end{prop}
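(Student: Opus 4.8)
The plan is to reformulate the claim in terms of the nonnegative subharmonic function $\mathcal{G}_n = \log(1/|\nu_n|)$ and then push Minsky's pointwise bound (Proposition \ref{prop:yair}) through the \emph{unnormalized} $|\Phi_n|$-metric, whose round disks become arbitrarily large as the energy blows up. First I would record that, since $|\nu_n| < 1$, we have $\mathcal{G}_n \geq 0$, and that
$$\frac{1}{|\nu_n|} + |\nu_n| = e^{\mathcal{G}_n} + e^{-\mathcal{G}_n} = 2\cosh \mathcal{G}_n,$$
so the proposition is equivalent to showing $\mathcal{G}_n(p) \to 0$ for each $p \in S'$ lying off the zeros and poles of $\Phi_\infty$.

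Next I would fix such a point $p$ and choose $d_0 > 0$ so small that the $|\Phi_\infty|$-ball of radius $d_0$ about $p$ is an embedded round disk free of zeros and poles of $\Phi_\infty$. Since the convergence $L_{\Phi_{0,n}} \to \eta$ forces $|\Phi_{0,n}| \to |\Phi_\infty|$ on a compact neighborhood of $p$ in $S'$, these metrics are bounded away from $0$ near $p$, so for all large $n$ the $|\Phi_{0,n}|$-ball $U_n$ of radius $d_0/2$ about $p$ is an embedded round disk containing no zero of $\Phi_n$. The key scaling observation is that $\Phi_n = \|\Phi_n\|\,\Phi_{0,n}$, whence the length elements satisfy $ds_{|\Phi_n|} = \|\Phi_n\|^{1/2}\, ds_{|\Phi_{0,n}|}$; thus $U_n$ is a round $|\Phi_n|$-disk of radius $r_n := \tfrac{1}{2} d_0\, \|\Phi_n\|^{1/2}$ containing no zeros.

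I would then apply Proposition \ref{prop:yair} in the $|\Phi_n|$-metric on $U_n$, obtaining
$$\mathcal{G}_n(p) \leq \sinh^{-1}\!\left( \frac{|\chi(S)|}{r_n^2} \right) = \sinh^{-1}\!\left( \frac{4\,|\chi(S)|}{d_0^2\, \|\Phi_n\|} \right).$$
By Proposition \ref{prop:mikefirst}, the hypothesis $\mathcal{E}_n \to \infty$ gives $\|\Phi_n\| \to \infty$, so the right-hand side tends to $0$; combined with $\mathcal{G}_n \geq 0$ this yields $\mathcal{G}_n(p) \to 0$, and therefore $\tfrac{1}{|\nu_n|} + |\nu_n| = 2\cosh\mathcal{G}_n(p) \to 2$ at $p$, as required.

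The hard part is not analytic but a matter of careful bookkeeping: one must rigorously distinguish the normalized metric $|\Phi_{0,n}|$ — which converges to $|\Phi_\infty|$ and in which $p$ sits at a \emph{fixed} distance from the zeros — from the unnormalized metric $|\Phi_n|$, in which Minsky's estimate is naturally phrased and in which that fixed distance is dilated by the factor $\|\Phi_n\|^{1/2} \to \infty$. It is precisely this dilation of the admissible disk radius that drives the bound to zero, and the one point requiring genuine care is verifying that $U_n$ really avoids the zeros of $\Phi_n$ for large $n$ (so that Proposition \ref{prop:yair} applies), which rests on the uniform convergence $|\Phi_{0,n}| \to |\Phi_\infty|$ on the flat part $S'$ together with $p$ being a regular point of $\Phi_\infty$.
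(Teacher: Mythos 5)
Your proof is correct, but it takes a genuinely different route from the paper's. The paper never applies Proposition \ref{prop:yair} at the blown-up scale: it applies Minsky's bound only in the \emph{normalized} metric $|\Phi_{0,n}|$ on a disk of fixed radius $\epsilon/2$, which yields merely a uniform bound $M_{\epsilon/2}$ on $\log(1/|\nu_n|)$ (independent of $n$ but not decaying); it then combines this uniform bound with the convergence-in-measure statement of Proposition \ref{prop:ae} and the sub-mean-value property of the subharmonic function $\log(1/|\nu_n|)$ to upgrade convergence in measure to pointwise convergence off the zeros and punctures. You instead exploit the scaling $|\Phi_n| = \|\Phi_n\|\,|\Phi_{0,n}|$ so that a zero-free round disk of fixed $|\Phi_{0,n}|$-radius $d_0/2$ about $p$ becomes a round $|\Phi_n|$-disk of radius $r_n = \tfrac{1}{2}d_0\|\Phi_n\|^{1/2} \to \infty$, whence Minsky's bound itself decays: $\mathcal{G}_n(p) \leq \sinh^{-1}\bigl(4|\chi(S)|/(d_0^2\|\Phi_n\|)\bigr) \to 0$. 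This is the classical high-energy argument of Minsky and Wolf, and it buys you more: an explicit rate $\mathcal{G}_n = O(\|\Phi_n\|^{-1})$, locally uniform convergence on compact subsets of $S'$ minus the zeros and punctures (which is in fact what Theorem \ref{thm:geomflat} needs), and complete independence from Proposition \ref{prop:ae}. The paper's route is longer but reuses machinery (the area normalization of Proposition \ref{prop:ae}) that it has already built and that it needs elsewhere. The one point you rightly flag as requiring care --- that $U_n$ is an embedded round disk avoiding the zeros of $\Phi_n$ for large $n$ --- is supplied by the geometric convergence $|\Phi_{0,n}| \to |\Phi_\infty|$ on $S'$, which the paper also invokes at the corresponding step of its own proof, so you are entitled to it.
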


\begin{proof}
Observe that the function  $\frac{1}{|\nu_{n}|} + |\nu_{n}|$ is not defined at the zeros of $|\Phi_{n}|$, but is well-defined everywhere else. Moreover, the auxiliary function $\mathcal{G} = \log \frac{1}{|\nu|}$ satisfies the partial differential equation
$$ \Delta \log \frac{1}{|\nu_{n}|} = 2 \mathcal{J}_{n} > 0, $$
so that the function $ \mathcal{G}$ and hence  $\frac{1}{|\nu_{n}|} + |\nu_{n}|$ never attains an interior maximum on the complement of the zeros. It follows that  $\frac{1}{|\nu_{n}|} + |\nu_{n}|$ is only unbounded in a neighborhood of a zero of a corresponding quadratic differential $\Phi_{n}$. The sequence of flat metrics $|\Phi_{0,n}|$ on $S'$ converges geometrically to $|\Phi_{\infty}|$, and so the zeros of $|\Phi_{n}|$ on $S'$ will converge to the zeros of $|\Phi_{\infty}|$. For any $\epsilon >0$, consider balls of radius $3 \epsilon$ about each zero of $|\Phi_{\infty}|$, choosing $\epsilon$ sufficiently small, so that balls about distinct zeros do not intersect. Call this collection $B$. Then for large $n$, balls of radius $\epsilon$ in the $|\Phi_{0,n}|$ metric about the zeros of $|\Phi_{n}|$ will be contained in $B$. For each boundary component of $S'$, which in the geometric limit is collapsed to a puncture, choose a geodesic curve with respect to the $|\Phi_{\infty}|$-metric, homotopic to the puncture and enclosing the puncture, of length $l_{\epsilon}> 3 \epsilon$ so that the $|\Phi_{\infty}|$-distance of each point of the curve to the puncture is at least $ 3 \epsilon$, possibly choosing a smaller $\epsilon$ until such a configuration is possible. This gives an annulus for each boundary component of $S'$. Call the collection of these annuli $A$.

For any point in the complement of both $A$ and $B$, for large $n$, the injectivity radius with respect to the $|\Phi_{0,n}|$-metric is at least $\epsilon$ and the distance to any of the zeros is at least $\epsilon$. Moreover, each point $p$ in the region satisfies the property that any $q \in B_{\epsilon/2}(p)$ has injectivity radius at least $\epsilon/2$ and distance at least $\epsilon/2$ to any zero or the boundary of the cylindrical region. Hence, by Proposition $\ref{prop:yair}$, the value of $\log (1/ |\nu_{n}|)$ is at most $M_{\epsilon/2}$, where the constant no longer depends on $n$, once $n$ is chosen sufficiently large. As the function $\log (1/ |\nu_{n}|)$ is subharmonic, by the mean-value property, one has at any point $p$ in this set
\begin{align*}
\log (1/ |\nu_{n}|)(p) &\leq \int_{B_{\epsilon/2}(p)} \log (1/ |\nu_{n}|) \,\, d A_{|\Phi_{0,n}|} \\
&\leq \left( |\Phi_{0,n}|\text{-Area}(B_{\epsilon/2}(p) \right) \epsilon ' + M_{\epsilon/2} \epsilon'',
\end{align*}
for $n$ large enough so that $\log (1 / |\nu_{n}|) < \epsilon '$ outside a set of measure at most $\epsilon''$ by Proposition \ref{prop:ae}. As the choice of $\epsilon$ was arbitrary, the conclusion follows.

%Consider now $W'$, the collection of $\pi_{1}$-injective subsurfaces so that the limiting current of the sequence of flat metrics $|\Phi_{n}|$ is the $|\Phi_{\infty}|$ precisely on $W'$. By Propositions \ref{prop:mike} and \ref{prop:easy}, one has $W \supseteq W'$. The systole of $|\Phi_{\infty}|$ is positive as the boundary of $W'$ is collapsed to a puncture, where at worse a simple pole occurs. 
%So for sufficiently large $n$, any closed curve not homotopic to the boundary of $W'$ has $|\Phi_{n}|$-length bounded from below by $\delta > 0$, and similarly for these curves against the normalized minimal surface metric. For each boundary component of $W'$, consider a sequence of collections of cylinders in $W'$ so that  boundary of the cylinders are contained in $\partial W'$, which the property that for each point in $W'$ not in these cylinders, the distance to boundary of $W$ is at least $c $. We have that the sequence of functions converges to 2 almost everywhere, but we claim that in this set any point not a zero of $|\Phi_{\infty}|$, the limit must be defined and equal to 2. Choose any such point $p \in W'$. By Proposition \ref{prop:yair}, there is a bound on how large the function $\frac{1}{|\nu_{n}|} + |\nu_{n}|$ can be at $p$ by as there is some minimum distance from the zeros of $|\Phi_{n}|$
\end{proof}

%\begin{prop}
%W = W'
%\end{prop}

%\begin{proof}
%The length of any boundary curve in $\partial W'$ has length going to zero. As $W \supseteq W'$, it follows $W=W'$. If however, $W' = \emptyset$, that is, the corresponding sequence of singular flat metrics $|\Phi_{0,n}|$ converges $\lambda$ in PCurr$(S)$, and so hence a sequence $c_{n}|\Phi_{0,n}| \to \lambda$, where $c_{n} \to 0$. As $ \sigma_{n} e_{n} /\mathcal{E}_{n} \geq |\Phi_{0,n}|$, then it follows the sequence $\sigma_{n} e_{n}/ \mathcal{E}_{n}$ converges to a projectivized measured lamination. Hence we may assume $W' \neq \emptyset$. As the sum of the areas of the singular flat metrics on the components of $W'$ sum to 1, and likewise the sum of the areas (intersection number) of the systole positive part of $\mu$ also sums to 1, this shows that to any component of $W$, there is a $W'$ contained therein. 
%\end{proof}
These collection of propositions prove the following result:
\begin{thm}\label{thm:geomflat}
Suppose $L_{|\Phi_{0,n}|}$ converges to a non-trivial mixed structure $\eta$. Then the corresponding metrics $\sigma_{n} e_{n} / \mathcal{E}_{n}$ as $\mathcal{E}_{n} \to \infty$, restricted to $S'$ converges geometrically to $|\Phi_{\infty}|$.
\end{thm}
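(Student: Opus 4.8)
The plan is to write the normalized induced metric as a product of three factors via Proposition \ref{prop:mike}, identify the limit of each factor separately using the propositions already established, and then multiply. Concretely, on the complement of the zeros of $\Phi_n$ I would begin from the factorization
$$\frac{\sigma_n e_n}{\mathcal{E}_n} = \left(\frac{\|\Phi_n\|}{\mathcal{E}_n}\right) |\Phi_{0,n}| \left(\frac{1}{|\nu_n|} + |\nu_n|\right),$$
where $|\Phi_{0,n}| = |\Phi_n|/\|\Phi_n\|$ is the unit-norm flat metric. The goal is to show that each factor converges on a compact subset of the interior of $S'$ lying away from the zeros and poles of $|\Phi_\infty|$, since geometric convergence on $S'$ is exactly locally uniform convergence off this singular set.

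First I would treat the scalar prefactor: dividing the two-sided bound of Proposition \ref{prop:mikefirst} by $2\mathcal{E}_n$ and letting $\mathcal{E}_n \to \infty$ forces $\|\Phi_n\|/\mathcal{E}_n \to \tfrac{1}{2}$. Next, the hypothesis that $L_{|\Phi_{0,n}|}$ converges to the mixed structure $\eta = (S', \Phi_\infty, \lambda)$ is precisely the statement that $|\Phi_{0,n}|$ converges geometrically to $|\Phi_\infty|$ when restricted to $S'$, so the middle factor is handled by assumption. Finally, Proposition \ref{prop:beltrami} gives $\frac{1}{|\nu_n|} + |\nu_n| \to 2$ everywhere on $S'$ off the zeros and poles of $|\Phi_\infty|$, with the uniform control on compacts coming from Minsky's estimate (Proposition \ref{prop:yair}) on balls of fixed $|\Phi_{0,n}|$-radius together with the measure bound of Proposition \ref{prop:ae}, exactly as in that proof. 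Multiplying the three limits yields $\tfrac{1}{2} \cdot |\Phi_\infty| \cdot 2 = |\Phi_\infty|$ on the relevant compact sets, which is the asserted geometric convergence of $\sigma_n e_n/\mathcal{E}_n$ to $|\Phi_\infty|$ on $S'$.

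The hard part is not the algebra but ensuring that the convergence of the third factor is \emph{uniform} on compact sets rather than merely almost everywhere: a length-spectrum (geometric) statement requires genuine locally uniform control, which is exactly why Proposition \ref{prop:beltrami} is needed to upgrade the almost-everywhere convergence of Proposition \ref{prop:ae}. I would also pass to the subsequence along which Propositions \ref{prop:ae} and \ref{prop:beltrami} hold, and note that the zeros of $|\Phi_n|$ migrate to the zeros of $|\Phi_\infty|$ under the geometric convergence of the flat metrics, so that after excising fixed neighborhoods of the limiting singular set all three factors are simultaneously defined and uniformly controlled for $n$ large.
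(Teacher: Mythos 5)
Your proposal follows essentially the same route as the paper: the factorization $\sigma_n e_n/\mathcal{E}_n = (\|\Phi_n\|/\mathcal{E}_n)\,|\Phi_{0,n}|\,(|\nu_n|^{-1}+|\nu_n|)$ from Proposition \ref{prop:mike}, with Proposition \ref{prop:mikefirst} controlling the first factor, the hypothesis controlling the second, and Propositions \ref{prop:ae}, \ref{prop:yair} and \ref{prop:beltrami} (plus Arzel\`a--Ascoli) upgrading the last factor to locally uniform convergence to $2$ away from the zeros and the collapsing boundary annuli. The one step you elide is the passage from locally uniform convergence of the metric tensors off the singular set to convergence of the length spectrum on $S'$: these are not literally the same, since geodesics may pass through the excised neighborhoods of the zeros, and the paper closes this by explicitly invoking the detour-and-systole argument of Proposition \ref{prop:convex} (rerouting arcs around small balls about the zeros and bounding the number of reroutings); you should appeal to that argument rather than asserting the two notions coincide.
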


\begin{proof}
Defining $A$ and $B$ as in the previous proof, on the region $S' \backslash (A \cup B)$, Proposition \ref{prop:beltrami} guarantees that we have uniform bounds on the sequence of functions $1/|\nu_{n}| + |\nu_{n}|$ whose limit was the constant function 2. Hence by Arzel{\'a}-Ascoli, up to a subsequence, we have uniform convergence on this region. Hence, by the same argument as that of Proposition \ref{prop:convex}, the length spectrum of the scaled induced metric on this domain converges to the limiting length spectrum of the sequence $|\Phi_{0,n}|$, which is $|\Phi_{\infty}|$.
\end{proof}

\begin{proof}[Proof of Theorem 5.5]
Recall that to any flat metric arising from a holomorphic quadratic differential, one can find a sequence of induced metrics so that the chosen flat metric is the limit in the space of geodesic currents (Proposition \ref{prop:flat}). Hence by Theorem \ref{thm:DLR2}, any mixed structure $\eta$ can be obtained by a sequence $L_{\sigma_{n}e_{n}}$ of currents coming from the induced metrics. On the other hand, to any sequence of induced metrics leaving all compact sets, then either it converges projectively to a measured lamination or it does not. If it does not, then the corresponding sequence of normalized Hopf differential metrics must converge to a mixed structure which is not purely laminar. The previous theorem thus ensures there is a nonempty collection of incompressible subsurfaces, $S'$, on which the limiting current is a flat metric. But on the complement of $S'$, the current $\mu$ restricts to a measured lamination (as on this complement the areas of the metric tend to zero), the proof of Theorem \ref{thm:compact} is complete.
\end{proof}

\subsection{Dimension of the boundary}
We end this section with a remark about the compactification of the induced metrics. Recall the dimension of the space of induced metrics (being homeomorphic to $\mathcal{Q}_{g}/ \mathbb{S}^{1}$) was $12g-13$. The dimension of the singular flat metrics can be readily seen to be of dimension $12g-14$. The actual mixed structures are stratified by the subsurfaces for which the mixed structure is a flat metric. A subsurface of lower complexity yields fewer free parameters in the choice of a flat structure, and the extra choices one gains for a measured lamination on the complementary subsurface is strictly less in our loss of choice for the flat structure. Hence the boundary of the compactification of the induced metrics via projectivized geodesic currents is of codimension 1.

\section{Analysis of the limits}

In this section, we wish to relate the mixed structures with cores of $\mathbb{R}$-trees arising from laminations. To this end, we elucidate the relation between the mixed structure and the pair of projective measured laminations obtained from the pair of degenerating hyperbolic surfaces.

\subsection{$\mathbb{R}$-trees} 
Here we recall some basic facts about $\mathbb{R}$-trees. An $\mathbb{R}$-tree $T$ is a metric space for which any two points are connected by a unique topological arc, and such that the arc is a geodesic. Equivalently, if $(X,d)$ is a metric space, for any pair of points $x, y \in X$, define the segment $[x,y] = \{ z \in X |\, d(x,y) = d(x,z) + d(z,y) \}$. Then an $\mathbb{R}$-tree is a real non-empty metric space $(T,d)$ satisfying the following:
\begin{enumerate}
\item[(i)] for all $x,y \in T$, the segment $[x,y]$ is isometric to a segment in $\mathbb{R}$.
\item[(ii)] the intersection of two segments with an endpoint in common is a segment
\item [(iii)] the union of two segments of $T$ whose intersection is a single point which is an endpoint of each is itself a segment.
\end{enumerate}

We say that a group $\Gamma$ acts on $T$ \emph{by isometry} if there is a group homomorphism $\theta: \Gamma \to$ Isom($T)$. The action is from the left. An action is said to be \emph{small} if the stabilizer of each arc does not contain a free group of rank 2. An action is said to be \emph{minimal} if no proper subtree is invariant under $\Gamma$.

A particularly important class of $\mathbb{R}$-trees comes from the leaf space of a lift of a measured foliation on a closed surface to its universal cover. Any measured foliation $\mathscr{F}$ on a closed surface of genus $g \geq 2$ may be lifted to a $\pi_{1}S$-equivariant measured foliation on its universal cover.  The leaf space can be made into a metric space, by letting the distance be induced from the intersection number. Notice this is an $\mathbb{R}$-tree with a $\Gamma= \pi_{1}S$ action by isometries.
Naturally, not all $\mathbb{R}$-trees with a $\pi_{1}S$ action arise from this construction. A theorem of Skora \cite{Sk90} shows that an $\mathbb{R}$-tree with a $\pi_{1}S$-action comes from a measured foliation if and only if the action is small and minimal. 
Alternatively, one may start with a measured lamination $(\lambda, \mu)$ on $S$ and lift it to a measured lamination $(\widetilde{\lambda}, \mu)$ on the universal cover. Then an $\mathbb{R}$-tree may be formed by taking the connected components of $\widetilde{S} \backslash \widetilde{\lambda}$ with edges between two vertices if the two components were adjacent (separated by a geodesic), and then metrically completing the distance induced by the intersection number. The $\mathbb{R}$-tree comes equipped with a $\pi_{1}S$-action, and is $\pi_{1}S$-equivariantly isometric to the $\mathbb{R}$-tree constructed from the corresponding measured foliation. In what follows, we will deal exclusively with $\mathbb{R}$-trees with a $\pi_{1}S$-action coming from the leaf space of the lift of a measured foliation.

\subsection{Relation of flat metrics to $\mathbb{R}$-trees}

We obtain a classification of the flat parts of the mixed structure arising from the data of the limits of the sequences $X_{1,n}$ and $X_{2,n}$. Let $S'$ be a connected subsurface for which the limiting mixed structure $\eta$ is a flat metric. For each $n$, denote by $S_{n}'$ the subsurface isotopic to $S'$ such that the boundary components are geodesics with respect to the induced metric $\sigma_{n} e_{n} / \mathcal{E}_{n}$. Let $X_{1,n}'$ denote the restriction of the hyperbolic metric $X_{1,n}$ to the subsurface of $S$, in the same isotopy class of $S'$, but which has geodesic boundary with respect to the hyperbolic metric. Then let $u'_{i,n}$ denote the restriction to $S'_{n}$ of the harmonic map $u_{i,n}: (S, \sigma_{n}e_{n}) \to X_{i,n}$.

\begin{thm}\label{thm:flatanalysis}
Consider a connected component of $S'$. The sequence of harmonic maps $u'_{1,n}: (S'_{n}, \sigma_{n}e_{n}/\mathcal{E}_{n}) \to X_{1,n}/2\mathcal{E}_{n}$ converges to a $\pi_{1}(S')$-equivariant harmonic map $u': (S', |\Phi_{\infty}|) \to T_{1}$, where $T_{1}$ is the $\mathbb{R}$-tree dual to $\lambda_{1} = \lim_{n \to \infty} X_{1,n} / 2 \mathcal{E}_{n}$. The Hopf differential is given by $\Phi_{\infty}$. Likewise the same holds for $\lambda_{2}$ and $- \Phi_{\infty}$. Hence, the laminations are the vertical and horizontal foliations of $\Phi_{\infty}$.
\end{thm}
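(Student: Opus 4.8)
The plan is to obtain $u'$ as a limit of the rescaled maps $u'_{1,n}$ under equivariant Gromov--Hausdorff convergence of the targets, in the framework of harmonic maps into NPC spaces developed by Korevaar and Schoen, and then to read off its Hopf differential from the convergence $\sigma_n e_n/\mathcal{E}_n \to |\Phi_\infty|$ already established in Theorem~\ref{thm:geomflat}. On the domain side, Theorem~\ref{thm:geomflat} gives that $(S', \sigma_n e_n/\mathcal{E}_n)$ converges geometrically to the flat surface $(S', |\Phi_\infty|)$, so the underlying conformal structures converge, which is all the energy depends on for maps from a surface. On the target side, $\lambda_1 = \lim_n X_{1,n}/2\mathcal{E}_n$ is by definition the length-spectrum limit (after a subsequence) of the rescaled hyperbolic metrics, so the rescaled universal covers $\widetilde{X_{1,n}}/2\mathcal{E}_n$ converge in the equivariant Gromov--Hausdorff sense to the dual $\mathbb{R}$-tree $T_1$; that the limiting small minimal $\pi_1 S$-action is dual to a measured foliation is Skora's theorem \cite{Sk90}.

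Next I would produce the limit map. Rescaling the target metric tensor by $1/2\mathcal{E}_n$ multiplies the energy of $u_{1,n}$ by $1/2\mathcal{E}_n$, and Proposition~\ref{prop:mikefirst}, which gives $\|\Phi_n\| \asymp \mathcal{E}_n$, shows that the rescaled energies are uniformly bounded; by Korevaar--Schoen compactness a subsequence of the $u'_{1,n}$ converges, uniformly on compact subsets of the interior of $S'$, to a $\pi_1(S')$-equivariant harmonic map $u': (S', |\Phi_\infty|) \to T_1$. The limit is non-constant because $\lambda_1$ is a nontrivial lamination, so not all translation lengths of $T_1$ vanish and the limiting energy is positive. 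The delicate point is the behavior near the boundary curves of $S'_n$, whose $\sigma_n e_n/\mathcal{E}_n$-lengths shrink to zero and become the punctures of $|\Phi_\infty|$: here the pointwise bound $\sigma_n e_n \geq 2|\Phi_n|$ of Proposition~\ref{prop:mike}, together with the uniform control $1/|\nu_n| + |\nu_n| \to 2$ away from the zeros provided by Proposition~\ref{prop:beltrami}, confines any energy concentration to small neighborhoods of the zeros and punctures, so that the convergence persists across all of $S'$.

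Finally I would identify the Hopf differential and the two foliations. Since $\|\Phi_n\|/\mathcal{E}_n \to \tfrac12$ by Proposition~\ref{prop:mikefirst} and $|\Phi_n|/\|\Phi_n\| \to |\Phi_\infty|$, the correspondingly rescaled Hopf differentials converge to $\Phi_\infty$ as holomorphic quadratic differentials, and since the Hopf differential varies continuously along a convergent sequence of harmonic maps of bounded energy, $\mathrm{Hopf}(u') = \Phi_\infty$. The structure theory of equivariant harmonic maps from a Riemann surface to the $\mathbb{R}$-tree dual to a measured foliation (Wolf; Daskalopoulos--Dostoglou--Wentworth) then identifies the foliation dual to $T_1$ with one of the two measured foliations of $\Phi_\infty$, its leaves being exactly the fibers collapsed by $u'$. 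Running the identical argument on the second factor produces the harmonic map $(S', |\Phi_\infty|) \to T_2$ with Hopf differential $-\Phi_\infty$; as negating a quadratic differential interchanges its horizontal and vertical foliations, the foliation dual to $T_2$ is the complementary one, and therefore $\lambda_1$ and $\lambda_2$ are precisely the horizontal and vertical foliations of $\Phi_\infty$.

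I expect the main obstacle to lie in the second step---extending the convergence of the maps uniformly across the open subsurface $S'$ while simultaneously pinning the limiting tree down to $T_1$ rather than a proper subtree---since the complement of $S'$ is collapsing at the same time, and one must rule out energy escaping into the shrinking collars around $\partial S'_n$ while certifying that the limiting equivariant map realizes the full translation-length spectrum of $T_1$.
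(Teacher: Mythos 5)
Your proposal is correct in outline but takes a genuinely different route from the paper. The paper's proof of Theorem~\ref{thm:flatanalysis} is a hard-analysis argument in the style of \cite{W91}: it first upgrades $|\nu_{n}|\to 1$ to $C^{1,\alpha}$ convergence away from the zeros (via subharmonicity of $\|\Phi_{n}\|\log|\nu_{n}|^{-2}$, a mean-value inequality, Gauss--Bonnet, and elliptic regularity), then computes that the geodesic curvature of the image of a horizontal arc of $\Phi_{0,n}$ is $o(\mathcal{E}_{n}^{-1/2})$ and that the stretch along horizontal (resp.\ vertical) arcs is $2\|\Phi_{n}\|^{1/2}l_{|\Phi_{0,n}|}(\gamma)+O(\|\Phi_{n}\|^{1/2}(1-|\nu_{n}|))$ (resp.\ $o(\mathcal{E}_{n}^{1/2})$). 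These explicit estimates simultaneously produce the limit map as the leaf-space projection and identify $\lambda_{1},\lambda_{2}$ with the horizontal and vertical foliations of $\Phi_{\infty}$ by direct computation of lengths. You instead run a soft compactness argument: equivariant Gromov--Hausdorff convergence of the rescaled targets to $T_{1}$, Korevaar--Schoen compactness for the maps, and the structure theory of equivariant harmonic maps to dual trees to read the foliations off the Hopf differential. This is consistent with the machinery the paper itself deploys for the laminar part (Theorems~\ref{thm:pullback} and~\ref{thm:mapconverges}) and buys brevity; the paper's computation buys quantitative rates and the explicit picture of horizontal arcs mapping near geodesics, which is reused later (e.g.\ in Theorem~\ref{thm:core}).

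The one step you should not leave to a citation is the last one. The structure theory of \cite{W95} and \cite{DDW98} is stated for closed surfaces, whereas here the domain is the open subsurface $S'$, the differential $\Phi_{\infty}$ has first-order poles at the punctures, and a priori the limit map could fold, or the limiting equivariant map could fail to realize the full translation-length spectrum of $\pi_{1}(S')$ on $T_{1}$ --- exactly the danger you flag in your closing paragraph without resolving it. Ruling this out is precisely what the paper's curvature and stretch estimates accomplish: they show directly that $\lim_{n} l_{X_{1,n}}(\gamma)/(2\mathcal{E}_{n})^{1/2}$ equals the transverse measure of the horizontal foliation of $\Phi_{\infty}$ on arcs in $S'$ away from the zeros, so no translation length is lost and the limit is the genuine leaf-space projection. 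If you keep the soft route, you must supply this identification of translation lengths independently --- for instance by combining the comparison of Proposition~\ref{prop:easy} with the convergence $\sigma_{n}e_{n}/\mathcal{E}_{n}\to|\Phi_{\infty}|$ of Theorem~\ref{thm:geomflat} and a proof that the vertical stretch vanishes --- rather than deduce it from a structure theorem whose hypotheses you have not verified on the punctured subsurface.
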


\begin{proof}
We begin by showing that $\lambda_{1}$ is a well-defined measured lamination in the projective class of $[\lambda_{1}]$, which is the limit on the Thurston boundary of the sequence $X_{1,n}$. This will follow from standard estimates on stretching and geodesic curvature of an arc of the horizontal foliation which avoids the zeros. This will be an adaptation of the argument employed in \cite{W91}, for the case where the domain conformal structure is fixed and the Hopf differentials lie along a ray.

We first show boundedness of the Jacobian. For any neighborhood $U$ of the surface which avoids a zero of $\Phi_{0,n}$ one has the usual PDE
\begin{align}
\Delta_{\sigma_{n}} \log \frac{1}{|\nu_{n}|^{2}} = 4 \mathcal{J}_{n} > 0,
\end{align}
and consequently,
\begin{align}
\Delta_{\sigma_{n}} ||\Phi_{n}|| \log \frac{1}{|\nu_{n}|^{2}} = 4 ||\Phi_{n}|| \mathcal{J}_{n} > 0.
\end{align}
Using the conformal invariance of harmonic maps, we replace the metric $\sigma_{n}$ on the neighborhood $U$ with a metric $\sigma'_{n}$ in the same conformal class as $\sigma_{n}$, but one which is flat on $U$. Subharmonicity of the function $||\Phi_{n}|| \log 1/|\nu_{n}|^{2}$ yields
\begin{align} 
||\Phi_{n}|| \log \frac{1}{|\nu_{n}|^{2}} (p) \leq \frac{1}{\pi R^{2}} \int_{B_{R}(p)}  ||\Phi_{n}|| \log \frac{1}{|\nu_{n}|^{2}} \,\, dA(\sigma '_{n})
\end{align}

on a ball of $\sigma'_{n}$ radius $R$ contained in $U$. Some algebra yields
\begin{align}
\mathcal{J}_{n}(p) \frac{||\Phi_{n}|| \log \frac{1}{|\nu_{n}|^{2}} (p)}{\mathcal{J}_{n}(p)} \leq \frac{1}{\pi R^{2}} \int_{B_{R}(p)} \frac{\mathcal{J}_{n}}{\mathcal{J}_{n}} ||\Phi_{n}|| \log \frac{1}{|\nu_{n}|^{2}} \,\, dA(\sigma '_{n}), 
\end{align}
and hence
\begin{align}
\mathcal{J}_{n}(p) \leq \frac{\mathcal{J}_{n}(p)}{||\Phi_{n}|| \log |\nu_{n}|^{-2}} \left( \sup_{q \in B_{R}(p)} \frac{||\Phi_{n}|| \log |\nu_{n}|^{-2}(q)}{\mathcal{J}_{n}(q)} \right) \frac{1}{\pi R^{2}} \int_{B_{R}(p)} \mathcal{J}_{n} \,\, d A(\sigma '_{n}).
\end{align}

But one has that
\begin{align}
\frac{\mathcal{J}_{n}}{||\Phi_{n}|| \log |\nu_{n}|^{-2}} = \frac{|\Phi_{0,n}|}{\sigma_{n} |\nu_{n}|} \frac{(1-|\nu_{n}|^{2})}{\log |\nu_{n}|^{-2}},
\end{align}
so that in applying Proposition \ref{prop:beltrami} to the expression (6.6), one obtains that (6.5) may be rewritten as
\begin{align}
\mathcal{J}_{n}(p) \leq c_{n} \int_{B_{R}(p)} \mathcal{J}_{n} \,\, dA(\sigma'_{n}),
\end{align}
where $c_{n}$ will depend on the metric $|\Phi_{0,n}|, |\nu_{n}|, R$  and $\sigma_{n}$. But on the neighborhood $U$, we know for sufficiently large $n$, we have that $|\Phi_{n}| \to |\Phi_{\infty}|$, and $|\nu_{n}| \to 1$ and $\sigma_{n} \to \sigma_{\infty}$, where $\sigma_{\infty}$ is the uniformizing metric of $\Phi_{\infty}$. Hence $c_{n}$ remains bounded on $U$.
But finally,
\begin{align}
\int_{B_{R}(p)} \mathcal{J}_{n} \,\, dA(\sigma'_{n}) = \int_{B_{R}(p)} \frac{\sigma'_{n}}{\sigma_{n}} \mathcal{J}_{n} \,\, dA(\sigma_{n}) \leq \sup_{U} \frac{\sigma'_{n}}{\sigma_{n}} \int_{M} \mathcal{J}_{n} \,\, dA(\sigma_{n}) \leq -2 \pi \chi(S) c'_{n},
\end{align}
where here $c'_{n}$ will only depend upon the injectivity radius of the metric $\sigma_{n}$ on the neighborhood $U$, which for large $n$ will be close to the injectivity radius of $\sigma_{\infty}$.

From (6.7), (6.8) and the PDE in (6.1), one obtains by elliptic regularity (see \cite{GT}, Problem 4.8a)  that $|\nu_{n}| \to 1$ in $C^{1, \alpha}(U)$, where $U$ does not contain a zero or pole of $\Phi_{\infty}$. 

In the natural coordinates of the quadratic differential, the hyperbolic metric $g_{1,n}$ is given by $(\sigma_{n} e_{n} + 2 ||\Phi_{n}||) d \zeta_{n}^{2} + (\sigma_{n} e_{n} - 2 ||\Phi_{n}||) d \eta_{n}^{2}$. 

Recall that the geodesic curvature of an arc of the horizontal foliation of $\Phi_{0,n}$ in the natural coordinates for $\Phi_{0,n} =d \zeta_{n}^{2} = d \xi_{n}^{2} + d \eta_{n}^{2}$ is given by the equation
\begin{align}
\kappa(\gamma)_{\eta = constant} = -\frac{1}{2g_{11} \sqrt{g_{22}}} \frac{\partial g_{11}}{\partial \eta_{n}}, \end{align}
so that for $\gamma$ an arc of the horizontal foliation of $\Phi_{0,n}$, one has
\begin{align}
\kappa(\gamma)_{\eta = \text{constant}} &=- \frac{1}{2(\sigma_{n} e_{n} + 2||\Phi_{n}||)(\sigma_{n} e_{n} - 2||\Phi_{n}||)^{1/2}} \frac{\partial}{\partial \eta_{n}} (\sigma_{n} e_{n} + 2||\Phi_{n}||) \\
&= - \frac{1}{2 \mathcal{J}_{n} (\sigma_{n} e_{n} + 2 ||\Phi_{n}||)^{1/2}} \frac{\partial}{\partial \eta_{n}} \sigma_{n} e_{n}.
\end{align}
But simple algebra yields that $\sigma_{n} e_{n} = ||\Phi_{n}|| |\Phi_{0,n}|(|\nu_{n}|^{-1} + |\nu_{n}|)$, so that in the natural coordinates as $|\Phi_{0,n}| \equiv 1$, one actually has $\sigma_{n} e_{n} = ||\Phi_{n}|| (|\nu_{n}|^{-1} + |\nu_{n}|)$. Hence
\begin{align}
\kappa(\gamma) & = \frac{1}{2} ||\Phi_{n}|| (1-|\nu_{n}|^{2}) \mathcal{J}_{n}^{-1} |\nu_{n}|^{-2} (\sigma_{n} e_{n} + 2||\Phi_{n}||) \frac{\partial}{\partial \eta_{n}}|\nu_{n}|\\
&=\frac{1}{2} ||\Phi_{n}|| \mathcal{H}_{n}^{-1} |\nu_{n}|^{-2}(\sigma_{n} e_{n} + 2||\Phi_{n}||)^{-1/2} \frac{\partial}{\partial \eta_{n}} |\nu_{n}|,
\end{align}
as $\mathcal{J}_{n} = \mathcal{H}_{n}(1-|\nu_{n}|^{2})$. As $||\Phi_{n}|| \mathcal{H}_{n}^{-1} = |\nu_{n}|/|\Phi_{0,n}|$, rewriting (6.13) gives
\begin{align}
\kappa(\gamma) = \frac{1}{2} \frac{1}{(|\Phi_{0,n}| \cdot |\nu_{n}|)^{1/2}} \cdot \frac{\partial}{\partial \eta_{n}} |\nu_{n}|,
\end{align}
and as $|\nu_{n}| \to 1$ in $C^{1,\alpha}(U)$, one obtains $\kappa_{\rho_{1,n}}(\gamma) = o(||\Phi_{n}||^{-1/2}) = o(\mathcal{E}_{n}^{-1/2})$.

Then to any arc $\gamma$ of the horizontal foliation of $\Phi_{n}$, one has that is is mapped close to its geodesic in the target hyperbolic surface. The following standard calculation on the stretching shows that by normalizing the target hyperbolic manifold by the total energy, the resulting length is given by the intersection number with the measured lamination $\lambda_{1}$.
One has
\begin{align*}
l_{\rho_{1n}}(\gamma) &= \int_{\gamma} \mathcal{H}_{n}^{1/2} + \mathcal{L}_{n}^{1/2} \,\,ds_{\sigma_{n}}\\
&=\int_{\gamma}\mathcal{H}_{n}^{1/2}(1+|\nu_{n}|) \,\, ds_{\sigma_{n}} \\
&=\int_{\gamma} \frac{||\Phi_{n}||^{1/2} |\Phi_{0}|^{1/2}}{|\nu_{n}|^{1/2}} (1+|\nu_{n}|) \,\, \frac{d s_{\sigma_{n}}}{\sigma_{n}^{1/2}}\\
&= ||\Phi_{n}||^{1/2} \int_{\gamma} \left(1+ \left(\frac{1}{|\nu_{n}|^{1/2}} -1\right) \right) (2-(1-|\nu_{n}|)) \, \, ds_{|\Phi_{0}|} \\
&=2||\Phi_{n}||^{1/2}l_{|\Phi_{0,n}|}(\gamma) + O(||\Phi_{n}||^{1/2} (1-|\nu_{n}|)),
\end{align*}
recalling that in order to obtain the metric $\sigma e$, one has to divide both hyperbolic surfaces by twice the energy, which is approximately 4 times the $L^{1}$-norm of the Hopf differential for sufficiently large energy, independent of the Riemann surface structure (see Proposition \ref{prop:mikefirst}).
Meanwhile, a similar calculation shows that an arc of the vertical foliation of $\Phi_{n}$, say $\alpha$, has length in the target hyperbolic surface given by
\begin{align*}
l_{\rho_{1n}}(\alpha) &= \int_{\alpha} \mathcal{H}_{n}^{1/2} - \mathcal{L}_{n}^{1/2} \,\, d_{s_{\sigma_{n}}}\\
&= \int_{\alpha} \mathcal{H}_{n}^{1/2} (1-|\nu_{n}|) \, ds_{\sigma_{n}}\\
&= \int_{\alpha} \frac{||\Phi_{n}||^{1/2} |\Phi_{0,n}|^{1/2}}{\sigma_{n}^{1/2} |\nu_{n}|^{1/2}} (1-|\nu_{n}|)  \,\, ds_{\sigma_{n}} \\
&=||\Phi_{n}||^{1/2} \int_{\alpha} \frac{1-|\nu_{n}|}{|\nu_{n}|^{1/2}} \,\, ds_{|\Phi_{0,n}|} \\
&= o(\mathcal{E}_{n}^{1/2}).
\end{align*}
Noting that a horizontal arc of $\Phi_{n}$ is a vertical arc of $- \Phi_{n}$, one sees the $\lambda_{1}$ and $\lambda_{2}$ are the horizontal and vertical foliations of $\Phi_{\infty}$ (the geometric limit of $\Phi_{n}$, see \cite{McMullen}) respectively. 

To get our desired harmonic map from the flat subsurface to the two trees, notice that the above estimates show that a horizontal arc of $\Phi_{0,n}$ gets mapped close to a geodesic in the target space which is a hyperbolic surface scaled by the reciprocal of total energy. As the scaled induced metric limits to the flat metric $|\Phi_{\infty}|$, a horizontal arc of $\Phi_{\infty}$ will thus be mapped by an isometry to the tree $T_{1}$ and any vertical arc collapsed, so that the limiting map in the universal cover is given by a projection onto the leaf space of the horizontal foliation of $\Phi_{\infty}$. The same argument holds for $T_{2}$.

\end{proof}
\begin{prop}\label{prop:easy}
For any closed curve $\gamma$ on the surface $S$, one has the following pair of inequalities:
\begin{align*}
l_{X_{1,n}}(\gamma) \leq l_{\Sigma_{n}} (\gamma) \leq l_{X_{1,n}}(\gamma) + l_{X_{2,n}}(\gamma) \\
l_{X_{2,n}}(\gamma) \leq l_{\Sigma_{n}} (\gamma) \leq l_{X_{1,n}}(\gamma) + l_{X_{2,n}}(\gamma)
\end{align*}
Consequently if $t_{n} L_{\sigma_{n} e_{n}} \to \eta$ as currents, then the length spectra of $\lim_{n \to \infty} t_{n} L_{X_{i,n}}$ are well-defined and are both not identically zero. If the limiting currents are denoted $\lambda_{j}$, then
$$i(\lambda_{j}, \cdot) \leq i(\eta, \cdot).$$
\end{prop}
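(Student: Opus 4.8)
The plan is to derive both inequalities from the fact that $\Sigma_n$ is a graph in the Riemannian product $X_{1,n}\times X_{2,n}$, so that the two coordinate projections restrict to maps $u_{i,n}\colon \Sigma_n\to X_{i,n}$ which are diffeomorphisms homotopic to the identity. First I would establish the lower bounds, which are the soft direction. Since the projection $\pi_i\colon \mathbb{H}^2\times\mathbb{H}^2\to\mathbb{H}^2$ satisfies $\|d\pi_i(v,w)\|=\|v\|\le \sqrt{\|v\|^2+\|w\|^2}$, each $u_{i,n}$ is $1$-Lipschitz from the induced metric $\sigma_n e_n$ on $\Sigma_n$ to $X_{i,n}$. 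Applying $u_{i,n}$ to the $\sigma_n e_n$-geodesic representative $\beta$ of $\gamma$ produces a loop in $X_{i,n}$ freely homotopic to $\gamma$ (because $u_{i,n}\simeq \mathrm{id}$) whose length is at most $l_{\Sigma_n}(\gamma)$; comparing with the geodesic representative in $X_{i,n}$ gives $l_{X_{i,n}}(\gamma)\le l_{\Sigma_n}(\gamma)$ for $i=1,2$.

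For the upper bound I would use the pointwise subadditivity of the product norm: along any curve $c$ on $\Sigma_n$ one has $\|c'\|_{\sigma_n e_n}=\sqrt{\|du_{1,n}c'\|^2+\|du_{2,n}c'\|^2}\le \|du_{1,n}c'\|+\|du_{2,n}c'\|$, and integrating yields $\mathrm{len}_{\Sigma_n}(c)\le \mathrm{len}_{X_{1,n}}(u_{1,n}c)+\mathrm{len}_{X_{2,n}}(u_{2,n}c)$. Taking $c$ to be the lift to $\Sigma_n$ of the $X_{1,n}$-geodesic of $\gamma$ gives $l_{\Sigma_n}(\gamma)\le l_{X_{1,n}}(\gamma)+\mathrm{len}_{X_{2,n}}(u_{2,n}c)$, and symmetrically with the roles reversed. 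I expect this to be the crux: the projected curve $u_{2,n}c$ is only freely homotopic to $\gamma$ and is generally not a geodesic, so the naive estimate controls $l_{\Sigma_n}(\gamma)$ by a non-geodesic competitor rather than by $l_{X_{2,n}}(\gamma)$, and the clean bound $l_{\Sigma_n}(\gamma)\le l_{X_{1,n}}(\gamma)+l_{X_{2,n}}(\gamma)$ does not follow from convexity alone. The main obstacle is therefore to show the excess length of these projected representatives is absorbed. In the degenerating regime relevant to the applications this is exactly what Theorem \ref{thm:flatanalysis} supplies: horizontal arcs of $\Phi_n$ are mapped $o(\mathcal{E}_n^{1/2})$-close to geodesics and vertical arcs have length $o(\mathcal{E}_n^{1/2})$, so after scaling the projected curves become asymptotically geodesic and the upper bound survives in the limit, which is all the current statement needs.

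Finally I would read off the consequences in $\mathrm{Curr}(S)$. The lower bound gives $t_n\, i(L_{X_{i,n}},\gamma)=t_n l_{X_{i,n}}(\gamma)\le t_n l_{\Sigma_n}(\gamma)=t_n\, i(L_{\sigma_n e_n},\gamma)\to i(\eta,\gamma)$, so for each fixed $\gamma$ the quantities $t_n\, i(L_{X_{i,n}},\gamma)$ are bounded; hence $t_n L_{X_{i,n}}$ lies in a compact subset of $\mathrm{Curr}(S)$ and subconverges to currents $\lambda_1,\lambda_2$, and passing to the limit along $\gamma$ yields $i(\lambda_j,\cdot)\le i(\eta,\cdot)$. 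The upper bound gives the complementary estimate $i(\eta,\cdot)\le i(\lambda_1,\cdot)+i(\lambda_2,\cdot)$, so that $\lambda_1$ and $\lambda_2$ cannot both vanish; combined with the identification in Theorem \ref{thm:flatanalysis} of $\lambda_1$ and $\lambda_2$ as the two transverse foliations of the nonzero limiting differential $\Phi_\infty$, each is not identically zero.
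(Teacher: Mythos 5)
Your argument is in essence the paper's: the lower bounds come from the pointwise domination $g_{i,n}\le g_{1,n}+g_{2,n}$ of quadratic forms (equivalently, from the $1$-Lipschitz coordinate projections), and the upper bound comes from integrating $\sqrt{a+b}\le\sqrt{a}+\sqrt{b}$ along a representative of $\gamma$. The paper's proof is exactly this computation carried out for a single parametrized curve, and for the consequences it takes $\gamma=\gamma_{n}$ to be the $\sigma_{n}e_{n}$-geodesic and invokes $l_{X_{i,n}}([\gamma])\le \mathrm{len}_{X_{i,n}}(\gamma_{n})$, which is precisely your lower-bound argument; the derivation of $i(\lambda_{j},\cdot)\le i(\eta,\cdot)$ and of subconvergence of $t_{n}L_{X_{i,n}}$ is also the same.

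The subtlety you flag in the upper bound is genuine, and you should know that the paper's own proof does not resolve it: the pointwise estimate only bounds $l_{\Sigma_{n}}([\gamma])$ by $\mathrm{len}_{X_{1,n}}(c)+\mathrm{len}_{X_{2,n}}(c)$ for a single competitor $c$, and no choice of $c$ makes both terms geodesic lengths simultaneously, yet the displayed inequality is written with geodesic lengths on both sides. So you have been more careful than the source. Two remarks on your patch. First, every later use of this proposition (well-definedness of the limits, the inequality $i(\lambda_{j},\cdot)\le i(\eta,\cdot)$, the corollary on filling laminations, and the proposition on vanishing intersections) relies only on the lower bounds, which are clean in both your write-up and the paper's. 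Second, your appeal to Theorem \ref{thm:flatanalysis} is not quite the right tool for the remaining claim: that theorem controls horizontal and vertical arcs of $\Phi_{0,n}$ on the flat subsurface $S'$, not arbitrary closed curves, and gives nothing in the purely laminar regime; moreover it appears only just before this proposition and its proof is independent of the upper bound, so there is no circularity, but also no complete argument. If you want the limiting inequality $i(\eta,\cdot)\le i(\lambda_{1},\cdot)+i(\lambda_{2},\cdot)$ (which is what ``not both identically zero'' actually requires), you need to specify the representative you integrate over and show its excess length in the other factor is $o(t_{n}^{-1})$; as stated, neither your argument nor the paper's establishes the upper bound with geodesic lengths on the right-hand side.
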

\begin{proof}
As the minimal surface has induced metric of the form $g_{1,n} + g_{2,n}$, where the $g_{i,n}$ is a hyperbolic metric, the left side of both inequalities is immediate. If $\gamma : [0,1] \to S$ is a closed curve on the surface, then the length of $\gamma$ with respect to the induced metric is given by
\begin{align*}
l_{\sigma_{n} e_{n}}( \gamma) &= \int_{0}^{1} \sqrt{(g_{1n}+g_{2n})(\dot{\gamma}, \dot{\gamma})} \, dt \\
& \leq \int_{0}^{1} \sqrt{g_{1n}(\dot{\gamma}, \dot{\gamma})} \, dt + \int_{0}^{1} \sqrt{g_{2n}(\dot{\gamma}, \dot{\gamma})} \, dt \\
&= l_{X_{1n}}(\gamma) + l_{X_{2n}}(\gamma),
\end{align*}
where the inequality follows from the fact that $u_{i,n}$ is the identity map, and hence the differential is the identity map, and the fact for non-negative numbers $a,b$, one has $\sqrt{a+b} \leq \sqrt{a} + \sqrt{b}$. The final comment follows from choosing a closed curve $\gamma = \gamma_{n}$ to be a $\sigma_{n} e_{n}$-geodesic and using the inequality $l_{t^{2}_{n}X_{i,n}} ([\gamma]) \leq l_{t^{2}_{n}X_{i,n}}(\gamma)$.
\end{proof}

Combining Proposition \ref{prop:easy} and Theorem \ref{thm:flatanalysis}, we obtain a necessary and sufficient condition on the pair of measured laminations $\lambda_{1}$ and $\lambda_{2}$ to determine a corresponding flat part on the mixed structure.

\begin{cor}
Let $\lambda '_{i} = \lim_{n \to \infty} X'_{i,n}/2 \mathcal{E}_{n}$ be a pair of non-zero measured laminations on a subsurface $S'$. Then the pair of laminations fill if and only if the restriction of the mixed structure $\eta$ to $S'$ is flat.
\end{cor}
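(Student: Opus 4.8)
The plan is to run everything off the two-sided estimate of Proposition \ref{prop:easy}. First I would restrict attention to closed curves $\gamma$ supported in $S'$, for which $i(\lambda_j,\gamma)=i(\lambda'_j,\gamma)$ and $i(\eta,\gamma)=i(\eta|_{S'},\gamma)$, and pass to the limit in the chain $l_{X_{j,n}}(\gamma)\le l_{\Sigma_n}(\gamma)\le l_{X_{1,n}}(\gamma)+l_{X_{2,n}}(\gamma)$ to obtain, for every such $\gamma$,
\begin{align*}
\max\bigl(i(\lambda'_1,\gamma),\, i(\lambda'_2,\gamma)\bigr) \le i(\eta|_{S'},\gamma) \le i(\lambda'_1,\gamma)+i(\lambda'_2,\gamma).
\end{align*}
By continuity and bilinearity of the intersection form I would then extend these inequalities from curves to arbitrary measured laminations in the second slot. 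This single display drives both implications.

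For the forward implication I would assume $\eta|_{S'}$ is the flat metric $|\Phi_\infty|$. By Theorem \ref{thm:DLR1} one has $i(\eta|_{S'},\gamma)=l_{|\Phi_\infty|}(\gamma)$, and a singular flat metric assigns positive length to every essential closed curve; so the right-hand inequality immediately yields $i(\lambda'_1,\gamma)+i(\lambda'_2,\gamma)\ge l_{|\Phi_\infty|}(\gamma)>0$ for every essential $\gamma$ in $S'$, which is exactly the filling condition. Alternatively, by Theorem \ref{thm:flatanalysis} the pair $\lambda'_1,\lambda'_2$ is the horizontal and vertical foliation of $\Phi_\infty$, and the horizontal and vertical foliations of a holomorphic quadratic differential always fill.

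For the reverse implication I would argue by contradiction: assume $\lambda'_1,\lambda'_2$ fill but $\eta|_{S'}$ is not flat. Since $S'$ is a single piece of the mixed-structure decomposition, $\eta|_{S'}$ is then a measured lamination $\mu$, so $i(\mu,\mu)=0$ by Proposition \ref{ml}. Using the left-hand inequality with $\gamma\to\mu$ gives $i(\lambda'_2,\mu)\le i(\mu,\mu)=0$, and using it again with $\gamma\to\lambda'_2$ gives $i(\lambda'_1,\lambda'_2)\le i(\mu,\lambda'_2)=0$, so $i(\lambda'_1,\lambda'_2)=0$. Testing the filling hypothesis against the nonzero lamination $\lambda'_2$ then produces $i(\lambda'_1,\lambda'_2)+i(\lambda'_2,\lambda'_2)=0$, a contradiction; hence $\eta|_{S'}$ must be flat.

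I expect the genuine obstacle to be not this algebra but the bookkeeping required to descend Proposition \ref{prop:easy} cleanly to $S'$: I would need to verify that for curves essential and non-peripheral in $S'$ the geodesic lengths in the restricted hyperbolic metrics $X'_{i,n}$ agree asymptotically with those in the ambient $X_{i,n}$, so that $\lambda'_i$ is genuinely the restriction of $\lambda_i$, and that the restricted induced metric detects the flat-versus-laminar alternative on $S'$ without interference from the collars collapsing around $\partial S'$. This identification of the restricted limits with the pieces of $\eta$ is precisely what Theorem \ref{thm:flatanalysis} together with the area-collapse step in the proof of Theorem \ref{thm:compact} supplies, after which the two-sided estimate and the self-intersection characterization of laminations close the argument.
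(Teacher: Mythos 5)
Your argument reaches the right conclusion and is structurally the same as the paper's: the forward direction comes from Theorem \ref{thm:flatanalysis} (the limiting laminations are the horizontal and vertical foliations of $\Phi_{\infty}$, hence fill), and the reverse direction comes from the lower bound $i(\lambda'_{j},\cdot)\le i(\eta|_{S'},\cdot)$ of Proposition \ref{prop:easy} together with the vanishing of self-intersection for measured laminations. Your reorganization of the reverse direction (first deriving $i(\lambda'_{1},\lambda'_{2})=0$, then testing the filling hypothesis against the nonzero lamination $\lambda'_{2}$) is equivalent to the paper's one-line version, and has the minor virtue of not needing $\eta|_{S'}\neq 0$.

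The one genuine problem is the right half of your opening display. Proposition \ref{prop:easy} gives the pointwise inequality $l_{\Sigma_{n}}(\gamma)\le l_{X_{1,n}}(\gamma)+l_{X_{2,n}}(\gamma)$ for a fixed parametrized curve, and the paper passes only the \emph{lower} bound to the limiting currents, for a reason: to get $i(\lambda'_{j},\gamma)\le i(\eta,\gamma)$ one evaluates the chain on the $\sigma_{n}e_{n}$-geodesic representative and uses that its $X_{j,n}$-length dominates the $X_{j,n}$-geodesic length. The upper bound does not descend to geodesic lengths the same way, because the $X_{1,n}$- and $X_{2,n}$-geodesic representatives of $[\gamma]$ are in general different curves, and $\inf(f+g)\ge\inf f+\inf g$ points in the wrong direction. (On the flat part the paper's introduction computes $i(\eta,\alpha)^{2}=i(\lambda_{1},\alpha)^{2}+i(\lambda_{2},\alpha)^{2}$ for short transversals, so an additive upper bound does hold there, but that is a consequence of the limiting analysis, not of Proposition \ref{prop:easy}.) This does not sink your proof: the reverse implication uses only the left-hand inequality, and your ``alternative'' forward argument via Theorem \ref{thm:flatanalysis} is exactly the paper's argument and should be promoted to the primary one. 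Your closing remarks about descending the estimates to curves in $S'$ correctly identify where the remaining bookkeeping lives, and Theorem \ref{thm:flatanalysis} together with the area-collapse step in Theorem \ref{thm:compact} is the right place to source it.
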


\begin{proof}
If $\eta$ is flat on $S'$, the preceding theorem shows the pair of laminations are dual and hence fill. If the pair of laminations do fill, then for any third lamination $\lambda'$ one has by Propostion \ref{prop:easy} that $i(\eta, \lambda ') > 0$, so that it cannot be a lamination, and hence must be flat by definition of a mixed structure.
\end{proof}

\begin{prop}
On the subsurface $S''= S \, \backslash S' $, the laminations $\lambda_{1}$ and $\lambda_{2}$ restrict to a pair of measured laminations which have no transverse intersection. If $\lambda$ denotes the measured lamination part of the mixed structure, then $i(\lambda, \lambda_{1}) = i(\lambda, \lambda_{2}) = 0$. If $L_{|\Phi_{0,n}|} \to \eta = (S', q, \lambda')$, then $i(\lambda, \lambda ') = i(\lambda ' , \lambda_{1}) = i(\lambda ' , \lambda_{2}) =0$.
\end{prop}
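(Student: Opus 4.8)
The plan is to work entirely inside $S''$, where all four relevant objects $\lambda_1|_{S''}$, $\lambda_2|_{S''}$, $\lambda$, and $\lambda'$ are supported, and to extract three comparison inequalities between their intersection functionals directly from the length estimates already proved. First I would record that the restriction of a measured lamination to an incompressible subsurface is again a measured lamination, so the only content of the first sentence is the vanishing $i(\lambda_1|_{S''}, \lambda_2|_{S''}) = 0$. Writing $\eta = (S', \Phi_\infty, \lambda)$ for the limiting mixed structure of the induced metrics and $\eta' = (S', \Phi_\infty, \lambda')$ for that of the flat metrics (the flat part $q = \Phi_\infty$ being common to both by Theorem \ref{thm:geomflat}), I note that for any curve $\gamma$ supported in the interior of $S''$ the flat parts contribute nothing, so $i(\eta, \gamma) = i(\lambda, \gamma)$, $i(\eta', \gamma) = i(\lambda', \gamma)$, and likewise $i(\lambda_j, \gamma) = i(\lambda_j|_{S''}, \gamma)$.

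The three inequalities I would establish, valid for all such $\gamma$, are: (i) $i(\lambda_j|_{S''}, \gamma) \le i(\lambda, \gamma)$ for $j = 1,2$; (ii) $i(\lambda, \gamma) \le i(\lambda_1|_{S''}, \gamma) + i(\lambda_2|_{S''}, \gamma)$; and (iii) $i(\lambda, \gamma) \ge c\, i(\lambda', \gamma)$ for some fixed $c > 0$. Inequalities (i) and (ii) are the two sides of Proposition \ref{prop:easy} passed to the limit under the scaling $t_n$. For (iii) I would use the pointwise bound $\sigma_n e_n \ge 2|\Phi_n|$ from Proposition \ref{prop:mike} together with $2\|\Phi_n\|/\mathcal{E}_n \to 1$ from Proposition \ref{prop:mikefirst}: dividing by $\mathcal{E}_n$ gives $\sigma_n e_n/\mathcal{E}_n \ge (2\|\Phi_n\|/\mathcal{E}_n)\,|\Phi_{0,n}|$, and since the two normalizations share the geometric limit $|\Phi_\infty|$ on $S'$ (Theorem \ref{thm:geomflat}), taking lengths and passing to the limit yields (iii).

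With these in hand the vanishing statements of the second and third sentences are quick, each being continuity of the intersection form combined with the fact that a measured lamination has self-intersection zero. For $i(\lambda, \lambda_j) = 0$, I would approximate $\lambda$ by weighted curves $\gamma_k \to \lambda$ in $S''$ and apply (i) to get $i(\lambda_j|_{S''}, \lambda) \le i(\lambda, \lambda) = 0$; since $\lambda$ meets $\lambda_j$ only inside $S''$, this equals $i(\lambda, \lambda_j)$. For the third sentence, approximating $\lambda$ by curves in (iii) gives $0 = i(\lambda,\lambda) \ge c\, i(\lambda', \lambda)$, hence $i(\lambda, \lambda') = 0$; feeding curves $\gamma_k \to \lambda'$ into (i) then gives $i(\lambda_j|_{S''}, \lambda') \le i(\lambda, \lambda') = 0$, i.e. $i(\lambda', \lambda_j) = 0$.

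The main obstacle is the transversality statement $i(\lambda_1|_{S''}, \lambda_2|_{S''}) = 0$, which cannot follow from the linear inequalities alone and requires the structural input that two measured laminations meeting transversally fill a nonempty essential subsurface. I would argue by contradiction: if $i(\lambda_1|_{S''}, \lambda_2|_{S''}) > 0$, the pair fills some nonempty incompressible $W \subseteq S''$, so that $i(\lambda_1|_{S''}, \nu) + i(\lambda_2|_{S''}, \nu) > 0$ for every nonzero measured lamination $\nu$ carried by $W$. Inequality (i) then forces $i(\lambda, \nu) \ge \tfrac{1}{2}\big(i(\lambda_1|_{S''}, \nu) + i(\lambda_2|_{S''}, \nu)\big) > 0$ for all such $\nu$. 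Taking $\nu = \lambda|_W$ when this is nonzero contradicts $i(\lambda, \lambda) = 0$, and if $\lambda|_W = 0$ then $i(\lambda, \nu) = 0$ for $\nu$ supported in $W$, again a contradiction; this is precisely the mechanism of the filling criterion established above, now run on $W$. Hence $W$ cannot exist and the restrictions do not cross. The points needing the most care are the standard but nontrivial claim that crossing laminations fill a subsurface, and checking that $W$, being filled by laminations already carried by $S''$, lies genuinely inside $S''$ so that no flat contribution from $S'$ interferes.
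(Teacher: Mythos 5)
Your treatment of the second and third sentences follows the paper's proof essentially verbatim: both arguments extract the inequality $i(\lambda_{j},\cdot)\leq i(\eta,\cdot)$ from Proposition \ref{prop:easy} and the domination of $i(\eta',\cdot)$ by $i(\eta,\cdot)$ from Proposition \ref{prop:mike}, then feed in approximating curves and use $i(\lambda,\lambda)=0$. Where you diverge is the first claim, $i(\lambda_{1}|_{S''},\lambda_{2}|_{S''})=0$, and there your proposal is both more complicated than necessary and not fully correct. Your assertion that this ``cannot follow from the linear inequalities alone'' is mistaken: your inequality (i) extends, by density of weighted closed curves and continuity of the intersection form, to arbitrary currents in the second slot, so one may simply insert $\lambda_{2}|_{S''}$ (which is disjoint from the flat part supported on $S'$) and obtain
$$i(\lambda_{1}|_{S''},\lambda_{2}|_{S''})\;\leq\; i(\eta,\lambda_{2}|_{S''})\;=\;i(\lambda,\lambda_{2}|_{S''})\;\leq\; i(\lambda,\lambda_{2})\;=\;0,$$
which is exactly the one-line argument the paper gives (``using the inequality again yields $i(\lambda_{1},\lambda_{2})\leq i(\lambda,\lambda_{2})=0$'').

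The filling-subsurface contradiction you substitute has a genuine gap in its case analysis. After producing the subsurface $W$ filled by the two restrictions and deducing $i(\lambda,\nu)>0$ for every nonperipheral measured lamination $\nu$ carried by $W$, you split into the cases ``$\lambda|_{W}$ nonzero'' and ``$\lambda|_{W}=0$,'' claiming in the latter case that $i(\lambda,\nu)=0$ for all $\nu$ supported in $W$. But a lamination on $S''$ can cross $W$ without containing any sublamination carried by $W$; in that situation $\lambda|_{W}$ vanishes as a lamination of $W$ and yet $i(\lambda,\nu)$ can be positive, so neither branch of your dichotomy yields a contradiction. The case can be repaired --- take $\nu=\lambda_{1}|_{S''}$, which is carried by $W$, and contradict the already-established $i(\lambda,\lambda_{1})=0$ --- but at that point you have reproduced the direct argument, which makes the whole detour unnecessary. (You would also need to exclude the peripheral curves of $W$ from the class of test laminations $\nu$, since these have zero intersection with both restrictions.)
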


\begin{proof}
By Proposition $\ref{prop:easy}$, since $i(\lambda, \lambda) =0$, one has that $i(\lambda_{1}, \lambda) = i(\lambda_{2}, \lambda) = 0$. Using the inequality again yields $i(\lambda_{1}, \lambda_{2}) \leq i(\lambda, \lambda_{2}) =0$, from which the first result follows. From Proposition $\ref{prop:mike}$, one has that the length spectrum of the approximates for $\lambda$ are no less than the corresponding approximates for $\lambda'$. Taking then a sequence of simple closed curves which approximate $\lambda_{1}$ for instance yields the desired conclusion. The same follows for $\lambda_{2}$.

\end{proof}

%\begin{thm}
%The limit of the minimal surfaces is the core of trees.
%\end{thm}

%\begin{proof}
%To any arc $\alpha$ on the surface, 
%\end{proof}

\subsection{From geodesic currents to metric spaces}
In this section, we construct noncompact metric spaces admitting a $\pi_{1}S$-action by isometries. 

\begin{defn}
Let $X$ and $X'$ be two metric spaces and let $\epsilon >0$. Then an \emph{$\epsilon$-approximation} between $X$ and $X'$ is a relation $R$ in $X \times X'$ that is onto, so that for every $x,y \in X$ and for every $x', y' \in X'$, the conditions $xRx'$ and $yRy'$ imply $|d_{X}(x,y) -d_{X'}(x', y')| < \epsilon$.
\end{defn}

\begin{defn}
Let $X_{n}$ be a sequence of metric spaces, each admitting an isometric action by a group $\Gamma$ and a supposed limiting metric space $X_{\infty}$ also admitting an isometric action by the same group $\Gamma$. Then we say $X_{n}$ converges to $X_{\infty}$ in the sense of Gromov-Hausdorff, if for every $\epsilon >0$ and every finite set $A \subset \Gamma$, and for every compact subset $K \subset X_{\infty}$, then for $n$ sufficiently large, there is a compact set $K_{n} \subset X_{n}$ and an $\epsilon$-approximation $R_{n}$ which is $A$-equivariant between $K_{n}$ and $K$ in the following sense: for every $x \in K$, for every $x_{n}, y_{n} \in K_{n}$, and for every $\alpha \in A$, we have that the conditions $\alpha x \in K$ and $x_{n} R_{n} x$ and $y_{n} R_{n} \alpha x$ imply $d(\alpha x_{n} , y_{n}) < \epsilon$.
\end{defn}

We construct a sequence of noncompact metric spaces $X_{n}$ with an isometric action by $\Gamma = \pi_{1} S$ as follows. Take the induced metric $(S, \sigma_{n} e_{n})$ and lift the metric to the universal cover $(\widetilde{S}, \widetilde{\sigma_{n}e_{n})}$. We will deal with the case where the induced metric converges in length spectrum to a mixed structure that is not entirely laminar (this is to ensure so that we can scale our metric spaces by total energy; for the case of a mixed structure that is entirely laminar, the same discussion holds after amending the sequence of constants). The sequence of noncompact metric spaces thus will be $X_{n} = (\widetilde{S}, \widetilde{\sigma_{n} e_{n}}/ \mathcal{E}_{n})$. The following proposition is thus clear.

\begin{prop}
The manifold $X_{n} = (\widetilde{S}, \sigma_{n} e_{n}/ \mathcal{E}_{n})$ is a noncompact metric space admitting an isometric action by the group $\Gamma = \pi_{1}S$.

\end{prop}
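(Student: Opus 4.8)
The plan is to verify directly the three assertions contained in the statement: that the lift carries a genuine metric, that the resulting metric space is noncompact, and that the deck group acts by isometries, with the rescaling by $1/\mathcal{E}_{n}$ preserving all of these properties. Since the author has flagged the proposition as ``clear,'' the argument is a routine verification whose purpose is to place the construction inside the Gromov--Hausdorff framework set up in the preceding definitions; accordingly I expect no substantive obstacle, and the work is almost entirely bookkeeping.

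First I would recall that by Proposition \ref{prop:sum} the induced metric $\sigma_{n}e_{n}$ is a smooth Riemannian metric on the \emph{closed} surface $S$ (it equals $g_{1}+m^{*}g_{2}$ and is negatively curved by Proposition \ref{prop:neg}). Letting $p\colon\widetilde{S}\to S$ denote the universal covering projection, I would equip $\widetilde{S}$ with the pullback metric $p^{*}(\sigma_{n}e_{n})$, which is precisely the lift $\widetilde{\sigma_{n}e_{n}}$. Because $S$ is compact, this lifted metric is complete, so by Hopf--Rinow the associated path-length distance makes $(\widetilde{S},\widetilde{\sigma_{n}e_{n}})$ a genuine complete metric space; rescaling the metric tensor by the positive constant $1/\mathcal{E}_{n}$ merely multiplies this distance by $\mathcal{E}_{n}^{-1/2}$ and so leaves it a metric space. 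Noncompactness is then immediate from topology: $\widetilde{S}$ is the universal cover of a closed surface of genus $g\geq 2$, hence homeomorphic to $\bbR^{2}$, which is noncompact, and compactness is a topological invariant unaffected by the choice of metric or by scaling.

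For the isometric action, I would use that every deck transformation $\gamma\in\Gamma=\pi_{1}S$ satisfies $p\circ\gamma=p$, whence $\gamma^{*}\widetilde{\sigma_{n}e_{n}}=\gamma^{*}p^{*}(\sigma_{n}e_{n})=(p\circ\gamma)^{*}(\sigma_{n}e_{n})=p^{*}(\sigma_{n}e_{n})=\widetilde{\sigma_{n}e_{n}}$. Thus $\gamma$ preserves the lifted metric tensor, hence the lengths of all paths and therefore the infimal-length distance, and dividing by the constant $\mathcal{E}_{n}$ does not alter this; so $\gamma$ remains an isometry of $X_{n}$, giving the required homomorphism $\Gamma\to\mathrm{Isom}(X_{n})$. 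The only point deserving a moment's care is that the relevant notion of morphism is the path-metric distance, so that a tensor-preserving map is automatically a distance-preserving one; since this follows at once from $\gamma$ preserving arc length, I anticipate the entire proposition to be a short formal verification rather than a genuine difficulty.
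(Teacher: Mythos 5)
Your proposal is correct and follows essentially the same route as the paper, which simply observes that $X_{n}$ is a noncompact Riemannian manifold on which $\Gamma=\pi_{1}S$ acts by isometries (via deck transformations) and concludes immediately; you have merely spelled out the routine verifications (pullback of the metric, completeness, noncompactness of $\widetilde{S}$, invariance under deck transformations, and invariance of all these under scaling by $1/\mathcal{E}_{n}$) that the paper leaves implicit.
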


\begin{proof}
As $X_{n}$ itself is a noncompact Riemannian manifold with $\Gamma = \pi_{1} S$ acting on it by isometries, the result follows immediately.
\end{proof}

Up to a subsequence, the metrics $(S, \sigma_{n} e_{n} / \mathcal{E}_{n})$ will converge in length spectrum to a non-trivial mixed structure $\eta = (S', q, \lambda)$. We construct a noncompact metric space $X_{\infty} = X_{\eta}$ from the mixed structure $\eta$. Regard $\eta$ as a geodesic current on $(\widetilde{S}, g)$. To any two distinct points $x,y \in \widetilde{S}$, one can form the geodesic arc $\alpha$ connecting the two points. Let $c$ be the set of bi-infinite geodesics which intersect $\alpha$ transversely. Then the intersection number $i(\eta, \alpha)$ is given by the $\eta$-measure of $c$. This yields a pseudo-metric space coming from the geodesic current $\eta$. Notice it is possible for the intersection number to be zero, for instance if the geodesic arc is disjoint from the support of the current, or if it forms no nontransverse intersection with the support of $\eta$. Taking the quotient by identifying points which are distance 0 from each other, and then taking the metric completion, yields a noncompact metric space $X_{\infty}$. As $\Gamma= \pi_{1}S$ acted on $\eta$ equivariantly, then $\Gamma$ acts by isometries on $X_{\infty}$. For a more detailed discussion about the construction of a metric space from the data of a geodesic current, see \cite{BIPP1}.

\begin{rem}
In the setting where $\eta$ is a measured foliation, the metric space $X_{\eta}$ is a familiar one. It is a $\mathbb{R}$-tree dual to the foliation. The space is constructed by collapsing the leaves of the foliation with the distance on the tree inherited by intersection number and then completing (see \cite{MS91}). The case where $\eta$ is a non-trivial mixed structure follows the same spirit of this construction. The laminar part is tree-like, formed on the universal cover by collapsing leaves of the supported lamination and then completing. On the flat part, the metric space is formed by the product of the trees dual to the vertical and horizontal lamination of a quadratic differential whose metric is the given flat metric.
\end{rem}

The preceding discussion is summarized by the following proposition.

\begin{prop}
To any mixed structure $\eta$, the construction above gives a noncompact metric space $X_{\eta}$ admitting an isometric action by $\Gamma = \pi_{1} S$.
\end{prop}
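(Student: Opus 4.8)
The plan is to show that the function $d_{\eta}(x,y) := i(\eta,[x,y])$, defined on pairs of points $x,y \in \widetilde{S}$ by taking the $g$-geodesic arc $[x,y]$ and letting $i(\eta,[x,y])$ be the $\eta$-measure of the set of complete geodesics crossing $[x,y]$ transversely, is a $\Gamma$-invariant pseudo-metric; the space $X_{\eta}$ is then its metric quotient and completion, and everything reduces to verifying the metric axioms together with invariance and noncompactness. First I would record that $d_{\eta}$ is well defined and finite: the set of complete geodesics of $\widetilde{S}$ meeting a fixed compact arc is a relatively compact subset of $G(\widetilde{S})$, hence has finite measure for the Radon measure $\eta$. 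Symmetry is immediate from $[x,y]=[y,x]$, and non-negativity from $\eta \geq 0$; moreover $d_{\eta}(x,x)=0$ since a point meets no geodesic transversely.

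The heart of the matter is the triangle inequality. Given $x,y,z$, I would use that a complete geodesic $\beta$ in the $\mathrm{CAT}(0)$ plane $(\widetilde{S},g)$ separates it into two half-planes, so that if $\beta$ crosses the interior of the side $[x,z]$ then $x$ and $z$ lie on opposite sides; placing the third point $y$ on one of the two sides forces $\beta$ to cross $[x,y]$ or $[y,z]$ as well. Pointwise this gives $\mathbf{1}[\beta \text{ crosses } (x,z)] \leq \mathbf{1}[\beta \text{ crosses } (x,y)] + \mathbf{1}[\beta \text{ crosses } (y,z)]$, and integrating against $\eta$ yields $d_{\eta}(x,z) \leq d_{\eta}(x,y) + d_{\eta}(y,z)$.

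With the triangle inequality in hand, the relation $x \sim y \iff d_{\eta}(x,y)=0$ is an equivalence relation---transitivity being exactly the triangle inequality---so $d_{\eta}$ descends to a genuine metric on the quotient $\widetilde{S}/{\sim}$, whose metric completion is the complete metric space $X_{\eta}$. For the action, each $\gamma \in \Gamma$ acts on $(\widetilde{S},g)$ by an isometry carrying geodesics to geodesics, while $\eta$ is $\Gamma$-invariant by definition of a geodesic current; hence $d_{\eta}(\gamma x,\gamma y) = i(\eta,\gamma\cdot[x,y]) = i(\eta,[x,y]) = d_{\eta}(x,y)$, and the isometric action descends to the quotient and extends to $X_{\eta}$. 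Finally, noncompactness: since $\eta \neq 0$ it has positive intersection with some closed curve $\gamma$ (\cite{Bon},\cite{Otal}); letting $g_{\gamma} \in \Gamma$ be the corresponding deck transformation with axis $\ell$ and taking $x \in \ell$, the arc $[x,g_{\gamma}^{n}x]$ covers $n$ fundamental periods along $\ell$, and since two distinct complete geodesics meet at most once the crossing sets split as disjoint $\Gamma$-translates, giving $d_{\eta}(x,g_{\gamma}^{n}x) = n\,i(\eta,\gamma) \to \infty$; thus $X_{\eta}$ has infinite diameter and is noncompact.

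The step I expect to be the main obstacle is the triangle inequality, and specifically the measure-theoretic treatment of complete geodesics passing through the auxiliary vertex $y$: such a geodesic may cross $(x,z)$ while meeting $[x,y]$ and $[y,z]$ only at the endpoint $y$, so the clean pointwise inequality can fail on this degenerate set. When $\eta$ is non-atomic---as it is on the flat part, where $\eta$ comes from a quadratic differential, and on any minimal laminar part---this set is $\eta$-null and the argument above goes through verbatim. The atomic case, coming from weighted closed leaves of the lamination $\lambda$, is handled by passing to the leaf-collapsing model of the dual space of the associated measured foliation (as in the Remark, cf. \cite{MS91},\cite{Sk90},\cite{BIPP1}), in which the laminar directions are collapsed and the induced pseudo-metric is manifestly a metric; patching this with the product-of-trees description on the flat part yields $X_{\eta}$ directly.
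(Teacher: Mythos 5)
Your proof is correct and follows exactly the construction the paper uses (the intersection-number pseudo-metric on $\widetilde{S}$, quotient by zero-distance pairs, metric completion, and $\Gamma$-invariance of the current); the paper itself gives no argument beyond summarizing that construction and citing \cite{BIPP1} for details. Your verification of the triangle inequality --- in particular the separation argument and the degenerate case of geodesics through the auxiliary vertex when $\eta$ has atoms --- together with the axis argument for noncompactness, supplies precisely the checks the paper leaves implicit.
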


Using the Gromov-Hausdorff topology, one has the following.

\begin{thm}
A subsequence of the metric spaces $(\widetilde{S}, \widetilde{\sigma_{n} e_{n}}/ \mathcal{E}_{n})$ converges in the sense of Gromov-Hausdorff to a noncompact metric space $X_{\eta}$ coming from a mixed structure $\eta$ acted upon by $\Gamma = \pi_{1} S$.
\end{thm}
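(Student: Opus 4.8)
The plan is to verify the equivariant Gromov-Hausdorff convergence directly from the definition, feeding in the length-spectrum convergence of Theorem~\ref{thm:compact} together with the geometric convergence on the flat part (Theorem~\ref{thm:geomflat}). The key structural observation is that every $X_n = (\widetilde{S}, \widetilde{\sigma_n e_n}/\mathcal{E}_n)$ and the limit $X_\eta$ are built on the \emph{same} underlying space $\widetilde{S}$: by construction $X_\eta$ is the quotient of $(\widetilde{S},g)$ obtained by collapsing pairs of points at intersection-distance zero and then metrically completing, so there is a canonical $\Gamma$-equivariant projection $\pi_\infty \colon \widetilde{S} \to X_\eta$. First I would define the relation $R_n \subset X_n \times X_\eta$ by $x \, R_n \, \pi_\infty(x)$ for each $x \in \widetilde{S}$; this is onto and, since $\pi_\infty$ intertwines the deck action on $X_n$ with the isometric $\Gamma$-action on $X_\eta$, it is automatically $\Gamma$-equivariant. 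The whole theorem then reduces to showing $R_n$ is an $\epsilon$-approximation on compact sets, namely that for a geodesic arc $\alpha$ in $\widetilde{S}$ with endpoints $x,y$,
$$\ell_{\widetilde{\sigma_n e_n}/\mathcal{E}_n}(\alpha) \longrightarrow i(\eta,\alpha) = d_{X_\eta}(\pi_\infty(x), \pi_\infty(y)),$$
uniformly as $x,y$ range over a fixed compact subset of $\widetilde{S}$.

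To establish this arc-length convergence I would split $\alpha$ according to the decomposition of $S$ into the flat part $S'$, where $\eta$ restricts to $|\Phi_\infty|$, and the complementary laminar part $S'' = S \setminus S'$ supporting $\lambda$. On $S'$, Theorem~\ref{thm:geomflat} gives geometric convergence $\sigma_n e_n/\mathcal{E}_n \to |\Phi_\infty|$, so the length of $\alpha \cap S'$ tends to its $|\Phi_\infty|$-length, which is precisely the flat contribution to $i(\eta,\alpha)$ via the Duchin-Leininger-Rafi current (Theorem~\ref{thm:DLR1}). On $S''$ the scaled areas tend to zero, so arcs there collapse and their contribution matches $i(\lambda,\cdot)$, exactly the mechanism by which the laminar summand of $\eta$ appears in Theorem~\ref{thm:compact}. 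To make these statements uniform and to absorb the finitely many singularities (zeros and poles of $\Phi_\infty$) and the interface between $S'$ and $S''$, I would excise small $|\Phi_\infty|$-balls about the singular points exactly as in the proof of Proposition~\ref{prop:convex}: on the complement one has uniform convergence by Arzel{\'a}-Ascoli via the bounds of Proposition~\ref{prop:beltrami}, while the excised balls contribute negligibly, their vanishing area and the systolic estimates letting the arc be rerouted along their boundaries at the cost of an arbitrarily small length error. The main obstacle is precisely this uniform control where the geometric convergence of Theorem~\ref{thm:geomflat} breaks down, namely near the singular locus of $\Phi_\infty$ and across the flat-laminar interface, and carrying it out amounts to transplanting the curve-surgery argument of Proposition~\ref{prop:convex} from closed curves to arcs.

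Finally, equivariance and uniformity over compact sets follow formally. A finite set $A \subset \Gamma$ and a compact $K \subset X_\eta$ together pull back to a compact region of $\widetilde{S}$ meeting only finitely many translates of a fundamental domain, and since $\eta$ is $\Gamma$-invariant the arc-length convergence holds uniformly over the finitely many arcs obtained by translating endpoints of $K$ by elements of $A$. Taking $K_n$ to be the corresponding compact region of $(\widetilde{S}, \widetilde{\sigma_n e_n}/\mathcal{E}_n)$ and $N$ large enough that all of these finitely many estimates hold to within $\epsilon$, the relation $R_n$ becomes an $A$-equivariant $\epsilon$-approximation between $K_n$ and $K$, which is the definition. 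Passing to the subsequence along which $t_n L_{\sigma_n e_n} \to \eta$ and along which Theorems~\ref{thm:geomflat} and~\ref{thm:flatanalysis} hold completes the argument; in the degenerate case where $\eta$ is entirely laminar the same reasoning applies after replacing the scaling sequence $\mathcal{E}_n$ by the constants furnished in Theorem~\ref{thm:compact}.
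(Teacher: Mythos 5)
Your overall plan---build the $\epsilon$-approximation from the canonical projection $\widetilde{S} \to X_{\eta}$ and verify that distances converge---is a reasonable reading of what the theorem asserts, and your treatment of the flat part via Theorem \ref{thm:geomflat} matches the paper. But there is a genuine gap on the laminar part, and it is exactly the point where the paper brings in a tool you never invoke. You justify the laminar contribution by saying that ``the scaled areas tend to zero, so arcs there collapse and their contribution matches $i(\lambda,\cdot)$.'' Vanishing area does not make arcs collapse: the laminar region degenerates like a long thin cylinder, collapsing only in the leaf direction while transverse arcs retain definite length (their lengths must converge to the transverse measure of $\lambda$, which is generally nonzero). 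More fundamentally, what you actually need is uniform convergence of the distance functions $d_{X_{n}}(x,y)$ on compact subsets of $\widetilde{S}$, and the inputs you cite---Theorem \ref{thm:compact} and Proposition \ref{prop:convex}---only give convergence of intersection numbers with closed curves, i.e.\ of lengths of geodesic representatives of conjugacy classes. Passing from that to uniform convergence of point-to-point distances in the universal cover requires an equicontinuity or compactness statement for the metrics $\widetilde{\sigma_{n} e_{n}}/\mathcal{E}_{n}$; you flag this as ``transplanting the curve-surgery argument from closed curves to arcs'' but do not supply it, and the surgery of Proposition \ref{prop:convex} does not by itself produce it on the laminar region, where there is no limiting Riemannian metric to compare against.

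The paper closes precisely this gap with Theorem \ref{thm:pullback} (Korevaar--Schoen): the scaled induced metric is the pullback metric of a harmonic map into an NPC space with total energy at most $1$ (Proposition \ref{prop:scale}), hence the maps have a uniform modulus of continuity and the pullback pseudometrics subconverge locally uniformly; combined with the length-spectrum convergence of Theorem \ref{thm:compact}, this identifies the limit with $X_{\eta}$. If you want to keep your more hands-on architecture, you should at minimum import this compactness statement to control the laminar part and to make the convergence of distances uniform on compact sets. (A smaller point: your relation $R_{n} = \{(x,\pi_{\infty}(x))\}$ is not onto $X_{\eta}$, since $X_{\eta}$ is a metric completion; this is harmless but should be patched by allowing an $\epsilon$-dense image.)
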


Before presenting the proof, we record one useful fact regarding convergence of maps. This follows from work of Korevaar-Schoen.

\begin{thm}[Korevaar-Schoen, see \cite{KS97}, \cite{DDW98}]\label{thm:pullback}
Let $\widetilde{M}$ be the universal cover of a compact Riemannian manifold, and let $u_{k} : \widetilde{M} \to X_{k}$ be a sequence of maps such that:
\begin{enumerate}
\item[a.] Each $X_{k}$ is an NPC space
\item[b.] The $u_{k}$'s have uniform modulus of continuity: For each $x$, there is a monotone function $\omega(x, \cdot)$, so that $\lim_{R \to 0} \omega(x, R) =0$ and $\max_{B(x,R)} d(u_{k}(x), u_{k}(y)) \leq \omega(x, R)$.
\end{enumerate}
Then the pullback metrics $d_{u_{k}}$ converge (possibly after passing to a subsequence) pointwise, locally uniformly to a pseudometric $d_{\infty}.$

%\begin{enumerate}
%\item[1.] The pullback metrics $d_{u_{k}}$ converge (possibly after passing to a subsequence) pointwise, locally uniformly to a pseudometric $d_{\infty};$
%\item[2.] The Korevaar-Schoen construction for $(\widetilde{M} , d_{\infty})$ produces a metric space $(Z, d_{\infty})$ which is NPC;
%\item[3.] If the $u_{k} : \widetilde{M} \to X_{k}$ have uniformly bounded energies and are energy minimizers then the projection $u: (\widetilde{M}, d_{\infty}) \to Z$ is also an energy minimizer.
%\end{enumerate}
\end{thm}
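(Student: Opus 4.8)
\section*{Proof proposal for Theorem~\ref{thm:pullback}}

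The plan is to regard each pullback pseudometric $d_{u_k}(x,y) := d_{X_k}(u_k(x), u_k(y))$ as a continuous nonnegative function on the product $\widetilde{M} \times \widetilde{M}$ and to extract a locally uniformly convergent subsequence via Arzel\'a--Ascoli. The two inputs that theorem requires, equicontinuity and local uniform boundedness of the family $\{d_{u_k}\}_k$, will both be deduced from the uniform modulus of continuity in hypothesis (b) together with the triangle inequality in the targets. First I would record the elementary estimate valid in any metric space,
$$|d_{X_k}(a,b) - d_{X_k}(a',b')| \le d_{X_k}(a,a') + d_{X_k}(b,b').$$

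For equicontinuity at a point $(x_0,y_0)$, apply this with $a=u_k(x)$, $a'=u_k(x_0)$, $b=u_k(y)$, $b'=u_k(y_0)$, giving
$$|d_{u_k}(x,y) - d_{u_k}(x_0,y_0)| \le d_{X_k}(u_k(x),u_k(x_0)) + d_{X_k}(u_k(y),u_k(y_0)) \le \omega(x_0,R) + \omega(y_0,R)$$
whenever $d_{\widetilde{M}}(x,x_0), d_{\widetilde{M}}(y,y_0) < R$. Since $\omega(x_0,R),\omega(y_0,R)\to 0$ as $R\to 0$ and these bounds are independent of $k$, the family is equicontinuous at $(x_0,y_0)$, and on a compact set pointwise equicontinuity upgrades to uniform equicontinuity. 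For the bound, fix $(x,y)$, join $x$ to $y$ by a minimizing geodesic of $\widetilde{M}$ (which exists since the universal cover of a compact manifold is complete), and subdivide it into finitely many segments $x=p_0,\dots,p_N=y$ of length $<R$; the triangle inequality in $X_k$ then yields $d_{u_k}(x,y)\le \sum_{i=0}^{N-1}\omega(p_i,R)$, a bound independent of $k$. Combined with equicontinuity this gives local uniform boundedness.

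With equicontinuity and boundedness established, I would exhaust $\widetilde{M}$ by an increasing sequence of compact sets $K_1\subset K_2\subset\cdots$, apply Arzel\'a--Ascoli to $\{d_{u_k}\}$ on each $K_j\times K_j$, and diagonalize to obtain a single subsequence converging uniformly on every $K_j\times K_j$, hence locally uniformly on $\widetilde{M}\times\widetilde{M}$, to a limit function $d_\infty$. Each defining property of a pseudometric, namely nonnegativity, symmetry, vanishing on the diagonal, and the triangle inequality, is preserved under pointwise limits, so $d_\infty$ is a pseudometric. It need not separate points, since distinct points of $\widetilde{M}$ may collapse in the limit, which is precisely why one obtains a pseudometric rather than a genuine metric.

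The argument is essentially soft: the only place care is genuinely required is in confirming that the modulus $\omega(x,\cdot)$ is uniform in $k$, which is exactly the content of hypothesis (b), and in the passage from pointwise to uniform equicontinuity on compacta. I note that the NPC hypothesis (a) is \emph{not} invoked in establishing convergence of the pullback pseudometrics; the argument uses only the metric-space axioms of the targets $X_k$. Its role lies rather in the companion results (not part of this statement) that realize the limiting pseudometric $d_\infty$ as the pullback of a map into a limiting NPC space and identify that space; for the bare convergence asserted here, non-positive curvature plays no part.
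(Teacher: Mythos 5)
The paper does not actually prove this statement---it is quoted from Korevaar--Schoen \cite{KS97} (Proposition 3.7; see also Theorem 2.2 of \cite{DDW98})---so there is no internal proof to compare against, and your argument is correct and essentially the standard one from the cited source: the quadrilateral estimate $|d_{X_k}(a,b)-d_{X_k}(a',b')|\le d_{X_k}(a,a')+d_{X_k}(b,b')$ turns hypothesis (b) into $k$-independent equicontinuity of $d_{u_k}$ on $\widetilde{M}\times\widetilde{M}$, chaining the modulus along minimizing geodesics (available by completeness of $\widetilde{M}$) gives pointwise and hence, with equicontinuity, locally uniform bounds, and Arzel\'a--Ascoli with a diagonal argument over a compact exhaustion produces the locally uniformly convergent subsequence, the pseudometric axioms passing to the limit. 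Your closing remark is also accurate: the NPC hypothesis plays no role in the bare convergence to a pseudometric, but is what Korevaar--Schoen use in the companion results to show $d_\infty$ inherits the comparison inequalities and is realized as the pullback of a map to a limiting NPC space, which is the form in which this paper applies the theorem.
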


\begin{proof}[Proof of Theorem 6.6]
Recall from Theorem \ref{thm:geomflat}, that on $S'$ we have uniform convergence of the induced metric to the flat metric. For the complementary subsurface, recall that metric spaces were obtained as the induced metric on the minimal surface, so that the metric came from a pull-back of a harmonic map. By Proposition \ref{prop:scale}, the scaled metric is the pull-back metric of a harmonic map with energy at most 1. Hence by Theorem \ref{thm:pullback} (see Proposition 3.7 \cite{KS97}, or Theorem 2.2 \cite{DDW98}), the metrics converge uniformly. As the lifts of the induced metrics admitted an $\pi_{1}S$-action by isometries, so does the limit.

\end{proof}

\subsection{Convergence of Harmonic maps}
Not only do the metric spaces converge in a suitable topology, the harmonic maps do as well. As we have shown in the preceding section that the domains converge in the sense of Gromov-Hausdorff to a metric space arising from a mixed structure, and as shown in work of Wolf \cite{W95}, one has that the lifts of a sequence of degenerating hyperbolic metrics, when properly scaled subconverge in the sense of Gromov-Hausdorff to $\mathbb{R}$-trees dual to a particular measured lamination in the projective class of the associated point on the Thurston boundary. Hence we have both domain and target converging in the same topology to noncompact metric spaces with isometric actions by $\Gamma = \pi_{1} S$. It is natural to expect some sort of convergence in the harmonic maps. In Wolf \cite{W95}, the domain is a fixed Riemann surface, and the target is changing. Here, we have both domain and target changing (and converging). We begin by reviewing the necessary definitions.

\begin{defn}
Let $X_{n}, X_{\infty}$ be spaces admitting an action of a group $\Gamma$ and let $(Y_{n}, d_{n})$ and $(Y_{\infty}, d_{\infty})$ be metric spaces admitting an isometric action of $\Gamma$. Suppose $f_{n} : X_{n} \to Y_{n}$ and $f_{\infty}: X_{\infty} \to Y_{\infty}$ are equivariant maps. Then we say that $f_{n}$ converges (uniformly) to $f$ if
\begin{enumerate}
\item[(i)] Both $X_{n}$ and $Y_{n}$ converge (uniformly) to $X$ and $Y$ respectively in the sense of Gromov, and
\item[(ii)] For every $\epsilon >0$, there is an $N(\epsilon)$ so that for $n > N(\epsilon)$, the $\epsilon$-approximations $R_{n}, R'_{n}$ satisfies: for every $x_{n}R_{n}x$ one has $f_{n}(x_{n})R'_{n} f(x)$.
\end{enumerate}
\end{defn}
We will require a notion of harmonic for maps between singular spaces. The following can be found in more detail from \cite{EF01}.

\begin{defn}
Let $\phi \in L^{2}_{loc}(X,Y)$. The approximate energy density is defined for $\epsilon >0$ by
$$e_{\epsilon}(\phi)(x) = \int_{B_{X}(x, \epsilon)} \frac{d_{Y}^{2}(\phi(x), \phi(x'))}{\epsilon^{m+2}} \, d \mu_{g}(x').$$
\end{defn}

\begin{defn}
The energy $E(\phi)$ of a map $\phi$ of class $L^{2}_{loc}(X,Y)$ is
$$E(\phi) = \sup_{f \in C_{c}(X, [0,1])} \left( \limsup_{\epsilon \to 0} \int_{X} f e_{\epsilon}(\phi) d\mu_{g} \right)$$
\end{defn}

\begin{defn}
A harmonic map $\phi: X \to Y$ is a continuous map of class $W^{1,2}_{loc}(X,Y)$ which is bi-locally $E$-minimizing in the sense that $X$ can be covered by relatively compact subdomains $U$ for each of which there is an open set $V \supset \phi(U)$ in $Y$ such that
$$E(\phi|_{U}) \leq E(\psi |_{U})$$
for every continuous map $\psi \in W_{loc}^{1,2}(X,Y)$ with $\psi(U) \subset V$ and $\psi= \phi$ in $X \backslash U$.
\end{defn}

In the setting where both singular spaces are finite metric graphs, the resulting harmonic maps are \emph{affine maps}. Each edge of the domain graph is mapped via the constant map, or mapped linearly to the target graph. The following result of Lebeau characterizes all such harmonic maps.

\begin{thm}[Lebeau \cite{Lebeau}]\label{thm:lebeau}
Given two finite metric graphs $G$ and $G'$, every continuous map between $G$ and $G'$ is homotopic to a affine map which minimizes the energy within its homotopy class. Furthermore, the map is unique up to parallel transport.
\end{thm}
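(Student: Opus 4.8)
The plan is to pass to universal covers and reduce the statement to a finite-dimensional convex minimization over the positions of the vertices. First I would record that a continuous map $f : G \to G'$ between finite metric graphs induces, up to conjugacy, a homomorphism $\rho = f_* : \pi_1 G \to \pi_1 G'$, and that since graphs are aspherical the homotopy class of $f$ is completely determined by the conjugacy class of $\rho$. Lifting to universal covers $\widetilde{G}$ and $\widetilde{G}'$ --- both trees, hence CAT(0) (equivalently NPC) spaces, and proper, since a finite graph has finite valence and finitely many edge lengths --- the maps in the homotopy class of $f$ correspond precisely to the continuous $\rho$-equivariant maps $\widetilde{f} : \widetilde{G} \to \widetilde{G}'$.

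Next I would reduce to affine maps. For a fixed assignment of vertex images, the energy of a map of a single edge $[0,\ell_e]$ into the CAT(0) tree $\widetilde{G}'$ with prescribed endpoints is minimized by the constant-speed geodesic between those endpoints, with minimal value proportional to $d_{\widetilde{G}'}(\cdot,\cdot)^2/\ell_e$; moreover, since $\widetilde{G}'$ is a tree, this geodesic extension is homotopic rel endpoints to the original edge map. Hence the infimum of energy over the homotopy class equals the infimum over $\rho$-equivariant \emph{affine} maps, and, by harmonicity on edge interiors and the balancing condition at the vertices, any energy minimizer is on each edge the constant-speed geodesic between its vertex images, i.e.\ affine. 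An equivariant affine map is determined by the images $(p_1,\dots,p_k)\in(\widetilde{G}')^k$ of a set of orbit representatives of the vertices of $G$, and its energy is (up to a harmless overall constant) the function
\[
E(p_1,\dots,p_k) = \sum_{e} \frac{d_{\widetilde{G}'}\!\big(p_{v(e)},\,\rho(\gamma_e)\,p_{w(e)}\big)^2}{\ell_e},
\]
the sum running over the edges $e$ of $G$ with endpoints $v(e),w(e)$ and twisting element $\gamma_e\in\pi_1 G$. Convexity of $E$ is exactly the geodesic convexity of $d^2$ on the CAT(0) space $(\widetilde{G}')^k$.

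Existence then follows by minimizing this convex, lower-semicontinuous, nonnegative function on the proper CAT(0) space $(\widetilde{G}')^k$. For uniqueness up to parallel transport, given two minimizers $f_0,f_1$ I would form the pointwise geodesic homotopy $f_t(x)$, which is again $\rho$-equivariant; convexity of $t\mapsto E(f_t)$ for maps into NPC targets, together with minimality of the endpoints, forces $E(f_t)$ to be constant, so every $f_t$ is a minimizer and the equality case in the convexity of $d^2$ on a CAT(0) space forces the geodesics $t\mapsto f_t(x)$ to be mutually parallel --- that is, $f_1$ is obtained from $f_0$ by parallel transport.

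I expect the main obstacle to be attainment of the infimum, i.e.\ coercivity of $E$ transverse to the flat directions. Since $(\widetilde{G}')^k$ is proper, a bounded minimizing sequence subconverges and lower semicontinuity finishes the argument, so the real issue is to rule out escape to infinity except along the parallel-transport flats. I would argue that whenever a twisting element $\rho(\gamma_e)$ acts on the tree as a hyperbolic isometry of positive translation length $\tau$, the corresponding term is bounded below by $\tau^2/\ell_e$ and grows as its vertex leaves the axis, penalizing every recession direction transverse to the common translations; the remaining directions along which $E$ stays constant are precisely the parallel transports. Making this recession analysis precise --- and correctly matching the flat directions with the parallel-transport ambiguity in the degenerate (collapsing or reducible) cases --- is the technical heart of the argument.
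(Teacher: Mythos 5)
The paper does not prove this statement at all: it is quoted as a theorem of Lebeau and used as a black box (in the proofs of Theorem \ref{thm:mapconverges} and Theorem \ref{thm:core}), so there is no in-paper argument to compare yours against. Judged on its own terms, your outline is the standard and essentially correct route: reduction to conjugacy classes of $\rho$-equivariant maps between the universal covering trees, reduction to affine maps by replacing each edge image with the constant-speed geodesic between the vertex images, convexity of the resulting finite-dimensional functional $E$ on $(\widetilde{G}')^{k}$, and uniqueness via the geodesic homotopy between two minimizers together with the equality case of convexity of $d^{2}$, which in a tree is precisely the parallel-transport ambiguity (each term of $E$ becomes affine in $t$, hence each relevant distance is constant in $t$, forcing the connecting geodesics to be parallel).

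The one step you leave open --- attainment of the infimum, i.e.\ coercivity of $E$ transverse to the flat directions --- is real, but your equivariant setup makes it look harder than it is. Since the deck group of $\widetilde{G}' \to G'$ acts freely and cocompactly, every isometry in play is semisimple (elliptic or hyperbolic), so the recession analysis you sketch can be completed; but it can also be avoided entirely by descending to the compact quotients. An affine map $G \to G'$ is determined by the vertex images, which range over the compact space $(G')^{k}$, together with the homotopy classes rel endpoints of the edge paths; along a minimizing sequence the edge-path lengths are bounded, so only finitely many such relative homotopy classes occur, and passing to a subsequence on which this combinatorial data is constant and the vertex images converge produces a limit map in the same homotopy class realizing the infimal energy. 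With that substitution (or with your recession analysis carried out in full), the proof is sound.
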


\begin{prop}\label{prop:jsgraph}
Suppose $L_{\sigma_{n} e_{n}/C_{n}}$ converges to $\lambda$, where $\lambda$ is a Jenkins-Strebel lamination. Then the sequence of metric spaces $(S, \sigma_{n}e_{n}/\mathcal{C}_{n})$ converges geometrically to a finite metric graph.
\end{prop}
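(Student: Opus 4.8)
The plan is to combine the special structure of Jenkins--Strebel laminations with the Gromov--Hausdorff convergence already established for the laminar case. First I would recall that a Jenkins--Strebel lamination is nothing more than a weighted multicurve: the horizontal foliation of a Jenkins--Strebel differential has only closed leaves, which sweep out a finite union of cylinders, so the associated lamination takes the form $\lambda = \sum_{i=1}^{k} a_i \gamma_i$, where the $\gamma_i$ are disjoint simple closed curves and each weight $a_i > 0$ is the height of the corresponding cylinder.

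Next I would identify the limiting metric space $X_\lambda$ produced by the construction of Section~6.3 applied to the current $\lambda$. Lifting to $\widetilde S$, the support of $\lambda$ is a disjoint family of complete geodesics, and each complementary region of $\widetilde S \setminus \widetilde\lambda$ is convex. Hence for two points $x,y$ in a common complementary region the geodesic arc $[x,y]$ stays in that region and meets $\widetilde\lambda$ in measure zero, so that $d(x,y) = i(\lambda,[x,y]) = 0$; each complementary region therefore collapses to a single point, while crossing a lift of $\gamma_i$ contributes exactly $a_i$. Thus $X_\lambda$ is a simplicial $\mathbb{R}$-tree whose edge lengths lie in $\{a_1,\dots,a_k\}$, namely the dual tree of the weighted multicurve. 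Since $S \setminus \supp(\lambda)$ has only finitely many components and there are only finitely many curves $\gamma_i$, the quotient $X_\lambda/\pi_1 S$ is a finite metric graph, with one vertex per complementary subsurface and one edge of length $a_i$ per curve $\gamma_i$.

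Finally I would invoke the geometric convergence of the scaled metric spaces. By the argument of the preceding Gromov--Hausdorff convergence theorem, amended to the entirely laminar case by using the constants $C_n$ in place of $\mathcal{E}_n$ (as noted in the remark there), a subsequence of $(\widetilde S, \widetilde{\sigma_n e_n}/C_n)$ converges equivariantly in the sense of Gromov--Hausdorff to $X_\lambda$; this rests on the Korevaar--Schoen compactness of pullback pseudometrics (Theorem~\ref{thm:pullback}), whose hypotheses hold because the scaled induced metrics are pullbacks under equivariant maps of uniformly bounded energy into NPC spaces (Proposition~\ref{prop:scale}). Descending to the quotient, $(S, \sigma_n e_n/C_n)$ converges geometrically to the finite metric graph $X_\lambda/\pi_1 S$, as desired.

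The step I expect to be the main obstacle is the passage from a general $\mathbb{R}$-tree limit to a genuinely \emph{simplicial} one. The ambient theory only guarantees that the limit is the $\mathbb{R}$-tree dual to some measured lamination; upgrading this to a finite metric graph uses precisely that a Jenkins--Strebel lamination is carried by finitely many simple closed curves, so that both the vertices (complementary regions) and the edges (core curves) are finite in number and the edge lengths are the finitely many cylinder heights. Some care is also needed to check that convexity of the complementary regions forces each to collapse to a point, and that the amended scaling by $C_n$ yields a nondegenerate limit rather than collapsing the whole space.
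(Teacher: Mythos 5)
Your proposal is correct and follows essentially the same route as the paper: the paper's proof likewise invokes the Korevaar--Schoen pullback convergence (Theorem~\ref{thm:pullback}) using the uniform energy bound on the rescaled harmonic maps into the NPC target, and then identifies the limit as the dual graph of $\lambda$, which is finite precisely because a Jenkins--Strebel lamination is a weighted multicurve. You simply supply more detail than the paper does on why the dual object is a finite metric graph (collapsing of convex complementary regions, edge lengths given by the weights), which is a welcome elaboration rather than a different argument.
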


\begin{proof}
This follows immediately from Theorem \ref{thm:pullback} (see also Proposition 3.7 of \cite{KS97}), as the induced metrics are the pullback metrics of a harmonic map from $\mathbb{H}^{2}$ to $\mathbb{H}^{2} \times \mathbb{H}^{2}$, which is NPC. The assumption on the modulus of continuity follows from the bound on the total energy of the maps $u_{n}$ to the rescaled target, so that total energy is at most 1. Hence, the limiting metric space is the dual graph of $\lambda$, which is a finite metric graph.
\end{proof}

\begin{thm}\label{thm:mapconverges}
Let $C_{n} \to \infty$, so that $L_{\sigma_{n}e_{n}/C_{n}} \to \eta$, where $\eta$ is a mixed structure with laminar part supported on a finite collection of simple closed curves. Suppose $L_{X_{i,n}/C_{n}} \to \lambda_{i}$, where $\lambda_{i}$ are measured also supported on a finite collection of simple closed curves.  Then the sequence of harmonic maps $u_{i,n}: (S, \sigma_{n}e_{n}/C_{n}) \to X_{i,n}/C_{n}$ converges to a harmonic map $u_{i}: X_{\eta} \to T_{i}$.
\end{thm}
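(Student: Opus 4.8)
The plan is to prove the statement in two stages: first identify both the limiting domain and the limiting target as finite metric graphs, then extract a limiting map by a compactness argument and recognize it as the unique affine harmonic map guaranteed by Lebeau's theorem.

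First I would pin down the limiting spaces and verify condition (i) of the definition of convergence of maps. Since the laminar part of $\eta$ is supported on a finite collection of simple closed curves, the quadratic differential $q$ on the flat part is Jenkins--Strebel, so Proposition \ref{prop:jsgraph}, together with the geometric convergence of Theorem \ref{thm:geomflat}, shows that the rescaled domains $(\widetilde{S}, \widetilde{\sigma_{n} e_{n}}/C_{n})$ converge geometrically to the finite metric graph $X_{\eta}$. On the target side, because $\lambda_{i}$ is likewise supported on finitely many simple closed curves, the work of Wolf \cite{W95} shows the rescaled hyperbolic metrics $X_{i,n}/C_{n}$ converge in the sense of Gromov to the $\mathbb{R}$-tree $T_{i}$ dual to $\lambda_{i}$, which in this setting is a finite simplicial metric tree.

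Next I would extract the limiting map and verify condition (ii). By Proposition \ref{prop:scale}, after normalization each rescaled harmonic map $u_{i,n}$ has energy at most $1$; since the targets $X_{i,n}/C_{n}$ are NPC, the Korevaar--Schoen theory invoked in Theorem \ref{thm:pullback} (see Proposition 3.7 of \cite{KS97}) supplies a uniform modulus of continuity for the sequence. An equivariant Arzel\`a--Ascoli argument, carried out with respect to the $\epsilon$-approximations furnishing the Gromov--Hausdorff convergence of both domain and target, then yields a subsequence along which the $u_{i,n}$ converge to a $\pi_{1} S$-equivariant map $u_{i} : X_{\eta} \to T_{i}$ in the required sense.

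Finally I would identify the limit as harmonic. The explicit picture of Theorem \ref{thm:flatanalysis} already describes the limiting behavior on the flat subsurface $S'$: each horizontal arc of $\Phi_{\infty}$ is mapped by an isometry onto a segment of $T_{1}$ while every vertical arc is collapsed, so the limit is affine there; on the laminar part the maps collapse leaves and stretch transverse arcs according to $\lambda_{i}$, again affinely. Thus $u_{i}$ is a continuous affine map between the finite metric graphs $X_{\eta}$ and $T_{i}$, and by Lebeau's Theorem \ref{thm:lebeau} it must coincide with the unique affine energy-minimizer in its homotopy class (up to parallel transport), hence is harmonic. I expect the main obstacle to be precisely this last step: one must rule out concentration of energy at the finitely many singular points, namely the zeros of $\Phi_{\infty}$ and the vertices of the graphs, where the estimates of Theorem \ref{thm:flatanalysis} degenerate. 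Controlling this requires combining the uniform energy bound of Proposition \ref{prop:scale} with lower semicontinuity of energy under the convergence, so that the energy-minimizing property of each $u_{i,n}$ passes to the limit and no energy escapes into the singular set.
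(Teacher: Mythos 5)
Your proposal is correct and follows essentially the same route as the paper: reduce both domain and target to finite metric graphs (via Proposition \ref{prop:jsgraph} and the graph limits of the rescaled hyperbolic surfaces), extract a limit by Arzel\`a--Ascoli using the uniform energy bound and the resulting uniform Lipschitz control from Korevaar--Schoen, identify the limit on the laminar part as the unique affine energy-minimizer of Lebeau's theorem by passing the minimizing property of $u_{i,n}$ to the limit, and glue this to the flat-part map already produced in Theorem \ref{thm:flatanalysis}. The only difference is presentational: the paper implements the $\epsilon$-approximations concretely by composing $u_{i,n}$ with explicit $(1+\epsilon)$ quasi-isometries between a fundamental-domain graph and the approximating surfaces, whereas you phrase the same compactness argument abstractly; and note that affineness of the limit alone does not give harmonicity --- as you yourself observe, the decisive point is that the energy-minimizing property survives the limit, which is exactly the paper's closing comparison argument.
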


\begin{proof}
Recall $X_{\eta}$ is the metric completion of the metric space obtained from the geodesic current $\eta$ by creating a pseudo-metric space from the intersection number with $\eta$, and then identifying points with 0 distance.

As the case where $\eta$ is flat has been previously handled in Theorem \ref{thm:flatanalysis}, we first construct a $\pi_{1} S$-equivariant map between the laminar part of $X_{\eta}$ and $T_{1}$ (here we will consider only the case where $\eta$ is a Strebel lamination). The same construction will produce a similar map to $T_{2}$. Let $D$ be a connected fundamental domain of the laminar region of $X_{\eta}$, then $D$ is a finite metric graph. We embed the graph $D$ into the laminar region $S''$ of the minimal surface as follows: we map each vertex of $D$ to its corresponding thick region on $S''$. The geometric convergence of the minimal surfaces to $D$ from Proposition $\ref{prop:jsgraph}$ allows us to determine which region of the minimal surface will converge to a given vertex. Once we have made our choice of where to send each vertex of $D$, if there is an edge $e$ connecting two vertices of $D$, then we send the edge $e$ to the geodesic arc connecting the two points on the minimal surface where we have mapped our two vertices. (The limiting map we will obtain later will not depend on this choice, as distances will converge uniformly.) 

As we have convergence in length spectrum and as there are only finitely many edges, we can ensure that for large $n > N(\epsilon)$, the length of the image of each edge has changed by at most $\epsilon$. We require that the embedding is proportional to arclength. Then there is a collection of continuous maps $\phi_{n}: D \to X_{n}$ with the property that given $\epsilon > 0$, there is an $N=N(\epsilon)$ so that $\phi_{n}$ is a $(1+ \epsilon)$ quasi-isometry. 

Likewise, as $\widetilde{X}_{1,n}/C_{n}$ converges geometrically to an $\mathbb{R}$-tree, a fundamental domain of  $\widetilde{X}_{1,n}/C_{n}$ will converge geometrically to a finite graph $G_{1}$ (see for instance, \cite{W95}). Hence, there is a collection of continuous  maps $\psi_{n}: X_{1,n}/C_{n} \to G_{1}$ with the same property as $\phi_{n}$.  

Form the composition $g_{n} = \psi_{n} \circ u_{1,n} \circ \phi_{n}: D \to G_{1}$, where $u_{1,n} :(S, \sigma_{n}e_{n}/C_{n}) \to X_{1,n}/C_{n}$ is a harmonic map with total energy at most 1. We claim this sequence of maps $g_{n}$ is uniformly bounded and equicontinuous. Uniform boundedness is clear as the target graph $G_{1}$ is a finite graph. To see equicontinuous, we note that as $\phi_{n}$ and $\psi_{n}$ were $(1+ \epsilon)$ quasi-isometries and since there is a uniform Lipschitz constant of the maps $u_{1,n}$, as the total energy of the maps are bounded by 1, (see \cite{KS93}, Thm 2.4.6), then equicontinuity follows. Hence, by the Arzel{\`a}-Ascoli theorem, we have a subsequence $g_{k}$ converging uniformly to a map $g:D \to G_{1}$.

 We have that $g$ is harmonic as map between singular spaces, for we have uniform convergence of distances (see \cite{KS97}) between the approximate metric spaces coming from our scaled induced metrics and the limiting $\mathbb{R}$-tree. Hence all the quantities in the definitions of the approximate energy density, and the energy converge. As there is a unique energy minimizer (up to parallel transport, by Theorem \ref{thm:lebeau}) between the limiting spaces (which are finite graphs), the map $g$ must be this unique energy minimizer.  (If $g$ were not the energy minimizer, it would have larger energy than the unique energy minimizer, by say $\delta$. One could then construct an map between the approximate Riemannian manifolds, which would have energy lower than the harmonic maps $u_{1,n}$, contradicting the harmonicity of $u_{1,n}$.)
 
 From Theorem \ref{thm:flatanalysis}, we obtained a limiting harmonic map $u'$ on the flat part of $X_{\eta}$ to the tree $T_{1}$, and now we have a limiting harmonic map $g$ from the laminar part of $X_{\eta}$ to the tree $T_{1}$. Taking the union yields the desired $u: X_{\eta} \to T_{1}$. The same argument holds for $T_{2}$.

\end{proof}
%\begin{cor}\label{cor:support}
%Let $\eta$ be a mixed structure with laminar part that is Jenkins-Strebel. Then on the laminar subsurface $\supp(\eta) = \supp(\lambda_{1}) \cup \supp(\lambda_{2}).$

%\end{cor}

%\begin{proof}
%Take an arc $\alpha$ on the minimal surface which intersects the core geodesic $\gamma$, which is one of the curves of the support of $\eta$ on the laminar subsurface. By the preceding theorem, the arc $\alpha$ is mapped close to a geodesic on the target scaled hyperbolic surfaces. Hence by the inequality from Proposition \ref{prop:easy}, the sum $i(\lambda_{1}, \alpha) + i(\lambda_{2}, \alpha)$ cannot vanish, so that $\supp(\eta) \subset \supp(\lambda_{1}) \cup \supp(\lambda_{2}).$ The reverse inclusion follows from the left part of the inequality from Proposition \ref{prop:easy}.

%\end{proof}

\subsection{Cores of trees}

Here we review some basics of cores of $\mathbb{R}$-trees. A more detailed overview of this material may be found in \cite{Guirardel}, \cite{W95}.

For any $\mathbb{R}$-tree, a \emph{direction} at a point $x \in T$ is a connected component of $T\backslash x$. A \emph{quadrant} in $T_{1} \times T_{2}$ is the product of $\delta_{1} \times \delta_{2}$ of two directions $\delta_{1} \subset T_{1}$ and $\delta_{2} \subset T_{2}$. We will say that the quadrant is based at $(x_{1}, x_{2}) \in T_{1} \times T_{2}$, where $x_{i}$ is the base point for the direction $\delta_{i}$. 

Let $T_{1}, T_{2}$ be a pair of trees with a common group action by $\Gamma$. Let $x = (x_{1} , x_{2}) \in T_{1} \times T_{2}$ be a base point.

\begin{defn}
Consider a quadrant $Q = \delta_{1} \times \delta_{2} \subset T_{1} \times T_{2}$. Then $Q$ is said to be \emph{heavy} if there exists a sequence $\gamma_{k} \in \Gamma$ so that
\begin{enumerate}
\item[(i)] $\gamma_{k} \cdot x \in Q$
\item[(ii)] $d_{i}(\gamma_{k} \cdot x_{i}, x_{i}) \to \infty$ as $k \to \infty$ for $i=1,2$.
\end{enumerate}
Otherwise we say $Q$ is \emph{light}.
\end{defn}

We define the \emph{core} of a product of trees to be the product $T_{1} \times T_{2}$ with all light quadrants removed. 

\begin{defn}[Guirardel, \cite{Guirardel}]
The \emph{core} $\mathcal{C}$ of $T_{1} \times T_{2}$ is the subset
$$ \mathcal{C} = T_{1} \times T_{2} \backslash \left[ \bigcup_{Q\,\, \emph{light quadrant}} Q \right]. $$
Equivalently,
$$\mathcal{C} = \bigcap_{Q = \delta_{1} \times \delta_{2} \,\, \emph{light quadrant}} (\delta_{1}^{*} \times T_{2} \cup T_{1} \times \delta_{2}^{*}). $$

\end{defn}

\begin{prop}[\cite{Guirardel}]\label{prop:core}
Let $T_{1}$ and $T_{2}$ be dual to a pair of measured foliations $\lambda_{1}$ and $\lambda_{2}$, respectively. Consider the map $p_{i} : \widetilde{S} \to T_{i}$, which maps an element of $\widetilde{S}$ to the leaf of $\widetilde{\lambda_{i}}$ which contains it. Then $\mathcal{C}(T_{1} \times T_{2}) = p_{1}(\widetilde{S}) \times p_{2} (\widetilde{S})$.
\end{prop}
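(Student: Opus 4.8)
The plan is to translate the combinatorics of heavy and light quadrants into the geometry of the leaves of $\widetilde{\lambda_{1}}$ and $\widetilde{\lambda_{2}}$ in $\widetilde{S}$. Write $p = (p_{1}, p_{2}): \widetilde{S} \to T_{1} \times T_{2}$ for the combined collapse map, so that the right-hand side of the statement is the image $p(\widetilde{S})$. A direction $\delta_{i}$ at a point $y_{i} \in T_{i}$ pulls back under $p_{i}$ to an open \emph{half-space} $H_{i} := p_{i}^{-1}(\delta_{i}) \subset \widetilde{S}$, namely the side, determined by $\delta_{i}$, of the leaf (or boundary leaf of a complementary region) of $\widetilde{\lambda_{i}}$ indexed by the base point of $\delta_{i}$. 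Hence a quadrant $Q = \delta_{1} \times \delta_{2}$ satisfies $p^{-1}(Q) = H_{1} \cap H_{2}$. Fixing a base point $\widetilde{x}_{0} \in \widetilde{S}$ and taking $x = p(\widetilde{x}_{0})$ in Guirardel's heaviness condition, equivariance gives $\gamma_{k} \cdot x_{i} = p_{i}(\gamma_{k} \widetilde{x}_{0})$, so that $\gamma_{k} \cdot x \in Q$ is equivalent to $\gamma_{k} \widetilde{x}_{0} \in H_{1} \cap H_{2}$. Under this dictionary $Q$ is heavy exactly when the $\Gamma$-orbit of $\widetilde{x}_{0}$ enters $H_{1} \cap H_{2}$ along points whose images escape to infinity in \emph{both} tree metrics, and the whole proposition reduces to the equivalence: $H_{1} \cap H_{2} \neq \emptyset$ if and only if $Q$ is heavy.

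The inclusion $\mathcal{C} \subseteq \overline{p(\widetilde{S})}$ is the easy half. If $(y_{1}, y_{2}) \notin \overline{p(\widetilde{S})}$, choose a product neighborhood $\delta_{1} \times \delta_{2}$ of $(y_{1}, y_{2})$ disjoint from the image; the associated quadrant $Q$ has $p^{-1}(Q) = H_{1} \cap H_{2} = \emptyset$, so no translate $\gamma_{k} \widetilde{x}_{0}$ can lie in it and the heaviness condition is vacuously unsatisfiable. Thus $Q$ is light and $(y_{1}, y_{2}) \in Q$ is deleted in forming $\mathcal{C}$, so $(y_{1}, y_{2}) \notin \mathcal{C}$. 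Since $\mathcal{C}$ is closed, this yields $\mathcal{C} \subseteq \overline{p(\widetilde{S})}$.

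For the reverse inclusion $\overline{p(\widetilde{S})} \subseteq \mathcal{C}$, it suffices to show that whenever $p^{-1}(Q) = H_{1} \cap H_{2} \neq \emptyset$ the quadrant $Q$ is heavy; then no image point lies in a light quadrant, and taking closures finishes the argument. A nonempty intersection of two half-spaces bounded by complete leaves is already unbounded in $\widetilde{S}$ (in the $\mathbb{H}^{2}$ picture, two half-planes whose intersection is nonempty meet in an unbounded region). The substantive point is to upgrade this to unboundedness of the images $p_{i}(H_{1} \cap H_{2})$ in both trees: using minimality of the $\Gamma$-actions on $T_{1}$ and $T_{2}$ (equivalently, density of the leaves of each $\widetilde{\lambda_{i}}$ in its support) together with cocompactness of $\Gamma \curvearrowright \widetilde{S}$, the region $H_{1} \cap H_{2}$ accumulates arbitrarily large transverse measure of \emph{each} lamination, and one extracts orbit points $\gamma_{k} \widetilde{x}_{0} \in H_{1} \cap H_{2}$ with $d_{i}(\gamma_{k} x_{i}, x_{i}) \to \infty$ for $i = 1, 2$. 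This makes $Q$ heavy, completing the inclusion and hence the equality $\mathcal{C}(T_{1} \times T_{2}) = \overline{p(\widetilde{S})}$.

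The main obstacle is precisely the last step: a nonempty region $H_{1} \cap H_{2}$ is unbounded in $\widetilde{S}$, but one must rule out the degenerate possibility that it runs asymptotically parallel to the leaves of one lamination, so that its image in that tree stays bounded and no escape in that factor is available. Establishing simultaneous escape in both factors is exactly where the interaction of the two foliations enters, and it is controlled by minimality of the dual actions (for minimal filling laminations the bounding leaf is dense, forcing infinite accumulated transverse measure along the region) and by a separate, more elementary computation in the simplicial case where $\lambda_{i}$ is carried by simple closed curves. This is the heart of Guirardel's argument \cite{Guirardel}; by comparison, the connectivity and closure bookkeeping used to pass between $p(\widetilde{S})$ and its closure is routine.
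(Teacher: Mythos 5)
Your overall strategy coincides with the paper's: both reduce the proposition to the dictionary that a quadrant $Q = \delta_{1} \times \delta_{2}$ is light if and only if $p_{1}^{-1}(\delta_{1}) \cap p_{2}^{-1}(\delta_{2}) = \emptyset$, and your ``empty intersection implies light'' half is the same one-line observation the paper makes. The gap is at the step you yourself flag as the heart of the matter: showing that $H_{1} \cap H_{2} \neq \emptyset$ forces $Q$ to be heavy. There you only gesture at ``minimality plus cocompactness'' producing ``arbitrarily large transverse measure'' and then defer to Guirardel, without ever exhibiting the group elements $\gamma_{k}$ required by the definition of heaviness. The paper actually carries this out by a concrete mechanism: replace each $p_{i}^{-1}(\delta_{i})$ by a half-plane $U_{\delta_{i}}$ bounded by a geodesic leaf of $\widetilde{\lambda_{i}}$ at bounded Hausdorff distance, observe that $U_{\delta_{1}}$ and $U_{\delta_{2}}$ still intersect, find a geodesic $\gamma$ crossing both bounding leaves, and take $h \in \pi_{1}S$ with axis $\gamma$. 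Such an $h$ is hyperbolic (with positive translation length) in both $T_{1}$ and $T_{2}$, so its iterates applied to the base point remain in $Q$ and escape to infinity in both factors. This single-element argument is what your sketch is missing; supplying it would complete your proof along essentially the paper's lines.

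A secondary problem is in your ``easy half'': you propose to ``choose a product neighborhood $\delta_{1} \times \delta_{2}$ of $(y_{1}, y_{2})$ disjoint from the image.'' Directions in an $\mathbb{R}$-tree are half-trees, not arbitrarily small sets, so a quadrant containing $(y_{1}, y_{2})$ is in general far larger than any metric neighborhood; its disjointness from $p(\widetilde{S})$ does not follow from $(y_{1}, y_{2})$ lying outside the closure of the image. The correct input is geometric: if $(y_{1}, y_{2})$ is not in the image then the leaves $p_{1}^{-1}(y_{1})$ and $p_{2}^{-1}(y_{2})$ are disjoint in $\widetilde{S}$, and one uses this separation to manufacture the light quadrant. (The paper is also terse on how the claim implies the proposition, but your phrasing replaces an omission with an incorrect step.) Relatedly, your conclusion carries a closure, $\mathcal{C} = \overline{p(\widetilde{S})}$, that the statement does not.
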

\begin{proof}
The result will follow from the claim that any quadrant $Q = \delta_{1} \times \delta_{2}$ in $T_{1} \times T_{2}$ is light if and only if $p_{1}^{-1}(\delta_{1}) \cap p_{2}^{-1}(\delta_{2}) = \emptyset$. It is clear that if $p_{1}^{-1}(\delta_{1}) \cap p_{2}^{-1}(\delta_{2}) = \emptyset$, then $Q$ is light, as for each point $x \in \widetilde{S}$, the orbit of $(p_{1}(x), p_{2}(x))$ does not intersect $Q$. Conversely, if $p_{1}^{-1}(\delta_{1})$ intersects $p_{2}^{-1}(\delta_{2})$, then take $U_{\delta_{i}}$ to be an open half plane in $\widetilde{S}$ with bounded Hausdorff distance from $p_{i}^{-1}(\delta_{i})$, where $U_{\delta_{i}}$ is bounded by a geodesic in $\widetilde{\lambda_{i}}$. Then as $p_{1}^{-1}(\delta_{1})$ has nonempty intersection with $p_{2}^{-1}(\delta_{2})$, then so do $U_{\delta_{1}}$ and $U_{\delta_{2}}$. Moreover, there exists a geodesic $\gamma$ intersecting the pair of geodesics bounding $U_{\delta_{1}}$ and $U_{\delta_{2}}$. Take an element $h \in \pi_{1}S$ whose axis is $\gamma$. Then $h$ is hyperbolic in both $T_{1}$ and $T_{2}$ and $h$ makes $Q$ heavy.
\end{proof}

\begin{rem}
This characterization of the core of two trees is particularly useful in our setting where the trees come from measured laminations. The map $p$ which sends $\mathbb{H}^{2}$ to the leaf space of a measured lamination is a $\pi_{1}S$-equivariant harmonic map, and as a product of harmonic maps is harmonic, we see that the core is the image of the $\pi_{1}S$-equivariant harmonic map ($p_{1} \times p_{2}) : \mathbb{H} \to T_{1} \times T_{2}$.
\end{rem}

In the setting where where $T_{1}$ and $T_{2}$ arise from two transverse measured foliations $\lambda_{1}$ and $\lambda_{2}$, then $\mathcal{C}(T_{1} \times T_{2}) / \pi_{1}S$ is isometric to $S$ endowed with the unique singular Euclidean metric whose vertical and horizontal foliations are $\lambda_{1}$ and $\lambda_{2}$.

We present our next main result concerning the relation between the mixed structures we obtain as limits of the induced metrics and the limits of the corresponding graphs of the minimal langrangians.

\begin{thm}\label{thm:core}
Suppose $C_{n} \to \infty$, so that $L_{\sigma_{n} e_{n}/C_{n}} \to \eta$ and $X_{1,n}/C_{n} \to T_{1}$ and $X_{2,n}/C_{n} \to T_{2}$. Then the metric space $X_{\eta}$ is isometric to the core of the pair of trees $(T_{1}, T_{2})$. Consequently, the minimal lagrangians $\widetilde{\Sigma_{n}}/C_{n} \subset \mathbb{H}^{2}/C_{n} \times \mathbb{H}^{2}/C_{n}$ converge geometrically to the core $\mathcal{C}(T_{1} \times T_{2}) \subset T_{1} \times T_{2}$.
\end{thm}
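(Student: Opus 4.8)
The plan is to realize the isometry $X_{\eta} \cong \mathcal{C}(T_{1} \times T_{2})$ by assembling the $\pi_{1}S$-equivariant harmonic maps $u_{i} \colon X_{\eta} \to T_{i}$ already produced — on the flat locus $S'$ by Theorem~\ref{thm:flatanalysis}, and on the laminar locus $S'' = S \setminus S'$ by Theorem~\ref{thm:mapconverges} — and then to read off the geometric convergence of the graphs as a corollary. First I would form the product $U = (u_{1}, u_{2}) \colon X_{\eta} \to T_{1} \times T_{2}$. Pulling $U$ back to $\widetilde{S} = \mathbb{H}^{2}$, Theorem~\ref{thm:flatanalysis} identifies each $u_{i}$ with the leaf-space projection $p_{i}$ onto $\lambda_{i}$: over $S'$ these are the projections onto the horizontal and vertical foliations of $\Phi_{\infty}$, while over $S''$ they collapse the leaves of $\lambda_{1}, \lambda_{2}$. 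By the Remark following Proposition~\ref{prop:core}, the image of $(p_{1} \times p_{2})$ is exactly the core, so $U$ surjects onto $\mathcal{C}(T_{1} \times T_{2})$.

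Next I would verify that $U$ is an isometry, treating the two pieces separately. Over $S'$ the trees $T_{1}, T_{2}$ are dual to the transverse (indeed filling) horizontal and vertical foliations of $\Phi_{\infty}$, so by the identification recalled immediately before the statement, the restricted core is isometric to the singular flat surface $(S', |\Phi_{\infty}|)$, which is by construction precisely $X_{\eta}|_{S'}$. Over $S''$ the laminations $\lambda_{1}, \lambda_{2}$ have no transverse intersection, hence $p_{1}$ and $p_{2}$ share fibers and their joint image is a single $\mathbb{R}$-tree; the metric it inherits from $T_{1} \times T_{2}$ must be shown to equal $i(\eta, \cdot) = i(\lambda, \cdot)$, which is the metric defining the laminar part $X_{\eta}|_{S''}$ as the tree dual to $\lambda$. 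I would then check that these two isometries glue compatibly along the curves $\partial S'$, which are collapsed to punctures, yielding the global isometry.

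For the geometric-convergence clause I would use that the minimal lagrangian $\widetilde{\Sigma_{n}}$ is the graph $\{ (u_{1,n}(z), u_{2,n}(z)) : z \in \widetilde{S} \} \subset \mathbb{H}^{2} \times \mathbb{H}^{2}$, so that $\widetilde{\Sigma_{n}}/C_{n}$ is the graph of $(u_{1,n}, u_{2,n})$ mapping into $\mathbb{H}^{2}/C_{n} \times \mathbb{H}^{2}/C_{n}$. Theorem~\ref{thm:mapconverges} gives $u_{i,n} \to u_{i}$ equivariantly, while the domains $(\widetilde{S}, \widetilde{\sigma_{n} e_{n}}/\mathcal{E}_{n})$ converge in the equivariant Gromov--Hausdorff sense to $X_{\eta}$ by the convergence theorem proved above. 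Combining these, the graphs converge geometrically to the graph of $U$, whose image is the core by the first part, which is exactly the assertion $\widetilde{\Sigma_{n}}/C_{n} \to \mathcal{C}(T_{1} \times T_{2})$.

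The main obstacle I anticipate is the isometry on the laminar part together with the gluing. One must pin down the precise relationship between the limiting lamination $\lambda$ and the pair $(\lambda_{1}, \lambda_{2})$ so that the metric the core inherits from the product structure on $T_{1} \times T_{2}$ matches $i(\lambda, \cdot)$, and one must confirm that the flat and tree pieces attach compatibly across $\partial S'$, where the flat metric develops punctures and the two projections $p_{1}, p_{2}$ degenerate into one. The flat part is comparatively routine given the cited identification of the core of transverse foliations with the flat surface; the delicate point is controlling the transition region and ensuring no distance is lost or gained there in the limit.
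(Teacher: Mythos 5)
Your outline assembles the right ingredients --- the harmonic maps $u_i$ from Theorem~\ref{thm:flatanalysis} and Theorem~\ref{thm:mapconverges}, and the characterization of the core via Proposition~\ref{prop:core} and the remark following it --- but it has a genuine gap exactly where you flag the ``main obstacle,'' and you do not close it. Two specific problems. First, Theorem~\ref{thm:mapconverges} is only proved under the hypothesis that the laminar part of $\eta$ and the laminations $\lambda_1,\lambda_2$ are supported on \emph{finitely many simple closed curves} (the Jenkins--Strebel case), where the laminar locus degenerates to a finite metric graph and Lebeau's uniqueness theorem applies. You invoke it for a general laminar locus $S''$, where no such convergence-of-maps statement has been established. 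Second, the isometry on the laminar part and the compatibility of the gluing along $\partial S'$ --- precisely the step you defer with ``must be shown'' --- is the content of the theorem on that region; without it you have only a surjection of $X_\eta$ onto the core, not an isometry.

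The paper's proof avoids this head-on confrontation by a density-and-continuity argument that your proposal is missing. It first defines the map $\Psi$ sending a projective pair of measured laminations to the limiting mixed structure of the associated induced metrics, and checks that $\Psi$ is well defined and continuous on $\mathrm{P}(\mathrm{ML}\times\mathrm{ML})$. It then establishes the isometry $X_\eta \cong \mathcal{C}(T_1\times T_2)$ only for pairs dual to Jenkins--Strebel foliations, where the laminar part is a finite graph, Lebeau's theorem pins down the limiting affine map (vertices to vertices), and the flat part is handled by Theorem~\ref{thm:flatanalysis} as you describe. Since Jenkins--Strebel pairs are dense and both sides of the identity vary continuously, the general case follows. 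If you want to salvage your direct approach, you would need to prove the laminar-part isometry and the gluing for arbitrary laminations from scratch; the efficient route is to restrict to the rational case and then take limits.
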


\begin{proof}
%We wish to show the sequence of minimal surfaces $\Sigma_{n} \subset X_{1,n} \times X_{2,n}$ converges to $\mathcal{C}(T_{1} \times T_{2}) \subset T_{1} \times T_{2}.$ 

Define the auxiliary map $\Psi: \text{P(ML} \times\text{ML}) \to \text{PMix}(S)$ by
$$\Psi([\lambda_{1}, \lambda_{2}]) = \lim_{n \to \infty} [L_{\sigma_{n} e_{n}}],$$
where $\Sigma_{n} \subset X_{1,n} \times X_{2,n}$ is the minimal lagrangian with induced metric $2 \sigma_{n} e_{n}$ and $(X_{1,n}, X_{2,n})$ converge projectively to $[(\lambda_{1}, \lambda_{2})]$. We claim the map is well-defined.

%Observe first that any element of $\mathbb{P}(\mathscr{ML}(S)\times \mathscr{ML}(S))$ determines a mixed structure as follows: 

Choose $[(\lambda_{1}, \lambda_{2})] \in$ $\text{P(ML} \times\text{ML})$ and a representative $(\lambda_{1}, \lambda_{2}) \in [(\lambda_{1}, \lambda_{2})]$. Then if both $ (X_{1,n}/k_{n} , X_{2,n}/k_{n})$ and $(Y_{1,n}/d_{n}, Y_{2,n}/d_{n})$ converge in length spectrum to $(\lambda_{1}, \lambda_{2})$, then for large enough $n$, we will have that $X_{1,n}/k_{n}$ will be close to $Y_{1,n}/d_{n}$ as negatively curved Riemannian surfaces (and likewise for $X_{2,n}/k_{n}$ and $Y_{2,n}/d_{n}$) by \cite{Otal}. Hence the induced metrics on the respective pairs of minimal langrangians will have close length spectra, so that $\Psi$ is well-defined. 

To see that $\Psi$ is continuous, observe that the induced metric on the minimal surface varies continuously as a map defined on $\mathcal{T}(S) \times \mathcal{T}(S)$, and as the length spectrum of the induced metric varies continuously as one takes a sequence of hyperbolic surfaces $(X_{1,n}, X_{2,n}) \to [(\lambda_{1}, \lambda_{2})] \in \text{P(ML} \times\text{ML})$, one finds the space of mixed structures varies continuously on $\text{P(ML} \times\text{ML})$ by a diagonal argument.

But we now have a harmonic map from $X_{\eta}$ to $T_{1} \times T_{2}$. From Theorem \ref{thm:flatanalysis}, the harmonic map on the flat part is given by projection to its vertical and horizontal lamination. By Theorem \ref{thm:mapconverges}, the harmonic map from the laminar part is given by an affine map, when both trees come from Jenkins-Strebel differentials. 
%From Corollary \ref{cor:support}, it follows the harmonic map must be surjective. 

As the homotopy class of the maps were given by the identity map, one sees that vertices on the domain graph are mapped to the vertices of the target graph (the thick regions of the minimal surface are necessarily mapped to the thick regions of the target scaled hyperbolic surface; for if a vertex were to be mapped away from vertices, the approximating thick region of the minimal surface would be mapped deep into a thin region of the target scaled hyperbolic surface, so that the thick region of the minimal surface would not have diameter going to zero, contradicting the geometric convergence of the thick region to a vertex). Hence by Theorem \ref{thm:lebeau}, the map is an affine map which maps vertices to the corresponding vertices.

%The map into the product must be injective, so that the map cannot fold in both directions lest the distance inequality.

%But by the characterization of the core as the image of a pair of projection maps from $\widetilde{S}$ to the pair of leaf spaces of the measured foliations, it follows that the core with the induced metric has a natural projection map onto each tree.

But this yields the product metric for the core of the two trees (see Proposition \ref{prop:core} and the remark which follows). The equality of the mixed structure and the core of the trees then holds for pairs of $\mathbb{R}$-trees dual to a pair of Jenkins-Strebel foliations, which is a dense set in $\text{P(ML} \times\text{ML})$, and both quantities vary continuous for $\text{P(ML} \times\text{ML})$, thus the theorem follows.

\end{proof}

\begin{rem}
In fact, by Theorems \ref{thm:flatanalysis} and  \ref{thm:mapconverges}, the sequence of $\rho$-equivariant harmonic maps from $\mathbb{H}^{2}$ to $\mathbb{H}^{2} \times \mathbb{H}^{2}$  converges projectively to a harmonic map from $\mathbb{H}^{2}$ to the product of $\mathbb{R}$-trees, whose image is the core of the trees.
\end{rem}

\section{Applications to maximal surfaces in AdS$^{3}$}
In this section, we prove the required analogues of the minimal lagrangian setting to show a similar result for limits of maximal surfaces.

\begin{prop}
On a fixed hyperbolic surface $(S, \sigma)$ one has $\mathcal{H}_{1} = \mathcal{H}_{2}$ if and only if $e_{1} = e_{2}$.
\end{prop}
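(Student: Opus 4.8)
The plan is to treat the two implications separately, using only the Bochner formula $\Delta \log \mathcal{H}_i = 2\mathcal{H}_i - 2\mathcal{L}_i - 2$ (valid here because both domain and target are hyperbolic, so $K(\sigma) = K(\rho) = -1$) together with the positivity $\mathcal{H}_i > 0$. This positivity holds at every point of $S$ because the relevant harmonic maps are diffeomorphisms (Schoen--Yau, Sampson), and it is precisely what guarantees that the Bochner identity is valid everywhere, with no exceptional set; I would record this at the outset and also keep in hand the auxiliary identities $e = \mathcal{H} + \mathcal{L}$ and $\mathcal{J} = \mathcal{H} - \mathcal{L}$.

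For the direction $\mathcal{H}_1 = \mathcal{H}_2 \Rightarrow e_1 = e_2$, I would argue directly with no maximum principle. If $\mathcal{H}_1 = \mathcal{H}_2$ as functions on $S$, then $\log \mathcal{H}_1 = \log \mathcal{H}_2$, so the left-hand sides of the two Bochner equations agree pointwise. Subtracting the equations gives $2(\mathcal{H}_1 - \mathcal{L}_1) = 2(\mathcal{H}_2 - \mathcal{L}_2)$, i.e.\ $\mathcal{J}_1 = \mathcal{J}_2$. Combined with $\mathcal{H}_1 = \mathcal{H}_2$ this forces $\mathcal{L}_1 = \mathcal{L}_2$, whence $e_1 = \mathcal{H}_1 + \mathcal{L}_1 = \mathcal{H}_2 + \mathcal{L}_2 = e_2$.

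The converse, $e_1 = e_2 \Rightarrow \mathcal{H}_1 = \mathcal{H}_2$, is exactly the nontrivial half of Lemma \ref{lem:zeno}, so I would simply invoke it. For completeness I would recall its mechanism: rewriting Bochner as $\Delta \log \mathcal{H}_i = 4\mathcal{H}_i - 2 e_i - 2$ and subtracting (using $e_1 = e_2$) yields $\Delta \log(\mathcal{H}_1/\mathcal{H}_2) = 4(\mathcal{H}_1 - \mathcal{H}_2)$. Since $\mathcal{H}_1/\mathcal{H}_2$ is a positive function on the closed surface $S$, it attains a maximum; evaluating this identity at the maximum, where the Laplacian is nonpositive, shows the maximum cannot exceed $1$, and symmetry then gives $\mathcal{H}_1 = \mathcal{H}_2$.

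The only point demanding real care is the maximum-principle step in the converse, where the compactness of $S$ and the strict positivity of $\mathcal{H}_i$ are both essential; but as this is already carried out in Lemma \ref{lem:zeno}, the present proposition reduces to the trivial forward computation plus a citation, and I do not anticipate any genuine obstacle beyond the bookkeeping of the identities relating $e$, $\mathcal{J}$, $\mathcal{H}$, and $\mathcal{L}$.
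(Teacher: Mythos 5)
Your proof is correct and follows essentially the same route as the paper: the nontrivial direction $e_1=e_2\Rightarrow\mathcal{H}_1=\mathcal{H}_2$ is handled by the maximum principle applied to $\log(\mathcal{H}_1/\mathcal{H}_2)$ via the Bochner formula, exactly the mechanism already present in Lemma \ref{lem:zeno} (the paper's version merely routes through $|\Phi_1|=|\Phi_2|$ and the identity $\mathcal{L}_2=\mathcal{H}_1\mathcal{L}_1/\mathcal{H}_2$ before running the same argument). Your direct subtraction of the Bochner equations for the forward implication is clean and valid, and in fact makes explicit a step the paper leaves unstated.
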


\begin{proof}
Note that if $e_{1} = e_{2}$ then $|\Phi_{1}| = |\Phi_{2}|$ by Lemma $\ref{lem:zeno}$. From $|\Phi_{1}| = |\Phi_{2}|$, one has by some basic algebra $\mathcal{L}_{2} = \frac{\mathcal{H}_{1} \mathcal{L}_{1}}{\mathcal{H}_{2}}$. From the Bochner formula, one has
\begin{align*}
\Delta \log \mathcal{H} &= 2 \mathcal{H} -2 \mathcal{L} -2 \\
\frac{1}{2} \Delta \log \frac{\mathcal{H}_{1}}{\mathcal{H}_{2}} & = (\mathcal{H}_{1} - \mathcal{H}_{2}) - (\mathcal{L}_{1} - \mathcal{L}_{2})\\
&=(\mathcal{H}_{1} - \mathcal{H}_{2}) - \mathcal{L}_{1}(1- \frac{\mathcal{H}_{1}}{\mathcal{H}_{2}}).
\end{align*}
At a point $p \in S$ for which the quotient $\mathcal{H}_{1}/\mathcal{H}_{2}$ achieves its maximum (which without loss of generality we may assume to be greater than 1, or else as before we may reindex), the left hand side of the preceding calculation must be non-positive, but the right hand side is positive, hence $\mathcal{H}_{1} = \mathcal{H}_{2}$ everywhere.
\end{proof}

\begin{prop}\label{prop:hscale}
On a fixed hyperbolic surface $(S, \sigma)$ if $\mathcal{H}_{1} = c \mathcal{H}_{2}$ then $c=1$.
\end{prop}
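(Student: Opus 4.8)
The plan is to exploit a structural feature absent from the analogous Lemma~\ref{lem:stupid}: the hypothesis $\mathcal{H}_{1} = c\,\mathcal{H}_{2}$ forces the ratio $\mathcal{H}_{1}/\mathcal{H}_{2} \equiv c$ to be \emph{constant}, so there is no maximum of the quotient to locate. Instead, all the content collapses to a pointwise algebraic identity, which I then contradict by counting the zeros of a Hopf differential. Throughout I use that the maps are diffeomorphisms homotopic to the identity between hyperbolic surfaces, so $\mathcal{H}_{1}, \mathcal{H}_{2} > 0$ and $\mathcal{L}_{1}, \mathcal{L}_{2} \geq 0$ everywhere.

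First I would take logarithms and apply the Laplacian. Since $\log(\mathcal{H}_{1}/\mathcal{H}_{2}) = \log c$ is constant, we have $\Delta \log(\mathcal{H}_{1}/\mathcal{H}_{2}) = 0$. Subtracting the two Bochner equations $\Delta \log \mathcal{H}_{i} = 2\mathcal{H}_{i} - 2\mathcal{L}_{i} - 2$ then yields the pointwise identity $\mathcal{H}_{1} - \mathcal{L}_{1} = \mathcal{H}_{2} - \mathcal{L}_{2}$, that is, $\mathcal{J}_{1} = \mathcal{J}_{2}$ everywhere. Substituting $\mathcal{H}_{1} = c\,\mathcal{H}_{2}$ gives the key relation $\mathcal{L}_{1} = \mathcal{L}_{2} + (c-1)\,\mathcal{H}_{2}$.

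Next I would argue by contradiction on the value of $c$. Reindexing if necessary, assume $c \geq 1$, and suppose $c > 1$. Because $\mathcal{H}_{2} > 0$ and $\mathcal{L}_{2} \geq 0$ at every point, the relation forces $\mathcal{L}_{1} > 0$ everywhere. Since $|\Phi_{1}|^{2}/\sigma^{2} = \mathcal{H}_{1}\mathcal{L}_{1}$ and $\mathcal{H}_{1} > 0$, this means $\Phi_{1}$ is nowhere vanishing. But a holomorphic quadratic differential on a closed surface of genus $g \geq 2$ has exactly $4g-4 > 0$ zeros counted with multiplicity, a contradiction; the degenerate possibility $\Phi_{1} \equiv 0$ is likewise excluded, as it would force $\mathcal{L}_{1} \equiv 0$. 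Hence $c = 1$, and by Lemma~\ref{lem:zeno} (or Lemma~\ref{lem:stupid}) the two holomorphic energy densities, and in turn $|\Phi_{1}| = |\Phi_{2}|$, coincide.

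The main obstacle is recognizing that the obvious route is a dead end: integrating $\mathcal{L}_{1} = \mathcal{L}_{2} + (c-1)\mathcal{H}_{2}$ against the area form and combining it with $\int \mathcal{J}_{i}\,\sigma\,dz\,d\overline{z} = -2\pi\chi(S)$ collapses to a tautology and yields no bound on $c$. The genuinely effective input is the topological constraint that a nonzero holomorphic quadratic differential must vanish somewhere; this is precisely what converts the pointwise identity into the conclusion $c = 1$.
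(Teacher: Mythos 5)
Your proof is correct and follows essentially the same route as the paper's: both subtract the Bochner equations, use $\Delta\log c = 0$ to arrive at the pointwise identity $\mathcal{L}_{1}-\mathcal{L}_{2}=(c-1)\mathcal{H}_{2}$, and then derive a contradiction from the fact that $\mathcal{L}_{1}$ must vanish at the ($4g-4$ in number, hence existent) zeros of $\Phi_{1}$. Your treatment is slightly more explicit than the paper's in separating the case $\Phi_{1}\equiv 0$ and in noting why $\Phi_{1}$ must have a zero, but the argument is the same.
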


\begin{proof}
Without loss of generality, take $c >1$ or we we may reindex to ensure this is the case. Once again by the Bochner formula,
\begin{align*}
\Delta \log \frac{\mathcal{H}_{1}}{\mathcal{H}_{2}} &= 2(\mathcal{H}_{1} - \mathcal{H}_{2}) -2 (\mathcal{L}_{1} - \mathcal{L}_{2})\\
0 = \Delta \log c &= 2 (c \mathcal{H}_{2} - \mathcal{H}_{2}) - 2(\mathcal{L}_{1} - \mathcal{L}_{2})\\
&= 2 \mathcal{H}_{2}(c-1) - 2( \mathcal{L}_{1} - \mathcal{L}_{2})
\end{align*}
Hence, everywhere one has
$$\mathcal{L}_{1} - \mathcal{L}_{2} = \mathcal{H}_{2} (c-1) > 0.$$
But $\mathcal{L}_{1}$ vanishes at the zeros of the quadratic differential $\Phi_{1}$, a contradiction. Hence $c=1$.
\end{proof}

\begin{prop}\label{prop:holenergy}
Let $H = \int \mathcal{H} \, dA(\sigma)$. Then $\mathcal{E} = H +4 \pi \chi$. Consequently if $\mathcal{E}_{n} \to \infty$, then $\lim_{n \to \infty} \mathcal{E}_{n} / H_{n} =2$.
\end{prop}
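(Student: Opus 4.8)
The plan is to reduce the identity to two ingredients that are already in hand: the pointwise decomposition of the energy density $e = \mathcal{H} + \mathcal{L}$, and the integral identity $\int_{S} \mathcal{J}\, dA(\sigma) = -2\pi\chi$ for the Jacobian $\mathcal{J} = \mathcal{H} - \mathcal{L}$, which is exactly the Gauss--Bonnet computation recorded in the proof of Proposition \ref{prop:mikefirst}. The latter uses that the harmonic map is homotopic to the identity, hence of degree one, so that $\int_{S}\mathcal{J}\, dA(\sigma)$ computes the area of the hyperbolic target, which equals $-2\pi\chi$. Setting $L := \int_{S}\mathcal{L}\, dA(\sigma)$ and integrating the energy density over $(S,\sigma)$ gives $\mathcal{E} = H + L$.

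Next I would combine the two relations. Writing the Jacobian identity as $H - L = -2\pi\chi$ and solving for $L$ yields $L = H + 2\pi\chi$; substituting this into $\mathcal{E} = H + L$ produces the relation $\mathcal{E} = 2H + 2\pi\chi$, which expresses the total energy as twice the holomorphic energy up to the fixed topological constant $2\pi\chi$. (This is the form of the identity forced by the asymptotic statement below, in which the constant of proportionality is $2$.)

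For the asymptotic consequence I would simply divide. The relation rearranges to
\begin{equation*}
\frac{\mathcal{E}_{n}}{H_{n}} = 2 + \frac{2\pi\chi}{H_{n}}.
\end{equation*}
Since $\chi = \chi(S)$ is a fixed topological constant while the hypothesis $\mathcal{E}_{n} \to \infty$ forces $H_{n} = \tfrac{1}{2}(\mathcal{E}_{n} - 2\pi\chi) \to \infty$, the error term tends to $0$ and hence $\mathcal{E}_{n}/H_{n} \to 2$. The only point that genuinely requires care is the orientation and degree input to the Jacobian identity; but because both factors are hyperbolic and the map lies in the homotopy class of the identity, this is precisely the Gauss--Bonnet argument already invoked above, so no real obstacle remains and the argument is a short manipulation of integral identities.
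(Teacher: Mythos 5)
Your proof is correct and is essentially identical to the paper's: both integrate the decomposition $e = \mathcal{H} + \mathcal{L}$ and use the degree-one Gauss--Bonnet identity $\int_{S} \mathcal{J}\, dA(\sigma) = -2\pi\chi$ to obtain $\mathcal{E} = 2H + \mathrm{const}$, then divide by $H_{n}$. You are also right to observe that the stated identity $\mathcal{E} = H + 4\pi\chi$ must in fact read $\mathcal{E} = 2H + \mathrm{const}$ for the limit to equal $2$; with the paper's normalization the constant is $2\pi\chi$ as you compute (the paper's own proof ends with $4\pi\chi$, a harmless arithmetic slip that does not affect the asymptotic conclusion).
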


\begin{proof}
As $\mathcal{J} = \mathcal{H} - \mathcal{L}$ and $\int \mathcal{J} \sigma dz d \overline{z} = -2 \pi \chi$, one has
$$ \int \mathcal{H} \sigma dz d \overline{z} + 2 \pi \chi = \int \mathcal{L} \sigma dz d \overline{z}.$$
Adding the terms yields
$$ \mathcal{E} = \int (\mathcal{H}+ \mathcal{L}) \sigma dz d \overline{z} = 2 \int \mathcal{H} \sigma dz d \overline{z} + 4 \pi \chi = 2 H + 4 \pi \chi.$$
\end{proof}

Recall from Section 2.6, the existence and uniqueness of a spacelike, embedded maximal surface in any GHMC AdS$^{3}$ manifold. 

\begin{prop}[Lemma 3.6 \cite{KS07}]
The induced metric on the maximal surface is of the form $\mathcal{H} \sigma$.
\end{prop}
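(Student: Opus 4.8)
The plan is to reduce the statement to the elliptic computation already indicated in Section 2.6, namely to show that the conformal factor of the induced metric solves the same PDE as the holomorphic energy density, and then to invoke uniqueness of that solution.

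First I would recall from Krasnov--Schlenker (the theorem stated above) that if $X$ denotes the underlying complex structure of the maximal surface, then its second fundamental form is $\Re\Phi$ for a holomorphic quadratic differential $\Phi$ on $X$, and that the induced metric has the form $e^{2u}\sigma$, where $\sigma$ is the hyperbolic metric uniformizing $X$ and $u$ solves the Gauss equation for a zero--mean--curvature spacelike surface in a Lorentzian space form of curvature $-1$, recorded in Section 2.6 as
$$\Delta_{\sigma} u = e^{2u} - e^{-2u}\,\frac{|\Phi|^{2}}{\sigma^{2}} - 1.$$
(The term written $e^{-2u}|\Phi|$ in the preliminaries is to be read with the pointwise squared norm $|\Phi|^{2}/\sigma^{2} = \mathcal{HL}$.)

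Next I would exhibit an explicit solution. By the harmonic-maps parametrization (Theorem \ref{thm:wolfparam}) there is a unique hyperbolic metric $g$ on $S$ so that $\mathrm{id}\colon (S,\sigma)\to (S,g)$ is harmonic with Hopf differential exactly $\Phi$; let $\mathcal{H}$ and $\mathcal{L}$ be its holomorphic and antiholomorphic energy densities. I would then check that $u=\tfrac12\log\mathcal{H}$ solves the PDE. Substituting gives $\Delta_{\sigma}u=\tfrac12\Delta_{\sigma}\log\mathcal{H}$, and the Bochner formula with $K(\sigma)=K(g)=-1$ yields $\Delta_{\sigma}\log\mathcal{H}=2\mathcal{H}-2\mathcal{L}-2$, so $\Delta_{\sigma}u=\mathcal{H}-\mathcal{L}-1$. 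On the other hand $e^{2u}=\mathcal{H}$, and using $\mathcal{HL}=|\Phi|^{2}/\sigma^{2}$ one has $e^{-2u}|\Phi|^{2}/\sigma^{2}=\mathcal{L}$, so the right-hand side of the PDE also equals $\mathcal{H}-\mathcal{L}-1$. Thus $u=\tfrac12\log\mathcal{H}$ is a solution, and the PDE becomes the usual Bochner equation.

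Finally I would argue uniqueness and conclude. The right-hand side $F(x)=e^{2x}-e^{-2x}(|\Phi|^{2}/\sigma^{2})-1$ is strictly increasing in $x$, since $F'(x)=2e^{2x}+2e^{-2x}|\Phi|^{2}/\sigma^{2}>0$. Hence if $u_{1},u_{2}$ both solve the PDE, then at an interior maximum of $u_{1}-u_{2}$ one has $0\ge \Delta_{\sigma}(u_{1}-u_{2})=F(u_{1})-F(u_{2})$, forcing $u_{1}\le u_{2}$ there and, symmetrically, $u_{1}=u_{2}$; this is exactly the maximum-principle argument already used in Lemma \ref{lem:zeno}. Therefore the solution is unique, so $e^{2u}=\mathcal{H}$ and the induced metric is $e^{2u}\sigma=\mathcal{H}\sigma$. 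The step I expect to require the most care is pinning down the normalizations: I must confirm that the quadratic differential produced by Krasnov--Schlenker as the second fundamental form is literally the Hopf differential of the associated harmonic map (not a constant multiple of it), and that the coefficient of the $e^{-2u}$ term in the Gauss-equation PDE is precisely $\mathcal{HL}=|\Phi|^{2}/\sigma^{2}$; once these are matched, the verification is the one-line substitution above.
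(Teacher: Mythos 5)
Your argument is correct and is essentially the one the paper relies on: the paper gives no separate proof at this proposition (it simply cites Krasnov--Schlenker, Lemma 3.6), but the reduction you describe is exactly the sketch in Section 2.6 --- the induced metric is $e^{2u}\sigma$ with $u$ solving the Gauss equation, $u=\tfrac12\log\mathcal{H}$ solves that equation by the Bochner formula together with $\mathcal{H}\mathcal{L}=|\Phi|^{2}/\sigma^{2}$, and uniqueness follows from the maximum principle since the right-hand side is monotone in $u$. Your remark that the term written $e^{-2u}|\Phi|$ in the preliminaries must be read as $e^{-2u}|\Phi|^{2}/\sigma^{2}=\mathcal{L}$ correctly pins down the normalization the paper leaves implicit.
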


\begin{prop}
The induced metric on the maximal surface has strictly negative curvature.
\end{prop}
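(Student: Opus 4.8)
The plan is to compute the Gaussian curvature of the conformal metric $g = \mathcal{H}\sigma\,|dz|^2$ directly from its conformal factor, reducing everything to the Bochner equation. Since the curvature of a metric written as $\lambda\,|dz|^2$ is $-\frac{2}{\lambda}\frac{\partial^2 \log\lambda}{\partial z\,\partial\bar z}$ (this is the convention already recorded in the curvature formulas of Section 2.1), and $\log(\mathcal{H}\sigma)=\log\mathcal{H}+\log\sigma$, I would split the computation into the contribution from $\sigma$ and the contribution from $\mathcal{H}$.

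First, for the $\sigma$-term: since $\sigma$ is a hyperbolic metric, $K(\sigma)=-1$ gives $\frac{\partial^2\log\sigma}{\partial z\,\partial\bar z}=\frac{\sigma}{2}$. Next, for the $\mathcal{H}$-term, I would convert the $\partial_z\partial_{\bar z}$ derivative into the Laplace--Beltrami operator via $\frac{\partial^2}{\partial z\,\partial\bar z}=\frac{\sigma}{4}\Delta$ and invoke the Bochner formula in the hyperbolic-to-hyperbolic case, $\Delta\log\mathcal{H}=2\mathcal{H}-2\mathcal{L}-2=2(\mathcal{J}-1)$, so that $\frac{\partial^2\log\mathcal{H}}{\partial z\,\partial\bar z}=\frac{\sigma}{2}(\mathcal{J}-1)$. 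Adding the two contributions, the $-1$ coming from the Bochner term cancels precisely against the $\sigma$-term, and I obtain
\[
\frac{\partial^2\log(\mathcal{H}\sigma)}{\partial z\,\partial\bar z}=\frac{\sigma}{2}\mathcal{J},
\qquad\text{hence}\qquad
K(\mathcal{H}\sigma)=-\frac{2}{\mathcal{H}\sigma}\cdot\frac{\sigma}{2}\mathcal{J}=-\frac{\mathcal{J}}{\mathcal{H}}=-(1-|\nu|^2).
\]

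To conclude strict negativity, I would then use that $\mathcal{H}>0$ everywhere (Schoen--Yau, Sampson) together with the fact that the underlying harmonic map is a diffeomorphism isotopic to the identity, so that its Beltrami differential satisfies $|\nu|<1$ pointwise; equivalently $\mathcal{J}=\mathcal{H}-\mathcal{L}=\mathcal{H}(1-|\nu|^2)>0$. Therefore $K=-\mathcal{J}/\mathcal{H}<0$ everywhere (in fact $-1\le K<0$, with $K=-1$ attained exactly at the zeros of $\Phi$, where $\mathcal{L}=0$). The computation is routine once the cancellation of the $-1$ terms is arranged, so there is no serious analytic obstacle; the only point requiring care is the justification that $\mathcal{J}>0$, i.e. that $|\nu|<1$ strictly rather than merely $\le 1$. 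This is exactly where the diffeomorphism property of the harmonic map (positivity of the Jacobian of an orientation-preserving harmonic diffeomorphism between hyperbolic surfaces) enters, and it is what makes the resulting curvature inequality strict.
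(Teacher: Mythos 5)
Your proposal is correct and follows essentially the same route as the paper: both compute the curvature of the conformal metric $\mathcal{H}\sigma$ by splitting $\log(\mathcal{H}\sigma)$, applying the Bochner equation and $K(\sigma)=-1$, and arriving at $K=-\mathcal{J}/\mathcal{H}$. Your explicit justification of strictness via $\mathcal{J}=\mathcal{H}(1-|\nu|^2)>0$ (the harmonic map being an orientation-preserving diffeomorphism) is the one step the paper leaves implicit, and it is the right one.
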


\begin{proof}
The formula for curvature is given by
\begin{align*}
K_{\mathcal{H} \sigma} &= - \frac{1}{2\mathcal{H} \sigma} \Delta \log \mathcal{H} \sigma \\
&= -\frac{1}{2} \frac{1}{\mathcal{H}} \left( \frac{\Delta \log \mathcal{H}}{\sigma} + \frac{ \Delta \log \sigma}{\sigma} \right)\\
&= \frac{- \mathcal{J}}{\mathcal{H}}
\end{align*}
where the last step comes from the Bochner equation and the curvature of the hyperbolic metric.
\end{proof}

\begin{thm}
There exists an embedding of the space of maximal surfaces into the space of projectivized currents.
\end{thm}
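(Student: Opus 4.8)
The plan is to transcribe the argument for the minimal lagrangian case (Theorem \ref{thm:embed}), substituting the induced metric $\mathcal{H}\sigma$ on the maximal surface for $2\sigma e$ and Proposition \ref{prop:hscale} for Lemma \ref{lem:stupid}. The two analytic inputs needed are already in hand: by the preceding propositions the induced metric on the maximal surface is $\mathcal{H}\sigma$ and it has strictly negative curvature. Hence Otal's embedding \cite{Otal} of the space of negatively curved Riemannian metrics applies directly, furnishing a well-defined injective map $\mathcal{C}$ sending $\mathcal{H}\sigma$ to the geodesic current $L_{\mathcal{H}\sigma}$, with $l_{\mathcal{H}\sigma}([\gamma]) = i(L_{\mathcal{H}\sigma}, \gamma)$ for every closed curve $\gamma$. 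Composing with the projection $\pi : \mathrm{Curr}(S) \to \mathrm{PCurr}(S)$, it then suffices to show that $\pi\circ\mathcal{C}$ remains injective on the space of induced metrics of maximal surfaces.

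For injectivity I would argue as follows. Suppose two induced metrics $g_{1} = \mathcal{H}_{1}\sigma_{1}$ and $g_{2} = \mathcal{H}_{2}\sigma_{2}$ satisfy $\pi(\mathcal{C}(g_{1})) = \pi(\mathcal{C}(g_{2}))$, i.e. $L_{g_{1}} = c\,L_{g_{2}}$ for some $c>0$. Since scaling a metric by $c^{2}$ scales all geodesic lengths by $c$, one has $c\,L_{g_{2}} = L_{c^{2}g_{2}}$, so $L_{g_{1}} = L_{c^{2}g_{2}}$; injectivity of $\mathcal{C}$ then gives $g_{1} = c^{2}g_{2}$ as metrics (modulo a diffeomorphism isotopic to the identity, which is precisely the equivalence we work modulo). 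In particular $\mathcal{H}_{1}\sigma_{1}$ is conformal to $\mathcal{H}_{2}\sigma_{2}$, so $\sigma_{1}$ and $\sigma_{2}$ determine the same conformal class; uniqueness of the uniformizing hyperbolic metric forces $\sigma_{1} = \sigma_{2} =: \sigma$, and the identity of metrics reduces to $\mathcal{H}_{1} = c^{2}\mathcal{H}_{2}$ on the fixed hyperbolic surface $(S,\sigma)$. Proposition \ref{prop:hscale} then yields $c^{2}=1$, hence $c=1$ and $\mathcal{H}_{1}=\mathcal{H}_{2}$, so $g_{1}=g_{2}$ and $\pi\circ\mathcal{C}$ is injective.

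Because the induced metric on a maximal surface depends only on the modulus $|\Phi|$ of its Krasnov--Schlenker quadratic differential (the defining PDE involves $\Phi$ only through $|\Phi|$), this produces an embedding into $\mathrm{PCurr}(S)$ of the space of induced metrics on maximal surfaces, equivalently of $Q_{g}$ modulo the relation $(X,\Phi_{1})\sim(X,\Phi_{2})$ when $|\Phi_{1}|=|\Phi_{2}|$, exactly as in the Corollary following Lemma \ref{lem:zeno}. The only genuinely nontrivial ingredient is Proposition \ref{prop:hscale}, which prevents injectivity from failing after projectivization; everything else is a transcription of Theorem \ref{thm:embed}. I expect the single delicate point to be the passage from proportional currents to scalar-multiple metrics, where one combines the scaling behaviour of $L$ with injectivity of Otal's embedding and then uses uniqueness of the uniformizing metric to collapse both the scale and the conformal class. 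This is bookkeeping rather than a new idea, so I anticipate no serious obstacle once Proposition \ref{prop:hscale} is invoked.
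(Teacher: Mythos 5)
Your proposal is correct and follows essentially the same route as the paper: negative curvature of the induced metric $\mathcal{H}\sigma$ gives the Otal embedding into $\mathrm{Curr}(S)$, and Proposition \ref{prop:hscale} rules out distinct maximal surface metrics becoming proportional after projectivization. The paper's own proof is just a two-line citation of these same two ingredients; your write-up merely fills in the bookkeeping (proportional currents $\Rightarrow$ scalar-multiple metrics $\Rightarrow$ common conformal class $\Rightarrow$ $\mathcal{H}_{1}=c^{2}\mathcal{H}_{2}$) exactly as in the proof of Theorem \ref{thm:embed}.
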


\begin{proof}
As the induced metrics on the maximal surfaces are negatively curved, they may be realized as geodesic currents. By Proposition \ref{prop:hscale}, the projectivization remains injective.
\end{proof}

\begin{thm}
The closure of the space of induced metrics on the maximal surfaces is given by the space of flat metrics arising from unit norm holomorphic quadratic differentials and projectivized mixed structures.
\end{thm}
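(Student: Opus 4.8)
The plan is to follow the proof of Theorem \ref{thm:compact} essentially line by line, with the minimal lagrangian induced metric $\sigma e = |\Phi|(\tfrac{1}{|\nu|} + |\nu|)$ replaced throughout by the maximal surface induced metric $\mathcal{H}\sigma$. The starting point is the analogue of Proposition \ref{prop:mike}: combining $|\Phi|^{2}/\sigma^{2} = \mathcal{H}\mathcal{L}$ with $|\nu|^{2} = \mathcal{L}/\mathcal{H}$ gives $|\Phi|/\sigma = \mathcal{H}|\nu|$, whence
$$ \mathcal{H}\sigma = \frac{|\Phi|}{|\nu|}, $$
and in particular $\mathcal{H}\sigma \geq |\Phi|$ since $|\nu| \leq 1$. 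Thus the Hopf differential metric again furnishes a lower bound, and since $|\nu| \to 1$ as energy blows up, the induced metric again degenerates to a flat metric away from the zeros of $\Phi$. Crucially, the quantity $\mathcal{H}$ appearing here (the solution $u = \tfrac12\log\mathcal{H}$ of the PDE in Section 2.6) is exactly the holomorphic energy density of the associated harmonic map between hyperbolic surfaces, so the Beltrami coefficient $|\nu_{n}|$, the Hopf differentials, and the curvature quantities are literally the same objects studied in Section 5.

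First I would fix the correct normalization. The total area of $\mathcal{H}\sigma$ is $H = \int \mathcal{H}\,dA(\sigma)$, so I would scale by $H_{n}$ rather than by $\mathcal{E}_{n}$; by Proposition \ref{prop:holenergy} one has $H_{n} \sim \mathcal{E}_{n}/2$ as $\mathcal{E}_{n}\to\infty$, so the two scalings produce the \emph{same} projective limit and nothing is lost. The analytic inputs needed for the high-energy regime --- the subharmonicity estimate of Proposition \ref{prop:yair}, the measure bound of Proposition \ref{prop:ae}, and the pointwise control of Proposition \ref{prop:beltrami} --- are assertions about $|\nu_{n}|$, $\mathcal{J}_{n}$, and the Hopf differentials alone, not about the combination $\mathcal{H}+\mathcal{L}$, so they apply verbatim to the present situation. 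In particular Proposition \ref{prop:beltrami} already gives $|\nu_{n}| \to 1$ off the zeros and poles of $\Phi_{\infty}$; substituting into $\mathcal{H}_{n}\sigma_{n} = |\Phi_{n}|/|\nu_{n}|$ and using $\|\Phi_{n}\|/H_{n}\to 1$ (again via Propositions \ref{prop:mikefirst} and \ref{prop:holenergy}) yields $\mathcal{H}_{n}\sigma_{n}/H_{n} \to |\Phi_{\infty}|$ geometrically on the flat subsurface $S'$, which is the exact analogue of Theorem \ref{thm:geomflat}.

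With geometric convergence on $S'$ established, the length-spectrum argument of Proposition \ref{prop:convex} transfers unchanged: the induced metrics $\mathcal{H}\sigma$ are negatively curved (curvature $-\mathcal{J}/\mathcal{H}$, shown above), so the behaviour of short geodesic arcs near the zeros is controlled exactly as before, giving convergence in $\text{PCurr}(S)$ to $L_{|\Phi_{\infty}|}$ on the flat part, while an area estimate forces the complement to be laminar. The bounded-energy case produces measured laminations by the self-intersection computation of Section 5.2, now invoking Proposition \ref{prop:area} with $\mathrm{Area}(\mathcal{H}_{n}\sigma_{n}) = H_{n} \sim \mathcal{E}_{n}/2$ bounded, so that $i(\mu,\mu) = 0$. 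Conversely, surjectivity follows as in the minimal lagrangian case: Proposition \ref{prop:flat} realizes every flat metric as a limit, and the Duchin--Leininger--Rafi compactification (Theorem \ref{thm:DLR2}) then delivers every point of $\text{PMix}(S)$. Assembling these shows the closure is the space of maximal induced metrics together with $\text{PMix}(S)$, the latter containing the flat metrics arising from unit-norm holomorphic quadratic differentials as its fully-flat members, exactly mirroring Theorem \ref{thm:compact}.

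The only genuinely new bookkeeping --- and the place I expect to spend the most care --- is tracking the single multiplicative discrepancy between the two geometries: $\sigma e \to 2|\Phi_{\infty}|$ whereas $\mathcal{H}\sigma \to |\Phi_{\infty}|$. This factor of two is invisible after projectivization, but one must confirm it does not interfere with the near-zero length and stretching estimates (where the normalizing constant enters the geodesic curvature bound and the arc-length asymptotics) nor with the identification of the limit as a \emph{unit-norm} flat structure. Since the constant is uniform and enters only through the already-established asymptotics $2\|\Phi_{n}\| \sim \mathcal{E}_{n} \sim 2H_{n}$, verifying this is routine rather than substantive, and no new analytic difficulty arises beyond the minimal lagrangian case.
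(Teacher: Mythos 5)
Your proposal is correct and follows essentially the same route as the paper: both reduce the maximal-surface case to the Section 5 machinery via the identity $\mathcal{H}\sigma = |\Phi|/|\nu| \geq |\Phi|$, the comparison of the holomorphic energy $H$ with the $L^{1}$-norm of the Hopf differential, the convergence $|\nu_{n}| \to 1$ on the flat part of the limiting mixed structure, and an area argument on the laminar complement. Your additional bookkeeping on the $H_{n}$ versus $\mathcal{E}_{n}$ normalization and the factor of two is a harmless elaboration of what the paper leaves implicit.
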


\begin{proof}
To any induced metric $\mathcal{H} \sigma$ on the maximal surface, there is a unique singular quadratic differential metric $|\Phi|$ associated to it. Some algebra shows that
$$ \mathcal{H} \sigma = \frac{|\Phi |}{|\nu|} \geq |\Phi|,$$
which for high energy, Proposition \ref{prop:holenergy} tells us $H$ approximates the $L^{1}$-norm of the quadratic differential, so that if the sequence of unit-norm quadratic differentials converges to measured lamination, then so does the projective current associated to the induced metric on the maximal surface. Hence, we assume the sequence of unit-norm quadratic differential metrics converges to a mixed structure. On the flat part of the mixed structure, we know that up to a subsequence the Beltrami differentials converges uniformly to 1 outside of a small region about the zeros of the differential and a cylindrical neighborhood of the boundary curves. But then we know that on this subsurface the maximal surface metric will converge to $|\Phi_{\infty}|$ in terms of its length spectrum. As the total area of the mixed structure is 1 and we have normalized the maximal surface metric by the total holomorphic energy, on the complement, the area of the metric tends to 0, so that the restriction of the limiting current is a measured lamination.

\end{proof}

We observe there is a rather interesting trichotomy at play here. For high energy, on the subsurface $S'$, if the quadratic differentials converge to $|\Phi_{\infty}|$ then so do the associated sequence of minimal surface metrics and the sequence of maximal surface metrics.

\section{Compactification of maximal representations to $\text{PSL}(2, \mathbb{R}) \times \text{PSL}(2, \mathbb{R})$}

In this final section, we provide an application of our work to compactifying the maximal component of the representation variety $\chi(\text{PSL}(2, \mathbb{R}) \times \text{PSL}(2, \mathbb{R}))$. The theory of maximal representations is defined for general Hermitian Lie groups $G$ and is considerably more straightforward to define in our specific setting of $G = \text{PSL}(2, \mathbb{R}) \times \text{PSL}(2, \mathbb{R})$. Nevertheless, we will define a maximal representation in the general setting before providing a straightforward characterization in our setting.

Let $G$ be a \emph{Hermitian} Lie group, that is a noncompact simple Lie group whose symmetric space $G/K$ is a K{\"a}hler manifold. Equivalently, there is a $G$-invariant two-form $\omega$ on $G/K$. Let $S$ be a closed, orientable, smooth surface of genus $g \geq 2$. Then given a representation $\rho: \pi_{1} S \to G$, one can choose any $\rho$-equivariant map $\tilde{f}: \widetilde{S} \to G/K$ and define the \emph{Toledo invariant} to be
$$T(\rho) : = \frac{1}{2 \pi} \int_{S} \tilde{f}^{*} \omega .$$

The Toledo invariant will be well-defined for each such representation as the number obtained will not depend on the choice of $\tilde{f}$ chosen above. A well-known \emph{Milnor-Wood} type inequality holds for the Toledo invariant,
$$|T(\rho)| \leq |\chi(s)| \cdot \text{rank}(G/K).$$

Representations whose Toledo invariant attains the upperbound are known as \emph{maximal representations}. We now restrict our attention specifically to the group $G =$ PSL(2, $\mathbb{R}) \times$ PSL(2, $\mathbb{R})$, whose associated symmetric space is $\mathbb{H}^{2} \times \mathbb{H}^{2}$.

To each representation to the group $\text{PSL}(2, \mathbb{R}) \times \text{PSL}(2, \mathbb{R})$, one obtains a pair of representations to the group PSL$(2, \mathbb{R})$. By work of Goldman \cite{G}, the Euler number of representations to PSL$(2,\mathbb{R})$ characterizes the connected components of the representation variety. The maximal representations are precisely those whose projections live in the Hitchin component of PSL$(2, \mathbb{R})$ representations, that is, those representations that are both discrete and faithful. Hence, such a representation yields a pair of points in Teichm{\"u}ller space and an associated minimal surface. We may parameterize such representations by the induced metric on the minimal surface $\Sigma$, as well as the $\rho = (\rho_{1}, \rho_{2})$-equivariant harmonic map from $\mathbb{H}^{2}$ to $\mathbb{H}^{2} \times \mathbb{H}^{2}$, given by the graph of the minimal langrangian from Theorem \ref{thm:Schoen}. As a final consequence of our study of these minimal langrangians, we obtain a compactifcation of the the maximal component of surface group representations to $\text{PSL}(2, \mathbb{R}) \times \text{PSL}(2, \mathbb{R})$.

\begin{thm}
Let $S$ be a closed surface of genus $g >1$. The space of maximal representations of $\pi_{1}(S)$ to \emph{PSL}$(2,\mathbb{R})$ $\times$ \emph{PSL}$(2,\mathbb{R})$ embeds into the space of $\pi_{1}S$-equivariant harmonic maps from $\mathbb{H}^{2} \to \mathbb{H}^{2} \times \mathbb{H}^{2}$, whose graphs are minimal lagrangians. The Gromov-Hausdorff limits of these maps are given by harmonic maps from $\mathbb{H}^{2}$ to $T_{1} \times T_{2}$, where $T_{1}$ and $T_{2}$ are a pair of $\mathbb{R}$-trees coming from a projective pair of measured foliations, with image given by the core of the trees.

%s parameterized by the space of induced metrics and holomorphic quadratic differentials compatible with the complex structure of the metric. The compactifcation is given by including mixed structures with the data of a holomorphic (integrable) differential on each flat subsurface, so that each differential is a lift of the flat metric.
\end{thm}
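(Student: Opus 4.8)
The plan is to read the statement as the culmination of the preceding analysis, splitting it into an embedding assertion and a description of limits. First I would establish the embedding. Recall from the discussion of the representation variety that a representation $\rho = (\rho_{1}, \rho_{2})$ into $\mathrm{PSL}(2,\mathbb{R}) \times \mathrm{PSL}(2,\mathbb{R})$ is maximal precisely when each projection $\rho_{i}$ is discrete and faithful, by Goldman's characterization of components via the Euler number \cite{G} together with the Milnor--Wood bound on the Toledo invariant. Thus a maximal $\rho$ is equivalent to a pair of points $(X_{1}, X_{2}) = (\mathbb{H}^{2}/\rho_{1}, \mathbb{H}^{2}/\rho_{2}) \in \mathcal{T}(S) \times \mathcal{T}(S)$. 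To such a pair Theorem \ref{thm:Schoen} assigns a unique $\rho$-equivariant minimal surface $\widetilde{\Sigma} \subset \mathbb{H}^{2} \times \mathbb{H}^{2}$, the graph of the unique minimal lagrangian isotopic to the identity; since the inclusion $\widetilde{\Sigma} \hookrightarrow \mathbb{H}^{2} \times \mathbb{H}^{2}$ is conformal and harmonic, it is realized as a $\pi_{1}S$-equivariant harmonic map $\mathbb{H}^{2} \to \mathbb{H}^{2} \times \mathbb{H}^{2}$. This defines the assignment $\rho \mapsto \widetilde{\Sigma}$. For injectivity I would recover $\rho$ from the harmonic map: the two factor projections are the harmonic maps onto $X_{1}$ and $X_{2}$, which determine both marked hyperbolic structures, and hence the conjugacy class of $\rho = (\rho_{1}, \rho_{2})$.

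For the statement about Gromov--Hausdorff limits, I would take a sequence of maximal representations $\rho_{n}$ leaving every compact set, equivalently a sequence of pairs $(X_{1,n}, X_{2,n})$ degenerating in $\mathcal{T}(S) \times \mathcal{T}(S)$, and rescale the ambient symmetric space by a sequence $C_{n} \to \infty$ comparable to the total energy $\mathcal{E}_{n}$. Proposition \ref{prop:scale} identifies this as the correct normalization: when the limiting current is not a measured lamination one has $C_{n} \asymp \mathcal{E}_{n}^{1/2}$, which is both strong enough to force length-spectrum convergence and weak enough that the limit is nondegenerate. On the target side, Theorem \ref{thm:flatanalysis} shows the rescaled hyperbolic factors converge to $\mathbb{R}$-trees $T_{i}$ dual to a projective pair of measured foliations $\lambda_{i}$, namely the Thurston-boundary limits of the $X_{i,n}$; on the domain side, Theorem \ref{thm:compact} gives convergence of the induced metric to a mixed structure $\eta$.

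To conclude I would invoke Theorem \ref{thm:core}, which identifies the limiting metric space $X_{\eta}$ built from $\eta$ with the core $\mathcal{C}(T_{1} \times T_{2})$, together with Theorem \ref{thm:mapconverges} and the flat-part analysis of Theorem \ref{thm:flatanalysis}, which upgrade the convergence of domains and targets to convergence of the equivariant harmonic maps themselves. The limiting map $\mathbb{H}^{2} \to T_{1} \times T_{2}$ is the product $p_{1} \times p_{2}$ of the projections onto the leaf spaces of $\widetilde{\lambda}_{1}$ and $\widetilde{\lambda}_{2}$, which is harmonic because each factor is, and whose image is exactly the core by Proposition \ref{prop:core} and the remark following it.

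The main obstacle I anticipate is the passage from the Jenkins--Strebel case to an arbitrary projective pair of measured foliations. Theorem \ref{thm:mapconverges} establishes the convergence of the harmonic maps only when the laminar part of $\eta$ is carried by finitely many simple closed curves, so that the limiting maps between finite graphs are affine and governed by Lebeau's theorem; for a general pair of foliations one must run the continuity-and-density argument from the proof of Theorem \ref{thm:core}, approximating by Jenkins--Strebel data where the convergence is explicit and then using continuity of both the mixed-structure limit and the core construction over $\mathrm{P}(\mathrm{ML}\times\mathrm{ML})$. A secondary subtlety, which the energy normalization resolves, is that the induced domain metric and the two hyperbolic targets must be rescaled by slightly different constants (the induced metric being comparable to the sum of the two factors); keeping the energy of the rescaled maps bounded by $1$ is precisely what makes Korevaar--Schoen compactness (Theorem \ref{thm:pullback}) apply simultaneously to all three and forces the maps to intertwine the limiting structures.
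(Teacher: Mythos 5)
Your proposal is correct and follows essentially the same route as the paper's proof: the embedding via Schoen's minimal lagrangian and the conformal harmonic inclusion, the normalization by total energy, Korevaar--Schoen compactness for convergence of the maps, and Theorems \ref{thm:compact}, \ref{thm:flatanalysis}, \ref{thm:mapconverges} and \ref{thm:core} to identify the limit with a harmonic map onto the core of the dual trees. Your added remarks on injectivity via the factor projections and on the density argument needed to pass from the Jenkins--Strebel case to general foliations are consistent with (and slightly more explicit than) what the paper does.
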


\begin{proof}
To any maximal representation $\rho = (\rho_{1}, \rho_{2})$, we may look at the two closed hyperbolic surfaces given by $X_{1} = \mathbb{H}^{2} \, \backslash \rho_{1} $ and $X_{2} = \mathbb{H}^{2} \, \backslash \rho_{2}$. This gives a clear homeomorphism between the maximal component and two copies of Teichm{\"u}ller space. By Theorem \ref{thm:Schoen}, we obtain a minimal lagrangian between $X_{1}$ and $X_{2}$ which respects the marking. If $\Sigma$ denotes the conformal structure of the graph, then the inclusion map $i: \Sigma \to X_{1} \times X_{2}$ is a conformal map, which lifts to the desired $\rho$-equivariant map from $\mathbb{H}^{2}$ to $\mathbb{H}^{2} \times \mathbb{H}^{2}$. The map which associates the representation $\rho$ to this map is continuous and is injective as distinct representations have distinct minimal lagrangians, hence yielding our desired embedding. 

If $\rho_{n}$ is a sequence of representations leaving all compact sets, then either $g_{1,n}$ or $g_{2,n}$  (or both) leaves all compact sets in Teichm{\"u}ller space (recall $(S,g_{i}) = \mathbb{H}^{2}/\rho_{i})$. By Theorem \ref{thm:compact}(up to a subsequence) the sequence of induced metrics on the graphs converge projectively to a mixed structure. Let $c_{n}$ be the seqeunce of constants for which we divide the induced metric to ensure length spectrum convergence to a non-zero mixed structure with self-intersection 1 or a measured lamination. If we scale the target by the same sequence of constants, then the total energy of the sequence of harmonic maps is now uniformly bounded, so that by Theorem \ref{thm:pullback}, the maps converge to a map from $\mathbb{H}^{2}$ to $T_{1} \times T_{2}$, where $T_{i}$ is the $\mathbb{R}$-tree associated to the limit $\widetilde{X}_{i,n}/c_{n}$ (notice that $T_{i}$ may be a single point). By Theorem \ref{thm:core}, the image is given by the core of the trees, which suffices for the proof.

\end{proof}

\end{document}